\newcommand{\T}{\mathbb{T}}
\newcommand{\C}{\mathbb{C}}
\renewcommand{\P}{\mathbb{P}}
\newcommand{\Z}{\mathbb{Z}}
\renewcommand{\L}{\mathcal{L}}
\newcommand{\V}{\mathcal{V}}
\newcommand{\VV}{\mathbb{V}}
\newcommand{\WW}{\mathbb{W}}
\newcommand{\LL}{\mathbb{L}}
\renewcommand{\O}{\mathcal{O}}
\newcommand{\X}{\widetilde{X}}
\newcommand{\Tor}{\operatorname{Tor}}
\newcommand{\Hom}{\operatorname{Hom}}
\newcommand{\im}{\operatorname{im}}
\newcommand{\Span}{\operatorname{Span}}
\newcommand{\Spec}{\operatorname{Spec}}
\newcommand{\D}{\operatorname{D}}
\newcommand{\G}{\mathbb{G}}
\renewcommand{\u}{\mathfrak{u}}
\newcommand{\U}{\mathcal{U}}
\renewcommand{\k}{{\pmb{k}}}
\renewcommand{\S}{\mathcal{S}}
\newcommand{\op}{\operatorname{op}}
\newcommand{\vrho}{{\pmb{\rho}}}
\newcommand{\Lr}{\mathbf{L}}
\renewcommand{\biggl}{\bigg{(}}
\renewcommand{\biggr}{\bigg{)}}
\newcommand{\A}{\mathcal{A}}
\renewcommand{\a}{{\pmb{a}}}
\newcommand{\bomega}{{\pmb{\omega}}}
\newcommand{\x}{{\pmb{m}}}
\newcommand{\h}{\mathfrak{h}}
\newcommand{\bmu}{\pmb{\mu}}
\newcommand{\fin}{\operatorname{fin}}
\renewcommand{\deg}{\operatorname{deg}}
\newcommand{\un}{\operatorname{un}}
\newcommand{\w}{\omega}
\newcommand{\W}{\mathcal{W}}
\newif\ifCopR \CopRfalse
\newtheorem{theorem}{Theorem}[section]
\newtheorem{lemma}[theorem]{Lemma}
\newtheorem{proposition}[theorem]{Proposition}
\newtheorem{corollary}[theorem]{Corollary}
\theoremstyle{definition}
\newtheorem{definition}[theorem]{Definition}
\newtheorem{example}[theorem]{Example}
\newtheorem{notation}[theorem]{Notation}
\newtheorem{important}[theorem]{Important Fact}
\theoremstyle{remark}
\newtheorem{remark}[theorem]{Remark}
\author{Anthony Joseph Narkawicz}
\email{nark@math.duke.edu}
\title{Cohomology Jumping Loci and Relative Malcev Completion}
\address{Duke University Department Of Mathematics}
\begin{document}

\maketitle

\tableofcontents

\section{Introduction}
\label{chapterIntroduction}
\pagenumbering{arabic}

\noindent Two standard invariants used to study the fundamental group of the complement $X$ of a hyperplane arrangement are the Malcev (i.e. unipotent) completion of its fundamental group $\pi_1(X,x_0)$ and the cohomology groups $H^\bullet(X,\L_\vrho)$ with coefficients in rank one local systems. In this paper, we develop a tool that unifies these two approaches. This tool is the Malcev completion of $\pi_1(X,x_0)$ relative to a homomorphism $\vrho \colon \pi_1(X,x_0) \to (\C^*)^N$. This is a prosolvable group that generalizes the Malcev completion of $\pi_1(X,x_0)$ and is tightly controlled by the cohomology groups $H^1(X,\L_{\rho_1^{k_1}\cdots\rho_N^{k_N}})$.

\subsection{Relative Malcev Completion}
\label{RelativeMalcevCompletion}

\noindent Let $\vrho \colon \pi_1(X,x_0) \to (\C^*)^N$ be a representation. Let $D_\vrho$ denote the Zariski closure of the image of $\vrho$ in $\G_m^N$. The Malcev completion of $\pi_1(X,x_0)$ relative to $\vrho$ is a proalgebraic group over $\C$ (i.e., inverse limit of algebraic groups) that is an extension
$$
1 \longrightarrow \U_\vrho \longrightarrow \S_\vrho \longrightarrow D_\vrho \longrightarrow 1
$$
of $D_\vrho$ by a prounipotent group $\U_\vrho$. It is equipped with a Zariski dense homomorphism $\theta_\vrho \colon \pi_1(X,x_0) \to \S_\vrho$ that lifts $\vrho$, and it is a characterized by the following universal property. Suppose that the affine algebraic group $S$ is an extension
$$
1 \longrightarrow U \longrightarrow S \longrightarrow D_\vrho \longrightarrow 1
$$
of $D_\vrho$ by a unipotent group $U$ and that $\theta \colon \pi_1(X,x_0) \to S$ lifts $\vrho$:
$$
\xymatrix{\pi_1(X,x_0) \ar[d]_\theta \ar[dr]^\vrho \\ S \ar[r] & D_\vrho.}
$$
Then there is a unique map $\S_\vrho \to S$ such that the diagram
$$
\xymatrix{\pi_1(X,x_0) \ar[r]^{\theta_\vrho} \ar[dr]_\theta & \S_\vrho \ar[d] \\ & S}
$$
commutes.

If $\vrho$ is the trivial homomorphism, then $D_\vrho$ is the trivial group, and $\S_\vrho$ is the standard Malcev completion of $\pi_1(X,x_0)$. This is the prounipotent group whose Lie algebra is the completion $\h^\wedge$ of Kohno's \cite{Kohno} holonomy Lie algebra
$$
\h = \frac{\LL(H_1(X,\C))}{\langle\im\partial\rangle},
$$
where $\partial \colon H_2(X,\C) \to \bigwedge^2 H_1(X,\C)$ is the dual of the cup product, and $\langle\im\partial\rangle$ is the ideal generated by the image of $\partial$.

The relative Malcev completion of the fundamental group of a knot complement was studied by Miller \cite{Miller}.

\subsection{The Pronilpotent Lie Algebra}
\label{PronilpLie}

\noindent The exponential and logarithm maps determine an equivalence of categories between prounipotent algebraic groups and pronilpotent Lie algebras. In particular, the prounipotent group $\U_\vrho$ corresponds to a pronilpotent Lie algebra $\u_\vrho$. Each character $\alpha$ of $D_\vrho$ determines a one dimensional representation $V_\alpha$ of $D_\vrho$ and a rank one local system $\VV_\alpha$ on $X$ with monodromy given by the character $\alpha \circ \vrho \colon \pi_1(X,x_0) \to \C^*$.

If $\u$ is any pronilpotent Lie algebra, then $\u$ is strongly controlled by $H_1(\u)$ and $H_2(\u)$. As described in the proof of Proposition 7.1 of \cite{HainTorelli}, there is a $D_\vrho$-equivariant isomorphism
\begin{equation}
\label{Dp1}
\prod_{\alpha \in D_\vrho^\vee} H^1(X,\VV_\alpha)^*\otimes V_\alpha \hspace{.05 in} \cong \hspace{.05 in} H_1(\u_\vrho)
\end{equation}
and a $D_\vrho$-equivariant surjection
\begin{equation}
\label{Dp2}
\prod_{\alpha \in D_\vrho^\vee} H^2(X,\VV_\alpha)^* \otimes V_\alpha \longrightarrow H_2(\u_\vrho).
\end{equation}

Rational homotopy theory provides several equivalent methods for constructing a pronilpotent Lie algebra from a connected commutative differential graded algebra (e.g. bar construction, formal power series connections, and minimal models). The Lie algebra $\u_\vrho$ is the pronilpotent Lie algebra constructed from the commutative differential graded algebra
$$
E^\bullet(X,\O_\vrho) = \bigoplus_{\alpha \in D_\vrho^\vee} E^\bullet(X,\VV_\alpha) \otimes V_\alpha^*
$$
This algebra has a product that we describe in Section \ref{CommDiffE}. If $\vrho$ has Zariski dense image in $\G_m^N$, then
$$
E^\bullet(X,\O_\vrho) = \bigoplus_{\k \in \Z^N} E^\bullet(X,\L_{\vrho_1^{k_1}\cdots \vrho_N^{k_N}}) q_1^{-k_1}\cdots q_N^{-k_N}.
$$

Standard results of rational homotopy theory imply that the Lie algebra $\u_\vrho$ is quadratically presented if and only if the differential graded algebra $E^\bullet(X,\O_\vrho)$ is 1-formal. In particular, if $\u_\vrho$ is quadratically presented, then all Massey triple products of $1$-forms must vanish modulo their indeterminacies. If $\vrho$ is the trivial representation, then $D_\vrho$ is the trivial group and $E^\bullet(X,\O_\vrho) = E^\bullet(X,\C)$. Thus, $\u_\vrho = \u_\mathbf{1}$ is the completion $\h^\wedge$ of Kohno's \cite{Kohno} holonomy Lie algebra $\h$, which is quadratically presented. As the next theorem shows, $E^\bullet(X,\O_\vrho)$ is not, in general, 1-formal.

Let $X \subset \C^2$ denote the complement of the braid arrangement $\mathcal{B}$, and let $\T = H^1(X,\C^*)$ denote its character torus. The intersection of $\mathcal{B}$ with $\mathbb{R}^2$ is shown below. Let the hyperplanes be numbered as indicated.
\begin{figure}[!ht]
\label{Braiddf}
\centering\epsfig{file=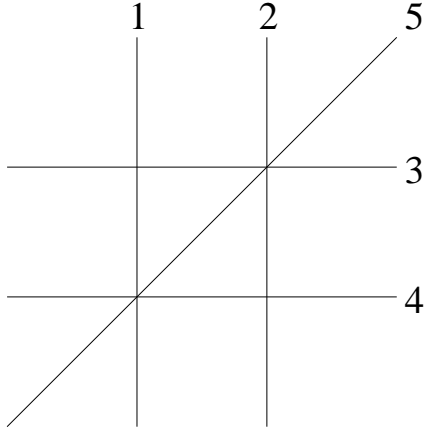, width=2.2in}
\caption{The braid arrangement $\mathcal{B}$}
\end{figure}

\begin{theorem}
\label{FirstMassey}
There exist infinitely many $\vrho \in \T^2$ for which $H^\bullet(X,\O_\vrho)$ has a nonzero Massey triple product of degree-one elements. Thus, the commutative differential graded algebra $E^\bullet(X,\O_\vrho)$ is not $1$-formal and therefore the pronilpotent Lie algebra $\u_\vrho$ is not quadratically presented. \qed
\end{theorem}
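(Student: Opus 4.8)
The plan is to reduce the statement to an explicit computation of a Massey triple product in a finite-dimensional combinatorial model for $E^\bullet(X,\O_\vrho)$, and then to show that this product is nonzero for all $\vrho$ in a Zariski-dense subset of a positive-dimensional subtorus of $\T$. First I would replace $E^\bullet(X,\O_\vrho)$, up to differential graded algebra quasi-isomorphism, by the twisted Orlik--Solomon (Aomoto) complex: the de Rham algebra $E^\bullet(X,\VV_\alpha)$ is quasi-isomorphic, compatibly with products across the characters $\alpha$, to the Orlik--Solomon algebra $A^\bullet$ of $\mathcal{B}$ equipped with the flat connection $\nabla_\vrho = d + \omega_\vrho\wedge$, where $\omega_\vrho = \sum_H \lambda_H\, e_H$ records the logarithms of the eigenvalues of $\vrho$ on the meridians. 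Because the comparison is through maps of differential graded algebras, Massey products are preserved, so it suffices to produce one in this model. Since $\mathcal{B}$ is a rank-two arrangement, $A^\bullet$ is concentrated in degrees $0,1,2$, and the whole computation becomes linear algebra over $\C$ graded by $\k \in \Z^N$, with the product carrying the $\k$-graded and $\k'$-graded pieces into the $(\k+\k')$-graded piece.

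Next I would locate the source of nonformality. The characteristic variety of the braid arrangement contains a positive-dimensional subtorus $T \subset \T$ through the identity (the essential component arising from the neighborly partition of $\mathcal{B}$). For $\vrho \in T$ every integer power $\vrho^{\k}$ again lies on $T$, so $H^1(X,\VV_{\vrho^{\k}}) \neq 0$ for all $\k$; this supplies an ample reservoir of degree-one classes from which to build a Massey product. I would then select integer vectors $\k_1,\k_2,\k_3$ and cocycles $a,b,c$ in the corresponding graded pieces so that the supports of $a,b,c$ meet the flats of $\mathcal{B}$ in a prescribed pattern forcing the cup products $a\cup b$ and $b\cup c$ to be coboundaries in the $\vrho^{\k_1+\k_2}$- and $\vrho^{\k_2+\k_3}$-twisted complexes, while arranging $H^2(X,\VV_{\vrho^{\k_1+\k_2+\k_3}})\neq 0$ to house the resulting class. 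The simplest instance to try first takes all three classes in $H^1(X,\VV_\vrho)$, so that the pairwise products live in the $\vrho^2$-twisted cohomology and the Massey class lives in the $\vrho^3$-twisted cohomology.

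With a defining system in hand I would compute $\langle a,b,c\rangle$ directly: solve $\nabla_\vrho x = a\cup b$ and $\nabla_\vrho y = b\cup c$ in degree one, form the representative $a\cup y \pm x\cup c$ in degree two, and express its class as a rational function of the coordinates on $T$. The decisive point is to show that this class is nonzero modulo the indeterminacy $a\cup H^1(X,\VV_{\vrho^{\k_2+\k_3}}) + H^1(X,\VV_{\vrho^{\k_1+\k_2}})\cup c$. Because every ingredient is algebraic in $\vrho$, nonvanishing modulo indeterminacy is a nonempty Zariski-open condition on $T$; verifying it at a single convenient point of $T$, by a finite rank computation in $A^\bullet$, then forces it on a cofinite, hence infinite, subset of $T$. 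This produces infinitely many $\vrho \in \T$ with $\langle a,b,c\rangle \neq 0$.

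The main obstacle is precisely this last non-degeneracy check. One must compute the relevant cup products and the indeterminacy subspace explicitly in the Orlik--Solomon algebra of $\mathcal{B}$ and show that the Massey representative is not absorbed by the indeterminacy; controlling the indeterminacy, rather than merely producing a nonzero representative, is what makes the Massey product a genuine obstruction, and it is here that the specific combinatorics (the triple points and the neighborly partition of the braid arrangement) must enter in an essential way. Once a single nonvanishing Massey product of $1$-forms is exhibited for infinitely many $\vrho$, the standard results of rational homotopy theory quoted above show that $E^\bullet(X,\O_\vrho)$ fails to be $1$-formal, and hence that $\u_\vrho$ is not quadratically presented, for all such $\vrho$.
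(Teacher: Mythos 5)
Your high-level frame---pass to the combinatorial model $\A_\a^\bullet$ for generic parameters (Theorem \ref{OSATheorem}), exhibit an explicit Massey triple product there, and control the indeterminacy via Suciu's description of the characteristic variety of the braid arrangement---is the same as the paper's. The gap is in the mechanism you propose for producing a \emph{nonzero} product: you take a single character $\vrho$ on one positive-dimensional component $T$ of $\V_1^1(X)$ and draw all three degree-one classes from the groups $H^1(X,\VV_{\vrho^k})$, $k \in \Z$. This cannot succeed. By Suciu \cite{Suciu1}, $\dim_\C H^1(X,\L)\le 1$ for every nontrivial rank-one local system on $X$, and in the Aomoto model every generator of $H^1(X,\L_{\vrho^k})$, $k\neq 0$, is represented by a $1$-form lying in the \emph{same} resonance component $P=\ker(\lambda\wedge\cdot)\subset A^1$ attached to $T$, since $\ker(k\lambda\wedge\cdot)=\ker(\lambda\wedge\cdot)$. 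Components of the resonance variety are isotropic (for the local components in play this is a direct computation with the Orlik--Solomon relations), so any two forms in $P$ multiply to zero \emph{at the chain level} in $A^2$. Consequently every Massey triple product built from such classes admits the zero defining system ($r_{12}=r_{23}=0$) and is therefore the zero coset. Your ``simplest instance'' fails even more directly: the three classes in $H^1(X,\VV_\vrho)$ are forced to be proportional, and an odd-degree element of a commutative differential graded algebra squares to zero on the nose, so $\langle x,x,x\rangle$ contains $0$. The only escape within your setup is to mix in untwisted classes from $H^1(X,\C)$, but then the indeterminacy contains products of $H^1(X,\C)$ with the twisted classes, and your proposal never confronts this.

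This is exactly why the theorem is stated for $\vrho\in\T^2$ and why the paper's proof needs classes drawn from two \emph{different} components, not merely two coordinates: it takes $\rho_1=\exp(\alpha_1)$ and $\rho_2=\exp(\alpha_2)$ with $\alpha_1=r(\w_2+\w_3-2\w_5)$ and $\alpha_2=r(-\w_1-\w_4+2\w_5)$ attached to the two triple points $\{2,3,5\}$ and $\{1,4,5\}$, and the classes $(\w_2-\w_3)\cdot q_1$ and $(\w_1-\w_4)\cdot q_2$. Because these come from different components, the cup product $(\w_1-\w_4)(\w_2-\w_3)\cdot q_1q_2$ is nonzero in $A^2$ yet exact in $\A_\a^\bullet$, with primitive $-\frac{1}{r}(\w_2+\w_3)\cdot q_1q_2$; a chain-level-nonzero but cohomologically trivial cup product is precisely the ingredient a nonvanishing Massey triple product requires, and it is unavailable on a single isotropic component. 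The paper then evaluates the product as $\frac{2}{r}[\w_2\w_3\cdot q_1^2q_2]$ and proves it nonzero modulo indeterminacy using the $\Z^2$-grading together with Suciu's fact that $H^1(X,\L_{\rho_1^s\rho_2^t})\neq 0$ forces $st=0$ (so the indeterminacy in the $q_1^2q_2$-component involves only products from $A^1q_1^2$ and $A^1q_2$), followed by an explicit linear-algebra contradiction. Two smaller points: your claim that nonvanishing modulo indeterminacy is a Zariski-open condition is not automatic, because the indeterminacy subspace can jump along $T$; the paper sidesteps this by making the final contradiction uniform in $r$. And while examples in $\T$ would formally suffice for the statement (embed $\T\hookrightarrow\T^2$ by $\rho\mapsto(\rho,1)$), the isotropy argument above shows that the route you outline cannot produce them there.
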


\subsection{Completion and Characteristic Varieties}
\label{CompletionCharVar}

\noindent For each $i \geq 0$, the character torus has a filtration $$\T = \V_0^i(X)\supset\V_1^i(X) \supset \dots$$ by {\em characteristic} (sub)varieties of $\T$, where $$\V_m^i(X) := \{\rho \in \T\colon \dim H^i(X,\L_\rho) \geq m\}.$$ There is a similar stratification of $\T^N$ defined by $$\V_{N,m}^i(X) := \{(\rho_1,\dots,\rho_N) \in \T^N \colon \dim H^i(X,\L_{\rho_1\cdots\rho_N}) \geq m\}.$$ In \cite{Arapura}, Arapura proved a general result that implies that the subvariety $\V_{N,m}^i(X)$ of $\T^N$ is the union of translates of subtori of $\T^N$ by torsion characters.

If $\vrho \in \T^N$, then the universal property of the relative Malcev completion $\S_\vrho$ gives a surjection $\S_\vrho \longrightarrow D_\vrho \times \pi_1(X,x_0)^{\un}$ of groups, where $\pi_1(X,x_0)^{\un}$ is the Malcev completion of $\pi_1(X,x_0)$. Thus, there is a surjection $\S_\vrho \to \pi_1(X,x_0)^{\un}$, which is an isomorphism if $\vrho$ is the trivial representation. Consequently, we may view $\S_\vrho$ as a kind of ``deformation" of $\pi_1(X,x_0)^{\un}$ over $\T^N$.

In Section \ref{SectionY}, we examine exactly how $\S_\vrho$ depends on $\vrho$. The first result in this direction is the following theorem.
\begin{theorem}
\label{SDU}
If $X$ is the complement of an arrangement of hyperplanes in a complex vector space and two distinct hyperplanes intersect, then $\S_\vrho \cong D_\vrho \times \pi_1(X,x_0)^{\un}$ for general\footnote{Those $\vrho \in Y$ which lie in an intersection of countably many Zariski open sets.} $\vrho \in \T^N$.
\end{theorem}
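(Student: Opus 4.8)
The plan is to show that the surjection $\S_\vrho \to D_\vrho \times \pi_1(X,x_0)^{\un}$ coming from the universal property is an isomorphism for general $\vrho$, working on the level of the associated pronilpotent Lie algebras. Both groups are extensions of $D_\vrho$ by prounipotent groups, and the surjection commutes with the projections to $D_\vrho$, so it restricts to a surjection $\U_\vrho \to \pi_1(X,x_0)^{\un}$ on unipotent radicals; by the five lemma the whole map is an isomorphism if and only if this restriction is. Passing to Lie algebras, it therefore suffices to prove that the induced map $\u_\vrho \to \u_\mathbf{1} = \h^\wedge$ is an isomorphism.

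This map is induced by the inclusion of the trivial-character summand $\iota \colon E^\bullet(X,\C) = E^\bullet(X,\VV_\mathbf{1})\otimes V_\mathbf{1}^* \hookrightarrow E^\bullet(X,\O_\vrho)$, which is a morphism of commutative differential graded algebras because the trivial character is closed under the product. By the standard theory of $1$-minimal models, $\iota$ induces an isomorphism on Malcev Lie algebras as soon as it induces an isomorphism on $H^1$ and a monomorphism on $H^2$. Since the differential preserves the character grading, $H^\bullet(E(X,\O_\vrho)) = \bigoplus_\alpha H^\bullet(X,\VV_\alpha)\otimes V_\alpha^*$, so $\iota^*$ is the inclusion of the $\alpha = \mathbf{1}$ direct summand: the monomorphism on $H^2$ is then automatic, and the isomorphism on $H^1$ holds precisely when $H^1(X,\VV_\alpha) = 0$ for every nontrivial $\alpha \in D_\vrho^\vee$. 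The theorem thus reduces to arranging this vanishing for general $\vrho$.

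For $\vrho$ with Zariski-dense image we have $D_\vrho = \G_m^N$, $D_\vrho^\vee = \Z^N$, and $\VV_\k = \L_{\vrho_1^{k_1}\cdots\vrho_N^{k_N}}$, so the required vanishing becomes $\vrho^\k \notin \V_1^1(X)$ for every $\k \in \Z^N\setminus\{0\}$. For fixed $\k\neq 0$ the map $\phi_\k \colon \T^N \to \T$, $\vrho \mapsto \vrho^\k$, is a surjective homomorphism of tori, so if $\V_1^1(X) \neq \T$ then $\phi_\k^{-1}(\V_1^1(X))$ is a proper Zariski-closed subset of $\T^N$ with nonempty open complement. Intersecting these complements over the countably many $\k \in \Z^N\setminus\{0\}$ produces exactly a general set in the sense of the footnote; on it all the required $H^1$ vanish, and (since $\mathbf{1}\in\V_1^1(X)$ because $H^1(X,\C)\neq 0$) the image is automatically dense, justifying $D_\vrho = \G_m^N$.

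Everything therefore comes down to the single fact that $\V_1^1(X) \subsetneq \T$, and this is where the hypothesis that two distinct hyperplanes meet is essential: for an arrangement of parallel hyperplanes $X$ is homotopy equivalent to a wedge of circles, $H^2(X,\C)=0$, and $H^1(X,\L_\rho)\neq 0$ for generic $\rho$, so $\V_1^1(X)=\T$ and the conclusion genuinely fails. I expect this step to be the main obstacle. The plan is to pass to the essentialization, writing $X \cong \C^r \times X'$ with $X'$ the complement of an essential arrangement whose rank $m$ satisfies $m \geq 2$, since two intersecting distinct hyperplanes have independent directions and so still meet after quotienting by the common lineality space; as a local system is trivial on the $\C^r$ factor, Künneth gives $H^1(X,\L_\rho) = H^1(X',\L_\rho)$. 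It then remains to exhibit a single $\rho$ with $H^1(X',\L_\rho) = 0$, which follows from the generic vanishing theorem for essential arrangements of rank $\geq 2$ --- equivalently, from the fact that the degree-one resonance variety $\R_1(X')$ is a proper subvariety of $H^1(X',\C)$, so that by the tangent cone theorem no component of $\V_1^1$ can equal all of $\T$. The nonvanishing of $H^2$ forced by the intersecting hyperplanes is precisely what drives this nondegeneracy.
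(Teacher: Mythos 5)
Your proposal is correct, and its skeleton is the same as the paper's proof (Theorem \ref{IsUnipTheorem1}): reduce to the unipotent radicals, pass to the pronilpotent Lie algebras, note that the map $\u_\vrho \to \u_\mathbf{1}$ is induced by the inclusion $E^\bullet(X) \hookrightarrow E^\bullet(X,\O_\vrho)$, apply the $H^1$-isomorphism/$H^2$-injection criterion (the paper's Lemma \ref{usefulcorollary}), observe that the $H^2$ condition is automatic because the map is the inclusion of a direct summand, and finally arrange $H^1(X,\VV_\alpha)=0$ for all nontrivial $\alpha \in D_\vrho^\vee$ by deleting countably many proper Zariski-closed subsets indexed by $\k \in \Z^N\setminus\{0\}$. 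You differ in three places, all legitimately. First, where the paper handles an arbitrary $D_\vrho$ by extending each nontrivial character of $D_\vrho$ to a character $q_1^{k_1}\cdots q_N^{k_N}$ of $\G_m^N$, you restrict to Zariski-dense $\vrho$ and then show density is automatic on your general set, because a non-dense $\vrho$ satisfies $\vrho^\k = \mathbf{1} \in \V_1^1(X)$ for some $\k \neq 0$; this closes the same loophole more cleanly. Second, for fixed $\k \neq 0$ you obtain properness of the bad locus from Zariski-closedness of $\V_1^1(X)$ together with surjectivity of the power map $\phi_\k \colon \T^N \to \T$ alone, whereas the paper additionally invokes Arapura's structure theorem (Theorem \ref{ArapuraTheorem}); your version needs less. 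Third, the crux $\V_1^1(X) \subsetneq \T$ is exactly the point where the paper simply quotes Yuzvinsky's result (recorded at the end of Section \ref{CharVar}), while you derive it from essentialization plus K\"unneth, properness of the resonance variety $\R_1$, and one inclusion of the tangent cone theorem. That route is valid, and it has the virtue of exhibiting precisely where the hypothesis that two distinct hyperplanes intersect enters (rank at least two, with the parallel-arrangement example showing the hypothesis cannot simply be dropped); but it is heavier than necessary, since the properness of $\R_1$ for an essential arrangement of rank $\geq 2$ is itself essentially Yuzvinsky's vanishing theorem, so the tangent cone theorem is an extra layer that quoting Yuzvinsky (or combining the Esnault--Schechtman--Viehweg Theorem \ref{ESVTheorem} with Orlik--Solomon vanishing) would let you avoid.
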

If $\vrho$ lies in the characteristic variety $\V_{N,1}^1(X)$, then $\S_\vrho$ is not isomorphic to $D_\vrho \times \pi_1(X,x_0)^{\un}$.

\subsection{The Problem with $(A^\bullet,-\a\bomega^T)$}
\label{IntroductionProblem}

\noindent Suppose that $X$ is the complement of an arrangement of $n$ hyperplanes in a complex vector space $V$. Choose a linear function $L_j$ on $V$ whose vanishing set is the $j$-th hyperplane. Set $\omega_j = (2\pi i)^{-1}d L_j/L_j$. This is a closed holomorphic $1$-form on $X$ with integral periods. Let $A^\bullet$ denote the subalgebra of $E^\bullet(X,\C)$ generated by the forms $\omega_j$. This is the complexified {\em Orlik-Solomon algebra} of the arrangement.

Define $\bomega = (\omega_1,\dots,\omega_n)$, and let $\bomega^T$ denote the transpose of $\bomega$. Given $\a \in \C^n$, set $\a \bomega^T = a_1 \omega_1 + \dots + a_n \omega_n$. This is an element of $H^1(X,\C)$. Its exponential $\vrho = \exp(\a\bomega^T)$ is an element of $\T$. Let $\nabla_{\a}$ denote the connection on the trivial line bundle $\C \times X \to X$ defined by $\nabla_\a\sigma = d\sigma - (\a \bomega^T)\sigma$ for $\sigma \in E^0(X)$. There is a natural inclusion
$$
(A^\bullet,-\a\bomega^T) \, \hookrightarrow \, E^\bullet(X,\L_\vrho).
$$
of complexes. Though the product in $A^\bullet$ induces a product in $H^\bullet(A^\bullet,-\a\bomega^T)$, the algebra $(A^\bullet,-\a\bomega^T)$ is not a differential graded algebra. The cup product of two elements of $H^\bullet(X,\L_{\vrho})$ lies in $H^\bullet(X,\L_{\vrho^2})$. If $\zeta$ and $\psi$ are elements of $(A^\bullet,-\a\bomega^T)$, then $\zeta \wedge \psi$ is an element of the complex $(A^\bullet,-2\a\bomega^T)$. The diagram
$$
\xymatrix@1{(A^\bullet,-\a\bomega^T) \otimes_\C (A^\bullet,-\a\bomega^T) \ar[d] \ar[r]^-\wedge & (A^\bullet,-2\a\bomega^T) \ar[d] \\ E^\bullet(X,\L_\vrho) \otimes_\C E^\bullet(X,\L_\vrho) \ar[r]^-\wedge & E^\bullet(X,\L_{\vrho^2})}
$$
commutes, and all maps are chain maps. That is, although the cup product of two elements in $(A^\bullet,-\a\bomega^T)$ is an element of this same complex, it is more naturally an element of the complex $(A^\bullet,-2\a\bomega^T)$. Thus, it is natural to define
$$
\A_\a^\bullet = \bigoplus_{k \in \Z} A^\bullet q^{-k}.
$$
This is a commutative differential graded $\C$-algebra, where the differential is given on the $k$-th component by left multiplication by $-k\a\bomega^T$. It is graded by degree of differential forms.

In general, if $\a$ is any element of $M_{N \times n}(\C)$, then $\a\bomega^T$ is an element of $H^1(X,\C^N)$. Thus, $\vrho = \exp(\a\bomega^T)$ is an element of $\T^N$. If $\k \in \Z^N$, then $\k\a\bomega^T$ is an element of $A^\bullet$. In this case, we define
$$
\A_\a^\bullet = \bigoplus_{\k \in \Z^N} A^\bullet q_1^{-k_1}\cdots q_N^{-k_N}.
$$
As above, this is a commutative differential graded $\C$-algebra, where the differential is given on the $\k$-th component by left multiplication by $-\k\a\bomega^T$. There is a canonical homomorphism
$$
\A_\a^\bullet \longrightarrow E^\bullet(X,\O_\vrho)
$$
of commutative differential graded algebras, which is an inclusion when $\vrho$ has Zariski dense image in $\G_m^N$.

\begin{theorem}
\label{VTheorem}
If $\mathbf{V}$ is a vector subspace of $M_{N \times n}(\C)$, then there is a countable collection $\{\W_j\}$ of proper affine subspaces of $\mathbf{V}$ that do not contain $0$ with the following property. If $\a \in \mathbf{V}-\bigcup_j\W_j$ and $\vrho = \exp(\a\bomega^T)$ has Zariski dense image in $\G_m^N$, then the induced homomorphism $H^\bullet(\A_{\a}^\bullet) \longrightarrow H^\bullet(X,\O_\vrho)$ is an isomorphism.
\end{theorem}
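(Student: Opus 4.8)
The plan is to use the Zariski density hypothesis to split both complexes into a direct sum indexed by $\Z^N$, to observe that the map respects the splitting, and thereby to reduce the theorem to a comparison statement for each rank one local system $\L_{\vrho^\k}$ separately. As observed above, when $\vrho$ has Zariski dense image one has
$$
E^\bullet(X,\O_\vrho) = \bigoplus_{\k\in\Z^N} E^\bullet(X,\L_{\vrho^\k})\,q_1^{-k_1}\cdots q_N^{-k_N},
$$
where $\vrho^\k := \vrho_1^{k_1}\cdots\vrho_N^{k_N} = \exp(\k\a\bomega^T)$, and the canonical map $\A_\a^\bullet\to E^\bullet(X,\O_\vrho)$ is an inclusion. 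On $\A_\a^\bullet$ the product raises the $q$-grading while the differential preserves it, so as a complex $\A_\a^\bullet = \bigoplus_{\k}(A^\bullet,-\k\a\bomega^T)\,q_1^{-k_1}\cdots q_N^{-k_N}$, and the inclusion is the direct sum of its restrictions to the summands of equal $q$-degree. On the $\k$-th summand this restriction is the inclusion of the Orlik--Solomon algebra $A^\bullet$, carrying the Aomoto differential $-\k\a\bomega^T\wedge$, into the twisted de Rham complex $(E^\bullet(X,\C),\nabla_{\k\a})$ computing $H^\bullet(X,\L_{\vrho^\k})$; the two differentials agree because $d$ vanishes on $A^\bullet$. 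Since cohomology commutes with direct sums, $H^\bullet(\A_\a^\bullet)\to H^\bullet(X,\O_\vrho)$ is an isomorphism if and only if the component map $H^\bullet(A^\bullet,-\k\a\bomega^T)\to H^\bullet(X,\L_{\vrho^\k})$ is an isomorphism for every $\k\in\Z^N$.

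I would dispose of each component using the comparison theorem of Esnault--Schechtman--Viehweg (building on work of Aomoto and Kohno). For $\k = 0$ the differential vanishes and the component map is the Brieskorn--Orlik--Solomon isomorphism $H^\bullet(A^\bullet)\cong H^\bullet(X,\C)$, which holds for every $\a$. For $\k\neq 0$ the component map is the Aomoto comparison map for the weight $-\k\a$, and that theorem furnishes a \emph{finite} family of linear functionals $\ell_Z$ on $\C^n$, indexed by the dense edges $Z$ of the arrangement (including the edge at infinity), and a countable resonance set $B_Z\subset\C$, such that the component map is a quasi-isomorphism whenever $\ell_Z(-\k\a)\notin B_Z$ for every $Z$. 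The feature that matters here is that $0\notin B_Z$: the weight $0$ is non-resonant, as it must be, since at weight $0$ the component map is once again the Brieskorn--Orlik--Solomon isomorphism.

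It then remains to assemble the exceptional set. For each $\k\in\Z^N$, each dense edge $Z$, and each $c\in B_Z$, set
$$
\W_{\k,Z,c} = \{\a\in\mathbf{V}\colon \ell_Z(-\k\a) = c\}.
$$
Because $\a\mapsto\ell_Z(-\k\a)$ is linear on $\mathbf{V}$ and $c\neq 0$, each nonempty $\W_{\k,Z,c}$ is a proper affine subspace of $\mathbf{V}$, and it omits the origin since $\ell_Z(-\k\cdot 0) = 0\neq c$. If the linear form $\a\mapsto\ell_Z(-\k\a)$ vanishes identically on $\mathbf{V}$, then $\W_{\k,Z,c}$ is empty and is discarded; this is precisely where $0\notin B_Z$ is needed, for otherwise such a set could be all of $\mathbf{V}$. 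The $\W_{\k,Z,c}$ form a countable collection $\{\W_j\}$ with the required properties. If $\a\in\mathbf{V}-\bigcup_j\W_j$ and $\vrho = \exp(\a\bomega^T)$ has Zariski dense image, then for every $\k$ the weight $-\k\a$ avoids all resonances, so every component map is an isomorphism, and hence so is $H^\bullet(\A_\a^\bullet)\to H^\bullet(X,\O_\vrho)$.

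The main obstacle is the second step: stating the comparison theorem in exactly the form required. One must check that its non-resonance hypothesis is a finite union of affine conditions pulled back, via $\a\mapsto\k\a$, from the dense edges; that the resulting quasi-isomorphism holds in all cohomological degrees; and above all that the resonant values $B_Z$ exclude $0$, which is what keeps the exceptional subspaces off the origin. The surrounding steps---splitting the two complexes, matching their differentials, and checking that $\{\W_j\}$ is countable---are routine.
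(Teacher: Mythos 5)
Your proposal is correct and takes essentially the same route as the paper's proof: decompose both algebras over $\Z^N$ using Zariski density, reduce to the componentwise inclusions $(A^\bullet,-\k\a\bomega^T) \hookrightarrow E^\bullet(X,\L_{\vrho^\k})$, apply the Esnault--Schechtman--Viehweg comparison theorem to each component, and assemble the resulting affine conditions into a countable family of proper affine subspaces of $\mathbf{V}$ avoiding the origin. The one point you flag as an obstacle --- that the resonance values must exclude $0$ so the exceptional subspaces miss the origin --- is already built into the paper's statement of ESV (Theorem \ref{ESVTheorem}), whose conditions $\lambda_{S,M}(\a) = M - \sum_{S \subset \widehat{K}_j} a_j \neq 0$ range over \emph{positive} integers $M$, so the resonance set is $\Z_{>0}$ and the corresponding subspaces never contain $\a = 0$.
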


Recall that Theorem \ref{FirstMassey} says that when $X$ is the complement of the braid arrangement in $\C^2$, there exists $\vrho \in \T^2$ such that $H^1(X,\O_\vrho)$ has a nonvanishing Massey triple product of degree-one elements. To prove Theorem \ref{FirstMassey}, we apply Theorem \ref{VTheorem} and then exhibit a nonvanishing Massey triple product in $H^2(\A_\a^\bullet)$ for a $2$ by $5$ matrix $\a$.

In addition, we use Theorem \ref{VTheorem} to give conditions under which $\S_\vrho$ is combinatorially determined.

\subsection{When $\S_\vrho$ is Combinatorially Determined}
\label{IntroductionSpComb}

Let $X$ denote the complement of an arrangement of hyperplanes in a complex vector space, and let $\h$ denote its holonomy Lie algebra. Let $\h^\wedge$ denote its completion with respect to degree. Then $\h^\wedge$ is the pronilpotent Lie algebra constructed from the differential graded algebra $E^\bullet(X)$ by the methods of rational homotopy theory. Let $A^\bullet$ denote the complexified Orlik-Solomon algebra of $X$. The inclusion $A^\bullet \hookrightarrow E^\bullet(X)$ is a quasi-isomorphism \cite{OS1}. Thus, $\h^\wedge$ can also be constructed from the differential graded algebra $A^\bullet$. The Orlik-Solomon algebra is determined by the intersection poset of the hyperplane arrangement. Thus, the pronilpotent Lie algebra $\h^\wedge$ is also determined by the intersection poset. The Malcev completion $\pi_1(X,x_0)^{\un}$ is the unique prounipotent group whose Lie algebra is $\h^\wedge$. Thus, $\pi_1(X,x_0)^{\un}$ is determined by the intersection poset of the arrangement. For this reason, we say that $\pi_1(X,x_0)^{\un}$ is {\em combinatorially determined}.

It is natural to ask whether, in general, the isomorphism class of the relative Malcev completion $\S_\vrho$ is combinatorially determined. The first result in this direction is given in Theorem \ref{SDU}, which implies that if two distinct hyperplanes intersect, then $\S_\vrho \cong D_\vrho \times \pi_1(X,x_0)^{\un}$ for general $\vrho \in \T$. For such $\vrho$, the relative Malcev completion $\S_\vrho$ is combinatorially determined. This generalizes to any subtorus of the character torus $\T$ that contains the trivial character.

\begin{theorem}
If $Y$ is a subtorus of $\T$ that contains the trivial character, then the isomorphism class of the relative Malcev completion $\S_\vrho$ is combinatorially determined for general $\vrho \in Y$.
\end{theorem}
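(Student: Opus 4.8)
The plan is to reduce the statement to the combinatorial invariance of the two pieces of data that determine $\S_\vrho$, namely its reductive quotient $D_\vrho$ and its prounipotent radical $\U_\vrho$ together with the $D_\vrho$-action, and then to compute these through the algebraic model $\A_\a^\bullet$ of Theorem \ref{VTheorem}. Since $D_\vrho$ is a subgroup of $\G_m$ it is linearly reductive, so the extension $1 \to \U_\vrho \to \S_\vrho \to D_\vrho \to 1$ splits and any two splittings are conjugate; hence $\S_\vrho \cong \U_\vrho \rtimes D_\vrho$, and the isomorphism class of $\S_\vrho$ is determined by $D_\vrho$ and by the pronilpotent Lie algebra $\u_\vrho$ regarded as a $D_\vrho$-module with bracket. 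Write $Y = \exp(\mathbf{V})$ for the rational subspace $\mathbf{V} \subseteq M_{1 \times n}(\C)$ corresponding to $Y$ under the identification $\a \leftrightarrow \a\bomega^T \in H^1(X,\C)$ coming from the meridian basis of $H^1(X,\Z)$, and take $\a \in \mathbf{V}$ as the primary parameter. For general such $\a$ the character $\vrho = \exp(\a\bomega^T)$ has infinite order, so $D_\vrho = \G_m$, which is already combinatorial; it remains to treat $\u_\vrho$.

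First I would apply Theorem \ref{VTheorem} to $\mathbf{V}$: there is a countable family $\{\W_j\}$ of proper affine subspaces, none containing $0$, such that for every $\a \in \mathbf{V}-\bigcup_j\W_j$ with $\vrho = \exp(\a\bomega^T)$ Zariski dense the canonical map $\A_\a^\bullet \to E^\bullet(X,\O_\vrho)$ induces an isomorphism on cohomology, hence is a quasi-isomorphism of commutative differential graded algebras. Both algebras carry the grading by the exponent of $q$, which is exactly the weight grading for the $D_\vrho = \G_m$-action, and the comparison map respects it, so this quasi-isomorphism is $\G_m$-equivariant. Because the functor of rational homotopy theory producing the pronilpotent Lie algebra (bar construction, or the $1$-minimal model) sends equivariant quasi-isomorphisms to equivariant isomorphisms, I obtain a $D_\vrho$-equivariant isomorphism $\u_\vrho \cong \u_\a$, where $\u_\a$ is the Lie algebra attached to $\A_\a^\bullet = \bigoplus_{k \in \Z} A^\bullet q^{-k}$, whose underlying graded algebra and $\G_m$-action are those of the complexified Orlik--Solomon algebra $A^\bullet$ and whose differential is left multiplication by $-k\,\a\bomega^T$ on the $k$-th summand.

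The heart of the argument is to show that the isomorphism class of $\u_\a$ is constant as $\a$ ranges over the general points of $\mathbf{V}$. Here the underlying graded algebra $\bigoplus_k A^\bullet q^{-k}$ is fixed and only the differential $d_\a$ varies, linearly in $\a$. The dimensions $\dim H^i(\A_\a^\bullet)$ are upper semicontinuous in $\a$, hence minimal on a Zariski-dense open subset of $\mathbf{V}$; over that locus the finite-dimensional quotients $\u_\a/C^{m}\u_\a$ of the lower central series of $\u_\a$ have structure constants that are polynomial in $\a$ (with coefficients determined over $\Q$ by the matroid), so for each $m$ the isomorphism class of $\u_\a/C^{m}\u_\a$, as a $\G_m$-graded Lie algebra, is constant off a proper $\Q$-subvariety $Z_m \subseteq \mathbf{V}$. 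Intersecting the complements of the $Z_m$ with $\mathbf{V}-\bigcup_j\W_j$ and with the non-torsion locus yields a subset of $\mathbf{V}$ of ``general'' type on which every nilpotent quotient of $\u_\a$, and therefore $\u_\a$ itself as the inverse limit, has a single isomorphism type $\u^{\mathrm{gen}}$. The remark that $\u_\vrho$ is strongly controlled by $H_1(\u_\vrho)$ and $H_2(\u_\vrho)$ reduces the essential jumping to that of $H^1(X,\VV_\alpha)$ and $H^2(X,\VV_\alpha)$, which by Arapura's theorem occurs along characteristic subvarieties, i.e. translated subtori; this is what lets the exceptional set in $\mathbf{V}$ be transported to a countable union of proper translated subtori of $Y$, so that the conclusion holds for \emph{general} $\vrho \in Y$ in the sense of the footnote. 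Establishing this semicontinuity and constructibility in a family of differential graded algebras, controlling all the quotients simultaneously so that only countably many subvarieties are excluded and verifying that they descend to subtori of $Y$, is the step I expect to be the main obstacle.

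Finally I would read off combinatoriality. The generic type $\u^{\mathrm{gen}}$ depends only on the family $\{(\bigoplus_k A^\bullet q^{-k}, d_\a)\}_{\a \in \mathbf{V}}$, and this family is built solely from the Orlik--Solomon algebra $A^\bullet$ and the subspace $\mathbf{V}$. Now $A^\bullet$ is determined by the intersection poset, and the subtorus $Y$ --- equivalently the rational subspace $\mathbf{V}$ expressed in the meridian basis of $H^1(X,\Z)$ --- is combinatorial data. Hence $\u^{\mathrm{gen}}$ with its $\G_m$-action, and therefore $\S_\vrho \cong \U^{\mathrm{gen}} \rtimes \G_m$ with $\U^{\mathrm{gen}}$ the prounipotent group having Lie algebra $\u^{\mathrm{gen}}$, is combinatorially determined for general $\vrho \in Y$. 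When $Y$ is trivial this degenerates to the known fact that $\pi_1(X,x_0)^{\un}$, whose Lie algebra is $\h^\wedge$, is combinatorially determined.
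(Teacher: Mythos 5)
Your first half runs parallel to the paper's own argument: reduce to positive-dimensional $Y$, note that general $\vrho \in Y$ has Zariski dense image so $D_\vrho = \G_m$, invoke Theorem \ref{VTheorem} (Theorem \ref{OSATheorem}) to get a $\G_m$-equivariant quasi-isomorphism $\A_\a^\bullet \hookrightarrow E^\bullet(X,\O_\vrho)$, and conclude that the pronilpotent Lie algebra $\u_\vrho$ with its $\G_m$-action is computed by $\A_\a^\bullet$. The paper does the same reduction but finishes at this point by a different device: since $\O(\S_\vrho) = H^0B(\C,E^\bullet(X,\O_\vrho),\O(\G_m))$, the quasi-isomorphism invariance of the reduced bar construction (Corollary \ref{BarQuism}) gives $\S_\vrho \cong \Spec H^0B(\C,\A_\a^\bullet,\O(\G_m))$ outright, and the right-hand side is manifestly built from the Orlik--Solomon algebra $A^\bullet$, the subspace $\mathbf{V}$, and the coordinates of $\a$ in the meridian basis --- all combinatorial data. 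Your Levi-decomposition formulation ($\S_\vrho \cong \U_\vrho \rtimes \G_m$, with $\u_\vrho \cong \u_\a$ equivariantly) is an equivalent way to finish, and at the end of your second paragraph you have in hand everything needed to conclude exactly as the paper does.

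The genuine problem is your third paragraph, the self-described ``heart of the argument.'' The constancy of the isomorphism type $\u^{\mathrm{gen}}$ as $\a$ varies over general points of $\mathbf{V}$ is \emph{not needed} for the theorem: ``combinatorially determined'' here means that the isomorphism class of $\S_\vrho$ is computable from the intersection poset together with the coordinates of $\vrho$ (equivalently $\a$) in the meridian identification $\T \cong (\C^*)^n$, so that two arrangements with isomorphic posets yield isomorphic completions at corresponding general characters; this follows immediately from $\u_\vrho \cong \u_\a$, since $\A_\a^\bullet$ depends only on $A^\bullet$, $\mathbf{V}$, and $\a$. Worse, the constancy claim is not established and your sketch of it does not work: the fact that the structure constants of the nilpotent quotients $\u_\a/C^m\u_\a$ are polynomial in $\a$ does \emph{not} imply that their isomorphism class is constant off a proper subvariety $Z_m$ --- algebraic families of nilpotent Lie algebras can have continuously varying isomorphism type (nilpotent Lie algebras have moduli already in dimension seven), and constructibility of the isomorphism relation alone does not produce a single generic class. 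Nothing in the tools you cite (Theorem \ref{HainMatsuTheorem}, Arapura's theorem) fills this in; Arapura's theorem also does not perform the transport of exceptional loci from $\mathbf{V}$ to $Y$ that you assign to it --- the paper does that transport directly, by exponentiating the countably many $\W_j$ to proper translated subtori $\Omega_j$ of $Y$ and adjoining the torsion locus $\Lambda_r$. So as written your proposal hinges on an unproven (and possibly false, and in any case superfluous) statement; deleting the third paragraph and concluding from the equivariant isomorphism $\u_\vrho \cong \u_\a$, as the paper does via the bar construction, would repair it.
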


We do not know whether $\S_\vrho$ is always combinatorially determined. The isomorphism (\ref{Dp1}) and the surjection (\ref{Dp2}) of Lie algebra homologies suggest that the question of whether $\S_\vrho$ is combinatorially determined is related to the question of whether characteristic varieties are combinatorially determined.

\begin{theorem}
If the isomorphism class of the relative Malcev completion $\S_\vrho$ is combinatorially determined for all $\vrho \in \T$, then the characteristic variety $\V_m^1(X) = \{\vrho \in \T \, | \, \dim_\C H^1(X,\L_\vrho) \geq m\} = 0$ is combinatorially determined.
\end{theorem}

\subsection{Constancy Over Characteristic Varieties}
\label{IntroductionConstancy}

\noindent Let $Y$ be an irreducible subvariety of $\T^N$. In Section \ref{SectionY}, we examine how $\S_\vrho$ deforms as $\vrho \in Y$ varies. One natural and interesting choice for $Y$ is an irreducible component of the characteristic variety $\V_{N,m}^i(X)$.

There is an affine group scheme $\S_Y$ over $Y$ such that for $Y = \{\vrho\}$, $\S_Y$ is the Malcev completion of $\pi_1(X,x_0)$ relative to $\vrho$. Let $\O(Y)$ denote the coordinate ring of $Y$. There is a homomorphism
$$
\theta_Y \colon \pi_1(X,x_0) \longrightarrow \S_Y(\O(Y))
$$
into the group of $\O(Y)$-rational points of $\S_Y$. For each $\vrho \in Y$, there is a homomorphism $\S_\vrho \to \S_Y \otimes_{\O(Y)} \C_\vrho$ of affine group schemes over $\C$, where $\C_\vrho$ is the residue field associated to $\vrho$. This residue field is naturally a quotient of $\O(Y)$. The diagram
$$
\xymatrix{\pi_1(X,x_0) \ar[d]_{\theta_Y} \ar[r]^{\theta_\vrho} & \S_\vrho(\C) \ar[d] \\ \S_Y(\O(Y)) \ar[r] & \S_Y(\C_\vrho)}
$$
commutes.

\begin{theorem} If there exists $\pmb{\varrho} \in Y$ such that $D_{{\pmb{\varrho}}}$ contains $\im \vrho$ for all $\vrho \in Y$, then the homomorphism $$\S_\vrho \longrightarrow \S_Y \otimes_{\O(Y)} \C_\vrho$$ is an isomorphism for general $\vrho \in Y$.
\end{theorem}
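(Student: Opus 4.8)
The plan is to reduce the assertion to an isomorphism of pronilpotent Lie algebras and then to establish it by a flat base-change argument, using the homological control of $\u_\vrho$ furnished by (\ref{Dp1}) and (\ref{Dp2}) together with the semicontinuity that underlies the characteristic-variety stratification.

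First I would use the hypothesis on $\pmb{\varrho}$ to fix the base torus over a dense open subset. The condition $\im\vrho\subseteq D_{\pmb{\varrho}}$ for every $\vrho\in Y$ gives $D_\vrho\subseteq D_{\pmb{\varrho}}$ throughout $Y$. Since $Y$ is irreducible, $D_\vrho$ attains a generic value $D_{\text{gen}}$ on a dense open $U_0\subseteq Y$; as $D_{\text{gen}}$ is closed and $\im\vrho\subseteq D_{\text{gen}}$ for the dense set of $\vrho\in U_0$, the same containment holds for $\pmb{\varrho}\in\overline{U_0}=Y$, so $D_{\pmb{\varrho}}\subseteq D_{\text{gen}}$, and with the reverse inclusion we obtain $D_\vrho=D_{\pmb{\varrho}}$ on $U_0$. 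On $U_0$ the comparison map is a morphism of extensions whose map on base tori is the identity of $D_{\pmb{\varrho}}$; by the short five lemma for proalgebraic groups, and the equivalence between prounipotent groups and pronilpotent Lie algebras, it then suffices to prove that the induced map $\u_\vrho\to\u_Y\otimes_{\O(Y)}\C_\vrho$ is an isomorphism for general $\vrho\in U_0$.

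Next I would invoke the strong control of a pronilpotent Lie algebra by its first two homology groups: the map $\u_\vrho\to\u_Y\otimes_{\O(Y)}\C_\vrho$ is an isomorphism as soon as it is an isomorphism on $H_1$ and a surjection on $H_2$. By (\ref{Dp1}) and (\ref{Dp2}) these homology groups are assembled, $D_{\pmb{\varrho}}$-equivariantly, from the local-system cohomologies $H^i(X,\VV_\alpha)$ with $\alpha$ ranging over $D_\vrho^\vee$; on $U_0$ this index lattice is the fixed $D_{\pmb{\varrho}}^\vee$, so the family versions of (\ref{Dp1}) and (\ref{Dp2}) are built from the coherent $\O(Y)$-modules $\bigoplus_\alpha H^i(X,\VV_\alpha)$ entering the construction of $\S_Y$, and the comparison map on $H_i$ is the canonical base-change map of these modules.

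The main obstacle, and the technical heart of the argument, is the base-change step: I must check that forming these homology groups commutes with the specialization $\O(Y)\to\C_\vrho$, i.e. that $H_i(\u_Y)\otimes_{\O(Y)}\C_\vrho\cong H_i(\u_\vrho)$. This is where irreducibility of $Y$ is decisive. By generic flatness the coherent sheaves $\bigoplus_\alpha H^i(X,\VV_\alpha)$ are locally free on a dense open $U\subseteq U_0$; equivalently $\dim_\C H^i(X,\VV_\alpha)$ is constant on $U$, which is precisely the condition that $\vrho$ avoid the jumps of the characteristic varieties. On this flat locus the bar construction and its $1$-minimal model commute with base change, so the family versions of (\ref{Dp1}) and (\ref{Dp2}) specialize at $\vrho$ to the isomorphism (\ref{Dp1}) and the surjection (\ref{Dp2}) computing $\u_\vrho$. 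The comparison map is consequently an isomorphism on $H_1$ and a surjection on $H_2$ for every $\vrho\in U$, and the strong-control criterion finishes the proof.
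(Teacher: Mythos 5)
Your overall strategy (specialize, then control the answer by generic base change) has the right flavor, but the proposal assumes away exactly the step that constitutes the technical heart of the theorem. The fatal sentence is: ``On this flat locus the bar construction and its $1$-minimal model commute with base change.'' Flatness of the cohomology modules $H^i(X,\VV_{Y,\alpha})$ does not imply this. Even after one knows that $H^\bullet(X,\O_Y)\otimes_{\O(Y)}\C_\vrho\to H^\bullet(X,\O_\vrho)$ is an isomorphism for general $\vrho$ (this is Theorem \ref{HYpTheorem} in the paper), the object to be specialized is $H^0$ of the bar construction (equivalently, the Lie algebra and its homology), and its formation involves infinitely many further cohomology computations over $\O(Y)$; each one introduces new countably generated $\O(Y)$-modules (cycles, boundaries, the terms of each page) whose $\Tor$-vanishing against $\C_\vrho$ must be arranged afresh. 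No single flatness statement covers all of them: the paper's proof of Theorem \ref{MyBigTheorem} inducts over the pages of the Eilenberg--Moore spectral sequence, applying the generic base-change theorem for complexes of countably generated modules (Theorem \ref{GenericCohomDimension}) at every page, with the ``general'' locus shrinking by a countable intersection at each step. Note also that Theorem \ref{GenericCohomDimension} cannot be applied to the bar construction directly, since $E^\bullet(X,\O_Y)$ is not countably generated; this is why the argument must first pass to the $E_1$ page (Proposition \ref{EMSS1}). Moreover $\bigoplus_\alpha H^i(X,\VV_{Y,\alpha})$ is an infinite sum, hence not coherent, so generic flatness applies only summand by summand and yields a general locus rather than a dense open one; your one-step appeal to it cannot close the argument.

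There is a second, structural gap: the objects your reduction runs through do not exist in the paper's framework and are not routine to construct. The group scheme $\S_Y$ is defined only as $\Spec H^0B(\O(Y),E^\bullet(X,\O_Y),\O(D_Y))$; no prounipotent radical $\U_Y$ over $\O(Y)$, no pronilpotent Lie algebra $\u_Y$, no $\O(Y)$-family versions of (\ref{Dp1}) and (\ref{Dp2}), and no analogue of Lemma \ref{usefullemma} relative to a base ring are established anywhere. Indeed it is not even known a priori that the kernel of $\S_Y\otimes_{\O(Y)}\C_\vrho\to D_\vrho$ is prounipotent; that is part of what the theorem delivers, since it identifies this fibre with $\S_\vrho$. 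So the short-five-lemma reduction and the $H_1$/$H_2$ criterion presuppose a relative Malcev theory over $\O(Y)$ whose construction would itself require base-change statements of the same difficulty as the theorem. (Minor but symptomatic: the locus where $D_\vrho=G_Y$ is not a dense open $U_0$; torsion characters are dense in $Y\subseteq\T^N$, so that locus is only a countable intersection of complements of proper subvarieties, i.e., a general locus.)
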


\begin{remark}
If $Y$ is an irreducible subvariety of $\T$ that has positive dimension, then the general $\vrho \in Y$ has Zariski dense image in $\G_m$. Thus, if $N = 1$, then the hypotheses of the theorem are always satisfied.
\end{remark}

\subsection{Acknowledgements}

I would like to thank my advisor, Richard Hain, for his guidance and encouragement. He has helped to instill in me a love of mathematics and an appreciation for rigorous and creative accomplishment.

\section{Notation and Conventions}
\label{NotationAndConventions}

For the convenience of the reader, this section is an outline of the basic conventions that will be used.

All differential forms are assumed to be complex-valued. For a manifold $X$, $E^\bullet(X)$ denotes $E^\bullet(X,\C)$.

We multiply paths in their natural order. That is, if $\gamma, \beta \colon [0,1] \to X$ are paths in a topological space $X$ such that $\gamma(1) = \beta(0)$, then the path $\gamma\beta$ is given by $(\gamma\beta)(t) = \gamma(2t)$ for $0 \leq t \leq \frac{1}{2}$ and $(\gamma\beta)(t) = \beta(2t-1)$ for $\frac{1}{2} \leq t \leq 1$. If $(\X,\tilde{x}_0) \to (X,x_0)$ is a pointed universal covering of $X$, then $\pi_1(X,x_0)$ acts on the {\em left} of $\X$.

All schemes and varieties are assumed to be affine.

If $G$ is an affine group scheme over a commutative ring $R$, then all $G$-modules are assumed to be {\em right} $G$-modules.

If $n$ is an integer, elements of $\C^n$ are $1$ by $n$ vectors with entries in $\C$. If $\pmb{w} \in \C^n$, we let $\pmb{w}^T$ denote its transpose. Thus, if $\k \in \C^N$, $\pmb{w} \in \C^n$, and $\a$ is an $N$ by $n$ matrix with entries in $\C$, then $\k\a\pmb{w}^T$ is a complex number.

Suppose that $X$ is the complement of an arrangement of $n$ hyperplanes in a complex vector space $V$. For $j = 1,\dots,n$, choose a linear function $L_j$ on $V$ whose vanishing set is the $j$-th hyperplane. Set $\omega_j = (2\pi i)^{-1} d L_j /L_j$. This is a closed holomorphic $1$-form on $X$. Set $\bomega = (\omega_1,\dots,\omega_n)$, and let $\bomega^T$ denote the transpose of $\bomega$. If $\a$ is an $N$ by $n$ matrix with entries in $\C$, then $\a\bomega^T$ is a closed holomorphic $1$-form on $X$ with entries in $\C^N$. If $\k \in \Z^N$, then $\k\a\bomega^T$ is a closed holomorphic $1$-form on $X$ with entries in $\C$.

\section{Hyperplane Arrangements and Characteristic Varieties}
\label{SectionHyp}

This section is a review of some basic facts about the topology of complements of hyperplane arrangements.

\subsection{Hyperplane Arrangements}
\label{HypArr}
\label{OSAlg}

Let $\{K_1,\dots,K_n\}$ be an affine hyperplane arrangement in a complex vector space $V$ of dimension $\ell$. The complement $X = V - \bigcup_{j=1}^n K_j$ is affine and therefore has the homotopy type of a CW complex of dimension at most $\ell$ \cite[Theorem 7.2]{MilnorMorse}.

Suppose that $x_0 \in X$. Let $L_j$ be a defining equation for $K_j$, and set $\omega_j = (2\pi i)^{-1} d L_j/L_j$. This is a closed holomorphic $1$-form on $X$ whose periods are integers. Set $\bomega = (\omega_1,\dots,\omega_n)$, and let $\bomega^T$ denote the transpose of $\bomega$. If $\a$ is an $N$ by $n$ matrix with entries in an additive abelian group $A$, and if $\k \in \Z^N$, then $\k\a\bomega^T$ is a closed $1$-form on $X$ with values in $A$. The Zariski-Lefschetz theorem \cite{GorMac} implies that there are generators $\gamma_1,\dots,\gamma_n$ of $\pi_1(X,x_0)$ such that
$$
\int_{\gamma_j} \omega_k = \delta_{jk}.
$$
It follows from work of Brieskorn \cite{Brieskorn} that if $A$ is an abelian group under addition, then there is a natural isomorphism $A^n \overset{\simeq}{\longrightarrow} H^1(X,A)$ given by $\a \mapsto \a\bomega^T$. In particular, the character group $H^1(X,\C^*)$ is naturally isomorphic to the torus $(\C^*)^n$.

Denote by $A^\bullet$ the subalgebra of $E^\bullet(X)$ generated by the forms $\omega_1,\dots,\omega_n$. This algebra is due to Brieskorn and is known as the complexified Orlik-Solomon algebra. Brieskorn's theorem \cite{Brieskorn} implies that the homomorphism $A^\bullet \to H^\bullet(X,\C)$ is an isomorphism.

\subsection{Local Systems and Characteristic Varieties}
\label{ROLS}
\label{SectionESV}
\label{CharVar}

Define $\T$ to be the character torus $\T := H^1(X,\C^*)$. For each $\rho \in \T$, define $\L_\rho$ to be the rank one local system on $X$ with monodromy $\rho$. Choose $\a \in \C^n$ such that $\rho = \exp(\a\bomega^T)$. Then $\a$ determines a connection $\nabla_\a$ on the trivial line bundle $\C \times X \to X$ via the formula
$$
\nabla_\a\sigma = d\sigma - (\a\bomega^T)\sigma
$$
for $\sigma \in E^0(X,\C)$. It is flat because $d (\a\bomega^T) = (\a\bomega^T)\wedge(\a\bomega^T) = 0$. The monodromy representation of $\nabla_\a$ is $\vrho$. Consequently, there is an isomorphism $\L_\rho \cong (\C \times X, \nabla_\a)$ of flat line bundles on $X$.

If $\a \in \C^n$, then $\a\bomega^T$ is an element of $A^\bullet$. Thus, left multiplication by $-\a \bomega^T$ determines a differential on $A^\bullet$, which will be denoted $-\a\bomega^T$. The resulting complex $(A^\bullet,-\a\bomega^T)$ is then a subcomplex of $(E^\bullet(X),\nabla_\a)$. Let $\widehat{K}_j$ denote the closure of $K_j$ in $\P(V \oplus \C)$. Define $\widehat{K}_0 = \P(V)$ to be the hyperplane at infinity in $\P(V \oplus \C)$. Set $a_0 = -a_1 - \dots - a_n$. A subset $S$ of $\P(V \oplus \C)$ that is the intersection of a collection of the hyperplanes $\widehat{K}_j$ is said to be {\em dense} if the subarrangement consisting of all $\widehat{K}_j$ containing $S$ is not the product of two nonempty subarrangements. Given such a subset $S$ and a positive integer $M$, define a linear polynomial $\lambda_{S,M}$ on $\C^n$ by $\lambda_{S,M}(\a) = M - \sum_{S \subset \widehat{K}_j} a_j$.
\begin{theorem}[Esnault, Schechtman, and Viehweg, \cite{ESV}]
\label{ESVTheorem}
If $\a \in \C^n$ and $\lambda_{S,M}(\a) \neq 0$ for all dense $S$ and positive integers $M$, then the inclusion $$
(A^\bullet,-\a\bomega^T) \hookrightarrow (E^\bullet(X),\nabla_\a)$$ of complexes induces an isomorphism on cohomology. \qed
\end{theorem}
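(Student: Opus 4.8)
The plan is to prove Theorem~\ref{ESVTheorem} by passing to a smooth compactification on which the twisted cohomology of $\L_\rho$ is computed by a logarithmic de Rham complex, and then showing that under the stated non-resonance hypothesis the hypercohomology is carried entirely by the global logarithmic forms, which are exactly the Orlik--Solomon algebra. Concretely, I would form the projective closure of the arrangement in $\P(V \oplus \C)$, adjoin the hyperplane at infinity $\widehat{K}_0 = \P(V)$, and blow up the dense edges until the total boundary $D$ is a simple normal crossings divisor in a smooth projective variety $Y \supset X$. The form $\a\bomega^T$ then extends to a logarithmic $1$-form on $Y$, so that $\nabla_\a = \d - \a\bomega^T\wedge$ becomes a flat logarithmic connection. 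By Deligne's comparison theorem, $H^\bullet(X,\L_\rho)$ is the hypercohomology of the logarithmic de Rham complex $(\Omega_Y^\bullet(\log D)\otimes\mathcal{E}, \nabla_\a)$, where $\mathcal{E}$ is the canonical (Deligne) extension with residues normalized to lie in a fixed half-open interval.

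Brieskorn's theorem identifies the Orlik--Solomon algebra $A^\bullet$ with the algebra $H^0(Y,\Omega_Y^\bullet(\log D))$ of global logarithmic forms, so the inclusion $(A^\bullet,-\a\bomega^T)\hookrightarrow(E^\bullet(X),\nabla_\a)$ is, at the level of hypercohomology, the map induced by the inclusion of the global-sections row into the hypercohomology spectral sequence $E_1^{p,q} = H^q(Y,\Omega_Y^p(\log D)\otimes\mathcal{E})$. This inclusion will therefore be a quasi-isomorphism once one proves the vanishing $H^q(Y,\Omega_Y^p(\log D)\otimes\mathcal{E}) = 0$ for all $q > 0$: in that case the spectral sequence degenerates onto the row $q = 0$, whose complex is exactly $(A^\bullet,-\a\bomega^T)$.

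The heart of the matter is this vanishing theorem and the bookkeeping that ties it to the combinatorial condition. The residue of $\a\bomega^T$ along the strict transform of $\widehat{K}_j$ is $a_j$, along $\widehat{K}_0$ it is $a_0 = -a_1 - \dots - a_n$, and along the exceptional divisor over a dense edge $S$ it is $\sum_{S\subset\widehat{K}_j}a_j$. A residue (weight) filtration on the logarithmic complex reduces the vanishing to vanishing of cohomology of line bundles on the intersection strata of $D$, and each obstruction is governed by the residue eigenvalue along the relevant component being a positive integer. Since $\lambda_{S,M}(\a) = M - \sum_{S\subset\widehat{K}_j}a_j$ vanishes for some positive integer $M$ precisely when that residue is a positive integer, the hypothesis $\lambda_{S,M}(\a)\neq 0$ for all dense $S$ and all positive integers $M$ is exactly what removes these obstructions. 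The restriction to \emph{dense} $S$ is then justified by a K\"unneth argument: a non-dense edge arises from a product subarrangement, so its local contribution factors and vanishes once the condition holds at the dense edges refining it.

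The main obstacle I expect is the vanishing theorem itself, together with the geometry connecting it to the condition. One must check that blowing up all dense edges genuinely yields a normal crossings boundary, and compute the residue of the extended connection along \emph{every} component, including the exceptional divisors and the hyperplane at infinity; the sign in $a_0 = -a_1-\dots-a_n$ and the appearance of $M$ ranging over all positive integers both trace back to the half-open normalization of $\mathcal{E}$ and to the fact that twisting a logarithmic pole by an integral multiple of the boundary leaves the underlying local system unchanged. Establishing the line-bundle vanishing on each stratum---rather than only on $Y$---requires a genuine Bott-type positivity input, and organizing the induction over the strata of $D$ so that the dense-edge condition alone suffices is the most delicate part.
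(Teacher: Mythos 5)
The paper does not actually prove this theorem---it is stated with a citation to Esnault--Schechtman--Viehweg and a \qed---and your outline reconstructs essentially the argument of that cited source: Deligne's logarithmic comparison on a blow-up of the dense edges into a normal crossings boundary, residue bookkeeping identifying the non-resonance hypothesis $\lambda_{S,M}(\a)\neq 0$ with the avoidance of positive-integer residues along all boundary components (strict transforms, exceptional divisors, and the hyperplane at infinity), and a Bott-type vanishing plus degeneration argument that collapses the hypercohomology onto the global logarithmic forms, i.e.\ the Orlik--Solomon (Aomoto) complex. Your approach is therefore correct and coincides with the proof the paper implicitly relies on.
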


For each $i \geq 0$, the character torus has a filtration $$\T = \V_0^i(X)\supset\V_1^i(X) \supset \dots$$ by {\em characteristic} (sub)varieties of $\T$, where $$\V_m^i(X) := \{\rho \in \T\colon \dim H^i(X,\L_\rho) \geq m\}.$$ There is a similar stratification of $\T^N$ defined by $$\V_{N,m}^i(X) := \{(\rho_1,\dots,\rho_N) \in \T^N \colon \dim H^i(X,\L_{\rho_1\cdots\rho_N}) \geq m\}.$$ In \cite{Arapura}, Arapura proved the following theorem, which relies on and is closely related to work of Green and Lazarsfeld \cite{GreenLazarsfeld} and Simpson \cite{Simpson}.

\begin{theorem}
\label{ArapuraTheorem}
If $X$ is a smooth quasi-projective variety, then the characteristic variety $\V_{N,m}^i(X)$ is the union of translates of subtori of $\T^N$ by torsion characters.
\end{theorem}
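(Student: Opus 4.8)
The plan is to reduce immediately to the case $N=1$ and then to establish the single-variable statement by transporting the jumping loci from the Betti picture of characters to the Dolbeault picture of line-bundle-with-Higgs data, where their structure becomes visible. First observe that the multiplication homomorphism $\mu \colon \T^N \to \T$, $(\rho_1,\dots,\rho_N) \mapsto \rho_1\cdots\rho_N$, is a surjective map of algebraic tori with connected kernel, and that by definition $\V_{N,m}^i(X) = \mu^{-1}(\V_m^i(X))$. Since a surjective homomorphism of tori with connected kernel is surjective on torsion, the preimage of a torsion translate $t\,T_0$ of a subtorus $T_0 \subseteq \T$ is again a torsion translate $s\,\mu^{-1}(T_0)$ of the subtorus $\mu^{-1}(T_0) \subseteq \T^N$ (choose $s$ torsion with $\mu(s)=t$, and note $\mu^{-1}(T_0)$ is an extension of the torus $T_0$ by the torus $\ker\mu$, hence connected). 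Thus it suffices to prove that each $\V_m^i(X) \subseteq \T$ is a finite union of torsion translates of subtori.

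For the single-variable statement I would replace $X$ by a good compactification $(\bar X, D)$ with $D$ a normal crossings divisor and pass, following Simpson's nonabelian Hodge theory for quasi-projective varieties, to the three models of $\T = H^1(X,\C^*)$: the Betti model of characters $\rho$, the de Rham model of flat logarithmic line bundles with connection, and the Dolbeault model built from $\operatorname{Pic}^0(\bar X)$ together with the residue data along $D$. The groups $H^i(X,\L_\rho)$ are upper semicontinuous, so each $\V_m^i(X)$ is a Zariski closed subset of $\T$; the content of the theorem is entirely structural. The key geometric input, due to Green--Lazarsfeld in the compact case and extended by Arapura, is that on the Dolbeault side the derivative (cup-product) complex controls the tangent cones to the jumping loci, and that every positive-dimensional component arises by pullback along an admissible map $f \colon X \to C$ onto an orbifold curve of negative Euler characteristic, by a generalized Castelnuovo--de Franchis theorem. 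Such a pullback realizes a component of $\V_m^i(X)$ as a translate $f^{*}\T_C$ of a genuine subtorus, the translating character being a product of the local monodromies of $\L_\rho$ around the components of $D$ and the orbifold points of $C$.

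The main obstacle, and the deepest part, is showing that every such translation can be taken by a \emph{torsion} character. For the positive-dimensional components this follows from the structure just described: the local monodromies around the boundary divisors and orbifold points are roots of unity, because the orbifold multiplicities are finite and the monodromy at infinity is quasi-unipotent, so the translating character is torsion. For the isolated, rigid components I would invoke Simpson's theorem that a rigid semisimple local system on a smooth quasi-projective variety underlies a $\Z$-variation of Hodge structure and is of geometric origin with quasi-unipotent monodromy; for a rank one system this forces the monodromy, and hence $\rho$ itself, to have finite order. Equivalently, one uses that the loci are defined over $\bar\Q$ and are stable under both the Galois action and the $\C^*$-action scaling the Higgs field, and that a Galois- and scaling-stable finite set of $\bar\Q$-characters must consist of torsion points.

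Finally I would assemble the pieces: by Noetherianity $\V_m^i(X)$ has finitely many irreducible components, each of which is, by the above, a torsion translate of a subtorus, and the reduction of the first paragraph then upgrades this to the stated result for $\V_{N,m}^i(X)$. I expect the genuinely hard step to be the torsion claim for the isolated components, since it rests on the full arithmetic strength of Simpson's $\Z$-variation-of-Hodge-structure theorem, whereas the ``translate of a subtorus'' skeleton follows from the more formal Hodge-theoretic and deformation-theoretic machinery.
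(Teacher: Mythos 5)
The paper never proves this statement: Theorem \ref{ArapuraTheorem} is quoted from Arapura \cite{Arapura}, with Green--Lazarsfeld \cite{GreenLazarsfeld} and Simpson \cite{Simpson} named as the underlying inputs, so your sketch can only be measured against the literature it invokes, not against an internal argument. Your reduction to $N=1$ is correct and complete: by the paper's definitions $\V_{N,m}^i(X)=\mu^{-1}(\V_m^i(X))$ for the multiplication map $\mu\colon\T^N\to\T$; the kernel of $\mu$ is a subtorus, so the preimage $\mu^{-1}(T_0)$ of any subtorus $T_0\subseteq\T$ is a connected diagonalizable group, hence a subtorus; and a torsion translate pulls back to a torsion translate (lift a torsion character $t$ to $(t,1,\dots,1)$). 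This is exactly the step, left implicit in the paper, by which the $N$-variable statement follows from the one-variable results in the cited sources.

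The one-variable part of your sketch has two genuine gaps. First, the structure theorem you rely on --- every positive-dimensional component arises by pullback along an admissible map to an orbifold curve of negative Euler characteristic --- is a theorem about degree-one cohomology only. It says nothing about the components of $\V_m^i(X)$ for $i\ge 2$, and no Castelnuovo--de Franchis-type argument extracts pencils from higher-degree jump loci; this is precisely why Arapura's results in higher degree are proved under the extra hypothesis $H^1(\bar{X},\O)=0$ on the compactification (true for arrangement complements, false for general smooth quasi-projective $X$), and even then they give unitary rather than manifestly torsion translates. The statement in the generality asserted here (all $i$, all smooth quasi-projective $X$, torsion translates) was in fact only settled much later, by Budur and Wang, with different methods; as written, your argument covers at most $i=1$. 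Second, your torsion argument for the isolated components invokes Simpson's rigidity/VHS theorem, but a rank-one local system is not rigid in the relevant sense and that theorem does not apply; the mechanism that actually works (for projective $X$) is Simpson's Gelfond--Schneider-type transcendence argument comparing the $\bar{\Q}$-structures of the Betti and Dolbeault realizations --- essentially your ``equivalently'' sentence --- and carrying that argument across the boundary divisor (Deligne extensions, residues, orbifold data) is substantive work rather than a formal corollary. Relatedly, in the positive-dimensional case the generic member of a component satisfies no quasi-unipotency constraint at infinity; what is torsion is the translating character, and the reason is the finiteness of the multiple-fiber/orbifold data of the pencil, not quasi-unipotent monodromy of the local systems themselves. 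In short, you identified the right ingredients (the same three sources the paper cites) and correctly flagged the torsion claim as the crux, but as deployed your tools do not prove it.
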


It follows from the results in Yuzvinsky's paper \cite{Yuz1} that if $X$ is the complement of an arrangement of hyperplanes in a complex vector space and two distinct hyperplanes intersect, then $\V_{N,1}^1(X)$ is a proper subvariety of $\T^N$.

\section{Affine Group Schemes}
\label{AGS}
\label{AffineGroupSchemes}

\noindent Several of the basic objects in this paper are affine group schemes, either because their coordinate rings are not finitely generated or because they are defined over the ring of functions on a subvariety of a characteristic variety.

Let $R$ be a commutative ring with identity. Throughout this paper, we assume that all algebras are commutative and have a multiplicative identity. The category of {\em affine schemes} over $R$ is by definition the opposite category of the category of commutative $R$-algebras:
$$
\{\textrm{affine schemes over $R$}\} \, = \, \{\textrm{commutative $R$-algebras}\}^{\op}.
$$
If $A$ is an $R$-algebra, the corresponding affine scheme is denoted $\Spec A$. If $X$ is an affine scheme over $R$, the corresponding $R$-algebra is denoted $\O(X)$. If $X$ and $Y$ are affine schemes over $R$, morphisms $X \to Y$ are $R$-algebra homomorphisms $\O(Y) \to \O(X)$. Each affine scheme $X$ over $R$ gives rise to a set-valued functor
$$
X \colon {\mathsf{Alg}}_R \longrightarrow {\mathsf{Sets}}
$$
that takes the $R$-algebra $A$ to its set
$$
X(A) = \Hom_R(\O(X),A)\, ,
$$
of $A$-rational points.

An {\em affine group scheme} over $R$ is a group object in the category of schemes over $R$. The category of affine schemes over $R$ is opposite to the category of Hopf algebras over $R$. That is, if $G$ is an affine group scheme over $R$, then $\O(G)$ is a Hopf algebra over $R$. Conversely, if $A$ is a Hopf algebra over $R$, then $\Spec A$ is an affine group scheme over $R$.

If $G$ is an affine group scheme over $R$ and $A$ is any $R$-algebra, then the set $G(A)$ of $A$-rational points of $G$ is a group.

\begin{example}
The ring $R[q_j^{\pm 1}] = R[q_1^{\pm 1},\dots,q_N^{\pm 1}]$ of Laurent polynomials is a Hopf algebra over $R$. The comultiplication is given by $\Delta (q_j) = q_j \otimes q_j$, the antipode by $\lambda (q_j) = q_j^{-1}$, and the counit by $\epsilon (q_j) = 1$. Let $\G_{m/R}^N = \Spec R[q_j^{\pm 1}]$ denote the corresponding affine group scheme. For each $R$-algebra $A$, there is a canonical isomorphism $\G_{m/R}^N(A) \cong (A^\times)^N$ of groups. In particular, when $R = \C$, the group $\G_{m/\C}^N(\C)$ is $(\C^*)^N$. If $R$ is a field, we denote the affine group scheme $\G_{m/R}^N$ by $\G_m^N$. If $R$ is the coordinate ring of an affine variety $Y$, we write $\G_{m/Y}^N$ for this group scheme.
\end{example}

\begin{example}
\label{ExampleMu}
Let $r$ be a positive integer, and let $F$ be a field. Consider the Hopf algebra $F[q]/(q^r-1)$, which has comultiplication $\Delta \bar{q}^j = \bar{q}^j \otimes \bar{q}^j$, antipode $\lambda(\bar{q}^j) = \bar{q}^{-j}$, and counit $\epsilon(\bar{q}^j) = 1$. The affine algebraic group scheme $\bmu_r = \Spec F[q^{\pm 1}]/(q^r-1)$ sends each $F$-algebra to its group of $r$-th roots of unity.
\end{example}

Let $G$ be an affine group scheme over $R$. If $A$ is an $R$-algebra, then $\O(G) \otimes_R A$ is a Hopf algebra over $A$. We define
$$
G \otimes_R A = \Spec (\O(G) \otimes_R A).
$$
This is an affine group scheme over $A$. Equivalently, the functor $G \otimes_R A$ from $A$-algebras to groups is the restriction of the functor $G$ to $A$-algebras \cite[Section 1.6]{Waterhouse}.

If $\O(G)$ is a finitely generated $R$-algebra, then $G$ is called {\em algebraic}. If $G$ is the limit of an inverse system of affine algebraic group schemes, then $G$ is called {\em proalgebraic}.

Suppose that $A$ is an $R$-algebra and that $K$ is a subset of $G(A)$. We define the {\em Zariski closure} of $K$ in $G$ to be the intersection of all affine subschemes $G_\alpha$ of $G$ over $R$ such that $K \subset G_\alpha(A)$. This is a group subscheme of $G$.

\subsection{Right Modules and Representations}
\label{GroupSchemeReps}

An $R$-module $M$ is a (right) {\em $G$-module} if it is equipped with an $R$-module map $\phi \colon M \to \O(G) \otimes_R M$ such that $(I \otimes \phi) \circ \phi = (\Delta \otimes I) \circ \phi$ and $(\epsilon \otimes I) \circ \phi = I$. If $R$ is a field, then $M$ is said to be a (right) {\em representation} of $G$. In what follows, all modules and representations are assumed to be right modules and representations, respectively. If $M$ is a $G$-module, there is a (right) action of $G(A)$ on $A \otimes_R M$.

\begin{example}
The algebra $\O(G)$ has the structure of a (right) $G$-module, with structure map $\O(G) \to \O(G) \otimes_R \O(G)$ given by the coproduct. Consider the affine group scheme $\G_m$ over a field $F$, with $\O(\G_m) = R[q^{\pm 1}]$. The coproduct in $F[q^{\pm 1}]$ sends $q$ to $q \otimes q$. An element $\zeta \in F^\times$ of the $F$-rational points of $\G_m$ acts on this algebra via $(h\cdot \zeta)(q) = h(\zeta q)$ for $h \in F[q^{\pm 1}]$. It acts on $\Span_F q^j$ by multiplication by $\zeta^j$.
\end{example}

Over a field, an affine algebraic group scheme $G$ is {\em reductive} if for each representation $V$ of $G$, every subrepresentation of $V$ has a $G$-invariant complement.

\begin{example}
Suppose that $N$ is a positive integer and $\k \in \Z^N$. The ring $R[q_j^{\pm 1}] = R[q_1^{\pm 1},\dots,q_N^{\pm 1}]$ of Laurent polynomials is a Hopf algebra. The comultiplication sends each $q_j$ to $q_j \otimes q_j$. Its spectrum is the affine algebraic group scheme $\G_{m/R}^N$. The ring $R$ is itself a free $R$-module of rank one. There is a homomorphism $R \to R[q_j^{\pm 1}] \otimes_R R$ of $R$-modules given by $1 \longmapsto q_1^{k_1}\cdots q_N^{k_N} \otimes 1$. Thus, we may view $R$ as a $\G_{m/R}^N$-module. The action of an element $(r_1,\dots,r_N)$ of the $R$-rational points $(R^\times)^N$ of this group scheme is given by multiplication by $r_1^{k_1}\cdots r_N^{k_N}$. When $R$ is a field, we call this module the $\k$-th standard representation of $\G_m^N$. The group scheme $\G_m^N$ is reductive, and each irreducible representation is isomorphic to the $\k$-th standard representation for some $\k$.
\end{example}

Suppose that $F$ is a field and that $G$ is an affine group scheme over $F$. If $V$ is a representation of $G$, we say that a vector $v \in V$ is {\em fixed} by $G$ if the structure map $V \to \O(G) \otimes_F V$ sends $v$ to $1 \otimes v$. An affine algebraic group scheme over a field is {\em unipotent} if every nonzero representation has a nonzero fixed vector.
Over a field of characteristic zero, there is a bijection between unipotent group schemes and finite dimensional nilpotent Lie algebras. In Section \ref{RelCompBasics}, we will discuss inverse limits of unipotent group schemes. Thus, we define an affine group scheme over a field to be {\em prounipotent} if it is the limit of an inverse system of unipotent group schemes.

\subsection{The Dual Group and Coordinate Ring}
\label{TheDualGroup}

If $F$ is a field and $G$ is an affine group scheme over $F$, a {\em character} of $G$ is a homomorphism $\alpha \colon G \to \G_m$ of group schemes. The set $G^\vee$ of characters of $G$ forms a group. Given characters $\alpha,\beta \colon G \to \G_m$, let the product $\alpha\beta$ be the character given by composition $G \overset{(\alpha,\beta)}{\longrightarrow} \G_m \times \G_m \to \G_m$, where the map $\G_m \times \G_m \to \G_m$ is multiplication. The group $G^\vee$ is known as the {\em dual group} of $G$. The elements of $G^\vee$ correspond to the one-dimensional representations of $G$.

Let $F$ be a field, and let $G$ be an affine group scheme over $F$. The coproduct $\O(G) \to \O(G) \otimes_F \O(G)$ gives $\O(G)$ the structure of a (right) representation of $G$. Each character $\alpha \colon G \to \G_m$ of $G$ corresponds to a Hopf algebra homomorphism $\alpha^* \colon \C[q^{\pm 1}] \to \O(G)$. There is an injective group homomorphism
$$
G^\vee \hookrightarrow \O(G)^\times
$$
defined by $\alpha \mapsto \alpha^*(q)$. Thus, we may view $G^\vee$ as a subset of $\O(G)$.

\subsection{Diagonalizable Group Schemes}
\label{DiagGroups}

An affine algebraic group scheme over a field $F$ is {\em diagonalizable} if it is isomorphic to a group subscheme of $\G_m^N$ for some positive integer $N$. Over an algebraically closed field, every diagonalizable group scheme is reductive. It is well known that if $D$ is a diagonalizable group scheme over a field, then there is an isomorphism
$$D \overset{\cong}{\longrightarrow} \G_m^s \times \bmu_{r_1} \times \dots \times \bmu_{r_t}$$
of affine algebraic group schemes, where $\bmu_{r_j}$ is the group scheme of $r_j$-th roots of unity, the $r_j$ are integers greater than 1 such that $r_j | r_{j+1}$, and $s$ is a nonnegative integer \cite[Section 2.2]{Waterhouse}.

The irreducible rational representations of a diagonalizable group $D$ are all one-dimensional. Thus, they are in bijective correspondence with the dual group $D^\vee$. If $F$ is an algebraically closed field and $D$ is a group subscheme of $\G_m^N$, then the induced map $[\G_m^N]^\vee \to D^\vee$ is surjective \cite[Page 111]{Borel}. Consequently, every irreducible representation of $D$ extends to an irreducible representation of $\G_m^N$.

\section{Relative Malcev Completion}
\label{RelCompBasics}

\noindent The relative Malcev completion of a discrete group $\pi$ with respect to a reductive representation over a field $F$ is a proalgebraic group scheme that generalizes the Malcev (or unipotent) completion of $\pi$. Here, we restrict to the field $F = \C$. The prounipotent radical of the relative completion is determined by its pronilpotent Lie algebra. We describe the homology of this Lie algebra and give a commutative differential graded algebra that determines this Lie algebra via rational homotopy theory. Finally, we show that if $\pi$ is the fundamental group of the complement of an affine hyperplane arrangement and $\T$ is the character torus of $\pi$, then the relative completion is generally constant over $\T^N$.

\subsection{Relative Malcev Completion}
\label{RelCompDef}

\noindent The concept of relative Malcev completion is due to Deligne. It and generalizations of it have been extensively developed by Hain (\cite{HainCompletions}, \cite{HainDeRham}, \cite{HainTorelli}) and Hain and Matsumoto \cite{HainMatsu}. The data for the relative Malcev completion over $\C$ are
\begin{itemize}
\item a discrete group $\pi$;
\item an algebraic group scheme $G$ over $\C$;
\item a Zariski dense homomorphism $\rho \colon \pi \to G(\C)$.
\end{itemize}

The {\em Malcev completion of $\pi$ relative to $\rho$} is an extension $1 \to \U \to \mathcal{G} \to G \to 1$ in the category of proalgebraic group schemes, where $\U$ is prounipotent. It is equipped with a homomorphism $\theta_\rho \colon \pi \to \mathcal{G}(\C)$ lifting $\rho$, and it is characterized by the following universal property. If $E$ is an extension of $G$ by a prounipotent group scheme and $\theta \colon \pi \to E(\C)$ is a homomorphism lifting $\rho$, then there is a unique homomorphism $\mathcal{G} \to E$ such that the diagrams
$$
\xymatrix{ & \mathcal{G} \ar[dl] \ar[d] \\ E \ar[r] & G} \hspace{1 in}
\xymatrix{\pi \ar[r]^{\theta_\rho} \ar[d]_{\theta} & \mathcal{G}(\C) \ar[dl] \\ E(\C) &}
$$
commute.  If $\theta$ is Zariski dense, then the homomorphism $\mathcal{G} \to E$ is surjective. When $G$ is reductive, $\U$ is called the {\em prounipotent radical} of $\mathcal{G}$.

\vspace{.1 in}

In what follows, we will sometimes suppress the word ``Malcev" and refer to $\mathcal{G}$ as the completion of $\pi$ relative to $\rho$ or simply as the relative completion. To see that the relative completion exists, consider all extensions
\begin{equation}
\label{exts}
1 \to U \to E \to G \to 1
\end{equation}
of $G$ by a unipotent group scheme $U$ that are equipped with a Zariski dense homomorphism $\theta \colon \pi \to E(\C)$ that lifts $\rho$:
$$
\xymatrix{\pi \ar[d]_{\theta} \ar[dr]^{\rho} \\ E(\C) \ar[r] & G(\C).}
$$

Given two extensions $E_1$ and $E_2$ of $G$ by unipotent group schemes with lifts $\theta_1 \colon \pi \to E_1(\C)$ and $\theta_2 \colon \pi \to E_2(\C)$ of $\rho$, a {\em morphism} from the first to the second is a homomorphism $E_1 \to E_2$ such that the diagrams
$$
\xymatrix{ & E_1 \ar[dl] \ar[d] \\ E_2 \ar[r] & G} \hspace{1 in}
\xymatrix{\pi \ar[r]^{\theta_1} \ar[d]_{\theta_2} & E_1(\C)\ar[dl] \\ E_2(\C) &}
$$
commute. One can define a partial order on these extensions. Given two such extensions $E_1$ and $E_2$, we say that $E_1 \succeq E_2$ if there is a surjective morphism $E_1 \to E_2$. A proof of the following proposition can be found in \cite{HainCompletions}.
\begin{proposition}
The set of extensions (\ref{exts}) forms an inverse system, and the completion of $\pi$ relative to $\rho$ is the inverse limit
$$
\mathcal{G} = \underleftarrow{\lim} \hspace{.05 in} E
$$
taken over all such extensions. \qed
\end{proposition}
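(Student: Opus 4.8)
The plan is to establish the two assertions in turn: first that the partial order $\succeq$ makes the collection of extensions (\ref{exts}) into a directed (inverse) system, and second that the inverse limit of this system satisfies the universal property that characterizes the relative completion. For directedness, given two extensions $E_1$ and $E_2$ with Zariski dense lifts $\theta_1,\theta_2$, I would form the fibre product $E_1 \times_G E_2$, which is an extension of $G$ by $U_1 \times U_2$; this kernel is unipotent because a product of unipotent groups is unipotent. The combined map $(\theta_1,\theta_2) \colon \pi \to (E_1 \times_G E_2)(\C)$ need not have Zariski dense image, so I let $E_3$ be the Zariski closure of that image. Since $\theta_1,\theta_2$ both lift $\rho$ and $\rho$ is Zariski dense in $G$, the projection $E_3 \to G$ has dense—hence, being the image of a homomorphism of algebraic groups, closed—image, so it is surjective; its kernel $E_3 \cap (U_1\times U_2)$ is a closed subgroup of a unipotent group and hence unipotent. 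Thus $E_3$ is again an extension of the required type, equipped with the Zariski dense lift $\theta_3 = (\theta_1,\theta_2)$. The two projections $E_3 \to E_i$ are morphisms of extensions, and each is surjective because its image is closed and contains the dense set $\theta_i(\pi)$. Hence $E_3 \succeq E_1$ and $E_3 \succeq E_2$, proving directedness.

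Next I would set $\mathcal{G} = \varprojlim E$ over this directed system. The extension $G$ itself (with trivial unipotent kernel and $\theta = \rho$) is the minimal object of the system, so every transition map $E \to G$ is surjective and the induced inclusions $\O(G) \hookrightarrow \O(E)$ realize $\O(G)$ as a Hopf subalgebra of $\O(\mathcal{G}) = \varinjlim \O(E)$; hence $\mathcal{G} \to G$ is surjective. Its kernel is $\varprojlim \ker(E \to G)$, an inverse limit of unipotent groups and therefore prounipotent, so $\mathcal{G}$ is an extension of $G$ by a prounipotent group. The compatible lifts $\theta_E$ assemble into a homomorphism $\theta_\rho \colon \pi \to \mathcal{G}(\C)$ lifting $\rho$, and $\theta_\rho$ is Zariski dense because its image projects to a dense subset of each $E$.

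To check the universal property, let $E$ be an extension of $G$ by a prounipotent group together with a lift $\theta \colon \pi \to E(\C)$ of $\rho$. Replacing $E$ by the Zariski closure of $\theta(\pi)$—which by the argument of the first paragraph is again an extension of $G$ by a prounipotent group, now with $\theta$ Zariski dense—I may assume $\theta$ is dense. Since the kernel is prounipotent, $E$ is the inverse limit of its quotients $E_\lambda$ that are extensions of $G$ by a finite-dimensional unipotent group; each $E_\lambda$ inherits a Zariski dense lift and so lies in the system, yielding a canonical projection $\mathcal{G} \to E_\lambda$. Passing to the limit over $\lambda$ gives a morphism $\mathcal{G} \to \varprojlim E_\lambda = E$ compatible with $\theta_\rho$ and $\theta$. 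For uniqueness, the equalizer of any two such morphisms is a closed subscheme of $\mathcal{G}$ containing the Zariski dense image $\theta_\rho(\pi)$, hence equals $\mathcal{G}$. This is precisely the universal property, so the limit is the relative completion.

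The step I expect to be the main obstacle is the directedness argument: one must confirm that the Zariski closure $E_3$ of the image in the fibre product is genuinely an extension of $G$ by a unipotent group carrying a Zariski dense lift, and that its projections to $E_1$ and $E_2$ are surjective morphisms of extensions. This hinges on two facts about algebraic groups over $\C$—that homomorphic images are Zariski closed and that closed subgroups of unipotent groups are again unipotent. Once these are secured, the identification of the limit as an extension of $G$ by a prounipotent group and the verification of the universal property (including the reduction of prounipotent extensions to finite-dimensional unipotent ones) are essentially formal.
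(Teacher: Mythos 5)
The paper does not prove this proposition itself---it defers to Hain \cite{HainCompletions}---and your argument is precisely the standard one given there: directedness via the Zariski closure of the image of $\pi$ in the fibre product $E_1 \times_G E_2$, followed by a formal verification of the universal property using Zariski density. Your proof is correct; the two facts you isolate (images of homomorphisms of affine algebraic groups over $\C$ are closed subgroups, and closed subgroups of unipotent groups are unipotent) are exactly the needed inputs, and the remaining steps (surjectivity of $\mathcal{G} \to G$ via injectivity on coordinate rings, uniqueness via the equalizer argument, and writing a prounipotent extension as an inverse limit of unipotent ones) are the routine parts of the cited argument.
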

This is a proalgebraic group scheme, and the Zariski dense homomorphisms $\theta \colon \pi \to E(\C)$ induce a Zariski dense homomorphism $\theta_\rho \colon \pi \to \mathcal{G}(\C)$ that lifts $\rho$. The prounipotent group scheme $\mathcal{U}$ is given by
$$
\U = \underleftarrow{\lim} \hspace{.05 in} U
$$
taken over all extensions (\ref{exts}). If $G$ is reductive, then $\mathcal{U}$ is called the {\em prounipotent radical} of $\mathcal{G}$.

If the homomorphism $\rho \colon \pi \to G(\C)$ is not Zariski dense, we can define the completion of $\pi$ relative to $\rho$ as follows. Let $\overline{\im\rho}$ denote the Zariski closure of the image of $\rho$ in $G$. This is the smallest algebraic group subscheme of $G$ whose group of $\C$-rational points contains the image of $\rho$. The induced homomorphism $\pi \overset{\rho}{\longrightarrow} \overline{\im\rho}(\C)$ is Zariski dense, and we define the {\em completion of $\pi$ relative to $\rho$} to be the completion of $\pi$ with respect to this map. This is an extension of $\overline{\im\rho}$ by a prounipotent group scheme.

\begin{example}
Let $X$ be the complement of a hyperplane arrangement in a complex vector space $V$, and let $$\vrho \colon \pi_1(X,x_0) \to (\C^*)^N$$ be a homomorphism. Let $D_\vrho$ denote the Zariski closure of the image of $\vrho$ in $\G_m^N$. This is a group subscheme of $\G_m^N$. Let $\S_\vrho$ denote the completion of $\pi_1(X,x_0)$ relative to $\vrho$, and let $\U_\vrho$ denote its prounipotent radical. There is a short exact sequence
$$
1 \longrightarrow \U_\vrho \longrightarrow \S_\vrho \longrightarrow D_\vrho \longrightarrow 1
$$
of proalgebraic group schemes. In the following sections, we develop tools that will help us to understand the completion $\S_\vrho$.
\end{example}

\subsection{Unipotent Completion}

The completion of $\pi$ relative to the trivial representation $\pi \to \{1\}$ is the standard Malcev (i.e. unipotent) completion $\pi^{\un}$ of $\pi$. It is the prounipotent group scheme that is the inverse limit of all unipotent group schemes $U$ over $\C$ for which there is a Zariski dense homomorphism $\pi \to U(\C)$. The homomorphisms $\pi \to U(\C)$ induce a Zariski-dense homomorphism $\pi \to \pi^{\un}(\C)$. This agrees with other standard definitions \cite[Section 3]{HainCompletions}.

If $X$ is the complement of an arrangement of hyperplanes in a complex vector space and $\pi = \pi_1(X,x_0)$, then $\pi^{\un}$ is the is the unique prounipotent group scheme whose Lie algebra is the completion $\h^\wedge$ of Kohno's \cite{Kohno} holonomy Lie algebra
$$
\h = \frac{\LL(H_1(X,\C))}{\langle\im\partial\rangle},
$$
where $\partial \colon H_2(X,\C) \to \bigwedge^2 H_1(X,\C)$ is the dual of the cup product, and $\langle\im\partial\rangle$ is the ideal generated by the image of $\partial$.

\subsection{Properties of Relative Malcev Completion}
\label{RMCProperties}

Some of the basic properties presented here can be found in \cite{HainCompletions} and \cite{HainDeRham}. Suppose that $\rho \colon \pi \to G(\C)$ is Zariski dense, where $G$ is an algebraic group scheme over $\C$. Let $H$ be an algebraic group scheme with a surjection $G \to H$. Suppose that $E$ is an extension of $H$ by a prounipotent group scheme and that $\theta \colon \pi \to E(\C)$ is a homomorphism such that the diagram
$$
\xymatrix{\pi \ar[r]^\rho \ar[d]_{\theta} & G(\C)\ar[d] \\ E(\C) \ar[r] & H(\C)}
$$
commutes. The following proposition can easily be proved using the universal property of the relative completion $\mathcal{G}$.
\begin{proposition}
\label{UnivPropGen}
There is a unique homomorphism $\mathcal{G} \to E$ such that the diagrams
$$
\xymatrix{ \mathcal{G} \ar[d] \ar[r]& G \ar[d] \\ E \ar[r] & H} \hspace{1 in}
\xymatrix{\pi \ar[r]^{\theta_\rho} \ar[dr]_\theta & \mathcal{G}(\C) \ar[d] \\ & E(\C)}
$$
commute. The homomorphism $\mathcal{G} \to E$ is surjective if $\theta$ is Zariski dense. \qed
\end{proposition}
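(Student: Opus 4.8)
The plan is to reduce the statement to the universal property of $\mathcal{G}$ already recorded for extensions of $G$ (rather than of $H$), by pulling $E$ back along the surjection $G \to H$. Concretely, I would first form the fiber product $\widetilde{E} := E \times_H G$ in the category of affine group schemes over $\C$ (the spectrum of $\O(E) \otimes_{\O(H)} \O(G)$). The projection $\widetilde{E} \to G$ is the base change of $E \to H$ along $G \to H$, so its kernel is $\ker(E \to H)$, which is prounipotent by hypothesis; hence $\widetilde{E}$ is an extension of $G$ by a prounipotent group scheme. Since taking $\C$-points preserves fiber products, $\widetilde{E}(\C) = E(\C) \times_{H(\C)} G(\C)$, and the commuting square in the hypothesis says precisely that $\theta$ and $\rho$ agree after projection to $H(\C)$. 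Therefore the pair $(\theta, \rho)$ defines a homomorphism $\widetilde{\theta} \colon \pi \to \widetilde{E}(\C)$ that lifts $\rho$.

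Next I would apply the universal property of the relative completion $\mathcal{G}$ to the extension $\widetilde{E}$ of $G$ together with the lift $\widetilde{\theta}$. This yields a unique homomorphism $\phi \colon \mathcal{G} \to \widetilde{E}$ compatible with the projections to $G$ and carrying $\theta_\rho$ to $\widetilde{\theta}$. Composing with the other projection $p_E \colon \widetilde{E} \to E$ produces the desired map $\mathcal{G} \to E$. Both required diagrams then commute by diagram chasing: the square commutes because $\mathcal{G} \to \widetilde{E} \to E \to H$ and $\mathcal{G} \to \widetilde{E} \to G \to H$ coincide by the defining property of the fiber product and agree with $\mathcal{G} \to G \to H$; the triangle commutes because the $E$-component of $\widetilde{\theta}$ is $\theta$. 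For uniqueness, suppose $\psi \colon \mathcal{G} \to E$ is any homomorphism making the two diagrams commute. Pairing $\psi$ with the structure map $\mathcal{G} \to G$ gives a homomorphism $\mathcal{G} \to \widetilde{E}$, well defined because the square forces $\mathcal{G} \to E \to H$ to equal $\mathcal{G} \to G \to H$; this homomorphism is compatible with the projection to $G$ and sends $\theta_\rho$ to $\widetilde{\theta}$, since its $G$-component is $\rho$ and its $E$-component is $\theta$. By the uniqueness clause of the universal property applied to $\widetilde{E}$, it must equal $\phi$, and hence $\psi = p_E \circ \phi$ is the map constructed above.

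Finally, for surjectivity when $\theta$ is Zariski dense: the image of $\theta_\rho \colon \pi \to \mathcal{G}(\C)$ is Zariski dense in $\mathcal{G}$, and the commuting triangle identifies its image in $E(\C)$ with $\theta(\pi)$, which is Zariski dense in $E$. The step I expect to be the real obstacle is that one cannot deduce surjectivity from the surjectivity clause of the universal property by asserting that $\widetilde{\theta}$ is Zariski dense in $\widetilde{E}$; in general it is not, because a pair of Zariski-dense homomorphisms need not have dense image in a fiber product. Instead I would argue directly that a homomorphism of affine group schemes over a field has closed image, working level by level through the defining inverse systems and invoking the fact that the image of a homomorphism of algebraic groups is closed. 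Since the image of $\mathcal{G} \to E$ is then a closed subgroup scheme containing the Zariski-dense subset $\theta(\pi)$, it must be all of $E$, which gives the surjectivity.
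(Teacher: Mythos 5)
Your proof is correct and follows the route the paper itself indicates: the paper gives no written argument, remarking only that the proposition ``can easily be proved using the universal property of the relative completion,'' and your construction of the fiber product $\widetilde{E} = E \times_H G$ (an extension of $G$ by the prounipotent kernel of $E \to H$, to which the recorded universal property applies) is precisely the standard way to carry that reduction out. Your extra care on the surjectivity clause is also well placed --- since $\widetilde{\theta}$ need not be Zariski dense in $\widetilde{E}$, one cannot quote the surjectivity part of the universal property directly, and your substitute argument (the image of a homomorphism of affine group schemes over a field is a closed subgroup scheme, and it contains the Zariski-dense set $\theta(\pi)$) is the correct fix.
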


\begin{corollary}
If $\pi^{\un}$ is the Malcev completion of $\pi$, then there is a unique surjection $\mathcal{G} \to \pi^{\un}$ of group schemes such that the diagrams
$$
\xymatrix{ & \mathcal{G} \ar[dl] \ar[d] \\ \pi^{\un} \ar[r] & G} \hspace{1 in}
\xymatrix{\pi \ar[r]^{\theta_\rho} \ar[d] & \mathcal{G}(\C)\ar[dl] \\ \pi^{\un}(\C)}
$$
commute. \qed
\end{corollary}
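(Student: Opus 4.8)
The plan is to realize the desired surjection as the special case of Proposition \ref{UnivPropGen} in which the quotient group $H$ is taken to be trivial. First I would recall that, by definition, the Malcev completion $\pi^{\un}$ is the completion of $\pi$ relative to the trivial representation $\pi \to \{1\}$, and that the canonical homomorphism $\theta \colon \pi \to \pi^{\un}(\C)$ is Zariski dense. Since $\pi^{\un}$ is prounipotent, it is tautologically an extension of the trivial group scheme $\{1\}$ by the prounipotent group scheme $\pi^{\un}$ itself.

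With this in hand, I would apply Proposition \ref{UnivPropGen} to the data consisting of the given Zariski dense $\rho \colon \pi \to G(\C)$, the target $H = \{1\}$ together with the unique surjection $G \to \{1\}$, and the extension $E = \pi^{\un}$ equipped with $\theta$. The compatibility square required by that proposition, having $\rho$ along the top, $\theta$ down the left, and the two maps $G(\C) \to \{1\}$ and $\pi^{\un}(\C) \to \{1\}$ into its trivial lower right corner, commutes automatically. The proposition then produces a \emph{unique} homomorphism $\mathcal{G} \to \pi^{\un}$ for which $\theta_\rho$ followed by the induced map $\mathcal{G}(\C) \to \pi^{\un}(\C)$ equals $\theta$; this is exactly the second commuting diagram of the corollary, and uniqueness is part of the proposition's conclusion. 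The first diagram, recording the compatibility of $\mathcal{G} \to \pi^{\un}$ with the structure maps to the trivial quotient, follows in the same way from the left-hand diagram of Proposition \ref{UnivPropGen} with its target degenerated to $\{1\}$.

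For surjectivity I would invoke the final clause of Proposition \ref{UnivPropGen}: since $\theta \colon \pi \to \pi^{\un}(\C)$ is Zariski dense, the induced homomorphism $\mathcal{G} \to \pi^{\un}$ is surjective.

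There is no genuine obstacle in this argument; it is pure bookkeeping with the universal property. The only two points deserving care are (i) the observation that $\pi^{\un}$ is itself a relative completion, namely the one attached to the trivial representation, so that the hypotheses of Proposition \ref{UnivPropGen} are satisfied with $H$ trivial; and (ii) the Zariski density of the canonical map $\pi \to \pi^{\un}(\C)$, which is precisely what promotes the mere existence of $\mathcal{G} \to \pi^{\un}$ to the asserted surjectivity.
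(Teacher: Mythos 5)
Your proposal is correct and matches the paper's intended argument: the corollary is stated as an immediate consequence of Proposition \ref{UnivPropGen}, obtained exactly as you describe by taking $H = \{1\}$ and $E = \pi^{\un}$ (the completion relative to the trivial representation), with surjectivity coming from the Zariski density of $\pi \to \pi^{\un}(\C)$. Nothing further is needed.
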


This corollary holds even if $\rho \colon \pi \to G$ is not Zariski dense, because the Zariski closure $\overline{\im\rho}$ always surjects onto the trivial group scheme.

\begin{example}
Suppose that $X$ is the complement of a hyperplane arrangement in a complex vector space $V$ and that
$$
\vrho \colon \pi_1(X,x_0) \to (\C^*)^N
$$
is a homomorphism. Let $\S_\vrho$ denote the completion of $\pi_1(X,x_0)$ relative to $\vrho$. The corollary gives a surjection $\S_\vrho(\C) \to \pi_1(X,x_0)^{\un}$. When $\vrho$ is the trivial representation, this is an isomorphism. Thus, $\S_\vrho(\C)$ is a kind of ``deformation" of $\pi_1(X,x_0)^{\un}$ over $\T^N$ in a sense that we will make more precise in Section \ref{SectionY}.
\end{example}

When computing the Malcev completion relative to a representation $\vrho \colon \pi \to G(\C)$, the group scheme $G$ can be replaced by its maximal reductive quotient $R$. The composition $\pi \overset{\rho}{\longrightarrow} G \longrightarrow R$ is Zariski dense. Let $\mathcal{R}$ denote the completion of $\pi$ relative to this composition. Then Proposition \ref{UnivPropGen} gives a surjection $\mathcal{G} \to \mathcal{R}$ such that the diagrams
$$
\xymatrix{ & \mathcal{G} \ar[dl] \ar[d] \\ \mathcal{R} \ar[r] & G} \hspace{1 in}
\xymatrix{\pi \ar[r]^{\theta_\rho} \ar[d]_{\theta} & \mathcal{G}(\C)\ar[dl] \\ \mathcal{R}(\C)}
$$
commute. The proof of the following proposition is left as an exercise.

\begin{proposition}
The surjection $\mathcal{G} \to \mathcal{R}$ is an isomorphism of proalgebraic group schemes. \qed
\end{proposition}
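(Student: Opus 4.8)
The plan is to show that $\mathcal{G}$ itself satisfies the universal property that characterizes $\mathcal{R}$, so that the surjection $\mathcal{G}\to\mathcal{R}$ furnished above must be the canonical comparison isomorphism. Write $p\colon G\to R$ for the projection onto the maximal reductive quotient and let $N=\ker p$ be the unipotent radical of $G$, a unipotent group scheme. First I would observe that $\mathcal{G}$ is already an extension of $R$ by a prounipotent group: the composite $\mathcal{G}\to G\xrightarrow{p} R$ is surjective, and its kernel is an extension of $N$ by the prounipotent kernel $\U=\ker(\mathcal{G}\to G)$. Since an extension of a unipotent group scheme by a prounipotent one is again prounipotent (write $\U$ as an inverse limit of unipotent quotients and take the induced extensions of $N$, each of which is unipotent), this kernel is prounipotent. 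Moreover $\theta_\rho\colon\pi\to\mathcal{G}(\C)$ is Zariski dense and lifts $p\circ\rho$, so $(\mathcal{G},\theta_\rho)$ is a legitimate object of the category whose initial object is $\mathcal{R}$.

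Next I would verify the universal property. Let $E$ be any extension of $R$ by a prounipotent group $\V$, equipped with a homomorphism $\theta\colon\pi\to E(\C)$ lifting $p\circ\rho$. Form the fibre product $\widetilde{E}=E\times_R G$; this is an extension of $G$ by $\V$, and since $\theta$ and $\rho$ agree after projection to $R$, the pair $(\theta,\rho)$ defines a homomorphism $\pi\to\widetilde{E}(\C)$ lifting $\rho$. The universal property of the relative completion $\mathcal{G}$ then yields a unique homomorphism $\mathcal{G}\to\widetilde{E}$ over $G$ compatible with the lifts; composing with the projection $\widetilde{E}\to E$ produces a homomorphism $\mathcal{G}\to E$ over $R$ with $(\mathcal{G}\to E)\circ\theta_\rho=\theta$. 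For uniqueness, any homomorphism $f\colon\mathcal{G}\to E$ over $R$ with $f\circ\theta_\rho=\theta$ pairs with the structural map $\mathcal{G}\to G$ to give a map $\mathcal{G}\to\widetilde{E}$ over $G$ compatible with the lifts, which the universal property of $\mathcal{G}$ forces to coincide with the one just constructed; projecting back to $E$ shows $f$ equals the map we built. Thus $(\mathcal{G},\theta_\rho)$ satisfies exactly the universal property defining $\mathcal{R}$.

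Finally, two objects satisfying the same universal property are canonically isomorphic. The surjection $\phi\colon\mathcal{G}\to\mathcal{R}$ produced from Proposition \ref{UnivPropGen} is a homomorphism over $R$ satisfying $\phi\circ\theta_\rho=\theta_{\mathcal{R}}$, where $\theta_{\mathcal{R}}$ is the canonical lift of $p\circ\rho$ into $\mathcal{R}$; by the uniqueness just established it is the canonical comparison map and hence an isomorphism. Equivalently, the universal property of $\mathcal{R}$ applied to the extension $(\mathcal{G},\theta_\rho)$ of $R$ gives a homomorphism $\psi\colon\mathcal{R}\to\mathcal{G}$, and then $\phi\circ\psi$ and $\psi\circ\phi$ are endomorphisms over $R$ fixing $\theta_{\mathcal{R}}$ and $\theta_\rho$ respectively, hence both are identities by uniqueness.

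I expect the main obstacle to be the pullback step: one must check that the fibre product $E\times_R G$ is genuinely an extension of $G$ by the prounipotent group $\V$ carrying a well-defined compatible lift, and that the uniqueness clause of the universal property of $\mathcal{G}$ transfers correctly through the projection $\widetilde{E}\to E$. The preliminary fact that an extension of a unipotent by a prounipotent group is prounipotent is routine but must be recorded, since it is precisely what permits viewing $\mathcal{G}$ as an object of the category defining $\mathcal{R}$ in the first place.
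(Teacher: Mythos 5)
The paper itself gives no argument for this proposition---it is explicitly left as an exercise---so your proof fills a gap rather than paralleling an existing one; judged on its own terms, it is correct and is the natural argument. The architecture is right: (i) the composite $\mathcal{G}\to G\xrightarrow{p} R$ exhibits $\mathcal{G}$ as an extension of $R$ by a prounipotent kernel; (ii) any extension $E$ of $R$ by a prounipotent $\V$ with a lift $\theta$ of $p\circ\rho$ pulls back along $p$ to the extension $E\times_R G$ of $G$ by $\V$ carrying the lift $(\theta,\rho)$, so the universal property of $\mathcal{G}$ produces a map $\mathcal{G}\to E$ over $R$, and the uniqueness clause transfers through the fibre product exactly as you say; (iii) the closing two-sided-inverse argument---$\psi\circ\phi$ and $\phi\circ\psi$ are endomorphisms over $R$ compatible with $\theta_\rho$ and $\theta_{\mathcal{R}}$ respectively, hence both identities---is the standard way to finish and is carried out correctly, using that $\phi$ is a map over $R$ by Proposition \ref{UnivPropGen}.

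The one step that needs repair is your parenthetical justification of the preliminary lemma that the kernel $K$ of $\mathcal{G}\to R$, which sits in an extension $1\to\U\to K\to N\to 1$ of the unipotent radical $N$ by the prounipotent $\U$, is itself prounipotent. Writing $\U=\varprojlim\,\U_i$ and ``taking the induced extensions of $N$'' does not quite make sense as stated: the kernels $\ker(\U\to\U_i)$ are normal in $\U$ but need not be normal in $K$, so they do not induce quotients of $K$. The lemma is nonetheless true, and the clean proof uses the fixed-vector characterization of unipotency that the paper adopts in Section \ref{GroupSchemeReps}: if $V$ is a nonzero finite-dimensional representation of $K$, then $V^{\U}\neq 0$ (the restriction to $\U$ factors through some unipotent quotient $\U_i$, since $\O(\U)$ is the filtered colimit of the $\O(\U_i)$); the subspace $V^{\U}$ is $K$-stable because $\U$ is normal in $K$, hence is a representation of $K/\U\cong N$; and $N$ unipotent gives $V^K=(V^{\U})^N\neq 0$. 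Consequently every algebraic quotient of $K$ is unipotent, and $K$, being the inverse limit of its algebraic quotients, is prounipotent. With this substitution your proof is complete.
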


This proposition allows us to replace $G$ with its maximal reductive quotient when studying the relative completion. Therefore, assume that $R$ is a reductive algebraic group scheme over $\C$ and that $\rho \colon \pi \to R(\C)$ is a Zariski dense homomorphism. Let $\mathcal{G}$ denote the completion of $\pi$ relative to $\rho$, and let $\mathcal{U}$ denote its prounipotent radical. Then there is an extension
$$
1 \longrightarrow \U \longrightarrow \mathcal{G} \longrightarrow R \longrightarrow 1
$$
in the category of proalgebraic group schemes. The following proposition generalizes the Levi decomposition for algebraic groups \cite[Page 158]{Borel}.
\begin{proposition}[Hain, \cite{HainCompletions}]
The sequence $1 \to \U \to \mathcal{G} \to R \to 1$ splits. \qed
\end{proposition}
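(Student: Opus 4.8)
The plan is to realize the splitting of the proalgebraic extension $1 \to \U \to \mathcal{G} \to R \to 1$ as the limit of a compatible system of splittings of the finite-dimensional extensions $1 \to U \to E \to R \to 1$ out of which $\mathcal{G} = \varprojlim E$ is built. First I would note that in each such $E$ the unipotent kernel $U$ is exactly the unipotent radical of $E$, since the quotient $E/U \cong R$ is reductive and hence has trivial unipotent radical. Because the ground field $\C$ has characteristic zero, the Levi decomposition for linear algebraic groups \cite[Page 158]{Borel} applies to each $E$: there is a Levi subgroup $L_E \subseteq E$, i.e.\ a reductive subgroup mapping isomorphically onto $R$, and any two Levi subgroups of $E$ are conjugate by an element of $U(\C)$. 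In particular every finite-level extension splits.

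Next I would organize the splittings into an inverse system and check that its transition maps are surjective. Write $\operatorname{Sect}(E)$ for the set of group-scheme sections $R \to E$ of $E \to R$; by the previous paragraph each $\operatorname{Sect}(E)$ is nonempty. Given a surjection $E_1 \to E_2$ over $R$ in the defining inverse system, its kernel is a subgroup of the unipotent group $U_1$ and hence is unipotent, and composition with $E_1 \to E_2$ induces a restriction map $\operatorname{Sect}(E_1) \to \operatorname{Sect}(E_2)$. To see this map is surjective, fix $s_2 \in \operatorname{Sect}(E_2)$ and let $P$ be the preimage in $E_1$ of $s_2(R) \cong R$. Then $P$ is an extension of $R$ by the unipotent group $\ker(E_1 \to E_2)$, so by the Levi decomposition again $P$ splits; a section $R \to P \hookrightarrow E_1$ is then a lift of $s_2$.

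It remains to produce a coherent family $(s_E) \in \varprojlim \operatorname{Sect}(E)$, which assembles into the desired homomorphism $R \to \mathcal{G}$ splitting $\mathcal{G} \to R$. Here I would exploit the reductivity of $R$ through complete reducibility: over a field of characteristic zero every rational $R$-module $V$ is semisimple, so $H^i(R,V) = 0$ for all $i \geq 1$. Filtering each $U$ by its lower central series, whose successive quotients are rational $R$-modules, the obstruction to splitting at each stage lies in some $H^2(R,V)$ and the ambiguity among splittings in some $H^1(R,V)$, both of which vanish. Consequently the fibres of the transition maps $\operatorname{Sect}(E_1) \to \operatorname{Sect}(E_2)$ are torsors under (pro)unipotent groups, in particular nonempty affine spaces, and the inverse system satisfies the Mittag--Leffler condition; hence $\varprojlim \operatorname{Sect}(E)$ is nonempty. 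Equivalently, the Levi decomposition passes to the proalgebraic limit, and a compatible section $(s_E)$ splits the proalgebraic extension, as in \cite{HainCompletions}.

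The main obstacle is that Levi subgroups are not unique but only unique up to $U$-conjugacy, so splittings chosen independently at each level need not respect the transition maps, and a priori $\varprojlim \operatorname{Sect}(E)$ of nonempty sets could still be empty. Controlling this conjugacy ambiguity coherently across the entire (possibly uncountable) system is the crux, and it is precisely the vanishing of $H^1(R,V)$ for rational $R$-modules $V$ --- that is, the reductivity of $R$ --- that rigidifies the ambiguity enough to guarantee surjective transition maps and a coherent choice in the limit. Were $R$ to have a nontrivial unipotent part, $H^1(R,V)$ could be nonzero and both the argument and the conclusion would break down.
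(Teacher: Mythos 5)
The paper offers no proof of this proposition at all---it is quoted from Hain \cite{HainCompletions} with only a citation---so your argument must stand on its own, and it has a genuine gap at exactly the point you yourself flag as the crux. Your first two steps are sound: each finite-level extension $1 \to U \to E \to R \to 1$ splits by the Levi--Mostow theorem in characteristic zero, and the restriction maps $\operatorname{Sect}(E_1)\to\operatorname{Sect}(E_2)$ are surjective by your preimage argument. The failure is in the assembly. Surjectivity of all transition maps (which is all the Mittag--Leffler condition amounts to here) does \emph{not} imply that an inverse limit of nonempty sets over an arbitrary directed index set is nonempty; that implication is valid only when the index set has a countable cofinal subset, and for genuinely uncountable directed systems there are classical counterexamples (Higman--Stone, Waterhouse's ``An empty inverse limit''): all sets nonempty, all maps surjective, limit empty. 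The system of all extensions $E$ defining $\mathcal{G}$ is not obviously countably cofinal, and you give no argument that it is. Your proposed rescue---that the vanishing of $H^1(R,V)$ ``rigidifies the ambiguity enough to guarantee \dots a coherent choice in the limit''---is a restatement of the conclusion, not a proof: what $H^1(R,V)=0$ actually gives is that all splittings of an abelian-kernel extension are conjugate, and it is precisely this conjugacy ambiguity, spread over uncountably many indices, that can make such a limit empty.

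The standard repair is your own lower-central-series idea, but applied so as to replace the index set by a countable one. Since every finite-dimensional quotient of $\U$ is nilpotent, one has $\U=\varprojlim_n \U/\U^{(n)}$ along the lower central series, and since each $\U^{(n)}$ is a characteristic subgroup of the normal subgroup $\U$, also $\mathcal{G}=\varprojlim_n \mathcal{G}/\U^{(n)}$: a tower indexed by $\mathbb{N}$. The kernel of $\mathcal{G}/\U^{(n+1)}\to\mathcal{G}/\U^{(n)}$ is $\U^{(n)}/\U^{(n+1)}$, which is abelian and centralized by $\U$ (because $[\U,\U^{(n)}]\subset\U^{(n+1)}$), hence is a rational (pro-)module over $R$; pulling the extension back along an already-constructed section $s_n\colon R\to\mathcal{G}/\U^{(n)}$ yields an extension of $R$ by this module, which splits because $R$ is linearly reductive in characteristic zero (vanishing of $H^2$). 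Induction along $\mathbb{N}$ produces a \emph{compatible} tower of sections $(s_n)$, constructed rather than chosen independently, whose limit is the desired homomorphism $R\to\mathcal{G}$; no limit pathology can arise in a countable tower built this way. Alternatively, one could try to show that in the situations of this paper $\O(\mathcal{G})$ has countable dimension over $\C$, so that the defining system admits a countable cofinal subsystem and your original argument closes---but that, too, is something that must be proved, not assumed.
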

This proposition implies that there is an isomorphism $\mathcal{G} \cong R \ltimes \U$ of proalgebraic group schemes. The relative completion $\mathcal{G}$ is therefore determined by its prounipotent radical $\U$ and the action of $R$ on $\U$.

Over an algebraically closed field of characteristic zero, the exponential and logarithm maps determine an equivalence of categories between prounipotent algebraic group schemes and pronilpotent Lie algebras. Thus, there is a unique pronilpotent Lie algebra $\u$ such that
$$
\U = \exp \u.
$$
The conjugation action of $R$ on $\U$ gives $\u$ the structure of a right representation of $R$.

\subsection{The Completion Relative to a Diagonal Representation}
\label{CompRelDiag}

\noindent Let $X$ be a smooth manifold, and set $\pi = \pi_1(X,x_0)$. Suppose that $\vrho \colon \pi \to (\C^*)^N$ is a representation. Let $D_\vrho$ denote the Zariski closure of the image of $\vrho$ in $\G_m^N$. Then $D_\vrho$ is a reductive algebraic group scheme. Let $\S_\vrho$ denote the completion of $\pi$ relative to $\vrho$, and let $\U_\vrho$ denote its prounipotent radical. Note that $\S_\vrho$ is prosolvable, as it is an extension
$$
1 \to \U_\vrho \to \S_\vrho \to D_\vrho \to 1,
$$
and $D_\vrho$ is diagonalizable. The irreducible representations of $D_\vrho$ correspond to elements of the dual group $D_\vrho^\vee$. Each $\alpha \in D_\vrho^\vee$ determines a one-dimensional irreducible representation $V_\alpha$ of $D_\vrho$ and a rank-one local system $\VV_\alpha$ on $X$ whose monodromy is given by the character $\alpha\circ\vrho$ of $\pi$.

Let $\u_\vrho$ denote the pronilpotent Lie algebra of $\U_\vrho$. Then $\u_\vrho$ is a representation of $D_\vrho$. Hain \cite[Proof of Proposition 7.1]{HainTorelli} shows that the $D_\vrho$-module structure on $\u_\vrho$ induces an $D_\vrho$-module structure on $H_\bullet(\u_\vrho)$, the Lie algebra homology of $\u_\vrho$. Recall that $H_1(\u_\vrho)$ is defined to the abelianization of $\u_\vrho$. A more general version of the following theorem is due to Hain and Matsumoto \cite[Theorems 4.8 and 4.9]{HainMatsu}.

\begin{theorem}
\label{HainMatsuTheorem}
There is a $D_\vrho$-equivariant isomorphism
$$
\prod_{\alpha \in D_\vrho^\vee} H^1(X,\VV_{\alpha})^*\otimes V_{\alpha} \hspace{.05 in} \cong \hspace{.05 in} H_1(\u_\vrho)
$$
and a $D_\vrho$-equivariant surjection
$$
\prod_{\alpha \in D_\vrho^\vee} H^2(X,\VV_{\alpha})^* \otimes V_{\alpha} \longrightarrow H_2(\u_\vrho).
$$ \qed
\end{theorem}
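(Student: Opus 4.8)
The plan is to deduce both statements from a single comparison between the rational cohomology of the group scheme $\S_\vrho$ and the group cohomology of $\pi = \pi_1(X,x_0)$, reading off the homology of $\u_\vrho$ isotypic component by isotypic component. Because $D_\vrho$ is diagonalizable, hence reductive, the Levi splitting $\S_\vrho \cong D_\vrho \ltimes \U_\vrho$ recorded above lets me route everything through the prounipotent radical. For each $\alpha \in D_\vrho^\vee$, pull the one-dimensional representation $V_\alpha$ back to an $\S_\vrho$-module along $\S_\vrho \to D_\vrho$, so that $\U_\vrho$ acts trivially. The Hochschild--Serre spectral sequence for $1 \to \U_\vrho \to \S_\vrho \to D_\vrho \to 1$ has $E_2^{p,q} = H^p(D_\vrho, H^q(\u_\vrho, V_\alpha))$; reductivity of $D_\vrho$ annihilates all rows with $p > 0$, so the sequence collapses to $H^n(\S_\vrho, V_\alpha) \cong H^n(\u_\vrho, V_\alpha)^{D_\vrho}$, the right-hand side being continuous Lie algebra cohomology. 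Since $\U_\vrho$ acts trivially on $V_\alpha$, universal coefficients for Lie algebra homology gives $H^n(\u_\vrho, V_\alpha) \cong H_n(\u_\vrho)^* \otimes V_\alpha$, whence $H^n(\S_\vrho, V_\alpha) \cong \Hom_{D_\vrho}(H_n(\u_\vrho), V_\alpha)$.

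The second ingredient is the comparison map $H^n(\S_\vrho, V_\alpha) \to H^n(\pi, V_\alpha)$ induced by the Zariski dense homomorphism $\theta_\vrho \colon \pi \to \S_\vrho(\C)$, which I would show is an isomorphism for $n \le 1$ and injective for $n = 2$. Degree $0$ is immediate from Zariski density, since $V_\alpha^{\S_\vrho} = V_\alpha^{D_\vrho} = V_\alpha^{\im\vrho} = V_\alpha^{\pi}$. Degree $1$ is where the universal property does the work: a class in $H^1(\pi, V_\alpha)$ is a $1$-cocycle, which is exactly the data of a lift $\pi \to (D_\vrho \ltimes V_\alpha)(\C)$ of $\vrho$ into the semidirect product with the abelian vector group $V_\alpha$; by the universal property this lift factors uniquely through $\S_\vrho$, producing a cocycle on $\S_\vrho$ restricting to the given one, and Zariski density forces injectivity as well. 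Degree $2$ injectivity follows by the same mechanism applied to extensions: a class in $H^2(\S_\vrho, V_\alpha)$ whose pullback to $\pi$ vanishes corresponds to an extension that splits over $\pi$, and that splitting factors through $\S_\vrho$ by the universal property, forcing the class to vanish.

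Combining the two ingredients gives $\Hom_{D_\vrho}(H_1(\u_\vrho), V_\alpha) \cong H^1(\pi, V_\alpha)$ and an injection $\Hom_{D_\vrho}(H_2(\u_\vrho), V_\alpha) \hookrightarrow H^2(\pi, V_\alpha)$. Now $H^1(\pi, V_\alpha) = H^1(X, \VV_\alpha)$, and via the classifying map $X \to K(\pi,1)$ there is a natural injection $H^2(\pi, V_\alpha) \hookrightarrow H^2(X, \VV_\alpha)$, so both can be rewritten in terms of the local systems. Since $D_\vrho$ is diagonalizable, each $V_\alpha$ is one-dimensional and $\Hom_{D_\vrho}(M, V_\alpha)$ is the dual of the multiplicity of $V_\alpha$ in $M$; matching isotypic components (all spaces are finite dimensional because $X$ has the homotopy type of a finite complex) and taking the product over $\alpha \in D_\vrho^\vee$ yields the asserted $D_\vrho$-equivariant isomorphism $\prod_\alpha H^1(X,\VV_\alpha)^* \otimes V_\alpha \cong H_1(\u_\vrho)$. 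Dualizing the degree-$2$ injection on multiplicity spaces produces the surjection $\prod_\alpha H^2(X,\VV_\alpha)^* \otimes V_\alpha \twoheadrightarrow H_2(\u_\vrho)$.

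The main obstacle is the comparison theorem of the second paragraph—specifically surjectivity in degree $1$ and injectivity in degree $2$—since these are exactly the points where the universal property of the relative completion must be invoked with care, translating cohomology classes into lifts and splittings of $V_\alpha$-by-$D_\vrho$ extensions. Everything else, namely the collapse of Hochschild--Serre, universal coefficients, and the bookkeeping of isotypic components, is formal once that comparison is established.
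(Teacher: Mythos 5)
There is no in-paper proof to compare against: the paper states this theorem with a terminal QED and a pointer to Hain--Matsumoto \cite[Theorems 4.8 and 4.9]{HainMatsu} (see also the proof of Proposition 7.1 of \cite{HainTorelli}). What you have written is, in substance, a correct reconstruction of the argument in those cited references. Your skeleton is the standard one: collapse the Hochschild--Serre spectral sequence of $1 \to \U_\vrho \to \S_\vrho \to D_\vrho \to 1$ using linear reductivity of the diagonalizable group $D_\vrho$, identify the rational cohomology of $\U_\vrho$ with continuous Lie algebra cohomology of $\u_\vrho$ to get $H^n(\S_\vrho,V_\alpha) \cong \Hom_{D_\vrho}(H_n(\u_\vrho),V_\alpha)$, and then prove the comparison with group cohomology of $\pi$ (isomorphism in degrees $\leq 1$, injection in degree $2$) via the universal property. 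Your three uses of the universal property are exactly the right mechanisms: the lift into $D_\vrho \ltimes V_\alpha$ for degree-one surjectivity, Zariski density of $\theta_\vrho(\pi)$ for degree-one injectivity (a rational cocycle agreeing with a coboundary on a dense subgroup agrees with it everywhere), and, for degree-two injectivity, the splitting argument in which uniqueness forces the composite $\S_\vrho \to E \to \S_\vrho$ to be the identity. The topological inputs $H^1(\pi,V_\alpha) \cong H^1(X,\VV_\alpha)$ and $H^2(\pi,V_\alpha) \hookrightarrow H^2(X,\VV_\alpha)$ are likewise correct and standard.

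Two caveats are worth recording. First, the theorem in the paper is stated for an arbitrary smooth manifold $X$, so you may not assume that $X$ has the homotopy type of a finite complex; the repair is either to carry the finite-generation hypothesis on $\pi$ that Hain--Matsumoto impose, or to work with continuous duals throughout, observing that each isotypic component $M_\alpha$ of $H_n(\u_\vrho)$ is an inverse limit of finite-dimensional $D_\vrho$-modules, so that $(M_\alpha^*)^* \cong M_\alpha$ and dualizing the inclusion $M_\alpha^* \hookrightarrow H^2(\pi,V_\alpha)$ still yields the required surjection onto $M_\alpha$, and hence, taking products over $\alpha \in D_\vrho^\vee$, onto $H_2(\u_\vrho)$. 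Second, your degree-two step silently uses that rational (Hochschild) $H^2$ of the proalgebraic group $\S_\vrho$ classifies extensions of affine group schemes $1 \to V_\alpha \to E \to \S_\vrho \to 1$ inducing the given action, and that such an $E$, viewed over $D_\vrho$, has prounipotent kernel (an extension of $\U_\vrho$ by the vector group $V_\alpha$), so that the universal property in the form stated in Section \ref{RelCompDef} (which allows prounipotent kernels) applies. Both facts are true, but they are the load-bearing identifications in that step and should be made explicit.
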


\begin{example}
\label{uHomNotDense}
Choose an isomorphism
$$
D_\vrho \overset{\phi}{\longrightarrow} \G_m^s \times \bmu_{r_1} \times \dots \times \bmu_{r_t}
$$
of algebraic group schemes, where $\bmu_{r_j}$ is the group scheme of $r_j$-th roots of unity, the $r_j$ are integers greater than 1 such that $r_j | r_{j+1}$, and $s$ is a nonnegative integer. Choose characters $q_1,\dots,q_s,\mathfrak{q}_1,\dots,\mathfrak{q}_t$ on $D_\vrho$ such that $\mathfrak{q}_j$ has order $r_j$ and the isomorphism $\phi$ is given by
$$
\phi = (q_1,\dots,q_s,\mathfrak{q}_1,\dots,\mathfrak{q}_t).
$$
Define characters $\rho_1,\dots,\rho_s, \varrho_1,\dots,\varrho_t$ of $\pi$ by $\rho_j = q_j \circ \vrho$ and $\varrho_j = \mathfrak{q}_j\circ \vrho$.

The irreducible representations of $D_\vrho$ correspond to elements in the dual group $D_\vrho^\vee$. The dual $\phi^\vee$ of $\phi$ is an isomorphism:
\begin{equation}
\label{IsoDual}
\Z^s \times (\Z/r_1\Z) \times \dots \times (\Z/r_t\Z) \overset{\phi^\vee}{\longrightarrow}D_\vrho^\vee.
\end{equation}
Given an element $\k = (k_1,\dots,k_s,\kappa_1,\dots,\kappa_t)$ of $\Z^s \times (\Z/r_1\Z) \times \dots \times (\Z/r_t\Z)$, let $\Lr_{\k}$ denote the one-dimensional representation of $D_\vrho$ given by the character $q_1^{k_1}\cdots q_s^{k_s}\cdot \mathfrak{q}_1^{\kappa_1}\cdots \mathfrak{q}_t^{\kappa_t}$, and let $\L_{\vrho^\k}$ denote the rank-one local system on $X$ with monodromy given by the character $\rho_1^{k_1}\cdots\rho_M^{k_s}\varrho_1^{\kappa_1}\cdots\varrho_t^{\kappa_t}$ of $\pi$. Then Theorem \ref{HainMatsuTheorem} implies that there is a $D_\vrho$-equivariant isomorphism
$$
\prod_{\k} H^1(X,\L_{\vrho^\k})^*\otimes \Lr_{\k} \hspace{.05 in} \cong \hspace{.05 in} H_1(\u_\vrho)
$$
and a $D_\vrho$-equivariant surjection
$$
\prod_{\k} H^2(X,\L_{\vrho^\k})^* \otimes \Lr_{\k} \longrightarrow H_2(\u_\vrho),
$$
where the products are taken over the elements $\k$ of $\Z^s\times (\Z/r_1\Z) \times \dots \times (\Z/r_t\Z)$.
\end{example}

\begin{example}
\label{uHomDense}
Suppose that  $\vrho = (\rho_1,\dots,\rho_N) \colon \pi \to (\C^*)^N$ has Zariski dense image in $\G_m^N$. That is, $D_\vrho = \G_m^N$. The irreducible representations of $\G_m^N$ correspond to characters, which are in bijective correspondence with $\Z^N$. Let $q_j$ denote the $j$-th standard character on $\G_m^N$. Given $\k = (k_1,\dots,k_N) \in \Z^N$, let $\Lr_{\k}$ denote the irreducible representation of $\G_m^N$ given by the character $q_1^{k_1}\cdots q_N^{k_N}$, and let $\L_{\vrho^\k}$ denote the rank-one local system on $X$ with monodromy $\rho_1^{k_1}\cdots \rho_N^{k_N}$. Then Theorem \ref{HainMatsuTheorem} implies that there is a $\G_m^N$-equivariant isomorphism
$$
\prod_{\k\in\Z^N} H^1(X,\L_{\vrho^\k})^*\otimes \Lr_{\k} \hspace{.05 in} \cong \hspace{.05 in} H_1(\u_\vrho)
$$
and a $\G_m^N$-equivariant surjection
$$
\prod_{\k\in\Z^N} H^2(X,\L_{\vrho^\k})^* \otimes \Lr_{\k} \longrightarrow H_2(\u_\vrho).
$$
\end{example}

\begin{remark}
Suppose that $X$ is the complement of a hyperplane arrangement in a complex vector space, and let $\T = H^1(X,\C^*)$ be the character torus. Each $\vrho \in \T^N$ can be viewed as a representation $\pi \to (\C^*)^N$. We will show that $\S_\vrho \cong D_\vrho \times \pi^{\un}$ for general $\vrho \in \T^N$, where $\pi^{\un}$ is the Malcev completion of $\pi$. To prove this, we will use the de Rham theory of relative completion. This result is nontrivial, and in fact it fails if $\vrho$ lies in the characteristic variety $\V_{N,1}^1(X)$.
\end{remark}

\subsection{A General Construction}
\label{GenConstructionAlg}

\noindent Suppose that $Z$ is an additive abelian group and that for each $\k \in Z$, $A_\k$ is a co-complex over $\C$ with differential $\nabla_\k$. Suppose further that there are chain maps $A_{\k_1} \otimes A_{\k_1} \to A_{\k_1 + \k_2}$ that are associative in the sense that $(a_1 \otimes a_2) \otimes a_3$ and $a \otimes (a_2 \otimes a_3)$ have the same image in $A_{\k_1 + \k_2 + \k_3}$, where $a_j \in A_{\k_j}$. This implies that there is a multiplication $A_0 \otimes A_0 \to A_0$, which gives $A_0$ the structure of an algebra over $\C$. We assume that $\C$ is a subalgebra of $A_0$.

We write $a_1\cdot a_2$ for the image of $a_1 \otimes a_2$ in $A_{\k_1 + \k_2}$. Assume that the differentials $\nabla_\k$ satisfy $$\nabla_{\k_1 + \k_2}(a_1\cdot a_2) = \nabla_{\k_1}(a_1)\cdot a_2 + (-1)^{\deg a_1} a_1 \cdot \nabla_{\k_2}(a_2).$$ The direct sum
$$
\bigoplus_{\k \in Z} A_\k
$$
is a graded $\C$-algebra, the grading determined by the degrees in the $A_\k$. The multiplication is defined componentwise by the chain maps $A_{\k_1} \otimes A_{\k_2} \to A_{\k_1 + \k_2}$. Define a differential $\nabla$ on this algebra by setting $\nabla(a_\k) = \nabla_\k(a_\k)$ for $a_\k \in A_\k$. The proof of the following proposition is trivial.

\begin{proposition}
The algebra $$\bigoplus_{\k \in Z} A_\k$$ is a commutative differential graded algebra over $\C$ with differential $\nabla$. \qed
\end{proposition}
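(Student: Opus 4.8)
The plan is to verify directly the axioms defining a commutative differential graded algebra for the graded vector space $\bigoplus_{\k \in Z} A_\k$, equipped with the componentwise multiplication and the differential $\nabla$. Throughout, the relevant grading is the internal (form, i.e.\ cohomological) degree carried by each co-complex $A_\k$, and I would extend every operation by bilinearity from homogeneous elements $a_\k \in A_\k$.

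First I would check that $\bigoplus_{\k \in Z} A_\k$ is a graded associative unital algebra. Associativity is immediate from the hypothesis that the chain maps $A_{\k_1} \otimes A_{\k_2} \to A_{\k_1 + \k_2}$ are associative: for homogeneous $a_j \in A_{\k_j}$ the products $(a_1 \cdot a_2)\cdot a_3$ and $a_1\cdot(a_2\cdot a_3)$ coincide in $A_{\k_1 + \k_2 + \k_3}$, and bilinearity propagates this to the whole direct sum. Since each chain map adds internal degrees, the product of a degree-$p$ element with a degree-$q$ element has degree $p+q$, so the multiplication respects the grading, and $1 \in \C \subset A_0$ is a two-sided identity. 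Graded commutativity, $x \cdot y = (-1)^{\deg x \deg y}\, y \cdot x$, is the one requirement not literally listed among the stated hypotheses; it holds because in the intended applications the chain maps are restrictions of the wedge product of differential forms, which is graded commutative, so I would record graded commutativity of the chain maps as the remaining input and note that it passes to the direct sum componentwise.

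Next I would verify that $\nabla$ is a differential of degree $+1$. Each $A_\k$ is a co-complex, so $\nabla_\k$ raises internal degree by one and satisfies $\nabla_\k^2 = 0$. Because $\nabla$ is defined componentwise by $\nabla(a_\k) = \nabla_\k(a_\k)$ and each summand $A_\k$ is $\nabla$-stable, one gets $\nabla^2 = 0$ and $\deg \nabla = 1$ on all of $\bigoplus_{\k} A_\k$. Finally, the graded Leibniz rule is exactly the assumed relation $\nabla_{\k_1 + \k_2}(a_1 \cdot a_2) = \nabla_{\k_1}(a_1)\cdot a_2 + (-1)^{\deg a_1} a_1 \cdot \nabla_{\k_2}(a_2)$ (equivalently, the statement that each multiplication map is a chain map for the total differential on the tensor product); writing $x = \sum_{\k} a_\k$ and $y = \sum_{\k} b_\k$ and expanding componentwise yields $\nabla(xy) = \nabla(x)\,y + (-1)^{\deg x}\, x\,\nabla(y)$ on homogeneous elements, which then extends linearly.

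The only genuine point of care — and the reason the verification is not wholly vacuous — is the bookkeeping between the two gradings: the internal form-degree grading, which governs the Koszul signs and the differential, and the $Z$-grading by $\k$, which merely indexes which differential $\nabla_\k$ acts on each summand. I expect no real obstacle, since the hypotheses were arranged precisely so that every axiom reduces to a componentwise assertion already granted; the main discipline is to keep $\deg$ referring to internal degree (not the $Z$-degree) in both the Leibniz rule and the graded-commutativity sign.
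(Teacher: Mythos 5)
Your verification is correct and is exactly what the paper intends: the paper declares this proof trivial and omits it, and your componentwise check of associativity, the unit, the grading, $\nabla^2 = 0$, and the graded Leibniz rule is precisely that omitted routine argument. Your one substantive observation --- that graded commutativity of the maps $A_{\k_1} \otimes A_{\k_2} \to A_{\k_1 + \k_2}$ is not literally among the stated hypotheses and must be supplied as an additional input (as it is in all of the paper's applications, where the products are restrictions of the wedge product of differential forms) --- is accurate and sharpens the statement rather than revealing any gap in your own argument.
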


\subsection{De Rham Theory of Relative Completion}
\label{CommDiffE}

\noindent Suppose that $X$ is a smooth manifold, and set $\pi = \pi_1(X,x_0)$. Let $\vrho \colon \pi \to (\C^*)^N$ be a representation, and let $D_\vrho$ denote the Zariski closure of the image of $\vrho$ in $\G_m^N$. The irreducible representations of $D_\vrho$ are given by the elements of the dual group $D_\vrho^\vee$. Given a character $\alpha \in D_\vrho^\vee$, let $V_\alpha$ be the corresponding irreducible representation of $D_\vrho$, and let $\VV_\alpha$ denote a rank-one local system on $X$ with monodromy given by the character $\alpha\circ\vrho$ of $\pi$. Let $\nabla_\alpha$ denote the differential on $E^\bullet(X,\VV_\alpha)$. For characters $\alpha$ and $\beta$ on $D_\vrho$, the cup product is a chain map
$$
E^\bullet(X,\VV_\alpha) \otimes E^\bullet(X,\VV_\beta) \overset{\wedge}{\longrightarrow} E^\bullet(X,\VV_{\alpha\beta}).
$$
In addition, if $\psi_\alpha \in E^j(X,\VV_\alpha)$ and $\psi_\beta \in E^\bullet(X,\VV_\beta)$, we have
$$
\nabla_{\alpha\beta}(\psi_\alpha \wedge \psi_\beta) = \nabla_\alpha(\psi_\alpha)\wedge \psi_\beta + (-1)^j \psi_\alpha \wedge \nabla_\beta(\psi_\beta).
$$

The construction in Section \ref{GenConstructionAlg} therefore allows us to define a commutative differential graded algebra $E^\bullet(X,\O_\vrho)$ by
$$
E^\bullet(X,\O_\vrho) = \bigoplus_{\alpha \in D_\vrho^\vee} E^\bullet(X,\VV_{\alpha}) \otimes V_\alpha^*\hspace{.05 in} .
$$
See \cite[Section 4]{HainDeRham} for an explanation of the notation $E^\bullet(X,\O_\vrho)$. This algebra is a $D_\vrho$-module. The differential is defined componentwise by the differential on $E^\bullet(X,\VV_\alpha)$. The product of an element in $E^\bullet(X,\VV_\alpha) \otimes V_\alpha^*$ with an element in $E^\bullet(X,\VV_\beta) \otimes V_\beta^*$ lies in $E^\bullet(X,\VV_{\alpha\beta}) \otimes V_{\alpha\beta}^*$.
The cohomology ring of $E^\bullet(X,\O_\vrho)$ is denoted $H^\bullet(X,\O_\vrho)$. We have
$$
H^\bullet(X,\O_\vrho) = \bigoplus_{\alpha \in D_\vrho^\vee} H^\bullet(X,\VV_\alpha) \otimes V_\alpha^*\hspace{.05 in} .
$$
Thus, this cohomology ring is a $D_\vrho$-module as well.

\begin{important}
The differential graded algebra $E^\bullet(X,\O_\vrho)$ determines the pronilpotent Lie algebra $\u_\vrho$ via the methods of rational homotopy theory. The $D_\vrho$-module structure on $E^\bullet(X,\O_\vrho)$ determines the $D_\vrho$-module structure on $\u_\vrho$. For a proof, see \cite{HainDeRham}. Thus, the Lie algebra $\u_\vrho$ is quadratically presented if and only if $E^\bullet(X,\O_\vrho)$ is 1-formal.
\end{important}

If $\vrho$ is the trivial homomorphism, then $D_\vrho$ is the trivial group scheme and $E^\bullet(X,\O_\vrho)$ is the de Rham complex $E^\bullet(X)$. When $X$ is the complement of a hyperplane arrangement, the Lie algebra $\u_\vrho = \u_1$ is therefore the completion of Kohno's \cite{Kohno} holonomy Lie algebra, since $X$ is 1-formal. We will show in Theorem \ref{MyTheorem} that when $X$ is the complement of the braid arrangement $\mathcal{B}$ in $\C^2$, there is a representation $\vrho \colon \pi \to (\C^*)^2$ for which $E^\bullet(X,\O_\vrho)$ is not 1-formal.

\begin{example}
\label{ExNotDense}
Choose an isomorphism
$$
D_\vrho \overset{\phi}{\longrightarrow} \G_m^s \times \bmu_{r_1} \times \dots \times \bmu_{r_t}
$$
of algebraic group schemes, where $\bmu_{r_j}$ is the group scheme of $r_j$-th roots of unity, the $r_j$ are integers greater than 1 such that $r_j | r_{j+1}$, and $s$ is a nonnegative integer. Choose characters $q_1,\dots,q_s,\mathfrak{q}_1,\dots,\mathfrak{q}_t$ of $D_\vrho$ such that $\mathfrak{q}_j$ has order $r_j$ and the isomorphism $\phi$ is given by
$$
\phi = (q_1,\dots,q_s,\mathfrak{q}_1,\dots,\mathfrak{q}_t).
$$
Set $\rho_j = q_j \circ \vrho$ and $\varrho_j = \mathfrak{q}_j\circ \vrho$. Given
$$
\k = (k_1,\dots,k_s,\kappa_1,\dots,\kappa_t) \hspace{.08 in}\in\hspace{.03 in} \Z^s \times (\Z/r_1\Z) \times \dots \times (\Z/r_t\Z),
$$
Let $\L_{\vrho^\k}$ be the rank-one local system on $X$ with monodromy $\rho_1^{k_1}\cdots\rho_s^{k_s}\varrho_1^{\kappa_1}\cdots\varrho_t^{\kappa_t}$.
The commutative differential graded algebra $E^\bullet(X,\O_\vrho)$ has the following description as a $D_\vrho$-module.
$$
E^\bullet(X,\O_\vrho) = \bigoplus_{\k} E^\bullet(X,\L_{\vrho^\k}) \cdot q_1^{-k_1}\cdots q_s^{-k_s}\cdot \mathfrak{q}_1^{-\kappa_1}\cdots\mathfrak{q}_t^{-\kappa_t}
$$
The $q_j$ and $\mathfrak{q}_j$ determine the $D_\vrho$-module structure on $E^\bullet(X,\O_\vrho)$ via the (right) action of $D_\vrho$ on its coordinate ring. The direct sum is taken over the elements $\k = (k_1,\dots,k_s,\kappa_1,\dots,\kappa_t)$ of $\Z^s \times (\Z/r_1\Z) \times \dots \times (\Z/r_t\Z).$
\end{example}

\begin{example}
\label{IsDenseExample}
Suppose now that $\vrho = (\rho_1,\dots,\rho_N) \colon \pi \to (\C^*)^N$ has Zariski dense image in $\G_m^N$. Then each $\rho_j$ is a character of $\pi$. Define $q_j$ to be the $j$-th standard character on $\G_m^N$. For $\k \in \Z^N$, let $\L_{\vrho^\k}$ denote the rank one local system on $X$ with monodromy $\rho_1^{k_1}\cdots\rho_N^{k_N}$. Then $E^\bullet(X,\O_\vrho)$ has the following description as a $\G_m^N$-module.
$$
E^\bullet(X,\O_\vrho) = \bigoplus_{\k \in \Z^N} E^\bullet(X,\L_{\vrho^\k}) \cdot q_1^{-k_1}\cdots q_N^{-k_N}
$$
Here, the $q_j$ determine the $\G_m^N$-module structure on $E^\bullet(X,\O_\vrho)$ via the (right) action of $\G_m^N$ on its coordinate ring.
\end{example}

\subsection{Constancy of Relative Completion}
\label{FirstConstancy}

\noindent Theorem \ref{HainMatsuTheorem} suggests a relationship between the relative completion and the characteristic varieties $\V_{N,m}^i(X)$. In this section, we prove the simplest form of this relationship.

The following lemma is standard in rational homotopy theory. The concepts and the idea were developed by Stallings, Chen, and Sullivan. A proof can be found in \cite[Corollary 3.2]{HainRelative}.
\begin{lemma}
\label{usefullemma}
A homomorphism $\alpha \colon \mathfrak{n}_1 \to \mathfrak{n}_2$ of pronilpotent Lie algebras is an isomorphism if and only if it induces an isomorphism $H_1(\mathfrak{n}_1) \to H_1(\mathfrak{n}_2)$ and a surjection $H_2(\mathfrak{n}_1) \to H_2(\mathfrak{n}_2)$. \qed
\end{lemma}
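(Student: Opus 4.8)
The plan is to prove the nontrivial (``if'') direction, the converse being immediate from functoriality of Lie algebra homology applied to $\alpha$ and $\alpha^{-1}$. Write $\Gamma_1\mathfrak{n} = \mathfrak{n}$ and $\Gamma_{k+1}\mathfrak{n} = [\mathfrak{n},\Gamma_k\mathfrak{n}]$ for the lower central series. Pronilpotence means precisely that each $\mathfrak{n}_i$ is complete with respect to this filtration, so $\mathfrak{n}_i = \varprojlim_k \mathfrak{n}_i/\Gamma_k\mathfrak{n}_i$, and in particular $\bigcap_k \Gamma_k\mathfrak{n}_i = 0$. The argument splits into two halves: the isomorphism on $H_1$ forces surjectivity of $\alpha$, and the surjectivity on $H_2$, combined with completeness, forces injectivity.

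For surjectivity, note that $\alpha$ carries $\Gamma_k\mathfrak{n}_1$ into $\Gamma_k\mathfrak{n}_2$, hence descends to maps $\alpha_k \colon \mathfrak{n}_1/\Gamma_k\mathfrak{n}_1 \to \mathfrak{n}_2/\Gamma_k\mathfrak{n}_2$ of finite-dimensional nilpotent Lie algebras. Since $H_1$ of a nilpotent Lie algebra agrees with $H_1$ of each of its lower-central-series quotients, each $\alpha_k$ ($k \geq 2$) induces the surjection that $\alpha$ does on $H_1$; thus $\im\alpha_k$ together with $[\mathfrak{n}_2/\Gamma_k\mathfrak{n}_2,\mathfrak{n}_2/\Gamma_k\mathfrak{n}_2]$ spans the target. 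A standard Nakayama-type induction up the (nilpotent) filtration then shows each $\alpha_k$ is onto. Passing to the inverse limit (the system is Mittag--Leffler, the terms being finite-dimensional) shows $\alpha = \varprojlim_k \alpha_k$ is surjective.

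For injectivity, set $\mathfrak{k} = \ker\alpha$, a closed ideal, and consider the extension $0 \to \mathfrak{k} \to \mathfrak{n}_1 \to \mathfrak{n}_2 \to 0$. The five-term exact sequence in (continuous) Lie algebra homology reads
$$
H_2(\mathfrak{n}_1) \xrightarrow{\alpha_*} H_2(\mathfrak{n}_2) \longrightarrow \mathfrak{k}/[\mathfrak{n}_1,\mathfrak{k}] \longrightarrow H_1(\mathfrak{n}_1) \xrightarrow{\cong} H_1(\mathfrak{n}_2) \to 0,
$$
where I have identified the coinvariants $(\mathfrak{k}^{\mathrm{ab}})_{\mathfrak{n}_2}$ with $\mathfrak{k}/[\mathfrak{n}_1,\mathfrak{k}]$ (using $[\mathfrak{k},\mathfrak{k}] \subseteq [\mathfrak{n}_1,\mathfrak{k}]$). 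Because $\alpha$ is an isomorphism on $H_1$, the map $\mathfrak{k}/[\mathfrak{n}_1,\mathfrak{k}] \to H_1(\mathfrak{n}_1)$ vanishes, so $H_2(\mathfrak{n}_2) \to \mathfrak{k}/[\mathfrak{n}_1,\mathfrak{k}]$ is surjective; but $\alpha_*$ is surjective on $H_2$ by hypothesis, so exactness at $H_2(\mathfrak{n}_2)$ forces that same map to be zero. Hence $\mathfrak{k} = [\mathfrak{n}_1,\mathfrak{k}]$.

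It remains to deduce $\mathfrak{k}=0$, and this is where pronilpotence is indispensable. Iterating $\mathfrak{k} = [\mathfrak{n}_1,\mathfrak{k}]$ gives $\mathfrak{k} \subseteq [\mathfrak{n}_1,\Gamma_k\mathfrak{n}_1] = \Gamma_{k+1}\mathfrak{n}_1$ for every $k$ by induction, so completeness yields $\mathfrak{k} \subseteq \bigcap_k \Gamma_k\mathfrak{n}_1 = 0$; with surjectivity, $\alpha$ is an isomorphism. I expect the main obstacle to be the bookkeeping in the complete setting: one must work with continuous homology and completed tensor products so that the five-term sequence is valid for pronilpotent, and not merely finite-dimensional nilpotent, Lie algebras. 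The final intersection step is where the hypothesis genuinely bites, since for a general Lie algebra $\mathfrak{k}=[\mathfrak{n}_1,\mathfrak{k}]$ need not force $\mathfrak{k}=0$ (consider $\mathfrak{n}_1$ perfect); the pronilpotent filtration is what rules this out.
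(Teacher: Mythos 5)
The paper itself does not prove this lemma: it cites Hain (\cite{HainRelative}, Corollary 3.2) and attributes the idea to Stallings, Chen, and Sullivan. Your argument is exactly that classical Stallings-type argument transposed to Lie algebras --- surjectivity of $\alpha$ extracted from the $H_1$-isomorphism by induction up the lower central series, then the five-term exact sequence of $0 \to \mathfrak{k} \to \mathfrak{n}_1 \to \mathfrak{n}_2 \to 0$ forcing $\mathfrak{k} = [\mathfrak{n}_1,\mathfrak{k}]$, then completeness forcing $\mathfrak{k} = 0$ --- and its structure is sound; in particular the order of operations (surjectivity first, so that the extension exists) and the exactness bookkeeping in the five-term sequence are both handled correctly.

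One justification, however, is wrong as stated. You claim the quotients $\mathfrak{n}_i/\Gamma_k\mathfrak{n}_i$ are finite-dimensional and use this to get the Mittag--Leffler condition when passing to the inverse limit. Pronilpotent Lie algebras are inverse limits of finite-dimensional nilpotent ones, but their lower-central-series quotients need not be finite-dimensional: already $H_1$ can be an infinite product, and in this very paper $H_1(\u_\vrho) \cong \prod_{\alpha} H^1(X,\VV_\alpha)^* \otimes V_\alpha$ is typically infinite-dimensional, so $\mathfrak{n}/\Gamma_2\mathfrak{n}$ is too. The step you want is still true, but for a different reason: all the objects in sight (closed lower-central-series terms, kernels, quotients) are linearly compact (pro-finite-dimensional) topological vector spaces, and in that category inverse limits are exact, so no Mittag--Leffler hypothesis is needed. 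Alternatively, you can bypass the limit argument entirely: since the associated graded of a pronilpotent Lie algebra is topologically generated in degree one, the $H_1$-isomorphism makes $\gr(\alpha)$ surjective in every degree, and a successive-approximation argument using completeness of $\mathfrak{n}_2$ then yields surjectivity of $\alpha$ directly. With that repair, and the continuous-homology/completed-tensor-product framework you already flag as necessary for the Hochschild--Serre five-term sequence, your proof is complete and agrees with the cited one in spirit.
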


Let $E_1^\bullet$ and $E_2^\bullet$ be commutative differential graded algebras which determine the pronilpotent Lie algebras $\mathfrak{n}_1$ and $\mathfrak{n}_2$ (respectively) via the methods of rational homotopy theory. The next lemma follows directly from Lemma \ref{usefullemma}.
\begin{lemma}
\label{usefulcorollary}
If $E_1^\bullet \to E_2^\bullet$ is a homomorphism of commutative differential graded algebras, then the induced map $\mathfrak{n}_1 \to \mathfrak{n}_2$ is an isomorphism if and only if the maps $H^1(E_1^\bullet) \to H^1(E_2^\bullet)$ and $H^2(E_1^\bullet) \to H^2(E_2^\bullet)$ are an isomorphism and an injection, respectively. \qed
\end{lemma}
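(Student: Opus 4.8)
The plan is to deduce this from Lemma~\ref{usefullemma} by translating its homological criterion on the pronilpotent Lie algebras $\mathfrak{n}_1,\mathfrak{n}_2$ into the stated cohomological criterion on the algebras $E_1^\bullet,E_2^\bullet$. The first step is to record functoriality: the rational homotopy theory construction sends the morphism $f\colon E_1^\bullet\to E_2^\bullet$ of commutative differential graded algebras to a morphism $\mathfrak{n}_1\to\mathfrak{n}_2$ of pronilpotent Lie algebras, compatibly with composition. This is exactly what makes the identifications below natural, that is, compatible with the maps induced by $f$.

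The second step is to recall the standard low-degree identifications underlying the isomorphism~(\ref{Dp1}) and the surjection~(\ref{Dp2}), which come from the $1$-minimal model of $E^\bullet$: there is a natural isomorphism $H^1(\mathfrak{n})\cong H^1(E^\bullet)$ of Lie algebra cohomology with the cohomology of the algebra, together with a natural injection $\iota\colon H^2(\mathfrak{n})\hookrightarrow H^2(E^\bullet)$. Dually, these are the isomorphism $H_1(\mathfrak{n})\cong H^1(E^\bullet)^*$ and the surjection $H^2(E^\bullet)^*\twoheadrightarrow H_2(\mathfrak{n})$. The property I would exploit is the naturality of these maps: they fit into commuting squares relating the maps induced by $f$ on $H^\bullet(E_i^\bullet)$ to the maps induced by $\mathfrak{n}_1\to\mathfrak{n}_2$ on $H^\bullet(\mathfrak{n}_i)$.

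With these facts the forward implication is essentially mechanical. By Lemma~\ref{usefullemma} the map $\mathfrak{n}_1\to\mathfrak{n}_2$ is an isomorphism if and only if it induces an isomorphism on $H_1$ and a surjection on $H_2$. Because the degree-one identification is an \emph{isomorphism}, the induced map on $H_1$ is an isomorphism exactly when $H^1(E_1^\bullet)\to H^1(E_2^\bullet)$ is. For the degree-two condition, if $H^2(E_1^\bullet)\to H^2(E_2^\bullet)$ is injective, then the naturality square together with injectivity of the $\iota_i$ forces the map induced by $\mathfrak{n}_1\to\mathfrak{n}_2$ on $H^2$ (Lie algebra cohomology) to be injective; dualizing, the induced map on $H_2$ is surjective. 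Hence an isomorphism on $H^1$ together with an injection on $H^2$ forces $\mathfrak{n}_1\to\mathfrak{n}_2$ to be an isomorphism, and this is the direction actually needed to run the proof of Theorem~\ref{VTheorem}.

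The step I expect to be the main obstacle is the converse: deducing that $H^2(E_1^\bullet)\to H^2(E_2^\bullet)$ is injective from the fact that $\mathfrak{n}_1\to\mathfrak{n}_2$ is an isomorphism. Naturality gives injectivity of $H^2(f)$ only on the image of $\iota_1$, that is, on the part of $H^2(E_1^\bullet)$ seen by the Lie algebra; the indecomposable part, namely the cokernel of the cup product $H^1\otimes H^1\to H^2$, is invisible to $\mathfrak{n}_1$, so naturality alone does not control $H^2(f)$ on all of $H^2(E_1^\bullet)$. The clean way I would close this gap is to use that for the algebras at issue in Theorem~\ref{VTheorem}, namely the Orlik--Solomon algebra $A^\bullet$ and the twisted algebras $\A_\a^\bullet$ and $E^\bullet(X,\O_\vrho)$ built from it, the relevant cohomology is generated in degree one; the cup product is then surjective onto $H^2$, so $\iota$ is an isomorphism and the degree-two correspondence becomes an exact equivalence, yielding injectivity of $H^2(f)$ everywhere and combining the two implications into the stated ``if and only if''.
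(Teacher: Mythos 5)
Your forward implication (cohomological conditions $\Rightarrow$ Lie algebra isomorphism) is correct and is exactly the argument the paper intends but leaves unwritten: the identifications $H_1(\mathfrak{n}_i)\cong H^1(E_i^\bullet)^*$ and the natural surjections $H^2(E_i^\bullet)^*\twoheadrightarrow H_2(\mathfrak{n}_i)$ are functorial, so an isomorphism on $H^1$ and an injection on $H^2$ dualize to an isomorphism on $H_1$ and a surjection on $H_2$, and Lemma~\ref{usefullemma} finishes. This is also the only direction the paper ever invokes (in Theorem~\ref{IsUnipTheorem1} and, via Theorem~\ref{OSATheorem}, in the applications of Theorem~\ref{VTheorem}).

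The gap is in your patch for the converse. You close it by asserting that for the algebras $\A_\a^\bullet$ and $E^\bullet(X,\O_\vrho)$ the cohomology is generated in degree one, so that $\iota_1\colon H^2(\mathfrak{n}_1)\hookrightarrow H^2(E_1^\bullet)$ is onto. That assertion is false, and the paper itself refutes it: Theorem~\ref{MyTheorem} exhibits a \emph{nonzero} Massey triple product, i.e.\ a class of $H^2(X,\O_\vrho)$ that does not lie in $H^1(X,\O_\vrho)^2$, so the cup product $H^1\otimes H^1\to H^2$ is not surjective there. More basically, for all but countably many characters $\alpha$ one has $H^1(X,\VV_\alpha)=0$ while $H^2(X,\VV_\alpha)\neq 0$ (the Euler characteristic of an essential arrangement complement in $\C^2$ is nonzero), so almost every $H^2$-component of $H^\bullet(X,\O_\vrho)$ is invisible to degree one; the same applies to $H^\bullet(\A_\a^\bullet)$. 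Moreover, even if the assertion held for these particular algebras it would not prove the lemma, which quantifies over arbitrary cdga homomorphisms. In fact the difficulty you ran into cannot be repaired: the ``only if'' direction is false in general. Take $E_1^\bullet=\C[x]/(x^2)$ with $x$ in degree two and zero differential, $E_2^\bullet=\C$, and the map sending $x\mapsto 0$; both associated pronilpotent Lie algebras are trivial, so the induced map is an isomorphism, yet $H^2(E_1^\bullet)\to H^2(E_2^\bullet)$ is not injective. The lemma should be read as (and the paper only ever uses) the ``if'' implication, which is the part your proposal establishes; your instinct that the converse is the obstacle was right, but the correct conclusion is that it fails, not that it can be salvaged by degree-one generation.
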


Let $X$ be the complement of $n$ hyperplanes in a complex vector space $V$, and let $\T = H^1(X,\C^*)$ denote the character torus. Set $\pi = \pi_1(X,x_0)$. An element $\vrho \in \T^N$ may be viewed as a homomorphism $\pi \to (\C^*)^N$. Let $D_\vrho$ denote the Zariski closure of the image of $\vrho$ in $\G_m^N$. Let $\pi^{\un}$ denote the Malcev completion of $\pi$. The universal property of the relative Malcev completion $\S_\vrho$ gives a unique homomorphism $\S_\vrho \to D_\vrho \times \pi^{\un}$ of proalgebraic group schemes such that the diagrams
$$
\xymatrix{ & \mathcal{S}_\vrho \ar[dl] \ar[d] \\ D_\vrho \times \pi^{\un} \ar[r] & D_\vrho} \hspace{1 in}
\xymatrix{\pi \ar[r]^{\theta_\rho} \ar[d] & \S_\vrho(\C) \ar[dl] \\ D_\vrho(\C) \times \pi^{\un}(\C)&}
$$
commute.

Recall that a property is said to hold for a {\em general point} of an irreducible affine variety $\mathbf{V}$ if there are countably many proper subvarieties $\Sigma_1,\Sigma_2,\dots$ of $\mathbf{V}$ such that the property holds for each point not lying in any $\Sigma_\k$.

\begin{theorem}
\label{IsUnipTheorem1}
If $X$ is the complement of an arrangement of $n$ hyperplanes in a complex vector space and two distinct hyperplanes intersect, then for general $\vrho \in \T^N$, the homomorphism $\S_\vrho \to D_\vrho \times \pi^{\un}$ is an isomorphism.
\end{theorem}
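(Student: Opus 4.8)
The plan is to reduce the statement to a computation with prounipotent radicals and their pronilpotent Lie algebras, and to show that a single genericity hypothesis on $\vrho$ does all the work at once. The theorem's homomorphism is compatible with the projections to $D_\vrho$ and is the identity there, so it is a morphism of extensions from $1 \to \U_\vrho \to \S_\vrho \to D_\vrho \to 1$ to $1 \to \pi^{\un} \to D_\vrho \times \pi^{\un} \to D_\vrho \to 1$; by a five-lemma argument it is an isomorphism as soon as the $D_\vrho$-action on $\U_\vrho$ is trivial (so that $\S_\vrho$ is a genuine product) and the induced map on radicals $\U_\vrho \to \pi^{\un}$ is an isomorphism. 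It is surjective to begin with: the image of $\pi \to (D_\vrho \times \pi^{\un})(\C)$ is Zariski dense because a closed subgroup of $D_\vrho \times \pi^{\un}$ surjecting onto both factors must be everything (by Goursat, a common quotient of a diagonalizable group and a prounipotent group is trivial), so Proposition \ref{UnivPropGen} applies. Passing to Lie algebras, it thus remains to show that $D_\vrho$ acts trivially on $\u_\vrho$ and that $\u_\vrho \cong \h^\wedge$.

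First I would pin down the genericity. For general $\vrho \in \T^N$ the image is Zariski dense, so $D_\vrho = \G_m^N$ and $D_\vrho^\vee = \Z^N$; I further claim that for general $\vrho$ one has $H^1(X,\L_{\vrho^\k}) = 0$ for every $\k \in \Z^N \setminus \{0\}$. Indeed, for fixed $\k \neq 0$ the power map $\phi_\k \colon \T^N \to \T$, $\vrho \mapsto \vrho^\k = \rho_1^{k_1}\cdots\rho_N^{k_N}$, is a surjective homomorphism of tori, hence dominant; since two distinct hyperplanes intersect, the consequence of Yuzvinsky's work cited after Theorem \ref{ArapuraTheorem} shows that $\V_1^1(X) = \V_{1,1}^1(X)$ is a proper subvariety of $\T$, so $\Sigma_\k := \phi_\k^{-1}(\V_1^1(X))$ is a proper subvariety of $\T^N$. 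Avoiding the countable union $\bigcup_{\k \neq 0}\Sigma_\k$ together with the countably many proper subvarieties on which the image fails to be dense yields the desired general $\vrho$.

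Now assume $\vrho$ is general in this sense. By Example \ref{uHomDense} (the Zariski-dense case of Theorem \ref{HainMatsuTheorem}) there is a $\G_m^N$-equivariant isomorphism $H_1(\u_\vrho) \cong \prod_{\k \in \Z^N} H^1(X,\L_{\vrho^\k})^* \otimes \Lr_\k$. Under the genericity every summand with $\k \neq 0$ vanishes, leaving $H_1(\u_\vrho) \cong H^1(X,\C)^* \otimes \Lr_0$, on which $\G_m^N$ acts trivially. Choosing a $\G_m^N$-equivariant lift of a basis of $H_1(\u_\vrho)$ to topological generators of $\u_\vrho$ (possible because $\G_m^N$ is reductive), triviality of the action on $H_1(\u_\vrho)$ propagates to all of $\u_\vrho$; hence $D_\vrho$ acts trivially, and the Levi splitting of Hain gives $\S_\vrho \cong \G_m^N \ltimes \U_\vrho = \G_m^N \times \U_\vrho$.

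Finally I would identify $\u_\vrho$ with $\h^\wedge$. The trivial-character summand is a subalgebra inclusion $E^\bullet(X) \hookrightarrow E^\bullet(X,\O_\vrho)$ of commutative differential graded algebras (a product of trivial-character forms stays in the trivial character), and it induces a map $\h^\wedge \to \u_\vrho$. By Lemma \ref{usefulcorollary} this map is an isomorphism provided $H^1(E^\bullet(X)) \to H^1(X,\O_\vrho)$ is an isomorphism and $H^2(E^\bullet(X)) \to H^2(X,\O_\vrho)$ is injective; using $H^\bullet(X,\O_\vrho) = \bigoplus_\alpha H^\bullet(X,\VV_\alpha)\otimes V_\alpha^*$, the genericity kills every non-trivial degree-one summand, so the first map is the identity on $H^1(X,\C)$, while in degree two the map is the inclusion of the $\alpha = 1$ summand, which is injective. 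Thus $\h^\wedge \cong \u_\vrho$, so each finite-dimensional associated-graded piece of $\u_\vrho$ has the same dimension as that of $\h^\wedge$; the filtered surjection $\u_\vrho \to \h^\wedge$ from the theorem's map is therefore an isomorphism, and the five-lemma argument of the first paragraph completes the proof. The main obstacle I anticipate is precisely this bookkeeping: arranging that the single vanishing $H^1(X,\L_{\vrho^\k}) = 0$ for $\k \neq 0$ simultaneously trivializes the $D_\vrho$-action, furnishes the $H^1$-isomorphism, and supplies $H^2$-injectivity, and then matching the abstractly produced isomorphism $\h^\wedge \cong \u_\vrho$ with the specific universal-property map so that it is that map which is shown to be an isomorphism.
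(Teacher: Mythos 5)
Your proof is correct, and its engine is the same as the paper's: genericity forces $H^1(X,\L_{\vrho^\k})=0$ for all $\k\neq 0$, and then the $H^1$-isomorphism/$H^2$-injection criterion (Lemma \ref{usefulcorollary}) applied to $E^\bullet(X)\to E^\bullet(X,\O_\vrho)$ identifies the prounipotent radicals. But your scaffolding around that engine differs from the paper's in three ways. First, for the genericity you pull back $\V_1^1(X)$ along the surjective power maps $\phi_\k\colon\T^N\to\T$, needing only Yuzvinsky's properness (plus closedness of the jump locus); the paper instead invokes Arapura's structure theorem together with Yuzvinsky, so your route is slightly more elementary. Second, the paper short-circuits your ``main obstacle'': by the uniqueness clause in the universal property, the map $\u_\vrho\to\u_1$ coming from $\S_\vrho\to D_\vrho\times\pi^{\un}$ \emph{is} the map induced by the canonical inclusion $E^\bullet(X)\hookrightarrow E^\bullet(X,\O_\vrho)$, so a single application of Lemma \ref{usefulcorollary} finishes the proof --- no Goursat argument for Zariski density, no separate triviality of the $D_\vrho$-action (that is automatic once the equivariant map of radicals is an isomorphism, since the target carries the trivial action), no Levi splitting, and no dimension count. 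Third, your substitute for that identification --- abstract isomorphism $\h^\wedge\cong\u_\vrho$ plus the fact that the universal map is a filtered surjection, hence an isomorphism by comparing lower-central-series graded pieces --- is legitimate, but note it silently uses that those graded pieces are finite dimensional (true here, since $H_1(\u_\vrho)\cong H^1(X,\C)^*$ is finite dimensional) and that surjectivity on radicals follows from surjectivity of $\S_\vrho\to D_\vrho\times\pi^{\un}$; both points deserve a sentence. In short: your argument is sound and self-contained, and it makes explicit some structure (trivial action, product decomposition) that the paper gets for free, at the cost of roughly twice the length.
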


\begin{remark}
If $\vrho$ lies in the characteristic variety $\V_{N,1}^1(X)$, then $\S_\vrho$ is not isomorphic to $D_\vrho \times \pi_1(X,x_0)^{\un}$. We show in Section \ref{SectionY} that $\S_\vrho$ is generally constant on each irreducible component of $V_{N,1}^1(X)$.
\end{remark}

\begin{proof}[Proof of Theorem \ref{IsUnipTheorem1}] The unipotent radical of $D_\vrho \times \pi^{\un}$ is simply $\pi^{\un}$. The Lie algebra $\u_1$ of $\pi^{\un}$ is Kohno's \cite{Kohno} holonomy Lie algebra, since the space $X$ is 1-formal. The conjugation action of $D_\vrho$ on $\pi^{\un}$ given by the extension $1 \to \pi^{\un} \to D_\vrho \times \U_1 \to D_\vrho \to 1$ is trivial. Hence, $D_\vrho$ acts trivially on $\u_1$. The induced map $\U_\vrho \to \pi^{\un}$ is a $D_\vrho$-equivariant isomorphism if and only if the corresponding Lie algebra homomorphism $\u_\vrho \to \u_1$ is an isomorphism.

The uniqueness of the maps $\S_\vrho \to D_\vrho \times \U_1$ and $\U_\vrho \to \U_1$ implies that the map $\u_\vrho \to \u_1$ is induced by the canonical inclusion
$$
E^\bullet(X) \longrightarrow E^\bullet(X,\O_\vrho)
$$
of commutative differential graded algebras. By Lemma \ref{usefulcorollary}, it suffices to show that for general $\vrho \in \T^N$, the map $H^1(X,\C) \to H^1(X,\O_\vrho)$ is an isomorphism and the map $H^2(X,\C) \to H^2(X,\O_\vrho)$ is injective. Recall that the definition of $E^\bullet(X,\O_\vrho)$ implies that
$$
H^\bullet(X,\O_\vrho) = \bigoplus_{\alpha \in D_\vrho^\vee} H^\bullet(X,\VV_\alpha) \otimes V_\alpha^*,
$$
where $V_\alpha$ is the representation of $D_\vrho$ given by the character $\alpha$, and $\VV_\alpha$ is the local system on $X$ with monodromy $\alpha\circ\vrho$. The fact that the induced map $H^2(X,\C) \to H^2(X,\O_\vrho)$ is injective is therefore trivial.

Thus, we only need to show that the induced map $H^1(X,\C) \to H^1(X,\O_\vrho)$ is an isomorphism for general $\vrho \in \T^N$. By the definition of $E^\bullet(X,\O_\vrho)$, it suffices to show that for general $\vrho \in \T^N$, we have $H^1(X,\VV_\alpha) = 0$ for all nontrivial characters $\alpha$ of $D_\vrho$.

Set $\vrho = (\rho_1,\dots,\rho_N)$, where each $\rho_j \in \T$. For $\k \in \Z^N$, let $\L_{\rho_1^{k_1}\dots\rho_N^{k_N}}$ denote the rank-one local system on $X$ with monodromy $\rho_1^{k_1}\cdots\rho_N^{k_N}$. It follows directly from Arapura's theorem (Theorem \ref{ArapuraTheorem}) and Yuzvinsky's result that $\V_{N,1}^1(X) \neq \T^N$ (Section \ref{CharVar}) that for general $\vrho \in \T^N$, we have $H^1(X,\L_{\rho_1^{k_1}\dots\rho_N^{k_N}}) = 0$ for all nonzero $\k \in \Z^N$. For any such $\vrho$, if $\alpha$ is a nontrivial character on $D_\vrho$, then $\alpha\circ\vrho \colon \pi \to \G_m^N(\C) = (\C^*)^N$ must be a nontrivial character of $\pi$, as $\vrho$ has Zariski dense image in $D_\vrho$. Thus, $\VV_\alpha$ is a nontrivial local system on $X$. Recall that every character on $D_\vrho$ extends to a character on $\G_m^N$. Each character of $\G_m^N$ is given by $q_1^{k_1}\cdots q_N^{k_N}$ for some $\k \in \Z^N$, where $q_j$ is the $j$-th standard character on $\G_m^N$. Since $\alpha\circ\vrho$ is a nontrivial character on $\pi$, this implies that there is some nonzero $\k \in \Z^N$ such that $\VV_\alpha$ is isomorphic to $\L_{\rho_1^{k_1}\cdots\rho_N^{k_N}}$ as local systems on $X$. Our choice of $\vrho$ then implies that $H^1(X,\VV_\alpha) = 0$. This completes the proof.
\end{proof}

\section{A Combinatorial Approximation of $E^\bullet(X,\O_\vrho)$}
\label{DeRham}

\noindent In this section, we assemble the twisted Orlik
-Solomon algebras $(A^\bullet,\a\bomega^T)$ into a new, infinite dimensional commutative differential graded algebra $\A_\a^\bullet$ that approximates $E^\bullet(X,\O_\vrho)$.

\subsection{Notation}
\label{ANotation}

\noindent Let $\{K_1,\dots,K_n\}$ be an affine hyperplane arrangement in an complex vector space $V$. Set $\mathcal{K} = \bigcup_{j=1}^n K_j$, and let $X$ denote the complement $X = V - \mathcal{K}$. Set $\pi = \pi_1(X,x_0)$. Let $\T = H^1(X,\C^*)$ denote the character torus. For $j = 1,\dots,n$, choose a linear function $L_j$ on $V$ whose vanishing set is the hyperplane $K_j$, and define a holomorphic $1$-form $\omega_j$ on $X$ by $\omega_j = (2\pi i)^{-1} d L_j/L_j$. Set $\bomega = (\omega_1,\dots,\omega_n)$, and let $\bomega^T$ denote the transpose of $\bomega$. Recall the notation of Section \ref{HypArr}: If $\a \in \C^n$, then the closed holomorphic $1$-form $\a \bomega^T$ on $X$ is defined as follows.
\begin{equation}
\label{adot}
\a\bomega^T = a_1 \omega_1 + \dots + a_n \omega_n
\end{equation}
Let $M_{N \times n}(\C)$ denote the set of $N$ by $n$ matrices with entries in $\C$. In general, if $\a$ is any element of $M_{N \times n}(\C)$, then $\a\bomega^T$ is a closed holomorphic $1$-form on $X$ with values in $\C^N$. Thus, $\vrho = \exp(\a\bomega^T)$ is an element of $\T^N$.

\subsection{The Problem with $(A^\bullet,-\a\bomega^T)$}
\label{AProblem}

Let $A^\bullet$ denote the complexified Orlik-Solomon algebra, defined in Section \ref{OSAlg}. If $\a \in \C^n$, then $\vrho = \exp(\a\bomega^T)$ is an element of the character torus $\T$. Let $\nabla_{\a}$ denote the connection on the trivial line bundle $\C \times X \to X$ defined by

$$
\nabla_\a\sigma = d\sigma - (\a \bomega^T)\sigma
$$
for $\sigma \in E^0(X)$. There is a natural inclusion
$$
(A^\bullet,-\a\bomega^T) \, \hookrightarrow \, E^\bullet(X,\L_\vrho).
$$
of complexes. Though the product in $A^\bullet$ induces a product in $H^\bullet(A^\bullet,-\a\bomega^T)$, the algebra $(A^\bullet,-\a\bomega^T)$ is not a differential graded algebra. If $\zeta$ and $\psi$ are elements of $(A^\bullet,-\a\bomega^T)$, then $\zeta \wedge \psi$ is an element of the complex $(A^\bullet,-2\a\bomega^T)$. The cup product of two elements of $H^\bullet(X,\L_{\vrho})$ lies in $H^\bullet(X,\L_{\vrho^2})$. The diagram
$$
\xymatrix@1{(A^\bullet,-\a\bomega^T) \otimes_\C (A^\bullet,-\a\bomega^T) \ar[d] \ar[r]^-\wedge & (A^\bullet,-2\a\bomega^T) \ar[d] \\ E^\bullet(X,\L_\vrho) \otimes_\C E^\bullet(X,\L_\vrho) \ar[r]^-\wedge & E^\bullet(X,\L_{\vrho^2})}
$$
commutes, and all maps are chain maps. That is, although the cup product of two elements in $(A^\bullet,-\a\bomega^T)$ is an element of this same complex, it is more naturally an element of the complex $(A^\bullet,-2\a\bomega^T)$. Thus, it is natural to define
$$
\A_\a^\bullet = \bigoplus_{k \in \Z} A^\bullet q^{-k}
$$
By the construction in Section \ref{GenConstructionAlg}, $\A_\a^\bullet$ is a commutative differential bigraded $\C$-algebra, where the differential is given on the $k$-th component by left multiplication by $-k\a\bomega^T$. It is graded by degree of differential forms and also by $k \in \Z$. In the next section, we generalize this construction to define $\A_\a^\bullet$, where $\a \in M_{N \times n}(\C)$. In this case, $\vrho = \exp(\a\bomega^T)$ is an element of $H^1(X,(\C^*)^N)$, which is naturally isomorphic to $\T^N$. The algebra $\A_\a^\bullet$ approximates $E^\bullet(X,\O_\vrho)$.

\subsection{The Algebra $\A_{\a}^\bullet$}
\label{Avrho}

\noindent Let $A^\bullet$ denote the complexified Orlik-Solomon algebra, defined in Section \ref{OSAlg}. If $\a \in M_{N \times n}(\C)$, then $\a\bomega^T$ is a closed holomorphic $1$-form with values in $\C^N$. Thus, $\vrho = \exp(\a\bomega^T)$ is an element of $\T^N$. If $\k \in \Z^N$, then $\k\a \in \C^n$. As in Section \ref{ROLS}, let $\nabla_{\k\a}$ be the flat connection on the trivial line bundle $\C \times X \to X$ defined by the formula
$$
\nabla_{\k \a}\sigma = d\sigma - (\k\a\bomega^T)\sigma
$$
for $\sigma \in E^0(X)$. Thus, $E^\bullet(X)$ is a chain complex with differential $\nabla_{\k \a}$.

For each $\k \in \Z^N$, the algebra $A^\bullet$ is a subcomplex of $(E^\bullet(X),\nabla_{\k\a})$. The restriction of $\nabla_{\k \a}$ to $A^\bullet$ is given by left multiplication by $-(\k\a\bomega^T)$. Moreover, for $\varphi_1 \in A^j$ and $\varphi_2 \in A^\bullet$,
$$
\nabla_{(\k_1+\k_2)\cdot \a} (\varphi_1\wedge \varphi_2) =
(\nabla_{\k_1\cdot \a} \varphi_1)\wedge \varphi_2 + (-1)^j\varphi_1 \wedge (\nabla_{\k_2\cdot \a} \varphi_2).
$$
By the general construction in Section \ref{GenConstructionAlg}, we can therefore define a commutative differential graded algebra $\A_{\a}^\bullet$ by
$$
\A_{\a}^\bullet = \bigoplus_{\k \in \Z^N}A^\bullet q_1^{-k_1}\cdots q_N^{-k_N},
$$
where the differential is given on the $\k$-th component by left multiplication by $-\k\a\bomega^T$ and where $q_j$ is the $j$-th standard character on $\G_m^N$. Then each $q_j$ can be viewed as an element of the coordinate ring of $\G_m^N$. The action of $\G_m^N$ on its coordinate ring gives $\A_{\a}^\bullet$ the structure of a (right) representation of $\G_m^N$.

Each character $\beta$ on $\G_m^N$ determines an irreducible representation $W_\beta$ of $\G_m^N$ and a rank-one local system $\WW_\beta$ on $X$ whose monodromy is given by the character $\beta\circ\vrho$ of $\pi$. Again by the construction in Section \ref{GenConstructionAlg}, the direct sum
$$
\bigoplus_{\beta \in [\G_m^N]^\vee} E^\bullet(X,\WW_\beta)\otimes W_\beta^*
$$
is a commutative differential graded algebra. The differential is defined componentwise, as is multiplication. The product of an element in $E^\bullet(X,\WW_\alpha) \otimes W_\alpha^*$ with an element in $E^\bullet(X,\WW_\beta) \otimes W_\beta^*$ lies in $E^\bullet(X,\WW_{\alpha\beta}) \otimes W_{\alpha\beta}^*$.

\begin{lemma}
\label{inclusionlemma}
There is a natural $\G_m^N$-equivariant inclusion
\begin{equation}
\label{inclusion1}
\A_{\a}^\bullet \hookrightarrow \bigoplus_{\beta \in [\G_m^N]^\vee} E^\bullet(X,\WW_\beta)\otimes W_\beta^*
\end{equation}
of commutative differential graded algebras.
\end{lemma}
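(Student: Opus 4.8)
The plan is to define the map in (\ref{inclusion1}) one summand at a time, matching the $\k$-th summand of $\A_\a^\bullet$ with the summand of the right-hand side indexed by the character $\beta_\k = q_1^{k_1}\cdots q_N^{k_N}$. First I would recall that the characters of $\G_m^N$ are exactly the monomials $\beta_\k$ for $\k \in \Z^N$, so the right-hand direct sum is indexed by $\Z^N$. The dual representation $W_{\beta_\k}^*$ is one-dimensional, spanned by $q_1^{-k_1}\cdots q_N^{-k_N}$; thus the $\k$-th summand $A^\bullet q_1^{-k_1}\cdots q_N^{-k_N}$ of $\A_\a^\bullet$ is naturally $(A^\bullet,-\k\a\bomega^T)\otimes W_{\beta_\k}^*$. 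For the target, I would compute the monodromy of $\WW_{\beta_\k}$ around the generators $\gamma_i$: it is $\beta_\k(\vrho(\gamma_i)) = \prod_j \rho_j(\gamma_i)^{k_j} = \exp((\k\a)_i)$, which is exactly the monodromy of the flat connection $\nabla_{\k\a}$. Hence $\WW_{\beta_\k} \cong \L_{\vrho^\k}$, and fixing the de Rham model $E^\bullet(X,\WW_{\beta_\k}) = (E^\bullet(X),\nabla_{\k\a})$ makes the $\beta_\k$-summand of the target equal to $(E^\bullet(X),\nabla_{\k\a})\otimes W_{\beta_\k}^*$.

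With these identifications in place, the map is forced: on the $\k$-th summand I would take the inclusion $A^\bullet \hookrightarrow E^\bullet(X)$ of the Orlik-Solomon subalgebra, tensored with $\Id_{W_{\beta_\k}^*}$, and then sum over $\k$. Injectivity is immediate, since each component is the injection of $A^\bullet$ into $E^\bullet(X)$. That the map is a chain map follows summand by summand: the differential on $A^\bullet q_1^{-k_1}\cdots q_N^{-k_N}$ is left multiplication by $-\k\a\bomega^T$, which, as already noted in Section \ref{Avrho}, is precisely the restriction of $\nabla_{\k\a}$ to $A^\bullet$.

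It remains to check that the map respects products and the $\G_m^N$-action, and this bookkeeping is the only real obstacle. For the algebra structure I would observe that both sides are assembled from their summands by the construction of Section \ref{GenConstructionAlg}, with the product of the $\k_1$- and $\k_2$-summands landing in the $(\k_1+\k_2)$-summand; on $\A_\a^\bullet$ the product is the wedge product of Orlik-Solomon classes, while on the target it is the cup product $E^\bullet(X,\WW_{\alpha})\otimes E^\bullet(X,\WW_{\beta}) \to E^\bullet(X,\WW_{\alpha\beta})$, and these agree because the wedge on $A^\bullet$ is by definition the restriction of the wedge on $E^\bullet(X)$. For equivariance, the action on $\A_\a^\bullet$ comes from the action of $\G_m^N$ on its coordinate ring through the $q_j$, whereas the action on the target is carried by the tensor factors $W_\beta^*$; the content of the check is that the weight of the monomial $q_1^{-k_1}\cdots q_N^{-k_N}$ under the coordinate-ring action agrees with the weight of $W_{\beta_\k}^*$, which is exactly the matching of characters recorded in Examples \ref{ExNotDense} and \ref{IsDenseExample}. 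Once this weight bookkeeping is confirmed, the componentwise definition is automatically $\G_m^N$-equivariant, and the lemma follows.
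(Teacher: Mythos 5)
Your proposal is correct and follows essentially the same route as the paper's proof: identify the characters of $\G_m^N$ with $\Z^N$, identify $E^\bullet(X,\WW_{\beta_\k})$ with $(E^\bullet(X),\nabla_{\k\a})$ via the monodromy computation, check compatibility of wedge/cup products across summands, and conclude from the fact that $(A^\bullet,-\k\a\bomega^T)$ is a subcomplex of $(E^\bullet(X),\nabla_{\k\a})$. The only difference is presentational: you make the monodromy calculation and the equivariance bookkeeping explicit, where the paper records the product compatibility as a commutative diagram.
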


\begin{proof}
For $\k \in \Z^N$, let $\L_{\vrho^\k}$ denote the rank one local system on $X$ with monodromy $\rho_1^{k_1}\cdots\rho_N^{k_N}$. There is a natural $\G_m^N$-equivariant isomorphism
$$
\bigoplus_{\beta \in [\G_m^N]^\vee} E^\bullet(X,\WW_\beta)\otimes W_\beta^* \cong \bigoplus_{\k \in \Z^N} E^\bullet(X,\L_{\vrho^\k}) \cdot q_1^{-k_1}\cdots q_N^{-k_N}
$$
of commutative differential graded algebras. For $\k \in \Z^N$, there is an isomorphism
$$
(E^\bullet(X), \nabla_{\k\a}) \cong E^\bullet(X,\L_{\vrho^\k})
$$
of complexes. Moreover, the following diagram commutes for all $\k$ and $\x$ in $\Z^N$, where the horizontal arrows are given by the cup product.
$$
\xymatrix{(E^\bullet(X),\nabla_{\k\a}) \otimes (E^\bullet(X),\nabla_{\x\a})  \ar[rr]^{\wedge} \ar[d]^\cong && (E^\bullet(X),\nabla_{(\k+\x)\a})\ar[d]^\cong\\
E^\bullet(X,\L_{\vrho^{\k}}) \otimes E^\bullet(X,\L_{\vrho^{\x}}) \ar[rr]^\wedge && E^\bullet(X,\L_{\vrho^{(\k+\x)}})}
$$
For each $\k \in \Z^N$, $(A^\bullet,-\k\a\bomega^T)$ is a subcomplex of $(E^\bullet(X),\nabla_{\k\a})$. The result follows.
\end{proof}

Recall that $\vrho$ is a representation $\vrho \colon \pi \to (\C^*)^N$. Let $D_\vrho$ denote the Zariski closure of the image of $\vrho$ in $\G_m^N$. Since $\A_{\a}^\bullet$ is a representation of $\G_m^N$, it is a representation of $D_\vrho$ as well.

\begin{theorem}
If $\vrho = \exp(\a\bomega^T)$, then there is a natural $D_\vrho$-equivariant homomorphism
$$
\A_{\a}^\bullet \longrightarrow E^\bullet(X,\O_\vrho)
$$
of commutative differential graded algebras. It is an inclusion when $\vrho$ is Zariski dense.
\end{theorem}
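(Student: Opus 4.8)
The plan is to factor the desired map through the large algebra appearing in Lemma~\ref{inclusionlemma}. That lemma provides a $\G_m^N$-equivariant inclusion
$$
\A_\a^\bullet \hookrightarrow \bigoplus_{\beta \in [\G_m^N]^\vee} E^\bullet(X,\WW_\beta)\otimes W_\beta^*,
$$
so it suffices to construct a $D_\vrho$-equivariant homomorphism of commutative differential graded algebras
$$
\Phi \colon \bigoplus_{\beta \in [\G_m^N]^\vee} E^\bullet(X,\WW_\beta)\otimes W_\beta^* \longrightarrow E^\bullet(X,\O_\vrho) = \bigoplus_{\alpha \in D_\vrho^\vee} E^\bullet(X,\VV_\alpha)\otimes V_\alpha^*
$$
and then compose. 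The source is indexed by the characters of the full torus $\G_m^N$ and the target by the characters of the subgroup scheme $D_\vrho$, and $\Phi$ will be the map induced by restriction of characters and representations along the closed immersion $D_\vrho \hookrightarrow \G_m^N$. Conceptually, $\Phi$ is the map on de Rham complexes induced by the surjection of Hopf algebras $\O(\G_m^N) \twoheadrightarrow \O(D_\vrho)$, and its naturality is inherited from the functoriality of Hain's construction \cite{HainDeRham}; the plan is to verify the required properties directly.

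To construct $\Phi$, fix $\beta \in [\G_m^N]^\vee$ and let $\alpha = \beta|_{D_\vrho} \in D_\vrho^\vee$ be its restriction. Because $\vrho$ has image in $D_\vrho(\C)$, the two characters $\beta \circ \vrho$ and $\alpha \circ \vrho$ of $\pi$ coincide, so the rank-one local systems $\WW_\beta$ and $\VV_\alpha$ have the same monodromy and are canonically isomorphic. Likewise, restricting the one-dimensional representation $W_\beta$ along $D_\vrho \hookrightarrow \G_m^N$ yields exactly $V_\alpha$, so $W_\beta^*|_{D_\vrho} \cong V_\alpha^*$. These identifications give a $D_\vrho$-equivariant isomorphism of complexes from the $\beta$-summand $E^\bullet(X,\WW_\beta)\otimes W_\beta^*$ onto the $\alpha$-summand $E^\bullet(X,\VV_\alpha)\otimes V_\alpha^*$, and I define $\Phi$ on this summand to be this isomorphism followed by the inclusion of the $\alpha$-summand. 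Summing over all $\beta$ yields $\Phi$. Note that many $\beta$ may restrict to the same $\alpha$, precisely a coset of $\ker([\G_m^N]^\vee \to D_\vrho^\vee)$, so $\Phi$ is generally not injective; the theorem claims injectivity only in the dense case.

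It remains to check that $\Phi$ is a homomorphism of commutative differential graded algebras and is $D_\vrho$-equivariant. Equivariance is immediate from the functorial identification $W_\beta|_{D_\vrho} \cong V_\alpha$. The differential is defined componentwise by the differential of the relevant local system, and the isomorphism $\WW_\beta \cong \VV_\alpha$ intertwines these, so $\Phi$ is a chain map. The main point to verify is compatibility with products. Since restriction $[\G_m^N]^\vee \to D_\vrho^\vee$ is a group homomorphism, $(\beta_1\beta_2)|_{D_\vrho} = \alpha_1\alpha_2$, so the product of the $\beta_1$- and $\beta_2$-summands lands in the $\beta_1\beta_2$-summand and is sent by $\Phi$ into the $\alpha_1\alpha_2$-summand, as required. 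One then checks that the cup-product square
$$
\xymatrix{E^\bullet(X,\WW_{\beta_1}) \otimes E^\bullet(X,\WW_{\beta_2}) \ar[r]^-\wedge \ar[d]_\cong & E^\bullet(X,\WW_{\beta_1\beta_2}) \ar[d]^\cong \\ E^\bullet(X,\VV_{\alpha_1}) \otimes E^\bullet(X,\VV_{\alpha_2}) \ar[r]^-\wedge & E^\bullet(X,\VV_{\alpha_1\alpha_2})}
$$
commutes, the vertical maps being the monodromy-matching isomorphisms. This is the analogue of the square verified in the proof of Lemma~\ref{inclusionlemma}, and it is the one genuine obstacle: care is needed so that the chosen isomorphisms are compatible with tensor products. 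I expect this is cleanest if all the isomorphisms $\WW_\beta \cong \VV_{\beta|_{D_\vrho}}$ are taken to be the canonical ones determined by the equality of monodromy representations, for then they are automatically multiplicative and the square commutes on the nose.

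Finally, suppose $\vrho$ has Zariski dense image, so that $D_\vrho = \G_m^N$. Then the restriction map $[\G_m^N]^\vee \to D_\vrho^\vee$ is the identity, every $\beta$ restricts to itself, and $\Phi$ is the identity map. Hence the composite $\A_\a^\bullet \to E^\bullet(X,\O_\vrho)$ coincides with the inclusion of Lemma~\ref{inclusionlemma}, which is injective. This establishes the final assertion.
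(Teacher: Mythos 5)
Your proposal is correct and follows essentially the same route as the paper: both factor through the inclusion of Lemma \ref{inclusionlemma} and define the map to $E^\bullet(X,\O_\vrho)$ by restricting the irreducible representations $W_\beta$ (and the characters $\beta$) of $\G_m^N$ to $D_\vrho$, using the equality of monodromies $\beta\circ\vrho = (\beta|_{D_\vrho})\circ\vrho$, with the dense case handled by noting the restriction map becomes the identity. Your write-up merely spells out the equivariance and multiplicativity checks that the paper leaves implicit.
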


\begin{proof}
By Lemma \ref{inclusionlemma}, it suffices to prove that there is a natural $D_\vrho$-equivariant homomorphism
\begin{equation}
\label{hello1}
\bigoplus_{\beta \in [\G_m^N]^\vee} E^\bullet(X,\WW_\beta) \otimes W_\beta^* \longrightarrow E^\bullet(X,\O_{\pmb \rho})
\end{equation}
of commutative differential graded algebras, which is equality when $\vrho$ is Zariski dense. Recall that $E^\bullet(X,\O_\vrho)$ is defined by
$$
E^\bullet(X,\O_\vrho) = \bigoplus_{\alpha \in D_\vrho^\vee} E^\bullet(X,\VV_{\alpha}) \otimes V_\alpha^*
$$
where $V_\alpha$ is the irreducible representation of $D_\vrho$ given by the character $\alpha$ and $\VV_\alpha$ is the rank-one local system on $X$ with monodromy given by the character $\alpha\circ\vrho$ of $\pi$. For $\beta \in [\G_m^N]^\vee$, the restriction of $W_\beta$ to $D_\vrho$ is isomorphic to $V_\alpha$, where $\alpha$ is the restriction of the character $\beta$ to $D_\vrho$. Thus, the homomorphism (\ref{hello1}) is simply defined by restriction of the irreducible representations $W_\beta$ of $\G_m^N$ to $D_\vrho$.
\end{proof}

\subsection{The Induced Map $H^\bullet(\A_{\a}^\bullet) \to H^\bullet(X,\O_\vrho)$}
\label{OSIso}

If $\a \in M_{N \times n}(\C)$, then $\vrho = \exp(\a\bomega^T)$ is an element of $\T^N$. The homomorphism $\A_{\a}^\bullet \hookrightarrow E^\bullet(X,\O_\vrho)$ of commutative differential graded algebras induces a homomorphism
$$
H^\bullet(\A_{\a}^\bullet) \longrightarrow H^\bullet(X,\O_\vrho)
$$
of graded algebras.

\begin{theorem}
\label{OSATheorem}
If $\mathbf{V}$ is a vector subspace of $M_{N \times n}(\C)$, then there is a countable collection $\{\W_j\}$ of proper affine subspaces of $\mathbf{V}$ that do not contain $0$ with the following property. If $\a \in \mathbf{V}-\bigcup_j\W_j$ and $\vrho = \exp(\a\bomega^T)$ has Zariski dense image in $\G_m^N$, then the homomorphism $H^\bullet(\A_{\a}^\bullet) \longrightarrow H^\bullet(X,\O_\vrho)$ is an isomorphism.
\end{theorem}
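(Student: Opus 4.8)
The plan is to exploit the $\Z^N$-grading that is present in both $\A_\a^\bullet$ and $E^\bullet(X,\O_\vrho)$ and thereby reduce the statement to the theorem of Esnault, Schechtman, and Viehweg (Theorem \ref{ESVTheorem}) applied one weight at a time. Since $\vrho = \exp(\a\bomega^T)$ is assumed to have Zariski dense image, $D_\vrho = \G_m^N$, so by Example \ref{IsDenseExample} the target decomposes as $E^\bullet(X,\O_\vrho) = \bigoplus_{\k \in \Z^N} E^\bullet(X,\L_{\vrho^\k}) q_1^{-k_1}\cdots q_N^{-k_N}$, while by construction $\A_\a^\bullet = \bigoplus_{\k \in \Z^N} (A^\bullet, -\k\a\bomega^T) q_1^{-k_1}\cdots q_N^{-k_N}$. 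The differential on each of these algebras preserves the $\Z^N$-grading (left multiplication by $-\k\a\bomega^T$ keeps a summand in itself), so cohomology commutes with the direct sum, and the map $H^\bullet(\A_\a^\bullet) \to H^\bullet(X,\O_\vrho)$ is $\G_m^N$-equivariant. A $\G_m^N$-equivariant map of $\Z^N$-graded spaces is an isomorphism if and only if it is an isomorphism on each graded piece, so it suffices to treat each $\k$ separately. Using the identification $(E^\bullet(X),\nabla_{\k\a}) \cong E^\bullet(X,\L_{\vrho^\k})$ from the proof of Lemma \ref{inclusionlemma}, the weight-$\k$ component of the map is exactly the map on cohomology induced by the inclusion $(A^\bullet, -\k\a\bomega^T) \hookrightarrow (E^\bullet(X), \nabla_{\k\a})$.

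Next I would invoke Theorem \ref{ESVTheorem} in each weight. Setting $\mathbf{b} = \k\a \in \C^n$, the weight-$\k$ inclusion is precisely $(A^\bullet, -\mathbf{b}\bomega^T) \hookrightarrow (E^\bullet(X), \nabla_{\mathbf{b}})$, and ESV guarantees that this induces an isomorphism on cohomology as soon as $\lambda_{S,M}(\k\a) \neq 0$ for every dense $S$ and every positive integer $M$. The weight $\k = 0$ imposes no condition, since there the map is Brieskorn's isomorphism $A^\bullet \cong H^\bullet(X,\C)$; thus only the weights $\k \neq 0$ contribute genuine constraints.

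The genericity step is to convert each offending condition into an excluded affine subspace of $\mathbf{V}$. Because $\lambda_{S,M}(\mathbf{b}) = M - \sum_{S \subset \widehat{K}_j} b_j$ is affine-linear in $\mathbf{b}$ and $\mathbf{b} = \k\a$ is linear in $\a$, the function $\a \mapsto \lambda_{S,M}(\k\a)$ is affine-linear on $\mathbf{V}$, so I would set
$$\W_{\k,S,M} = \{\a \in \mathbf{V} : \lambda_{S,M}(\k\a) = 0\}.$$
Evaluating at $\a = 0$ gives $\lambda_{S,M}(0) = M \geq 1 \neq 0$, which shows at once that each nonempty $\W_{\k,S,M}$ is a proper affine subspace of $\mathbf{V}$ and that none of them contains $0$. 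There are only countably many, indexed by $\k \in \Z^N \setminus \{0\}$, the finitely many dense $S$, and $M \in \Z$ with $M \geq 1$. Taking $\{\W_j\}$ to be this collection, any $\a \in \mathbf{V} - \bigcup_j \W_j$ satisfies the ESV hypothesis in every weight $\k$, so the induced map is an isomorphism in each weight and therefore an isomorphism overall.

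I expect the only genuinely delicate point to be the bookkeeping of the first paragraph: identifying the weight-$\k$ component of $H^\bullet(\A_\a^\bullet) \to H^\bullet(X,\O_\vrho)$ with the ESV inclusion, which requires the compatibility of the flat connection $\nabla_{\k\a}$ with the local system $\L_{\vrho^\k}$ together with the $\G_m^N$-equivariance that licenses checking isomorphy piece-by-piece. Once that is in place, the remainder is a direct application of Theorem \ref{ESVTheorem} and the routine observation that lying outside a countable union of proper affine subspaces through no common point is exactly the ``general $\a$'' condition recorded in the statement.
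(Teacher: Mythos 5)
Your proposal is correct and follows essentially the same route as the paper: decompose both algebras into their $\Z^N$-graded pieces (using $D_\vrho = \G_m^N$), identify the weight-$\k$ component of the map with the inclusion $(A^\bullet,-\k\a\bomega^T) \hookrightarrow (E^\bullet(X),\nabla_{\k\a})$, apply Theorem \ref{ESVTheorem} in each weight, and sweep the resulting conditions into a countable family of proper affine subspaces of $\mathbf{V}$ avoiding $0$. The only cosmetic difference is that you write out the excluded subspaces explicitly via the polynomials $\lambda_{S,M}(\k\a)$, whereas the paper takes them directly from ESV as subspaces $\Psi_{\k,M}$ of $M_{N\times n}(\C)$ and intersects with $\mathbf{V}$.
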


\begin{proof} For each $\k \in \Z^N$, Theorem \ref{ESVTheorem} implies that there is a countable collection $\{\Psi_{\k,M}\}_{M \in \Z}$ of affine subspaces of $M_{N \times n}(\C)$ that do not contain $0$ such that the inclusion $$(A^\bullet,-\k\a\bomega^T) \hookrightarrow (E^\bullet(X),\nabla_{\k\a})$$ is a quasi-isomorphism for all $\a \in M_{N \times n}(\C) - \bigcup_{M\in\Z} \Psi_{\k,M}$. Set $\W_{\k,M} = \mathbf{V} \cap \Psi_{\k,M}$. Then $\W_{\k,M}$ does not contain $0$, so it is a proper affine subspace of $\mathbf{V}$. Suppose $\a \in \mathbf{V}-\bigcup_{\k,M} \W_{\k,M}$.

Set $\vrho = \exp(\a\bomega^T) \in \T^N$ and suppose that $\vrho$ has Zariski dense image in $\G_m^N$. Given $\k \in \Z^N$, let $\L_{\vrho^\k}$ denote the rank one local system on $X$ with monodromy $\rho_1^{k_1}\cdots \rho_N^{k_N}$. As in Example \ref{IsDenseExample}, $E^\bullet(X,\O_\vrho)$ has the following description as a $\G_m^N$-module.
$$
E^\bullet(X,\O_\vrho) = \bigoplus_{\k \in \Z^N} E^\bullet(X,\L_{\vrho^\k}) \cdot q_1^{-k_1}\cdots q_N^{-k_N}
$$
The map $\A_{\a}^\bullet \to E^\bullet(X,\O_\vrho)$ on the $\k$-th component is determined by the inclusion $(A^\bullet,-\k\a\bomega^T) \hookrightarrow (E^\bullet(X),\nabla_{\k\a}) \cong E^\bullet(X,\L_{\vrho^\k})$, which is a quasi-isomorphism since $\a \notin \bigcup_{M \in \Z}\W_{\k,M}$. Thus, the induced map $$H^\bullet(\A_{\a}^\bullet) \to H^\bullet(X,\O_\vrho)$$ is an isomorphism on the $\k$-th component for all $\k \in \Z^N$. The result follows.
\end{proof}

\subsection{Massey Triple Products and $1$-Formality}
\label{MTPOneFormality}

\noindent In the next section, we show that when $X$ is the complement of the braid arrangement in $\C^2$, there exists $\vrho \in \T^2$ such that the algebra $E^\bullet(X,\O_\vrho)$ is not $1$-formal. Thus, the pronilpotent Lie algebra $\u_\vrho$ is not quadratically presented. This is accomplished by exhibiting a nonzero Massey triple product in $H^\bullet(\A_\a^\bullet)$, where $\vrho = \exp(\a\bomega^T)$. In this section, we review Massey triple products and their relationship to $1$-formality.

Let $R^\bullet$ be a commutative differential graded $\C$-algebra, and let $d$ denote the differential on $R^\bullet$. Let $H^1(R^\bullet)^2$ denote the vector subspace of $H^2(R^\bullet)$ defined by
$$
H^1(R^\bullet)^2 = \{ \phi \wedge \varphi | \phi,\varphi \in H^1(R^\bullet)\}.
$$
If $\varphi_1$, $\varphi_2$, and $\varphi_3$ are elements of $H^1(R^\bullet)$ such that $\varphi_1 \wedge \varphi_2 = \varphi_2\wedge \varphi_3 = 0$, then the {\em Massey triple product} $\big{\langle} \varphi_1, \varphi_2, \varphi_3 \big{\rangle}$ is a element of $H^2(R^\bullet)/H^1(R^\bullet)^2$. Equivalently, it is a coset of $H^1(R^\bullet)^2$ in $H^2(R^\bullet)$. It is defined as follows.

Choose closed elements $r_1$, $r_2$, and $r_3$ of $R^1$ that represent the cohomology classes $\varphi_1$, $\varphi_2$, and $\varphi_3$, respectively. Since $\varphi_1 \wedge \varphi_2 = \varphi_2\wedge \varphi_3 = 0$, there exist elements $r_{12}$ and $r_{23}$ of $R^1$ such that $d r_{12} = r_1 \wedge r_2$ and $d r_{23} = r_2 \wedge r_3$. Then $r_{12} \wedge r_3 + r_1 \wedge r_{23}$ is a closed element of $R^2$. Let $[r_{12} \wedge r_3 + r_1 \wedge r_{23}]$ denote its cohomology class in $H^2(R^2)$. Define
$$
\big{\langle} \varphi_1, \varphi_2, \varphi_3 \big{\rangle} \, = \, [r_{12} \wedge r_3 + r_1 \wedge r_{23}]  \, \, + \, H^1(R^\bullet)^2.
$$
This is a well-defined coset of $H^1(R^\bullet)^2$ in $H^2(R^\bullet)$. It is referred to as a Massey triple product in $H^2(R^\bullet)$.

The cohomology $H^\bullet(R^\bullet)$ is a commutative differential graded $\C$-algebra with trivial differential. Recall that $R^\bullet$ is {\em $1$-formal} if there exists a commutative differential graded $\C$-algebra $S^\bullet$ and differential graded algebra homomorphisms $\theta \colon S^\bullet \to R^\bullet$ and $\phi \colon S^\bullet \to H^\bullet(R^\bullet)$ which induce isomorphisms
\begin{equation*}
\begin{aligned}
\theta_* \colon H^0(S^\bullet) &\overset{\cong}{\longrightarrow} H^0(R^\bullet) \hspace{.5 in} & \theta_* \colon H^1(S^\bullet) &\overset{\cong}{\longrightarrow} H^1(R^\bullet)\\
\phi_* \colon H^0(S^\bullet) &\overset{\cong}{\longrightarrow} H^0(R^\bullet) & \phi_* \colon H^1(S^\bullet) &\overset{\cong}{\longrightarrow} H^1(R^\bullet)
\end{aligned}
\end{equation*}
and injections
\begin{equation*}
\begin{aligned}
\theta_* \colon H^2(S^\bullet)  \hookrightarrow H^2(R^\bullet) \hspace{.7125 in} &  \phi_* \colon H^2(S^\bullet)  \hookrightarrow H^2(R^\bullet).
\end{aligned}
\end{equation*}

\begin{proposition}
\label{1FormalMassey}
If $R^\bullet$ is $1$-formal, then all Massey triple products of degree-one elements vanish. \qed
\end{proposition}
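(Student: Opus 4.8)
The plan is to exploit the naturality of Massey triple products under morphisms of commutative differential graded algebras, together with the fact that in a CDGA whose differential vanishes every such product is the zero coset. Recall that $1$-formality furnishes a CDGA $S^\bullet$ and morphisms $\theta \colon S^\bullet \to R^\bullet$ and $\phi \colon S^\bullet \to H^\bullet(R^\bullet)$ that induce isomorphisms on $H^1$ and injections on $H^2$, where $H^\bullet(R^\bullet)$ carries the zero differential. These are exactly the degrees relevant to a Massey triple product of degree-one classes, which passes through $H^1$ and lands in $H^2$.

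First I would fix $\varphi_1,\varphi_2,\varphi_3 \in H^1(R^\bullet)$ with $\varphi_1\wedge\varphi_2 = \varphi_2\wedge\varphi_3 = 0$ and lift them along the isomorphism $\theta_* \colon H^1(S^\bullet)\to H^1(R^\bullet)$ to classes $s_1,s_2,s_3 \in H^1(S^\bullet)$. Since $\theta_*(s_1\wedge s_2) = \varphi_1\wedge\varphi_2 = 0$ and $\theta_* \colon H^2(S^\bullet)\hookrightarrow H^2(R^\bullet)$ is injective, one gets $s_1\wedge s_2 = 0$, and likewise $s_2\wedge s_3 = 0$. Hence the Massey product $\langle s_1,s_2,s_3\rangle$ is defined, as a coset of $H^1(S^\bullet)^2$ in $H^2(S^\bullet)$.

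The key step is the naturality statement: for any CDGA morphism $f\colon A^\bullet\to B^\bullet$ one has $f_*\langle a,b,c\rangle \subseteq \langle f_*a, f_*b, f_*c\rangle$. This follows by applying $f$ to a defining system, namely closed representatives $a_1,b_1,c_1$ and bounding elements $a_{12},b_{23}$ with $da_{12}=a_1\wedge b_1$ and $db_{23}=b_1\wedge c_1$, since $f(a_{12}\wedge c_1 + a_1\wedge b_{23}) = f(a_{12})\wedge f(c_1)+f(a_1)\wedge f(b_{23})$ is then a valid representative downstairs. Applying this to $\phi$: a defining system for $\langle s_1,s_2,s_3\rangle$ yields a representative $m$, and $\phi_* m$ is a representative of $\langle \phi_* s_1,\phi_* s_2,\phi_* s_3\rangle$, computed in $H^\bullet(R^\bullet)$. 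Because that CDGA has zero differential, its bounding elements may be taken to be $0$, so this product is the zero coset; thus $\phi_* m \in H^1(R^\bullet)^2$. Now $\phi_*$, being an isomorphism on $H^1$, carries $H^1(S^\bullet)^2$ onto $H^1(R^\bullet)^2$, and being injective on $H^2$, the relation $\phi_* m \in H^1(R^\bullet)^2 = \phi_*\bigl(H^1(S^\bullet)^2\bigr)$ forces $m\in H^1(S^\bullet)^2$. Therefore $\langle s_1,s_2,s_3\rangle = 0$.

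Finally I would push this back down via $\theta$: naturality gives $\theta_*\langle s_1,s_2,s_3\rangle\subseteq \langle\varphi_1,\varphi_2,\varphi_3\rangle$, and $\theta_*$ carries $H^1(S^\bullet)^2$ onto $H^1(R^\bullet)^2$ because it is an isomorphism on $H^1$. Since $\langle s_1,s_2,s_3\rangle$ is the zero coset, a representative of $\langle\varphi_1,\varphi_2,\varphi_3\rangle$ lies in $H^1(R^\bullet)^2$, so this single coset meets $H^1(R^\bullet)^2$ and hence equals it; that is, the Massey triple product vanishes. I expect the main technical care to lie in the coset bookkeeping of the naturality step: a CDGA map only produces a \emph{containment} of Massey products, and one must verify that the isomorphism-on-$H^1$ and injectivity-on-$H^2$ hypotheses are exactly strong enough to convert these containments, through the zig-zag $R^\bullet \xleftarrow{\theta} S^\bullet \xrightarrow{\phi} H^\bullet(R^\bullet)$, into the desired vanishing.
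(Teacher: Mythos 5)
Your argument is correct. Note that the paper does not actually supply a proof of this proposition: it is stated with a \qed{} as a standard fact of rational homotopy theory, so there is no in-paper argument to compare against; your write-up fills that gap. What you give is the standard zig-zag naturality proof, and all the delicate points are handled properly: (a) lifting the $\varphi_i$ through the isomorphism $\theta_*$ on $H^1$ and using injectivity of $\theta_*$ on $H^2$ to see that the lifted classes still satisfy $s_1\wedge s_2 = s_2\wedge s_3 = 0$, so the triple product upstairs is defined; (b) the observation that a CDGA morphism only yields a \emph{containment} of cosets, $f_*\bigl(m + H^1(A^\bullet)^2\bigr) \subseteq f_*m + H^1(B^\bullet)^2$, because $f_*\bigl(H^1(A^\bullet)^2\bigr) \subseteq H^1(B^\bullet)^2$; (c) the fact that in $H^\bullet(R^\bullet)$ with zero differential the bounding elements may be taken to be $0$, so the product there is the zero coset; and (d) the conversion of $\phi_*m \in H^1(R^\bullet)^2 = \phi_*\bigl(H^1(S^\bullet)^2\bigr)$ into $m \in H^1(S^\bullet)^2$ via injectivity of $\phi_*$ on $H^2$, which is exactly where the injectivity hypothesis earns its keep. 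One cosmetic remark: in the final step you only need $\theta_*\bigl(H^1(S^\bullet)^2\bigr) \subseteq H^1(R^\bullet)^2$, not surjectivity, since $\theta_*m$ being a representative of $\langle\varphi_1,\varphi_2,\varphi_3\rangle$ lying in $H^1(R^\bullet)^2$ already forces that coset to be zero; but invoking surjectivity does no harm.
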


\subsection{A Nontrivial Massey Triple Product}
\label{MTPSection}

\noindent Let $X \subset \C^2$ denote the complement of the braid arrangement $\mathcal{B}$. Let $\T$ denote the character torus. The intersection of $\mathcal{B}$ with $\mathbb{R}^2$ is shown below.

\begin{figure}[!ht]
\label{Braid34}
\centering\epsfig{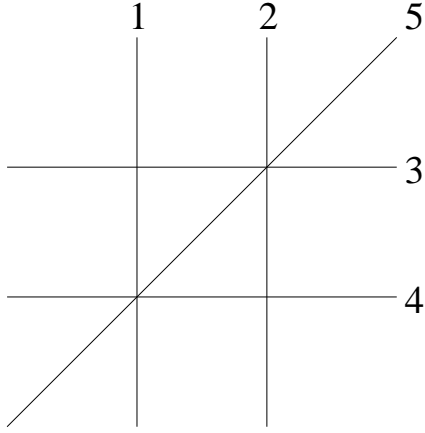}
\caption{The braid arrangement $\mathcal{B}$}
\end{figure}

\begin{theorem}
\label{MyTheorem}
There exist infinitely many $\vrho \in \T^2$ for which $H^2(X,\O_\vrho)$ has a nonzero Massey triple product of degree-one elements. Thus, the commutative differential graded algebra $E^\bullet(X,\O_\vrho)$ is not $1$-formal and the pronilpotent Lie algebra $\u_\vrho$ is not quadratically presented.
\end{theorem}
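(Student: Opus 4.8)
The plan is to transport the problem into the combinatorial model $\A_\a^\bullet$ and exhibit the Massey product there. By Theorem \ref{OSATheorem}, there is a countable collection $\{\W_j\}$ of proper affine subspaces of $M_{2\times 5}(\C)$ not containing $0$ such that, whenever $\a\notin\bigcup_j\W_j$ and $\vrho=\exp(\a\bomega^T)$ is Zariski dense in $\G_m^2$, the canonical map $\A_\a^\bullet\to E^\bullet(X,\O_\vrho)$ is a quasi-isomorphism of commutative differential graded algebras and hence induces an isomorphism $H^\bullet(\A_\a^\bullet)\cong H^\bullet(X,\O_\vrho)$ of graded algebras. A homomorphism of commutative differential graded algebras that induces a ring isomorphism on cohomology carries Massey triple products to Massey triple products and sends the indeterminacy $H^1(\A_\a^\bullet)^2$ isomorphically onto $H^1(X,\O_\vrho)^2$; therefore a nonzero Massey triple product of degree-one classes in $H^2(\A_\a^\bullet)$ produces a nonzero one in $H^2(X,\O_\vrho)$. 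Thus it suffices to exhibit, for a suitable family of matrices $\a$, a nonzero Massey triple product in $H^2(\A_\a^\bullet)$.

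Next I would construct the classes explicitly inside the finite-dimensional, bigraded algebra $\A_\a^\bullet=\bigoplus_{\k\in\Z^2}A^\bullet q_1^{-k_1}q_2^{-k_2}$, using that a degree-one element $\eta\,q^{-\k}$ with $\eta\in A^1$ is closed precisely when $(\k\a\bomega^T)\wedge\eta=0$ in $A^2$, and that the product of classes in components $\k^{(1)}$ and $\k^{(2)}$ lands in component $\k^{(1)}+\k^{(2)}$ with differential given by left multiplication by $-(\k^{(1)}+\k^{(2)})\a\bomega^T$. I would fix a low-dimensional affine family $\mathbf{V}$ of $2\times 5$ matrices designed so that the three multi-indices $\k^{(1)},\k^{(2)},\k^{(3)}$ and their partial sums $\k^{(1)}+\k^{(2)}$, $\k^{(2)}+\k^{(3)}$ make the relevant twisted Orlik--Solomon differentials \emph{resonant} (i.e.\ land on components of the resonance variety of $\mathcal B$). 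Concretely: choose $\eta_1,\eta_2,\eta_3\in A^1$ representing closed classes in the three components, and use the Arnold relations among $\omega_1,\dots,\omega_5$ recorded by the dependent triples of the intersection poset of $\mathcal B$ to produce $\xi_{12},\xi_{23}\in A^1$ with $\eta_1\wedge\eta_2=-\big((\k^{(1)}+\k^{(2)})\a\bomega^T\big)\wedge\xi_{12}$ and $\eta_2\wedge\eta_3=-\big((\k^{(2)}+\k^{(3)})\a\bomega^T\big)\wedge\xi_{23}$, so that both pairwise products vanish in cohomology and the Massey product is defined. Its representative is then $\big(\xi_{12}\wedge\eta_3+\eta_1\wedge\xi_{23}\big)\,q^{-(\k^{(1)}+\k^{(2)}+\k^{(3)})}$, a closed element of the component $\k^{(1)}+\k^{(2)}+\k^{(3)}$, with signs as in Section \ref{MTPOneFormality}.

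The main obstacle is the final step: proving that this representative is nonzero in $H^2(\A_\a^\bullet)/H^1(\A_\a^\bullet)^2$ for generic $\a\in\mathbf{V}$. This reduces to a finite linear-algebra computation inside $A^\bullet$ — computing the twisted cohomology $H^2\big(A^\bullet,-(\k^{(1)}+\k^{(2)}+\k^{(3)})\a\bomega^T\big)$ of the relevant component, the space of boundaries, and the image $H^1\cdot H^1$ of products of degree-one classes landing in that multidegree — and checking that the class of $\xi_{12}\wedge\eta_3+\eta_1\wedge\xi_{23}$ avoids both. I would carry this out by writing the matrices of the maps $A^1\to A^2$ and $A^1\otimes A^1\to A^2$ in the monomial basis of the Orlik--Solomon algebra and verifying the required rank conditions at one explicit rational $\a_0\in\mathbf{V}$; since nonvanishing of the relevant minors is a Zariski-open condition on $\mathbf{V}$, it then holds off a proper subvariety. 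Finally, to obtain infinitely many $\vrho$, I note that the conditions ``$\a\notin\bigcup_j\W_j$'', ``$\vrho$ Zariski dense'' (equivalently $\k\a\bomega^T$ non-integral for all $\k\in\Z^2\setminus\{0\}$), and ``Massey product nonzero'' each exclude only countably many proper subvarieties of the positive-dimensional irreducible $\mathbf{V}$, so the good locus is uncountable. Since $\a\mapsto\vrho=\exp(\a\bomega^T)$ is at most countable-to-one, these $\a$ yield infinitely many distinct $\vrho\in\T^2$ with a nonzero Massey triple product in $H^2(X,\O_\vrho)$. Non-$1$-formality of $E^\bullet(X,\O_\vrho)$, and hence the failure of quadratic presentation of $\u_\vrho$, then follow from Proposition \ref{1FormalMassey} and the Important Fact of Section \ref{CommDiffE}.
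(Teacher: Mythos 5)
Your overall architecture coincides with the paper's: pass to $\A_\a^\bullet$ via Theorem \ref{OSATheorem}, exhibit a nonzero Massey triple product of degree-one classes there, and conclude non-$1$-formality from Proposition \ref{1FormalMassey}. But the decisive step is never performed, and it is the entire content of Theorem \ref{MyTheorem}: one must actually produce the matrix family, the closed classes, and a verification that the resulting Massey product is nonzero modulo $H^1(\A_\a^\bullet)^2$. Your proposal describes a search procedure (``I would construct\dots'', ``I would carry this out\dots'') without exhibiting any of these data, so nothing is yet proved; a priori it is conceivable that all such Massey products for the braid arrangement vanish. The paper does the work explicitly: it takes $\mathbf{V}$ spanned by a single explicit matrix $\pmb{b}$, uses the classes $[(\w_1-\w_4)q_2]$, $[(\w_2-\w_3)q_1]$, $[(\w_2-\w_3)q_1]$ (the repetition makes one of the two required vanishing hypotheses trivial, since $(\w_2-\w_3)(\w_2-\w_3)=0$), checks by hand that $(\w_1-\w_4)(\w_2-\w_3)q_1q_2$ is exact, and then proves nonvanishing.

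The precise point where your plan would stall is the claim that nonvanishing in $H^2(\A_\a^\bullet)/H^1(\A_\a^\bullet)^2$ ``reduces to a finite linear-algebra computation.'' As stated it does not. Since $H^1(\A_\a^\bullet)=\bigoplus_{\k\in\Z^2}H^1(A^\bullet,-\k\a\bomega^T)$, the products of degree-one classes landing in the target component (multidegree $(2,1)$, i.e.\ $A^2q_1^2q_2$, in the paper's choice) arise from \emph{every} splitting $(2,1)=(s_1,t_1)+(s_2,t_2)$ over $\Z^2$, and there are infinitely many such splittings. Reducing to a finite check requires knowing which components carry nonzero $H^1$, i.e.\ knowing the characteristic variety $\V_1^1(X)$ of the braid arrangement. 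This is exactly the external input the paper imports from \cite[Example 10.3]{Suciu1}: $H^1(X,\L_{\rho_1^s\rho_2^t})\neq 0$ forces $st=0$, and $\dim_\C H^1(X,\L)\leq 1$ for every nontrivial rank-one local system $\L$. Only with both facts does the indeterminacy collapse to a single product $\varphi\beta$ with $\varphi$ closed in $A^1q_1^2$ and $\beta$ closed in $A^1q_2$, after which the paper derives a contradiction from the six linear equations expressing $\w_2\w_3=\varphi\beta+\psi(2\alpha_1+\alpha_2)$ in a basis of $A^2$. Your proposal invokes resonance only when choosing the Massey system, not when controlling the indeterminacy, and the latter is where the real difficulty lies. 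A secondary weak point: the paper's contradiction is uniform in the parameter $r$, so it holds for every admissible $\a$ in the line $\mathbf{V}$; your alternative of verifying rank conditions at one rational $\a_0$ and spreading by Zariski openness needs justification, because the subspace of $A^2q_1^2q_2$ spanned by coboundaries and by products of degree-one classes varies with $\a$ and its dimension can jump, so ``the representative avoids it'' is not obviously an open condition.
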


\begin{proof}
Let the hyperplanes be numbered as indicated. Define elements $\lambda_1$ and $\lambda_2$ of the Orlik-Solomon algebra $A^\bullet$ by $\lambda_1 = \w_2 + \w_3 - 2\w_5$ and $\lambda_2 = -\w_1-\w_4 + 2\w_5$. Let $\mathbf{V}$ be the one dimensional vector subspace of $M_{2 \times 5}(\C)$ spanned by
$$
\pmb{b} \, = \, \begin{pmatrix}
0 & 1 & 1 & 0 & -2\\
-1 & 0 & 0 & -1 & 2
\end{pmatrix}.
$$
Given an element $\a = r \pmb{b}$ of $\mathbf{V}$, where $r \in \C$, the element $\vrho = \exp(\a\bomega^T)$ of $\T^2 = H^1(X,(\C^*)^2)$ is given by $\vrho = (\rho_1,\rho_2)$, where $\rho_1, \rho_2 \in H^1(X,\C^*) \cong (\C^*)^5$ and
\begin{equation*}
\begin{aligned}
\text{$\rho_1 = (1,e^{r},e^{r},1,e^{-2r})$\,\, and\,\, $\rho_2 = (e^{-r },1,1,e^{-r},e^{2r})$.}
\end{aligned}
\end{equation*}
The image of $\vrho \colon \pi_1(X,x_0) \to (\C^*)^2$ contains both $(1,e^{-r})$ and $(e^{r },1)$. Thus, by applying Theorem \ref{OSATheorem} to $\mathbf{V}$, there are infinitely many $r \in \C$ such that $\vrho$ has Zariski dense image in $\G_m^2$ and such that the map $H^\bullet(\A_{\a}^\bullet) \to H^\bullet(X,\O_\vrho)$ is an isomorphism. To prove the theorem, it therefore suffices to show that there exists a nonzero Massey triple product of 1-forms in $H^2(\A_{\a}^\bullet)/H^1(\A_{\a})^2$. Define closed holomorphic 1-forms $\alpha_1$ and $\alpha_2$ in $A^\bullet$ by $\alpha_j = r \lambda_j$. The cohomology classes $[\alpha_j]$ lie in $H^1(X,\C)$, and we have
$$
\a\bomega^T = \binom{\alpha_1}{\alpha_2}.
$$

The cup product in $A^\bullet$ will be written as juxtaposition. The elements $(\w_2-\w_3)\cdot q_1$ and $(\w_1-\w_4)\cdot q_2$ are closed in $\A_{\a}^\bullet$, as $\alpha_1 (\w_2-\w_3) = \alpha_2 (\w_1-\w_4) = 0$. The product $(\w_1-\w_4)(\w_2-\w_3)\cdot q_1q_2$ lies in $A^\bullet\cdot q_1q_2 \subset \A_{\a}^\bullet$. We have
$$
\begin{aligned}
\nabla_{\a}\bigg{[}\biggl-\frac{1}{r}\w_2 - \frac{1}{r}\w_3\biggr\cdot q_1q_2\bigg{]} &=(-\alpha_1-\alpha_2)\biggl-\frac{1}{r}\w_2 - \frac{1}{r}\w_3\biggr\cdot q_1q_2 \\
&= (r\w_1 - r\w_2 - r\w_3 + r\w_4)\biggl-\frac{1}{r}\w_2 - \frac{1}{r}\w_3\biggr \cdot q_1 q_2\\
&= (\w_2\w_4-\w_1\w_3)\cdot q_1q_2\\
&= (\w_1-\w_4)(\w_2-\w_3)\cdot q_1q_2.
\end{aligned}
$$
That is, the cohomology class $[(\w_1-\w_4)(\w_2-\w_3)\cdot q_1q_2]$ in $H^2(\A_{\a}^\bullet)$ is trivial. Trivially, we have $((\w_2-\w_3)\cdot q_1)^2 = 0$. The Massey triple product
\begin{equation}
\label{MTP333}
\big{\langle} [(\w_1-\w_4)\cdot q_2], [(\w_2-\w_3)\cdot q_1],[(\w_2-\w_3)\cdot q_1]\big{\rangle}
\end{equation}
is therefore defined. We show that it is nonzero in $H^2(\A^\bullet_{\a})/H^1(\A^\bullet_{\a})^2$. This Massey triple product is equal to
$$
\bigg{[}\biggl-\frac{1}{r}\w_2 - \frac{1}{r}\w_3\biggr\biggl\w_2-\w_3\biggr\cdot q_1^2q_2\bigg{]} = \frac{2}{r}\bigg{[}\w_2\w_3\cdot q_1^2q_2\bigg{]} \, \, + \, H^1(\A^\bullet_{\a})^2.
$$
To show that it is nonzero, it suffices to show that $[\w_1\w_2\cdot q_1^2q_2]$ is not equal a sum of cup products of elements in $H^1(\A_{\a}^\bullet)$.

First, we show that if this Massey triple product is trivial, then $[\w_1 \w_2 \cdot q_1^2q_2]$ is equal to a sum of cup products of cohomology classes of elements in $A^\bullet\cdot q_1^2$ with cohomology classes of elements in $A^\bullet\cdot q_2$. To prove this statement, it would suffice show to that if $H^1(X,\L_{\rho_1^s\rho_2^t}) \neq 0$, then $st = 0$. We thus consider the characteristic variety $\V_1^1(X)$, which is completely described by Suciu in \cite[Example 10.3]{Suciu1}. We have
$$
s\alpha_{1} + t\alpha_{2} = -rt\w_1 + rs\w_2 + rs\w_3 - rt\w_4 + 2r(t-s)\w_5.
$$
If the character $\rho_1^s\rho_2^t$ is an element of $V_1^1(X)$, then by Suciu's description of the variety $\V_1^1(X)$, $s$ or $t$ must be zero. Thus, we have shown that if the Massey triple product (\ref{MTP333}) is trivial, then $[\w_1 \w_2 \cdot q_1^2q_2]$ is equal to a sum of cup products of cohomology classes of elements in $A^1\cdot q_1^2$ with cohomology classes of elements in $A^1\cdot q_2$.

By \cite[Example 10.3]{Suciu1}, we know that $\dim_\C H^1(X,\L) \leq 1$ for all nontrivial rank-one local systems $\L$ on $X$. Thus, if the Massey triple product (\ref{MTP333}) is trivial, then $[\w_1 \w_2 \cdot q_1^2q_2]$ is equal to the cup product of the cohomology class of an element in $A^\bullet\cdot q_1^2$ with the cohomology class of an element in $A^\bullet\cdot q_2$. That is, we can write
$$
\w_2\w_3 = \varphi\beta + \psi(2\alpha_1 + \alpha_2)
$$
as forms in the Orlik-Solomon algebra $A^\bullet$, where $\alpha_1\wedge\varphi = 0$ and $\alpha_2\wedge\beta = 0$. By an elementary argument using the fact that $H^1(X,\L_{\rho_1^2})$ and $H^1(X,\L_{\rho_2})$ are both one-dimensional, there are complex numbers $f_1$, $f_2$, $f_3$, $f_4$, $f_5$, $x$, $y$, $g$, and $h$ such that the following equalities hold.
$$
\begin{aligned}
\psi &= \frac{f_1}{r}\w_1 + \frac{f_2}{r}\w_2 + \frac{f_3}{r}\w_3 + \frac{f_4}{r}\w_4 + \frac{f_5}{r}\w_5\\
\varphi &= x \w_2 + y\w_3 - (x+y)\w_5\\
\beta &= g \w_1 + h\w_4 - (g+h)\w_5
\end{aligned}
$$
We have
$$
\begin{aligned}
&\psi(2\alpha_1 + \alpha_2) =\\
 & (f_1\w_1 + f_2\w_2 + f_3\w_3 + f_4\w_4 + f_5\w_5)(-\w_1 + 2\w_2 + 2\w_3 - \w_4 - 2\w_5)\\
&= 2f_1\w_1\w_3 - f_1\w_1\w_4 - 2f_1\w_1\w_5 + 2f_2\w_2\w_3 - f_2\w_2\w_4 - 2f_2\w_2\w_5 \\
& \hspace{1 in} + f_3\w_1\w_3 - 2f_3\w_2\w_3 - 2f_2\w_3\w_5 + f_4\w_1\w_4 - 2f_4\w_2\w_4\\
& \hspace{1 in} - 2f_4\w_4\w_5 + f_5\w_1\w_5 - 2f_5\w_2\w_5 - 2f_5\w_3\w_5 + f_5\w_4\w_5\\
&= 2f_1\w_1\w_3 - f_1\w_1\w_4 - 2f_1\w_1\w_5 + 2f_2\w_2\w_3 - f_2\w_2\w_4 - 2f_2\w_2\w_5 \\
& \hspace{1 in} + f_3\w_1\w_3 - 2f_3\w_2\w_3 + 2f_3\w_2\w_3 - 2f_3\w_2\w_5\\
& \hspace{1 in} + f_4\w_1\w_4 - 2f_4\w_2\w_4 + 2f_4\w_1\w_4 - 2f_4\w_1\w_5\\
& \hspace{1 in} + f_5\w_1\w_5 - 2f_5\w_2\w_5 + 2f_5\w_2\w_3 - 2f_5\w_2\w_5\\
& \hspace{1 in} - f_5\w_1\w_4 + f_5 \w_1\w_5
\end{aligned}
$$
We also have
$$
\begin{aligned}
\varphi\beta &= (x \w_2 + y\w_3 - (x+y)\w_5)(g \w_1 + h\w_4 - (g+h)\w_5)\\
&= xh\w_2\w_4 - x(g+h)\w_2\w_5 - yg\w_1\w_3 - y(g+h)\w_3\w_5 \\
&\hspace{1.4 in} + g(x+y)\w_1\w_5 + h(x+y)\w_4\w_5\\
&= xh \w_2\w_4 - x(g+h)\w_2\w_5 - yg\w_1\w_3 -y(g+h)\w_2\w_5 + y(g+h)\w_2\w_3\\
&\hspace{1.4 in} + g(x+y)\w_1\w_5 + h(x+y)\w_1\w_5 - h(x+y)\w_1\w_4
\end{aligned}
$$
Since $\{\w_1\w_3,\w_1\w_4,\w_1\w_5,\w_2\w_3\,\w_2\w_4,\w_2\w_5\}$ is a basis for $A^2$ and $\w_2\w_3 = \varphi\beta + \psi(2\alpha_{\rho_1} + \alpha_{\rho_2})$, we have the following set of equations.
$$
\begin{aligned}
2f_1 + f_3 - y g &= 0 &\hspace{.5 in} \textrm{(1)}\\
-f_1 + 3f_4 - f_5 - (x+y)h &= 0 &\hspace{.5 in} \textrm{(2)}\\
-2f_1-2f_4+2f_5 + (x+y)(g+h) &= 0 &\hspace{.5 in} \textrm{(3)}\\
2f_2 + 2f_5 + y(g+h) &= 1 &\hspace{.5 in} \textrm{(4)}\\
-f_2 - 2 f_4 + xh &= 0 &\hspace{.5 in} \textrm{(5)}\\
-2f_2 - 2f_3 - 4f_5 - (x+y)(g+h) &= 0 &\hspace{.5 in} \textrm{(6)}
\end{aligned}
$$
Adding (3) and (6) together gives
\begin{equation}
\label{WrongEquation1}
f_1 + f_2 + f_3 + f_4 + f_5 = 0.
\end{equation}
Note that $y (g+h) = (x+y) h + yg - xh$. Thus, (1), (5), and (2) imply that
$$
\begin{aligned}
y(g+h) &= (-f_1 + 3 f_4 - f_5) + (2 f_1 +  f_3) + (-f_2 - 2 f_4)\\
&= f_1 - f_2 + f_3 + f_4 - f_5.
\end{aligned}
$$
Plugging this in for $y(g+h)$ in (4) yields
\begin{equation}
\label{WrongEquation2}
\begin{aligned}
2 f_2  + 2 f_5 + y(g+h) & = 1\\
f_1 + f_2 + f_3 + f_4 + f_5 &= 1.
\end{aligned}
\end{equation}
Equations (\ref{WrongEquation1}) and (\ref{WrongEquation2}) are incompatible. This completes the proof.
\end{proof}

\section{Chen's Reduced Bar Construction}
\label{Bar}

\noindent In this section, we review Chen's (\cite{Chen2}, \cite{Chen3}) reduced bar construction $B(M,R^\bullet,N)$. We prove that under certain conditions, it is a differential graded Hopf algebra. In particular, the Hopf algebra $H^0(B(\C,E^\bullet(X,\O_\vrho),\O(D_\vrho)))$ is the coordinate ring of the relative Malcev completion $\S_\vrho$.

\subsection{Definition of the Reduced Bar Construction}
\label{DefOfBarConstr}

\noindent Suppose that $R^\bullet$ is a commutative differential graded algebra over a commutative ring $F$ and that $M$ and $N$ are graded $R^\bullet$-algebras that only have degree-zero elements. Then $rm = 0$ and $rn = 0$ for all $r \in R^+$, $m \in M$, and $n \in N$. We assume that $R^\bullet$ is non-negatively weighted. Define $R^+[1]$ to be the $F$-module obtained from $R^+$ be reducing degrees by $1$. Chen's (\cite{Chen2}, \cite{Chen3}) {\em reduced bar construction} $B(M,R^\bullet,N)$ is a quotient of the $F$-module
$$T(M,R^\bullet,N) \colon = \bigoplus_{s \geq 0} M \otimes (R^+[1]^{\otimes s}) \otimes N.$$ For $m \otimes (r_1\otimes \cdots \otimes r_s) \otimes n \in T(M,R^\bullet,N)$, we write $m[r_1|\cdots |r_s]n$ for its equivalence class in $B(M,R^\bullet,N)$. The relations in $B(M,R^\bullet,N)$ are given below.
\begin{equation*}
\begin{aligned}
m [d g | r_1 | \cdots | r_s] n &=  m [g r_1 | \cdots | r_s] n - m \cdot g [r_1|\cdots| r_s] n;\\
m [r_1|\cdots |r_j | dg |r_{j+1} | \cdots | r_s] n &= m[r_1 |\cdots | r_j |g r_{j+1}|\cdots | r_s]n\\
&\hspace{.15 in} - m [r_1 |\cdots| r_j g | r_{j+1} |\cdots| r_s] n \hspace{.3 in} 1 \leq j < s;\\
m [r_1| \cdots |r_s |d g]n &= m[r_1|\cdots| r_s] g\cdot n - m [r_1|\cdots| r_s g] n;\\
m [ d g ] n &=  1 \otimes g\cdot n - m\cdot g \otimes 1.
\end{aligned}
\end{equation*}
Here each $r_j \in R^+$, $g \in R^0$, $m \in M$, and $n \in N$.

The reduced bar construction $B(M,R^\bullet,N)$ has the structure of a commutative differential graded algebra over $F$. The degree of the element $m[r_1|\cdots|r_s]n$ is defined to be $\deg(r_1) + \cdots + \deg(r_s) - s$. Define an endomorphism $J$ of $R^\bullet$ by $J(r) = (-1)^{\deg r} r$ for each homogeneous element $r$. The differential on $B(M,R^\bullet,N)$ is defined by
\begin{equation}
\label{ddef}
\begin{aligned}
d \hspace{.05 in} m[r_1|\cdots|r_s]n &= \sum_{1 \leq j \leq s} (-1)^j m[J r_1|\cdots|J r_{j-1}| d r_j | r_{j+1}|\cdots | r_s]n\\
&\hspace{.15 in} + \sum_{1 \leq j < s} (-1)^{i+1} m[J r_1|\cdots | J r_{j-1}| J r_j \wedge r_{j+1} | r_{j+1} | \cdots | r_s]n.
\end{aligned}
\end{equation}
The product in $B(M,R^\bullet,N)$ is given by
$$
(m[r_1|\cdots|r_p]n)\cdot (m'[r_{p+1}|\cdots|r_{p+q}]n') = \sum_{\sigma \in Sh(p,q)} m m' [r_{\sigma(1)}|\cdots |r_{\sigma(p+q)}] n n',
$$
where $Sh(p,q)$ denotes the set of shuffles of type $(p,q)$. With this product, the reduced bar construction $B(M,R^\bullet,N)$ is a commutative differential graded $F$-algebra. The map $F \to B(M,R^\bullet,N)$ is given by $1 \mapsto [\,\phantom{i}]$.

If $R^\bullet \to A^\bullet$ is a surjective homomorphism of commutative differential graded $F$-algebras, then $M \otimes_{R^\bullet} A^\bullet$ and $N \otimes_{R^\bullet} A^\bullet$ are $A^\bullet$-modules. Their annihilators contain $A^+$. Thus, we may form the reduced bar construction $B(M \otimes_{R^\bullet}A^\bullet ,A^\bullet, N \otimes_{R^\bullet}A^\bullet)$.

\begin{proposition}
\label{BarSpecializationProposition}
If $R^\bullet \to A^\bullet$ is a surjective homomorphism of commutative differential graded $F$-algebras, then the canonical homomorphism
\begin{equation}
\label{BarSpec222}
B(M,R^\bullet,N) \otimes_{R^\bullet} A^\bullet \, \longrightarrow \, B(M \otimes_{R^\bullet} A^\bullet, A^\bullet, N \otimes_{R^\bullet} A^\bullet)
\end{equation}
is an isomorphism of differential graded $A^\bullet$-algebras.
\end{proposition}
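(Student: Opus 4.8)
The plan is to realize the canonical map $\Phi$ of (\ref{BarSpec222}) as the map induced by functoriality of the reduced bar construction applied to the surjection $\phi\colon R^\bullet\to A^\bullet$ together with the coefficient maps $M\to M':=M\otimes_{R^\bullet}A^\bullet$ and $N\to N':=N\otimes_{R^\bullet}A^\bullet$, and then to prove that it is an isomorphism by base-changing the defining presentation of the bar construction. First I would record the basic facts that make both sides defined: since $\phi$ is surjective and $R^+$ annihilates $M$ and $N$, the modules $M'$ and $N'$ are concentrated in degree zero and annihilated by $A^+$ — for $a\in A^k$ with $k\geq 1$ one writes $a=\phi(r)$ with $r\in R^+$, whence $m\otimes a=m\otimes r\cdot 1=mr\otimes 1=0$ — so $B(M',A^\bullet,N')$ makes sense, in agreement with the excerpt. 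I would then check that $\Phi$ is a well-defined morphism of differential graded $A^\bullet$-algebras: because $\phi$ is a homomorphism of commutative differential graded algebras, it carries the four defining relations of $B(M,R^\bullet,N)$ (which involve only products, the $R^0$-action, and the operator $d$) to the corresponding relations of $B(M',A^\bullet,N')$, and it commutes with the differential (\ref{ddef}) and the shuffle product because $\phi(dg)=d\phi(g)$ and $\phi$ respects $\wedge$.

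The heart of the argument is to apply the right-exact functor $-\otimes_{R^\bullet}A^\bullet$ to the presentation of the bar construction. By definition there is an exact sequence
\begin{equation*}
\mathrm{Rel}\longrightarrow T(M,R^\bullet,N)\longrightarrow B(M,R^\bullet,N)\longrightarrow 0,
\end{equation*}
where $\mathrm{Rel}$ is the submodule generated by the four families of relations. Right-exactness of base change reduces the claim to two identifications, compatible with $\Phi$: first, that
\begin{equation*}
T(M,R^\bullet,N)\otimes_{R^\bullet}A^\bullet\;\cong\;T(M',A^\bullet,N'),
\end{equation*}
and second, that under this identification the image of $\mathrm{Rel}\otimes_{R^\bullet}A^\bullet$ is precisely the relation submodule $\mathrm{Rel}'$ of the target. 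For the first point I would argue term by term in the bar length $s$: each generator is a tensor built from elements of $M$, finitely many elements of $R^+$, and $N$, and $\Phi$ sends it to the corresponding generator built via $\phi$ and the coefficient maps; surjectivity of $\phi$ on positive-degree parts guarantees that these images exhaust the generators of $T(M',A^\bullet,N')$. For the second point I would use that $\phi$ is a surjective chain map: every generator of $\mathrm{Rel}'$ is assembled from elements $\bar g\in A^0$, $\bar a_j\in A^+$, and the operator $d$, each of which lifts along $\phi$ (with $d\bar g=\phi(dg)$), so every such generator is the image of a generator of $\mathrm{Rel}$.

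The step I expect to be the main obstacle is the injectivity of $\Phi$, i.e. the assertion that \emph{no} relations appear on the target beyond the base change of those on the source. Equivalently, I must show that the natural surjection $\mathrm{Rel}\otimes_{R^\bullet}A^\bullet\to\mathrm{Rel}'$ lands on the full relation submodule of $B(M',A^\bullet,N')$, not merely inside it. Concretely this is the statement that any ambiguity in lifting a bar symbol $\bar m[\bar a_1|\cdots|\bar a_s]\bar n$ of the target — coming from the freedom to alter each $\bar a_j$ by an element of $\ker\phi\subset R^\bullet$ and to alter the coefficient lifts — is annihilated after applying $\otimes_{R^\bullet}A^\bullet$. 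Since $\ker\phi$ acts as zero on $A^\bullet$, these ambiguities are exactly the elements killed in the base change, so the candidate inverse of $\Phi$ is well defined. Assembling the bar-length filtration on both sides, the map $\Phi$ then induces an isomorphism on associated graded pieces, and hence is itself an isomorphism of differential graded $A^\bullet$-algebras.
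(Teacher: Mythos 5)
Your proposal follows the paper's proof essentially verbatim: the paper likewise base-changes the exact sequence $0 \to Rel_R \to T(M,R^\bullet,N) \to B(M,R^\bullet,N) \to 0$ along $-\otimes_{R^\bullet}A^\bullet$, identifies $T(M,R^\bullet,N)\otimes_{R^\bullet}A^\bullet$ with $T(M\otimes_{R^\bullet}A^\bullet,A^\bullet,N\otimes_{R^\bullet}A^\bullet)$, and deduces surjectivity of the canonical map from surjectivity of the quotient map onto $B(M\otimes_{R^\bullet}A^\bullet,A^\bullet,N\otimes_{R^\bullet}A^\bullet)$ and injectivity from surjectivity of the induced map $Rel_R\otimes_{R^\bullet}A^\bullet \to Rel_A$ between relation submodules. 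Your lifting argument (generators of the target relations lift along the surjection $\phi$ using $d\phi(g)=\phi(dg)$ and the surjectivity of $M\to M\otimes_{R^\bullet}A^\bullet$, $N\to N\otimes_{R^\bullet}A^\bullet$) is precisely the content that the paper leaves implicit when it asserts that latter surjectivity, so the two proofs coincide.
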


\begin{proof}
For simplicity of notation, all tensor product symbols in the proof are assumed to be over $R^\bullet$. Let $Rel_R$ denote the $R^\bullet$-submodule of $T(M,R^\bullet,N)$ consisting of all elements that have trivial equivalence class in $B(M,R^\bullet,N)$. Let $Rel_A$ denote the $A^\bullet$-submodule of $T(M \otimes A^\bullet, A^\bullet, N \otimes A^\bullet)$ consisting of all elements that have trivial equivalence class in $B(M \otimes A^\bullet, A^\bullet, N\otimes A^\bullet)$. The sequence
$$
0 \longrightarrow Rel_R \longrightarrow T(M,R^\bullet,N) \longrightarrow B(M,R^\bullet,N) \longrightarrow 0
$$
is exact. Thus, the following diagram commutes, and both rows are exact. All tensor product symbols indicate a tensor product over $R^\bullet$.
$$
\xymatrix{Rel_R \otimes A^\bullet \ar[d]^\phi \ar[r] & T(M,R^\bullet,N) \otimes A^\bullet \ar[d]^\cong \ar[r] & B(M,R^\bullet,N) \otimes A^\bullet \ar[d]\\ Rel_A \ar[r] & T(M \otimes A^\bullet, A^\bullet, N \otimes A^\bullet) \ar[r]^\theta & B(M \otimes A^\bullet, A^\bullet, N \otimes A^\bullet)}
$$
The homomorphism (\ref{BarSpec222}) is surjective because $\theta$ is surjective and injective because $\phi$ is surjective.
\end{proof}

\subsection{The Eilenberg-Moore Spectral Sequence}
\label{EMSS}

\noindent The reduced bar construction $B(M,R^\bullet,N)$ has a standard filtration
$$
F = B^0(M,R^\bullet,N) \subset B^{-1}(M,R^\bullet,N) \subset B^{-2}(M,R^\bullet,N) \dots
$$
The subspace $B^{-s}(M,R^\bullet,N)$ is defined to be the $F$-submodule of $B(M,R^\bullet,N)$ generated by those $m[r_1|\cdots|r_t]n$ with $t \leq s$. The second quadrant spectral sequence $E_n^{s,t}$ corresponding to this filtration is known as the {\em Eilenberg-Moore spectral sequence}. One always has $E_n^{-s,s} \Rightarrow H^0(B(M,R^\bullet,N))$.

\begin{proposition}[Chen, \cite{Chen2}]
\label{EMSS1}
If $H^0(R^\bullet) = F$, then the $E_1$ term of the Eilenberg-Moore spectral sequence is given by
$$
E_1 = B(M,H^\bullet(R^\bullet),N),
$$
where $H^\bullet(R^\bullet)$ is given the trivial differential. The differential $d_1$ is therefore given by the cup product. \qed
\end{proposition}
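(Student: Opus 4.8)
The plan is to run the spectral sequence directly off the two-piece structure of the bar differential relative to the length filtration $B^0 \subset B^{-1} \subset B^{-2} \subset \cdots$. Write $d = d_I + d_M$, where $d_I$ is the first sum in (\ref{ddef}), which applies the internal differential $dr_j$ entry-by-entry and preserves the number of bars, and $d_M$ is the second sum, which replaces adjacent entries $r_j, r_{j+1}$ by their product $Jr_j \wedge r_{j+1}$ and lowers the number of bars by one. Because $M$ and $N$ sit in degree zero and are annihilated by $R^+$, there are no further boundary terms, so $d_I$ preserves each $B^{-s}$ while $d_M$ carries $B^{-s}$ into $B^{-(s-1)}$. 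Hence the associated-graded differential is $d_0 = d_I$, and the residual differential $d_M$ is what induces $d_1$. The computation therefore breaks into three tasks: identify $(E_0, d_0)$, compute its cohomology by a Künneth argument to obtain $E_1$, and check that $d_M$ descends to the cup product.

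First I would determine $\gr^{-s}B(M,R^\bullet,N)$. Each defining relation equates a length-$(s+1)$ generator carrying an entry $dg$ with $g \in R^0$ to a combination of length-$s$ generators; such a generator thus lies in $B^{-s}$ and vanishes in $\gr^{-(s+1)}$. Consequently each bar slot becomes an element of $R^+$ read modulo $d(R^0)$, and $\gr^{-s}B(M,R^\bullet,N) \cong M \otimes_F (\widetilde{R}[1])^{\otimes s} \otimes_F N$ with $\widetilde{R} := R^+/d(R^0)$ and $d_0$ the tensor differential built from the internal differential on $\widetilde{R}$. The short exact sequence of complexes $0 \to d(R^0) \to R^+ \to \widetilde{R} \to 0$, with $d(R^0)$ concentrated in degree one, gives a long exact sequence showing $H^0(\widetilde{R}) = 0$ and $H^n(\widetilde{R}) \cong H^n(R^\bullet)$ for all $n \geq 1$; that is, $H^\bullet(\widetilde{R}) = H^+(R^\bullet)$. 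Note that this step does not itself use the hypothesis on $H^0$.

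Taking $F$ to be a field (as in the application $F = \C$, where Künneth is automatic; in general one needs flatness), the Künneth theorem applied slot-by-slot, using that $M$ and $N$ carry the zero differential, gives $H^\bullet(\gr^{-s}, d_0) \cong M \otimes_F (H^+(R^\bullet)[1])^{\otimes s} \otimes_F N$. This is exactly where $H^0(R^\bullet) = F$ is used: the target $B(M, H^\bullet(R^\bullet), N)$ is formed with the zero differential, so its reduction relations degenerate into the statement that a degree-zero cohomology class may be absorbed into $M$ and $N$; when $H^0(R^\bullet) = F$ these relations are vacuous and $B(M, H^\bullet(R^\bullet), N)$ is the free module $\bigoplus_s M \otimes_F (H^+(R^\bullet)[1])^{\otimes s} \otimes_F N$, matching $E_1$ term by term. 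Finally, since $d_M$ is assembled from the wedge product of $R^\bullet$, the differential it induces on $E_1$ is assembled from the induced product on $H^\bullet(R^\bullet)$, namely the cup product; this is precisely the bar differential of $B(M, H^\bullet(R^\bullet), N)$, which gives the assertion about $d_1$.

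The main obstacle is bookkeeping rather than conceptual. One must check carefully that the non-homogeneous defining relations interact with the length filtration exactly as claimed, so that $\gr^{-s}$ really is the normalized module $M \otimes (\widetilde{R}[1])^{\otimes s} \otimes N$ and nothing larger or smaller. The delicate slots are those of degree one: passing from $R^+$ to $\widetilde{R} = R^+/d(R^0)$ is what turns $H^1(R^+)$, which still records $d(R^0)$, into $H^1(R^\bullet)$, and it is easy to conflate this source-side normalization with the separate role of $H^0(R^\bullet) = F$, which governs the target. Keeping those two points distinct, and supplying flatness wherever Künneth is invoked, is the crux of a clean argument.
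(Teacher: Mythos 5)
The paper itself contains no proof of Proposition \ref{EMSS1}: it is stated as Chen's theorem and the terminal square is a citation to \cite{Chen2}, so there is no internal argument to compare yours against. Judged on its own terms, your proposal follows the standard route (which is also Chen's): filter by bar length, split the differential (\ref{ddef}) as $d = d_I + d_M$ with $d_I$ length-preserving and $d_M$ length-decreasing (correctly noting there are no endpoint terms because $R^+$ annihilates $M$ and $N$), identify $(E_0,d_0)$ with the normalized tensor complex built from $\widetilde{R} = R^+/d(R^0)$, compute its cohomology by K\"unneth using $H^\bullet(\widetilde{R}) \cong H^+(R^\bullet)$, and observe that $H^0(R^\bullet)=F$ is used only to collapse the defining relations of $B(M,H^\bullet(R^\bullet),N)$ (whose internal differential is zero) so that it equals $\bigoplus_s M \otimes_F (H^+(R^\bullet)[1])^{\otimes s}\otimes_F N$ slot by slot, with $d_M$ descending to the cup-product bar differential. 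The long-exact-sequence computation of $H^\bullet(\widetilde{R})$, the separation of the source-side normalization from the target-side role of $H^0(R^\bullet)=F$, and the flatness caveat for K\"unneth (genuinely needed, since the paper later applies the proposition over $F=\O(Y)$, not a field) are all correct.

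The one genuine gap is the step you yourself flag and then defer: the isomorphism $\gr^{-s}B(M,R^\bullet,N) \cong M \otimes_F (\widetilde{R}[1])^{\otimes s} \otimes_F N$. This is not bookkeeping that can be waved at, because the relation submodule of $T(M,R^\bullet,N)$ is not homogeneous for the length filtration: a combination of relations whose top-length terms cancel produces a relation of strictly smaller length, and one must prove that its leading term still lies in the span of tensors having an entry in $d(R^0)$. For example, expanding $m[dg|dh]n$ via the first-slot relation and via the last-slot relation and subtracting yields such a syzygy; it lands in the right span only because $g\,dh + h\,dg = d(gh)$, and an argument is required to show this always happens. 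The clean way to close the gap is a normal-form lemma: choose a linear splitting $R^1 = d(R^0)\oplus C^1$, set $C = C^1 \oplus R^{\geq 2}$, and show by induction on length that every class in $B^{-s}$ has a unique representative of length at most $s$ with all bar entries in $C$. That lemma is precisely the content supplied by Chen in \cite{Chen2}; without it (or an equivalent), your argument establishes the shape of $E_1$ only up to an unverified identification of $E_0$.
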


Note that the algebra $H^\bullet(R^\bullet)$ is a commutative differential graded algebra when equipped with the trivial differential. The algebra $R^\bullet$ is {\em formal} if there is a commutative differential graded algebra $S^\bullet$ and quasi-isomorphisms $S^\bullet \to R^\bullet$ and $S^\bullet \to H^\bullet(R^\bullet)$. The following proposition shows that the formality of $R^\bullet$ is closely related to the differentials in the Eilenberg-Moore spectral sequence.

\begin{proposition}
\label{prop2222}
If $R^\bullet$ is $1$-formal, then $E_2^{-s,s} = E_\infty^{-s,s}$, and if $R^\bullet$ is formal, then $E_2 = E_\infty$. \qed
\end{proposition}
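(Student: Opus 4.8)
The plan is to exploit the naturality of the Eilenberg--Moore spectral sequence together with the zig-zag of (possibly partial) quasi-isomorphisms provided by formality, transporting a degeneration from the trivial-differential model $H^\bullet(R^\bullet)$ back to $R^\bullet$. First I would record the shape of the spectral sequence. By Proposition~\ref{EMSS1}, $E_1 = B(M,H^\bullet(R^\bullet),N)$, and the total degree of a symbol $m[r_1|\cdots|r_s]n$ equals $\sum_j(\deg r_j - 1)\geq 0$ since each $r_j\in R^+$. Hence $E_1^{p,q}=0$ for $p+q<0$, the diagonal $E_r^{-s,s}$ (which converges to $H^0$) receives no differentials, and $E_{r+1}^{-s,s}=\ker\!\big(d_r\colon E_r^{-s,s}\to E_r^{-s+r,\,s-r+1}\big)$ for every $r$; so proving $E_2^{-s,s}=E_\infty^{-s,s}$ reduces to showing $d_r$ kills the diagonal for all $r\geq 2$. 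The same count identifies $E_1^{-s,s}=M\otimes H^1(R^\bullet)^{\otimes s}\otimes N$ (all entries of degree one) and identifies the one line $\{p+q=1\}$ that can receive differentials out of the diagonal as the span of symbols with exactly one entry in $H^2(R^\bullet)$ and all others in $H^1(R^\bullet)$.

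Second, I would analyze the model. For the algebra $H^\bullet(R^\bullet)$ with zero differential, the first sum in the bar differential (\ref{ddef}) vanishes, so the total differential is the cup-product term alone, which lowers bar length by exactly one. Thus $d=d_1$ and all higher differentials vanish, giving $E_2=E_\infty$ for the spectral sequence of $B(M,H^\bullet(R^\bullet),N)$. This is the degeneration to be transported.

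If $R^\bullet$ is formal, I would choose $S^\bullet$ with quasi-isomorphisms $\theta\colon S^\bullet\to R^\bullet$ and $\phi\colon S^\bullet\to H^\bullet(R^\bullet)$. Each is filtration preserving and hence induces a morphism of spectral sequences; since a quasi-isomorphism induces an isomorphism of graded cohomology algebras, both induce isomorphisms on $E_1=B(M,H^\bullet(-),N)$, and by the standard comparison theorem they are isomorphisms on every page. Combined with the degeneration of the model at $E_2$, this gives $E_2=E_\infty$ for $R^\bullet$. For the $1$-formal case the same $\theta,\phi$ now induce only isomorphisms on $H^0,H^1$ and injections on $H^2$; on $E_1$ this makes $\theta_*,\phi_*$ isomorphisms along the diagonal $\{p+q=0\}$ (only $H^1$ occurs) and injections along $\{p+q=1\}$ (one $H^2$ factor and the rest $H^1$, so over $\C$ a tensor product of injections, hence injective). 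I would then prove by induction on $r$ that $\theta_*$ and $\phi_*$ stay isomorphisms on $E_r^{-s,s}$ and injections on $\{p+q=1\}$: isomorphism on the diagonal comes from a kernel chase in the $d_r$-square (source iso, target injective, no incoming differentials), and injectivity on $\{p+q=1\}$ comes from a chase that uses only the incoming differential from the diagonal, so the line $\{p+q=2\}$ never intervenes. Granting this, $\phi_*$ transports the vanishing of $d_r$ on the model diagonal (valid for $r\geq 2$) through its injectivity on $\{p+q=1\}$ to force $d_r=0$ on the diagonal of $E(S^\bullet)$, and the diagonal isomorphism $\theta_*$ then forces $d_r=0$ on the diagonal of $E(R^\bullet)$ for all $r\geq 2$, i.e.\ $E_2^{-s,s}=E_\infty^{-s,s}$.

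The main obstacle is exactly this truncated comparison: the ordinary comparison theorem wants an isomorphism on a full page, which $1$-formality does not supply. The resolution is the structural observation that the diagonal is assembled only from $H^1$ while the unique target line for its differentials is assembled only from $H^1$ and $H^2$; this is precisely why the ``iso on $H^1$, injection on $H^2$'' content of $1$-formality suffices, and why no control over $H^{\geq 3}$ is ever needed.
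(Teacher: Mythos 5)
Your proof is correct, but note that there is no proof in the paper to compare it with: Proposition \ref{prop2222} is simply asserted (with a \qed) as a standard fact. Your argument supplies that missing standard proof, and all three of its pillars are sound: (i) the bar construction is non-negatively graded, so the diagonal $\{p+q=0\}$ receives no differentials and $E_{r+1}^{-s,s}=\ker\big(d_r|_{E_r^{-s,s}}\big)$; (ii) when the coefficient algebra has zero differential, the bar differential is pure of filtration degree one, so every class on every page of the model $B(M,H^\bullet(R^\bullet),N)$ is represented by a bihomogeneous cycle and $E_2=E_\infty$ there; (iii) a morphism of differential graded algebras induces a morphism of filtered bar constructions, hence of Eilenberg--Moore spectral sequences, whose $E_1$-term is Chen's identification. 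The formal case is then the usual full-page comparison, and the $1$-formal case rests on your truncated comparison, whose two diagram chases do close up exactly as you indicate: the diagonal stays an isomorphism at each page because its outgoing $d_r$ lands in the degree-one line, where the map is injective, and the degree-one line stays injective because its only incoming $d_r$ originates on the diagonal, where the map is surjective. Two points deserve to be made explicit. First, to apply Proposition \ref{EMSS1} to $S^\bullet$ you need $H^0(S^\bullet)=F$; this does follow from ($1$-)formality, since $\theta_*\colon H^0(S^\bullet)\to H^0(R^\bullet)=F$ is an isomorphism of $F$-algebras split by the unit. Second, the injectivity of $\theta_*$ and $\phi_*$ on the degree-one line of $E_1$ uses that a tensor product of injections is injective, which requires $F$ to be a field; this is consistent with the paper, which defines $1$-formality only for commutative differential graded $\C$-algebras.
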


\begin{corollary}
\label{BarQuism}
If $R^\bullet \hookrightarrow A^\bullet$ is a quasi-isomorphism of non-negatively weighted commutative differential graded $F$-algebras and $M$ and $N$ are graded $A^\bullet$-modules that have only degree-zero elements, then the map $B(M,R^\bullet,N) \to B(M,A^\bullet,N)$ induces an isomorphism
$$
H^\bullet B(M,R^\bullet,N) \overset{\cong}{\longrightarrow} H^\bullet B(M,A^\bullet,N)
$$
of graded $R^\bullet$-algebras. \qed
\end{corollary}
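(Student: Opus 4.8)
The plan is to compare Eilenberg--Moore spectral sequences. Write $f \colon R^\bullet \hookrightarrow A^\bullet$ for the given quasi-isomorphism, and regard $M$ and $N$ as $R^\bullet$-modules via $f$; since $A^+$ annihilates $M$ and $N$, so does $R^+$, and both reduced bar constructions are defined. Because $f$ is a homomorphism of commutative differential graded $F$-algebras and the bar construction is functorial in its middle argument, $f$ induces a homomorphism of commutative differential graded $F$-algebras
$$
B(f) \colon B(M,R^\bullet,N) \longrightarrow B(M,A^\bullet,N), \qquad m[r_1|\cdots|r_s]n \longmapsto m[f(r_1)|\cdots|f(r_s)]n.
$$
First I would observe that $B(f)$ preserves the standard bar-length filtrations, since it carries a word of length $s$ to a word of length $s$; it therefore induces a morphism of the associated Eilenberg--Moore spectral sequences that is compatible with the multiplicative structure.

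Next I would show that this morphism is an isomorphism on an early page. By Proposition \ref{EMSS1} the $E_1$-terms are $B(M,H^\bullet(R^\bullet),N)$ and $B(M,H^\bullet(A^\bullet),N)$, each equipped with the cup-product differential $d_1$, and $B(f)$ induces $B(M,H^\bullet(f),N)$ between them. Since $f$ is a quasi-isomorphism, $H^\bullet(f)$ is an isomorphism of graded algebras, so by functoriality $B(M,H^\bullet(f),N)$ is an isomorphism of complexes intertwining the two $d_1$'s. Hence the morphism of spectral sequences is an isomorphism on $E_1$, and therefore on every subsequent page and on $E_\infty$. Equivalently, on the $E_0$-page the morphism is induced by $f$ on each bar factor, and since $f$ is a quasi-isomorphism the K\"unneth theorem gives the isomorphism on $E_1$ directly; I would use this formulation to sidestep the hypothesis $H^0(R^\bullet)=F$ required to quote Proposition \ref{EMSS1} literally, noting that $f$ forces $H^0(R^\bullet)\cong H^0(A^\bullet)$ in any event.

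Finally I would pass to the abutments. Both filtrations are exhaustive and bounded below (a bar word has nonnegative length), and the Eilenberg--Moore spectral sequences converge to $H^\bullet B(M,R^\bullet,N)$ and $H^\bullet B(M,A^\bullet,N)$. An isomorphism on $E_\infty$ then yields the claimed isomorphism
$$
H^\bullet B(M,R^\bullet,N) \overset{\cong}{\longrightarrow} H^\bullet B(M,A^\bullet,N),
$$
and because $B(f)$ is a homomorphism of $R^\bullet$-algebras, this is an isomorphism of graded $R^\bullet$-algebras, as required.

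The step I expect to be the main obstacle is the passage from $E_\infty$ to the abutment. The spectral sequence is a second-quadrant one and is unbounded in the filtration direction (in a fixed total degree the bar length $s$ can be arbitrarily large), so an isomorphism on $E_\infty$ does not by itself force an isomorphism on cohomology; one needs strong convergence of both spectral sequences. The paper records convergence only along the diagonal $E_n^{-s,s}\Rightarrow H^0$, so the real work is to establish convergence along each anti-diagonal $E_n^{-s,\,s+k}\Rightarrow H^k$ --- equivalently, to control the relevant $\varprojlim^1$ terms --- so that the comparison theorem for spectral sequences applies in every cohomological degree.
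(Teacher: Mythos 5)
Your argument is correct and is exactly the proof the paper intends: Corollary \ref{BarQuism} is stated as an immediate consequence of the Eilenberg--Moore machinery, and the comparison of spectral sequences induced by the filtration-preserving map $B(f)$ is the standard route. Your K\"unneth substitute for Proposition \ref{EMSS1} is also sound: the associated graded of the bar filtration is $\gr^{-s} \cong M \otimes \bigl((R^+/dR^0)[1]\bigr)^{\otimes s} \otimes N$ (the bar relations kill every word with a slot in $dR^0$), the $d_0$-differential is the slotwise internal one, and $H^\bullet(R^+/dR^0) = H^+(R^\bullet)$; hence a quasi-isomorphism $f$ induces an isomorphism on $E_1$ without assuming $H^0(R^\bullet)=F$ (over a field, which is the setting in which the paper applies this corollary, e.g.\ in Theorem \ref{CombinatoriallyDetermined}).

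The one flaw is in your final paragraph: you have the direction of the limit backwards, so the ``main obstacle'' you anticipate does not exist. Chen's reduced bar construction is a \emph{direct sum} over bar lengths, and its standard filtration $B^0 \subset B^{-1} \subset B^{-2} \subset \cdots$ is increasing, bounded below, and exhaustive, so that $B(M,R^\bullet,N) = \varinjlim_s B^{-s}(M,R^\bullet,N)$ and cohomology commutes with this filtered colimit. Moreover, for fixed filtration degree $s$, every differential $d_r$ with $r > s$ leaving $E_r^{-s,t}$ lands in negative filtration and is therefore zero; in Boardman's terminology this is a half-plane spectral sequence with \emph{exiting} differentials, for which the classical convergence theorem gives strong convergence $E_1^{-s,t} \Rightarrow H^{t-s}B$ in every cohomological degree, with no hypotheses beyond exhaustiveness and boundedness below. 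No $\varprojlim^1$ terms can arise: those would be relevant for a \emph{completed} bar construction (a product over bar lengths) or for a decreasing, non-complete filtration, neither of which occurs here. Given strong convergence on both sides, your isomorphism on $E_1$ passes to every later page and to $E_\infty$, hence to the associated graded of the two abutments, and a five-lemma induction on filtration degree followed by the colimit yields the asserted isomorphism of graded $R^\bullet$-algebras.
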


\subsection{The Hopf Algebra $B(F,R^\bullet,\O)$}
\label{BarHopfAlgebra}

\noindent In this section, we show that under certain hypotheses, the reduced bar construction $B(F,R^\bullet,\O)$ is a differential graded Hopf algebra. This generalizes a construction by Hain in \cite{HainDeRham}. The main purpose for this construction is that it implies that $B(\C,E^\bullet(X,\O_\vrho),\O(D_\vrho))$ is a differential graded Hopf algebra. The Hopf algebra $H^0B(\C,E^\bullet(X,\O_\vrho),\O(D_\vrho))$ is the coordinate ring of the relative Malcev completion $\S_\vrho$ \cite{HainDeRham}. Suppose that we have the following data.
\begin{itemize}
\item A differential graded algebra $R^\bullet$ over a commutative ring $F$.
\item A Hopf algebra $\O$ over $F$ with augmentation $\epsilon$.
\item A homomorphism $R^\bullet \to \O$ of $F$-algebras that vanishes on $R^+$.
\item An $F$-algebra homomorphism $\nu \colon R^\bullet \to \O \otimes_F R^\bullet$ with the following properties.

(1) If $r \in R^j$ and $\nu(r) = \sum_k \phi_k \otimes r_k$, then $r_k \in R^j$ for all $k$ and $\nu(d r) = \sum_k \phi_k \otimes d r_k$.

(2) We have $(\Delta \otimes I)\circ\nu = (I \otimes \nu)\circ\nu \colon R^\bullet \to \O \otimes_F \O \otimes_F \otimes_F R^\bullet$.

(3) We have $(\epsilon \otimes I)\circ \nu = I\colon R^\bullet \to R^\bullet$.

\end{itemize}

Define a homomorphism $\epsilon\colon R^\bullet \to F$ of rings by composition $R^\bullet \to \O \overset{\epsilon}{\to} F$, where $\epsilon$ is the counit of $\O$. Then $\epsilon$ vanishes on $R^+$. Via the map $\epsilon \colon R^\bullet \to F$, we may view $F$ as an $R^\bullet$-module. Thus, we may form the reduced bar construction $B(F,R^\bullet,\O)$. We show here that it has the structure of a differential graded Hopf algebra over $F$. Suppose that $r_1,\dots,r_s \in R^+$. Define $\epsilon \colon B(F,R^\bullet,\O) \to F$ by $\epsilon([r_1|\cdots|r_s]\varphi) = 0$ for $s>0$ and $\epsilon([\,\phantom{i}]\varphi) = \epsilon(\varphi)$. Suppose that $\varphi \in \O$ and that the comultiplication in $\O$ sends $\varphi$ to $\sum_j \varphi_j'\otimes \varphi_j''$. Suppose that $\nu(r_\ell) = \sum_{k_\ell} \phi_{k_\ell}^{(\ell)} \otimes r_{k_\ell}^{\ell}$. Define an $F$-algebra homomorphism $\Delta \colon B(F,R^\bullet,\O) \to B(F,R^\bullet,\O)\otimes B(F,R^\bullet,\O)$ by
$$
\begin{aligned}
\Delta\colon [r_1|\cdots|&r_s]\varphi \,\, \longmapsto\\
&\sum_{i = 1}^s\sum_j \sum_{k_{i+1}}\cdots\sum_{k_s} ([r_1|\cdots|r_i]\phi_{k_{i+1}}^{i+1}\cdots \phi_{k_s}^{s}\varphi_j')\otimes ([r_{k_{i+1}}^{i+1}|\cdots|r_{k_s}^{s}]\varphi_j'').
\end{aligned}
$$
Define $\lambda \colon B(F,R^\bullet,\O) \to B(F,R^\bullet,\O)$ by
$$
\lambda \colon [r_1|\cdots|r_s]\varphi \,\, \longmapsto \,\, (-1)^s\sum_{k_s}\cdots\sum_{k_1} [r_{k_s}^{s}|\cdots|r_{k_1}^{1}]\iota(\phi_{k_s}^{s})\cdots\iota(\phi_{k_1}^{1})\iota(\varphi),
$$
where $\iota$ is the antipode of $\O$. The maps $\epsilon$, $\Delta$, and $\lambda$ are $F$-algebra homomorphisms. The proof of the following theorem can be found in \cite{Narkawicz}.
\begin{theorem}
The reduced bar construction $B(F,R^\bullet,\O)$ is a differential graded Hopf algebra over $F$ with counit $\epsilon$, comultiplication $\Delta$, and antipode $\lambda$. \qed
\end{theorem}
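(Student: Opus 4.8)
The plan is to take the commutative differential graded $F$-algebra structure on $B(F,R^\bullet,\O)$ (the shuffle product and the differential (\ref{ddef})) as already established, and to verify in turn the remaining axioms of a differential graded Hopf algebra: that $\epsilon$, $\Delta$, $\lambda$ are well-defined $F$-algebra homomorphisms, that $\Delta$ is coassociative and counital, that $\Delta$ and $\lambda$ are chain maps, and that $\lambda$ is a two-sided antipode. Since all three structure maps are first written down on the tensor module $T(F,R^\bullet,\O)$, the initial task is to check that each descends to the quotient $B(F,R^\bullet,\O)$, i.e. respects the four bar relations. I would check the relations one at a time. The only delicate feature is that each relation trades an interior $d g$ with $g \in R^0$ either for a product $g r_j$ inside the word or for an absorption of $g$ into $F$ or $\O$ at an endpoint. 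Property (1) of $\nu$ guarantees $\nu(dg) = \sum_k \phi_k \otimes d g_k$ while preserving degree, so the coaction commutes with the substitutions occurring in the relations, and the hypothesis that $R^\bullet \to \O$ (hence $\epsilon \colon R^\bullet \to F$) vanishes on $R^+$ disposes of the endpoint absorptions. Carrying this out term-by-term shows $\Delta$, $\epsilon$, $\lambda$ are well-defined on $B(F,R^\bullet,\O)$.

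Next I would establish the purely algebraic axioms. That $\epsilon$, $\Delta$, $\lambda$ are $F$-algebra homomorphisms for the shuffle product is a shuffle-combinatorial verification; for $\Delta$ it is exactly the bialgebra compatibility, the twisted analogue of the standard fact that deconcatenation and shuffle are compatible on the ordinary bar construction. Coassociativity $(\Delta \otimes I)\circ\Delta = (I \otimes \Delta)\circ\Delta$ reduces to coassociativity of the coproduct in $\O$ together with property (2) of $\nu$, namely $(\Delta \otimes I)\circ\nu = (I \otimes \nu)\circ\nu$; the two ways of splitting a bar word commute precisely because $\nu$ threads the $\O$-coefficients consistently. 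Counitality then follows from the counit axiom in $\O$ and property (3), $(\epsilon \otimes I)\circ\nu = I$, which collapses the threaded coaction back to the identity. None of these steps presents a serious obstacle once the bookkeeping of the threaded $\O$-coefficients $\phi_{k_\ell}^{(\ell)}$ is set up.

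The two genuinely technical points are that $\Delta$ and $\lambda$ are chain maps and that $\lambda$ is a two-sided antipode. For $\Delta$ I would compare $\Delta \circ d$ with $(d \otimes I + I \otimes d)\circ\Delta$ using (\ref{ddef}). The interior-differential terms $d r_j$ match immediately once $\nu$ is seen to commute with $d$ via property (1). The main obstacle is the family of collapse terms $J r_j \wedge r_{j+1}$: when $\Delta$ splits a word between positions $i$ and $i{+}1$, the collapse term straddling that cut does not visibly appear on either tensor factor, and one must check that it is reproduced exactly by the boundary contributions coming from the $\O$-valued coefficients $\phi_{k}^{i+1}$ and the deconcatenation of $\varphi$. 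Here the fact that $\nu$ is an $F$-algebra homomorphism is decisive, since $\nu(J r_j \wedge r_{j+1}) = \nu(J r_j)\,\nu(r_{j+1})$ is what reconciles the straddling collapse term with the $\O$-threading in $\Delta$; this step requires careful sign bookkeeping with the operator $J$ and the shift $R^+[1]$. For the antipode identity I would show that $m\circ(\lambda \otimes I)\circ\Delta$ and $m\circ(I \otimes \lambda)\circ\Delta$, where $m$ denotes the shuffle product, both equal the composite of $\epsilon$ with the unit $F \to B(F,R^\bullet,\O)$, $1 \mapsto [\,\phantom{i}]$. After applying $\Delta$ and then $\lambda$ on one factor, the shuffle product recombines the two halves into an alternating sum over splitting points that cancels in consecutive pairs, exactly as in the classical bar construction, with the $\O$-factors cancelling by the antipode axiom of $\O$ through $\iota$. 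I expect this telescoping cancellation, together with the straddling-term reconciliation in the chain-map verification for $\Delta$, to be the crux of the argument; every remaining axiom is a formal consequence of the three listed properties of $\nu$ and the Hopf structure of $\O$.
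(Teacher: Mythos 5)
You should note at the outset that the paper does not actually prove this theorem: it defines $\epsilon$, $\Delta$, and $\lambda$, asserts they are $F$-algebra homomorphisms, and defers the entire verification to the thesis \cite{Narkawicz}. So the only question is whether your plan would succeed. Structurally it is the expected direct verification, and your identification of the driving facts — multiplicativity of $\nu$ (in particular $\nu(Jr_j\wedge r_{j+1})=\nu(Jr_j)\nu(r_{j+1})$ for the collapse terms), properties (1)--(3), the Hopf axioms of $\O$, and a telescoping cancellation for the antipode identity — is sound as far as it goes.

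There is, however, a genuine gap at the well-definedness step, and your stated reason there is wrong in a way that matters. You claim that vanishing of $R^\bullet\to\O$ on $R^+$ ``disposes of the endpoint absorptions.'' But the bar relations absorb elements $g\in R^0$, on which neither $R^\bullet\to\O$ nor $\epsilon\colon R^\bullet\to F$ vanishes, so that hypothesis says nothing about them. Checking that $\Delta$ respects the relation $m[r_1|\cdots|r_s|dg]n = m[r_1|\cdots|r_s]\,g\cdot n - m[r_1|\cdots|r_sg]n$ forces one to compare the terms in which $dg$ is threaded into the second tensor factor (via property (1), with $\nu(g)=\sum_k\phi_k\otimes g_k$) against $\Delta_{\O}(\bar g)$, where $\bar g$ is the image of $g$ in $\O$ occurring in $g\cdot n$; equality requires $\Delta_{\O}(\bar g)=\sum_k\phi_k\otimes\bar g_k$, and the leading relation similarly requires $\sum_k\phi_k\,\epsilon(g_k)=\epsilon(g)1_{\O}$. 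Neither identity is among properties (1)--(3), and neither follows from them: take $R^\bullet$ to be the algebraic de Rham complex of the affine line, $\O=F[t^{\pm1}]$, the map $R^\bullet\to\O$ sending $x\mapsto t$, and $\nu(x)=t^2\otimes x$; every listed hypothesis holds, yet applying $\Delta$ to the two sides of the relation $[dx]=[\,]t-[\,]1$ leaves the discrepancy $([\,]t^2-[\,]t)\otimes([\,]t-[\,]1)=[dx]t\otimes[dx]$, which is nonzero in $B\otimes B$ (evaluate via iterated integrals along a path from $1$ to $t$), so $\Delta$ does not descend to the quotient. Consequently the theorem cannot be proved from the hypotheses as you use them: the missing compatibilities between $\nu$ and the two augmentations must be isolated as additional hypotheses and then verified in the application --- where they do hold, being forced by the path-composition formula for twisted iterated integrals, with $\nu(\psi)=\psi\otimes\alpha^{-1}$, image $\delta(\psi)\alpha^{-1}$ in $\O(D_\vrho)$, and $\Delta(\alpha^{-1})=\alpha^{-1}\otimes\alpha^{-1}$. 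The same compatibilities, with the antipode $\iota$ in place of $\Delta_{\O}$, are needed for $\lambda$ to be well defined. Your outline never isolates this point, and it is precisely where the content of the proof lies; by contrast, the two computations you flag as the crux are routine once well-definedness is secured.
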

\begin{corollary}
The Hopf algebra structure on $B(F,R^\bullet,\O)$ induces a Hopf algebra structure on $H^0B(F,R^\bullet,\O)$. \qed
\end{corollary}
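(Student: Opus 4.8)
The plan is to apply the functor $H^0(-)$ to the differential graded Hopf algebra $B := B(F,R^\bullet,\O)$ and to check that each of its five structure maps descends to degree-zero cohomology, yielding the Hopf axioms there. The first fact I would establish is that $B$ is non-negatively graded: the degree of $m[r_1|\cdots|r_s]n$ is $\deg(r_1) + \cdots + \deg(r_s) - s$, and since each $r_j \in R^+$ has degree at least $1$, every generator has non-negative degree. Consequently $B^{-1} = 0$, so $H^0(B) = \ker(d \colon B^0 \to B^1) = Z^0(B)$ is simply the algebra of degree-zero cocycles, with no coboundaries to quotient by. This makes the descent of structure maps especially transparent.

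Next I would dispatch the structure maps that are degree-preserving or that land in $F$. Because the preceding theorem asserts that $B$ is a \emph{differential} graded Hopf algebra, the product, unit, comultiplication $\Delta$, counit $\epsilon$, and antipode $\lambda$ are all chain maps. The unit $F \to B$, $1 \mapsto [\,]$, lands in $Z^0(B)$ because $[\,]$ is closed. The product $B \otimes B \to B$ is a chain map and $d$ is a derivation, so a product of degree-zero cocycles is again a cocycle; thus $H^0(B) = Z^0(B)$ is a subalgebra of $B^0$, commutative because $B$ is. The antipode $\lambda$, being a degree-preserving chain map, carries $Z^0(B)$ into $Z^0(B)$ and restricts to $H^0(B)$, and since $\epsilon \circ d = 0$ with $F$ concentrated in degree zero, $\epsilon$ restricts to a map $H^0(B) \to F$.

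The one genuinely nontrivial point, which I expect to be the main obstacle, is that the comultiplication descends to a map $H^0(B) \to H^0(B) \otimes H^0(B)$, and not merely to the induced map $H^0(\Delta) \colon H^0(B) \to H^0(B \otimes B)$ that the chain map $\Delta$ automatically produces. Here I would invoke the Künneth cross product $H^0(B) \otimes H^0(B) \to H^0(B \otimes B)$ and argue that it is an isomorphism. This is precisely where non-negativity of the grading is essential: since $B \otimes B$ is also non-negatively graded, $(B \otimes B)^0 = B^0 \otimes B^0$ and $H^0(B \otimes B) = Z^0(B \otimes B)$, and over the field $F = \C$ (or, in the general statement, with the relevant flatness of $B^0$) the Künneth theorem collapses to $Z^0(B \otimes B) = Z^0(B) \otimes Z^0(B)$. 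Composing $H^0(\Delta)$ with the inverse of this isomorphism then defines the coproduct on $H^0(B)$.

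Finally, with all five structure maps available on $H^0(B)$, I would verify the Hopf algebra axioms. Each axiom (associativity, coassociativity, the unit and counit identities, and the antipode condition) holds in $B$ as an equality of chain maps, that is, as a commutative diagram in the category of cochain complexes over $F$. Applying the functor $H^0$, which by the degree-zero Künneth isomorphism established above is compatible with tensor products, carries each such diagram to the corresponding commutative diagram of $F$-modules, giving exactly the Hopf algebra axioms for $H^0(B)$. Since $B$ is commutative, so is $H^0(B)$, and hence $H^0 B(F,R^\bullet,\O)$ is a commutative Hopf algebra, as claimed; in the case at hand this is the coordinate ring of the relative Malcev completion $\S_\vrho$.
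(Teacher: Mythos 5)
Your proof is correct and takes essentially the same route as the paper, which treats the corollary as immediate from the preceding theorem: since $B(F,R^\bullet,\O)$ is non-negatively graded, $H^0$ equals the algebra of degree-zero cocycles and is a Hopf subalgebra, which is exactly how the paper later justifies that $H^0B(\C,E^\bullet(X,\O_\vrho),\O(D_\vrho))$ carries a Hopf algebra structure. The one point to watch is your K\"unneth step $Z^0(B\otimes B)=Z^0(B)\otimes Z^0(B)$, which requires $F$ to be a field (or suitable flatness of $B^0$): the section states the corollary over an arbitrary commutative ring $F$, and the paper later invokes it with $F=\O(Y)$, so the flatness caveat you flag parenthetically is a genuine hypothesis there --- one the paper itself silently omits.
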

The following proposition allows us to describe the Hopf algebra structure on $H^0B(F,R^\bullet,\O)$ via the Eilenberg-Moore spectral sequence.

\begin{proposition}[Chen]
\label{EnHopf}
For each $n \geq 0$, the Hopf algebra structure on $B(F,R^\bullet,\O)$ induces a differential graded Hopf algebra structure on the $n$-th term of the associated Eilenberg-Moore spectral sequence. \qed
\end{proposition}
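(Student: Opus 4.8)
The plan is to show that the bar-length filtration $F = B^0 \subset B^{-1} \subset B^{-2} \subset \cdots$ defining the Eilenberg--Moore spectral sequence is a \emph{filtration compatible with the entire Hopf structure} of $B = B(F,R^\bullet,\O)$, so that the spectral sequence becomes a spectral sequence of differential graded Hopf algebras. Concretely, I would first record the compatibilities of the filtration with the structure maps, then verify that the differential $d$ is a filtered biderivation, and finally run an induction over the pages that transports the Hopf structure from $E_0 = \gr B$ to each $E_n$ by passing to cohomology.

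First I would read off the filtration compatibilities directly from the explicit formulas of Section~\ref{BarHopfAlgebra}. The shuffle product carries a $p$-bar element times a $q$-bar element to a sum of $(p+q)$-bar elements, so $B^{-p}\cdot B^{-q} \subseteq B^{-(p+q)}$ and the filtration is multiplicative. The coproduct $\Delta$ splits each word $[r_1|\cdots|r_s]$ into a left factor of bar length $i$ and a right factor of bar length $s-i$, so $\Delta(B^{-s}) \subseteq \sum_{i+j = s} B^{-i}\otimes B^{-j}$; that is, $\Delta$ is filtered for the tensor-product filtration on $B \otimes B$. The antipode $\lambda$ merely reverses the word and applies the antipode of $\O$, hence preserves bar length, so $\lambda(B^{-s}) \subseteq B^{-s}$. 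Finally the counit $\epsilon$ is supported in filtration level $0$, since $\epsilon([r_1|\cdots|r_s]\varphi) = 0$ for $s > 0$.

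Next I would check that $d$ is a filtered differential that is both a derivation and a coderivation. Inspecting the differential (\ref{ddef}), the first sum, which applies $d r_j$ inside a word, preserves bar length, while the second sum, which collapses $J r_j \wedge r_{j+1}$, lowers it by one; hence $d(B^{-s}) \subseteq B^{-s}$, and on $\gr B$ only the length-preserving part survives, giving $d_0$. That $d$ is a derivation for the shuffle product is part of the assertion that $B$ is a commutative differential graded algebra, and that $d$ is a coderivation is exactly the assertion that $\Delta$ is a chain map, which holds because $B$ is a differential graded Hopf algebra. Consequently $E_0 = \gr B$ is a bigraded Hopf algebra and $d_0$ is a biderivation of it.

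The inductive engine is then the general principle that the cohomology of a differential graded Hopf algebra whose differential is simultaneously a derivation and a coderivation is again a Hopf algebra, with the induced differential again a biderivation. Assuming $E_n$ is a differential graded Hopf algebra with $d_n$ a biderivation, the product and unit descend to $E_{n+1} = H(E_n, d_n)$ because $d_n$ is a derivation; the counit and antipode descend by naturality; and the coproduct descends once one knows that the K\"unneth map $H(E_n, d_n) \otimes H(E_n, d_n) \to H(E_n \otimes E_n,\, d_n \otimes 1 + 1 \otimes d_n)$ is an isomorphism, which holds in our setting since $F = \C$ is a field. A diagram chase then shows that the induced $d_{n+1}$ remains a derivation and coderivation, closing the induction. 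I expect the persistence of the coderivation property through each passage to cohomology---equivalently, that the K\"unneth isomorphism correctly intertwines the coproducts at every page---to be the main technical obstacle; everything else is formal once the filtration compatibilities of the second paragraph are in hand.
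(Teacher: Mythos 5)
The paper offers no argument for Proposition \ref{EnHopf} (it is attributed to Chen and closed with \qed), so your proposal is filling a genuine blank, and its skeleton is the standard one. Your filtration compatibilities are all correct: the shuffle product adds bar lengths, the coproduct of Section \ref{BarHopfAlgebra} splits a length-$s$ word into tensor factors of complementary lengths (property (1) of $\nu$ guarantees the right-hand factors $r^{\ell}_{k_\ell}$ have the same degrees as the $r_\ell$, so the right factor really has bar length $s-i$), the antipode preserves bar length, the counit is concentrated in filtration $0$, and in the differential (\ref{ddef}) the first sum preserves bar length while the second lowers it, so $d(B^{-s}) \subseteq B^{-s}$ and only the internal differential survives on $\gr$, consistent with Proposition \ref{EMSS1}. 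Transporting the structure page by page through cohomology is also the right engine, and over a field it is complete as you describe.

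The gap is your appeal to ``$F = \C$ is a field'' to invert the K\"unneth map. Proposition \ref{EnHopf} is stated for the data of Section \ref{BarHopfAlgebra}, where $F$ is an arbitrary commutative ring, and the paper genuinely needs that generality: in the proof of Theorem \ref{MyBigTheorem} the proposition is invoked with $F = \O(Y)$, the coordinate ring of a positive-dimensional variety, to assert that each page $E_n(Y)$ of the spectral sequence of $B(\O(Y),E^\bullet(X,\O_Y),\O(D_Y))$ is a Hopf algebra over $\O(Y)$. Over such a ring the natural map $H(C)\otimes_F H(C) \to H(C\otimes_F C)$ is not an isomorphism in general (there are $\Tor$ obstructions, and without flatness of cycles and boundaries not even a K\"unneth exact sequence), so your induction stalls exactly at the step you flagged: knowing that $\Delta_n$ carries cycles of $E_n$ into cycles of $E_n \otimes_F E_n$ does not produce a map $E_{n+1} \to E_{n+1}\otimes_F E_{n+1}$ unless the K\"unneth map at that page can be inverted. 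To prove the proposition in the generality in which the paper states and uses it, you would need to add and propagate a flatness (or freeness) hypothesis on the pages $E_n$ as $F$-modules, verified inductively alongside the Hopf structure, or else restrict the statement to fields and supply a separate argument where $F = \O(Y)$ appears. As written, your proof covers the applications of Section \ref{HopfExampleSection}, where $F = \C$, but not the proposition as stated.
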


\subsection{The Hopf Algebra $B(\C,E^\bullet(X,\O_\vrho),\O(D_\vrho))$}
\label{HopfExampleSection}

\noindent Of particular interest to us is the reduced bar construction when the differential graded algebra $R^\bullet$ is $E^\bullet(X,\O_\vrho)$, defined as in Section \ref{CommDiffE}. In this section, we prove that there is an algebra homomorphism $E^\bullet(X,\O_\vrho) \to \O(D_\vrho)$ that vanishes in positive degree. As in the previous section, this gives the reduced bar constructions $B(\C,E^\bullet(X,\O_\vrho),\C)$ and $B(\C,E^\bullet(X,\O_\vrho),\O(D_\vrho))$ the structure of differential graded Hopf algebras.

Let $X$ be a smooth manifold, and set $\pi = \pi_1(X,x_0)$. Let $\X \to X$ denote the universal cover of $X$. Suppose that $\vrho \colon \pi \to (\C^*)^N$ is a representation. Let $D_\vrho$ denote the Zariski closure of the image of $\vrho$ in $\G_m^N$, and let $\O(D_\vrho)$ denote the coordinate ring of $\O(D_\vrho)$. Each character $\alpha$ on $D_\vrho$ gives an irreducible representation $V_\alpha$ of $D_\vrho$ and a rank-one local system $\VV_\alpha$ on $X$ with monodromy given by the character $\alpha\circ\vrho$ of $\pi$. Recall the definition of $E^\bullet(X,\O_\vrho)$:
$$
E^\bullet(X,\O_\vrho) = \bigoplus_{\alpha \in D_\vrho^\vee} E^\bullet(X,\VV_\alpha) \otimes V_{\alpha}^*.
$$
For each $\alpha \in D_\vrho^\vee$, there is a left action of $\pi$ on the trivial line bundle $\X \times \C \to \X$ defined by $\gamma \cdot (z,u) = (\gamma \cdot z,\alpha(\vrho(\gamma^{-1})) u)$. This induces a right action of $\pi$ on $E^\bullet(\X)$ via $\psi\cdot \gamma = \alpha(\vrho(\gamma)) (\gamma^{-1})^*\psi$. The complex $E^\bullet(X,\VV_\alpha)$ is defined to be the $\pi$-invariants of $E^\bullet(\X)$.

Evaluation of a differential form $\psi$ in $E^\bullet(X,\VV_\alpha) \otimes V_\alpha^*$ at $x_0$ gives an element $\delta(\psi)$ of $V_\alpha \otimes V_\alpha^*$, which is the trivial representation $\C$ of $D_\vrho$. Note that the coproduct in $\O(D_\vrho)$ satisfies $\Delta(\alpha^{-1}) = \alpha^{-1} \otimes \alpha^{-1}$. Define a $\C$-algebra homomorphism $E^\bullet(X,\O_\vrho) \to \O(D_\vrho)$ by sending $\psi$ to $\delta(\psi)\alpha^{-1}$. Then this map vanishes on $E^+(X,\O_\vrho)$. Define an algebra homomorphism
\begin{equation}
\label{rhonu}
E^\bullet(X,\O_\vrho) \,\,\overset{\nu}{\longrightarrow}\,\, E^\bullet(X,\O_\vrho) \otimes \O(D_\vrho)
\end{equation}
via $\psi \longmapsto \psi \otimes \alpha^{-1}$. This extends in a unique way to $E^\bullet(X,\O_\vrho)$. It follows easily from definitions that $\nu$ satisfies the hypotheses at the beginning of Section \ref{BarHopfAlgebra}. Thus, the reduced bar construction $B(\C,E^\bullet(X,\O_\vrho),\O(D_\vrho))$ is a differential graded Hopf algebra.

Viewing $\C$ as an algebra over $E^\bullet(X,\O_\vrho)$ via the map $\delta$, we may form the reduced bar construction $B(\C,E^\bullet(X,\O_\vrho),\C)$ as well. By the construction in Section \ref{BarHopfAlgebra}, it is also a differential graded Hopf algebra.

The Hopf algebra $H^0B(\C,E^\bullet(X,\O_\vrho),\O(D_\vrho))$ is the coordinate ring of a proalgebraic group scheme, which is in fact the relative Malcev completion $\S_\vrho$ \cite{HainDeRham}. The Hopf algebra $H^0B(\C,E^\bullet(X,\O_\vrho),\C)$ is the coordinate ring of the prounipotent radical $\U_\vrho$ of $\S_\vrho$. The homomorphism $\theta_\vrho \colon \pi \to \S_\vrho$ will be described in Section \ref{Iter} using iterated integrals.

\section{Iterated Integrals}
\label{Iter}

\noindent In this section, we review Hain's \cite{HainDeRham} generalization of Chen's (\cite{Chen2},\cite{Chen3}) iterated integrals. We describe the relative Malcev completion using the bar construction $B(\C,E^\bullet(X,\O_\vrho),\O(D_\vrho))$ and iterated integrals.

\subsection{Twisted Iterated Integrals}
\label{TwistedIteratedIntegrals}

\noindent Suppose that $X$ is a smooth manifold and that $\psi_1,\dots,\psi_r$ are smooth 1-forms on $X$. Chen \cite{Chen3} defined
\begin{equation}
\label{ChenIterInt}
\int_\gamma \psi_1\cdots\psi_r = \int_{0\leq t_1 \leq \dots \leq t_r \leq 1} f_1(t_1)\cdots f_r(t_r) dt_1\cdots dt_r,
\end{equation}
where $\gamma\colon [0,1] \to X$ is a piecewise smooth path and $\gamma^*\psi_j = f_j(t)dt$. The integral $\int \psi_1\cdots\psi_r$ can therefore be viewed as a function $PX \to \C$, where $P X$ denotes the path space of $X$. A linear combination of such functions is called an {\em iterated intgegral}.

The following generalization of Chen's iterated integrals is due to Hain \cite{HainDeRham}. Set $\pi = \pi_1(X,x_0)$. Let $\vrho \colon \pi \to (\C^*)^N$ be a representation, and let $D_\vrho$ denote the Zariski closure of the image of $\vrho$ in $\G_m^N$. This is a group subscheme of $\G_m^N$. Each character $\alpha$ on $D_\vrho$ gives a one-dimensional irreducible representation $V_\alpha$ of $D_\vrho$ and a rank-one local system $\VV_\alpha$ on $X$ whose monodromy is given by the character $\alpha\circ\vrho$ of $\pi$. For each $\alpha$, the fiber of $\VV_\alpha$ over $x_0$ is canonically identified with $V_\alpha$. The local system $\VV_\alpha \otimes V_{\alpha}^*$ is isomorphic to $\VV_\alpha$, as the tensor product with $V_{\alpha}^*$ simply indicates an action by $D_\vrho$. The fiber of $\VV_{\alpha} \otimes V_\alpha^*$ over the point $x_0$, however, is canonically identified with $V_\alpha \otimes V_\alpha^*$. As a representation of $D_\vrho$, this is canonically isomorphic to the trivial representation $\C$.

Suppose that $\psi_j \in E^1(X,\O_\vrho)$ for $j = 1,\dots,r$, that $\varphi \in \O(D_\vrho)$, and that $\gamma$ is a piecewise smooth loop at $x_0$ in $X$: $\, \gamma \colon [0,1] \to X$. We will define the iterated integral
$$
\int_\gamma (\psi_1\cdots\psi_r|\varphi) \, \in \C.
$$

First, suppose that each $\psi_j \in E^1(X,\WW_{\alpha_j}) \otimes W_{\alpha_j}^*$, where each $\alpha_j$ is a character of $D_\vrho$. Let $\widetilde{X}$ denote the universal cover of $X$, with basepoint $\tilde{x_0}$ over $x_0$. The local system $\WW_{\alpha_j}$ is equal to the quotient $(\widetilde{X} \times \C)/\pi_1(X,x_0)$, where $\pi_1(X,x_0)$ acts on $\widetilde{X} \times \C$ on the left via $\eta \cdot (z,u) = (\eta \cdot z,\alpha_j(\vrho(\eta^{-1}))\cdot u)$. This action induces a right action of $\pi_1(X,x_0)$ on $E^\bullet(\X)$ via $\psi \cdot \eta = \alpha_j(\vrho(\eta))(\eta^{-1})^*\psi$, where $\eta \in \pi_1(X,x_0)$. By definition, $E^1(X,\WW_{\alpha_j})$ is the set of $\pi_1(X,x_0)$-invariants of $E^\bullet(\X)$. Let $\tilde{\gamma}$ denote any lift of $\gamma$ to $\X$. We define
$$
\int_\gamma (\psi_1\cdots\psi_r|\varphi) = \varphi(\vrho(\gamma))\int_{\tilde{\gamma}}\psi_1\cdots\psi_r.
$$
This definition extends uniquely to the case $\psi_1,\dots,\psi_r \in E^1(X,\O_\vrho)$ in such a way that the integral $\int_\gamma (\psi_1\cdots\psi_r|\varphi)$ is multi-linear in the forms $\psi_j$ and in $\varphi$. When $r = 0$, we set $\int_\gamma(\,|\varphi) = \varphi(\vrho(\gamma))$.

\begin{definition}
The set $I(X)_\vrho$ of {\em iterated integrals} with coefficients in $\O(D_\vrho)$ is defined to be the set of all linear combinations of integrals of the form $\int(\psi_1\cdots\psi_r | \varphi)$, where $r \geq 0$, $\psi_j \in E^1(X,\O_\vrho)$, and $\varphi \in \O(D_\vrho)$.
\end{definition}

The elements of $I(X)_\vrho$ will be regarded as functions $\Omega_{x_0} X \to \C$ on the loop space $\Omega_{x_0} X$.

\begin{definition}
We define $H^0(I(X)_\vrho)$ to be the subset of $I(X)_\vrho$ consisting of all elements that are constant on each homotopy class $[\gamma] \in \pi_1(X,x_0)$. We call the elements of $H^0(I(X)_\vrho)$ {\em locally constant iterated integrals} with coefficients in $\O(D_\vrho)$.
\end{definition}
We will see in Section \ref{TwistedII} that the set $H^0(I(X)_\vrho)$ has a purely algebraic description.
\begin{proposition}[\cite{Chen3}, \cite{HainDeRham}]
\label{commprop}
For $\psi_1,\dots,\psi_{p+q} \in E^1(X,\O_\vrho)$ and $\varphi,\theta \in \O(D_\vrho)$, we have
$$
\int (\psi_1\cdots\psi_p|\varphi) \int (\psi_{p+1}\cdots\psi_{p+q} | \theta) = \sum_{\sigma \in Sh(p,q)}\int (\psi_{\sigma(1)}\cdots\psi_{\sigma(p+q)} | \varphi\theta),
$$
where $Sh(p,q)$ denotes the set of shuffles of type $(p,q)$. \qed
\end{proposition}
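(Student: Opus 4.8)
The plan is to reduce the twisted statement to the classical shuffle-product formula for Chen's ordinary iterated integrals on the universal cover $\X$, using the multiplicativity of the coefficient factor $\varphi \mapsto \varphi(\vrho(\gamma))$. Both sides of the asserted identity are multilinear in $\psi_1,\dots,\psi_{p+q}$ and bilinear in $(\varphi,\theta)$: the left-hand side by the way $\int(\psi_1\cdots\psi_r|\varphi)$ was extended multilinearly, and the right-hand side because each summand is linear in each $\psi_j$ while $\varphi\theta$ is bilinear in $(\varphi,\theta)$. Since $E^1(X,\O_\vrho)$ is spanned by the homogeneous forms lying in a single summand $E^1(X,\WW_\alpha)\otimes W_\alpha^*$, I may therefore assume that each $\psi_j$ lies in some $E^1(X,\WW_{\alpha_j})\otimes W_{\alpha_j}^*$, so that the explicit formula $\int_\gamma(\psi_1\cdots\psi_r|\varphi) = \varphi(\vrho(\gamma))\int_{\tilde\gamma}\psi_1\cdots\psi_r$ applies directly (note that this formula is valid even when the characters $\alpha_j$ differ).

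Next I would fix, once and for all, a single lift $\tilde\gamma$ of $\gamma$ to $\X$ and use it in every iterated integral that appears; this is legitimate since the integrals $\int_\gamma(\cdots)$ are functions of $\gamma$, independent of the chosen lift. Each homogeneous $\psi_j$ is, by the definition of $E^1(X,\WW_{\alpha_j})$, a $\pi$-invariant ordinary $1$-form on $\X$, and $\int_{\tilde\gamma}\psi_1\cdots\psi_r$ is then the classical Chen iterated integral (\ref{ChenIterInt}) of ordinary forms along the fixed path $\tilde\gamma$. The classical shuffle-product formula \cite{Chen3} for such integrals gives
\begin{equation*}
\int_{\tilde\gamma}\psi_1\cdots\psi_p \cdot \int_{\tilde\gamma}\psi_{p+1}\cdots\psi_{p+q} = \sum_{\sigma\in Sh(p,q)}\int_{\tilde\gamma}\psi_{\sigma(1)}\cdots\psi_{\sigma(p+q)}.
\end{equation*}
To conclude, I multiply both sides by the scalar $\varphi(\vrho(\gamma))\,\theta(\vrho(\gamma))$. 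Because evaluation at the $\C$-point $\vrho(\gamma)\in D_\vrho(\C)$ is a $\C$-algebra homomorphism $\O(D_\vrho)\to\C$, one has $\varphi(\vrho(\gamma))\,\theta(\vrho(\gamma)) = (\varphi\theta)(\vrho(\gamma))$. The left-hand side then becomes $\int_\gamma(\psi_1\cdots\psi_p|\varphi)\int_\gamma(\psi_{p+1}\cdots\psi_{p+q}|\theta)$, while each term on the right becomes $(\varphi\theta)(\vrho(\gamma))\int_{\tilde\gamma}\psi_{\sigma(1)}\cdots\psi_{\sigma(p+q)} = \int_\gamma(\psi_{\sigma(1)}\cdots\psi_{\sigma(p+q)}|\varphi\theta)$, which is exactly the claimed identity.

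The classical shuffle formula itself, were one to include a proof rather than cite it, follows from decomposing the product of simplices $\{0\le t_1\le\cdots\le t_p\le 1\}\times\{0\le t_{p+1}\le\cdots\le t_{p+q}\le 1\}$ into the regions where the merged time-parameters are totally ordered according to a prescribed interleaving; these regions are indexed by shuffles, cover the product up to a set of measure zero, and, after re-indexing, yield the individual integrals upon applying Fubini's theorem. I expect no substantial obstacle here: the only point requiring genuine care is bookkeeping rather than analysis, namely that a common lift $\tilde\gamma$ be used in all the integrals so that the purely topological shuffle identity on $\X$ transfers verbatim, and that the coefficient factors combine through the algebra structure of $\O(D_\vrho)$. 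Both are immediate once the homogeneous reduction has stripped the problem down to ordinary iterated integrals, so the entire content is the classical formula together with the multiplicativity of evaluation.
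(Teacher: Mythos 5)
Your proof is correct and is essentially the argument the paper relies on: the paper states Proposition \ref{commprop} without proof, deferring to \cite{Chen3} and \cite{HainDeRham}, and when it proves the analogous Proposition \ref{shuffleY} it uses exactly your reduction --- restrict to homogeneous forms, evaluate all integrals along one fixed lift $\tilde\gamma$ so that Chen's classical shuffle formula \cite{ChenAlg} applies on $\X$, and combine the coefficient factors via $\varphi(\vrho(\gamma))\,\theta(\vrho(\gamma)) = (\varphi\theta)(\vrho(\gamma))$. One small caution: your parenthetical claim that the integrals are independent of the chosen lift is not literally true (replacing $\tilde\gamma$ by $\eta\cdot\tilde\gamma$ with $\eta \in \pi$ rescales $\int_{\tilde\gamma}\psi_1\cdots\psi_r$ by $\prod_j \alpha_j(\vrho(\eta))$), but this slip is inherited from the paper's own wording of the definition, and your argument is unaffected because all it requires is that a single fixed lift --- say the one starting at $\tilde{x}_0$ --- be used consistently in every integral appearing in the identity.
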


\begin{corollary}
The sets $I(X)_\vrho$ and $H^0(I(X)_\vrho)$ of functions on $\Omega_{x_0} X$ are $\C$-algebras, where the map $\C \to H^0(I(X)_\vrho)$ is given by $1 \mapsto \int (\phantom{i}|1)$. \qed
\end{corollary}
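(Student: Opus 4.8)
The plan is to deduce everything formally from the shuffle-product formula of Proposition \ref{commprop}, since the genuine analytic work is already contained there. The first observation I would make is that both $I(X)_\vrho$ and $H^0(I(X)_\vrho)$ consist of $\C$-valued functions on the loop space $\Omega_{x_0}X$, so the only natural candidate for a product is the pointwise product of functions, which is automatically commutative, associative, and $\C$-bilinear. Thus the entire content of the corollary is the assertion that each of these sets is \emph{closed} under this product and contains a multiplicative unit.

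For $I(X)_\vrho$: by definition it is the $\C$-linear span of the basic iterated integrals $\int(\psi_1\cdots\psi_r|\varphi)$, so it is already a $\C$-vector space. To prove closure under multiplication, I would use bilinearity of the pointwise product to reduce to the product of two basic iterated integrals, and then invoke Proposition \ref{commprop}, which rewrites that product as a finite $\C$-linear combination of basic iterated integrals $\int(\psi_{\sigma(1)}\cdots\psi_{\sigma(p+q)}|\varphi\theta)$ with coefficient $\varphi\theta\in\O(D_\vrho)$. Hence the product of any two elements of $I(X)_\vrho$ again lies in $I(X)_\vrho$. For the unit, note that $\int(\ |1)$ is by definition the function $\gamma\mapsto 1(\vrho(\gamma))=1$, i.e. the constant function $1$; taking $p=0$ in Proposition \ref{commprop} (where the only shuffle of type $(0,q)$ is the identity permutation and $1\cdot\theta=\theta$) gives $\int(\ |1)\cdot\int(\psi_1\cdots\psi_q|\theta)=\int(\psi_1\cdots\psi_q|\theta)$, so $\int(\ |1)$ is a two-sided identity. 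This shows $I(X)_\vrho$ is a commutative $\C$-algebra.

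Finally, for $H^0(I(X)_\vrho)$, I would check that it is a $\C$-subalgebra: it is a linear subspace because any $\C$-linear combination of functions that are constant on each homotopy class $[\gamma]\in\pi_1(X,x_0)$ is again constant on each class; it is closed under the pointwise product because the value of such a product at a loop $\gamma$ depends only on $[\gamma]$, while membership in $I(X)_\vrho$ is guaranteed by the previous paragraph; and the unit $\int(\ |1)=1$ is a constant function, hence trivially homotopy-invariant and so lies in $H^0(I(X)_\vrho)$. I do not anticipate any real obstacle here, since the combinatorial heart of the argument is Proposition \ref{commprop}; the corollary is simply the extraction of the $\C$-algebra structure from the shuffle formula, and the only steps requiring any care are the bilinear reduction to basic integrals and the verification that the constant function $1$ serves as the unit.
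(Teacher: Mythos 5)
Your proof is correct and follows exactly the route the paper intends: the corollary is stated as an immediate consequence of Proposition \ref{commprop}, and your argument simply makes explicit the pointwise-product structure, the closure under multiplication via the shuffle formula, the homotopy-invariance of products, and the identification of $\int(\phantom{i}|1)$ with the constant function $1$.
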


\begin{remark}
Suppose that $\vrho \colon \pi \to (\C^*)^N$ is the trivial representation and that $\psi_1,\dots,\psi_r \in E^1(X,\O_\vrho)$. Then $D_\vrho$ is the trivial group scheme and $E^\bullet(X,\O_\vrho) = E^\bullet(X)$. Consequently, the iterated integral $\int_\gamma (\psi_1\cdots\psi_r|1)$ is Chen's iterated integral $\int_\gamma \psi_1\cdots\psi_r$, defined by (\ref{ChenIterInt}).
\end{remark}

\subsection{Relative Malcev Completion}
\label{TwistedII}

\noindent In this section, we recall several results by Hain \cite{HainDeRham}. They are generalizations of work by Chen \cite{Chen3}.

Let $X$ be a smooth manifold, and set $\pi = \pi_1(X,x_0)$. Suppose that $\vrho \colon \pi \to (\C^*)^N$ is a representation. Let $D_\vrho$ denote the Zariski closure of the image of $\vrho$ in $\G_m^N$. Consider the bar construction $B(\C,E^\bullet(X,\O_\vrho),\O(D_\vrho))$, which is described in Section \ref{HopfExampleSection}. This Hopf algebra is nonnegatively graded. Thus, $H^0B(\C,E^\bullet(X,\O_\vrho),\O(D_\vrho))$ is a Hopf subalgebra of $B(\C,E^\bullet(X,\O_\vrho),\O(D_\vrho))$. In \cite{HainDeRham}, Hain shows that it is the coordinate ring of a proalgebraic group scheme $\S_\vrho$.

\begin{theorem}
There is a $\C$-algebra isomorphism
$$H^0B(\C,E^\bullet(X,\O_\vrho),\O(D_\vrho)) \overset{\cong}{\longrightarrow} H^0(I(X)_\vrho)$$ given by $[\psi_1|\cdots|\psi_r]\varphi \,\, \longmapsto \,\, \int (\psi_1\cdots\psi_r|\varphi)$. \qed
\end{theorem}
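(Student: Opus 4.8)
The plan is to realize the displayed map as Chen's iterated-integration (transport) homomorphism and to show that, in degree zero, the bar differential measures exactly the failure of an iterated integral to be a homotopy functional. Write $B = B(\C,E^\bullet(X,\O_\vrho),\O(D_\vrho))$, let $B_0$ denote its degree-zero component, and let $\Theta$ be the map $[\psi_1|\cdots|\psi_s]\varphi \mapsto \int(\psi_1\cdots\psi_s|\varphi)$. Because the degree of a word is $\sum_j \deg\psi_j - s$ and each entry lies in $E^+(X,\O_\vrho)$, an element of $B_0$ is a linear combination of words with every $\psi_j \in E^1(X,\O_\vrho)$; hence $\Theta$ is defined on all of $B_0$ provided it respects the defining relations of the reduced bar construction.

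First I would check well-definedness and multiplicativity. Each bar relation involves an element $g \in E^0(X,\O_\vrho)$ together with an entry $dg$, and each such relation is precisely the integration-by-parts identity satisfied by the iterated integrals of Section \ref{TwistedIteratedIntegrals}; this is Chen's computation \cite{Chen3}, carried out in the present twisted setting by Hain \cite{HainDeRham}. That $\Theta$ sends the shuffle product on $B$ to the pointwise product of functions on $\Omega_{x_0}X$ is exactly Proposition \ref{commprop}. Thus $\Theta \colon B_0 \to I(X)_\vrho$ is a $\C$-algebra homomorphism, and it is surjective because both sides are spanned by the same symbols.

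The heart of the argument is to match the bar differential (\ref{ddef}) with the exterior derivative on the based loop space $\Omega_{x_0}X$. Chen's theory extends $\Theta$ to a chain map from $B$ into the de Rham complex of $\Omega_{x_0}X$, under which the two families of terms in (\ref{ddef}), those containing $dr_j$ and those containing $Jr_j\wedge r_{j+1}$, account respectively for differentiating the forms and for the collisions $t_j = t_{j+1}$ in Stokes's theorem on the simplex $0 \le t_1 \le \cdots \le t_s \le 1$. Invoking this (Chen \cite{Chen3}, Hain \cite{HainDeRham}), I conclude that for $\xi \in B_0$ the function $\Theta(\xi)$ is locally constant on $\Omega_{x_0}X$ (equivalently, closed as a $0$-form), that is, constant on each homotopy class $[\gamma] \in \pi_1(X,x_0)$, if and only if $d\xi = 0$. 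This homotopy-invariance statement is where the specific shape of (\ref{ddef}) and the geometry of the loop space enter, and it is the step I expect to be the main obstacle; the remaining arguments are formal.

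Granting it, the isomorphism follows. Since $B$ is non-negatively graded by cohomological degree, $H^0 B$ is the kernel of $d$ on $B_0$, and by the previous paragraph $\Theta$ carries this kernel onto the locally constant iterated integrals, which are by definition $H^0(I(X)_\vrho)$; surjectivity is then immediate since $H^0(I(X)_\vrho)$ is spanned by such integrals. For injectivity I would appeal to Chen's linear independence of reduced iterated integrals: the reduced bar construction is arranged so that $\Theta$ is already injective on $B_0$ as functions on $\Omega_{x_0}X$, whence its restriction to $H^0 B = \ker(d|_{B_0})$ is injective as well. As $\Theta$ is a $\C$-algebra homomorphism, the resulting bijection $H^0 B(\C,E^\bullet(X,\O_\vrho),\O(D_\vrho)) \to H^0(I(X)_\vrho)$ is an isomorphism of $\C$-algebras, as claimed.
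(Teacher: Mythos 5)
Your proposal is correct and takes essentially the same route as the paper, which in fact offers no proof of this theorem at all: it is stated without proof as a result recalled from Hain \cite{HainDeRham} (generalizing Chen \cite{Chen3}), and your outline is precisely the Chen--Hain argument that this citation stands for, with the two genuinely hard steps --- matching the bar differential (\ref{ddef}) with the differential on forms over $\Omega_{x_0}X$, so that closedness in degree zero corresponds to homotopy invariance, and the injectivity of iterated integration on the reduced bar construction --- correctly deferred to those same sources. For what it is worth, the injectivity you invoke is exactly what the paper itself cites as \cite[Proposition 8.1]{HainDeRham} when proving the family version (Proposition \ref{IterYMap}), so your appeal lands in the right place.
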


\noindent Note that the group $\S_\vrho(\C)$ consists of the set of $\C$-algebra homomorphisms
$$
H^0B(\C,E^\bullet(X,\O_\vrho),\O(D_\vrho)) \,\, \longrightarrow \,\, \C.
$$
\begin{theorem}
\label{thetavrho}
The map $\theta_\vrho \colon \pi \to \S_\vrho(\C)$ given by
$$
\gamma \,\,\,\, \longmapsto \,\, \biggl [\psi_1|\cdots|\psi_r]\varphi \,\, \mapsto \, \int_\gamma (\psi_1\cdots\psi_r|\varphi)\biggr
$$
is a Zariski dense group homomorphism. \qed
\end{theorem}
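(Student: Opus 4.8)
The plan is to treat the three assertions implicit in the statement separately: that each $\theta_\vrho(\gamma)$ is a well-defined point of $\S_\vrho(\C)$, that $\theta_\vrho$ is multiplicative, and that its image is Zariski dense. Recall that $\S_\vrho(\C)$ is the set of $\C$-algebra homomorphisms $H^0B(\C,E^\bullet(X,\O_\vrho),\O(D_\vrho)) \to \C$, and that under the isomorphism with $H^0(I(X)_\vrho)$ from the preceding theorem the class $[\psi_1|\cdots|\psi_r]\varphi$ corresponds to the locally constant iterated integral $\int(\psi_1\cdots\psi_r|\varphi)$. Thus $\theta_\vrho(\gamma)$ is nothing but evaluation of such an iterated integral at the loop $\gamma$. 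Since Proposition \ref{commprop} and its corollary identify the algebra structure on $H^0(I(X)_\vrho)$ with pointwise multiplication of functions on $\Omega_{x_0}X$, evaluation at any fixed $\gamma$ is automatically a $\C$-algebra homomorphism, so each $\theta_\vrho(\gamma)$ lies in $\S_\vrho(\C)$.

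Next I would verify that $\theta_\vrho(\gamma_1\gamma_2) = \theta_\vrho(\gamma_1)\,\theta_\vrho(\gamma_2)$, where the product on the right is the group law on $\S_\vrho(\C)$, namely the convolution $(f\cdot g)(x) = (f\otimes g)(\Delta x)$ induced by the coproduct $\Delta$ of Section \ref{BarHopfAlgebra}. The essential input is a twisted path-composition formula for iterated integrals. Working first with homogeneous forms $\psi_j \in E^1(X,\VV_{\alpha_j})\otimes V_{\alpha_j}^*$, I would lift $\gamma_1\gamma_2$ to $\X$, break the lift at the endpoint of the lift of $\gamma_1$, and apply Chen's composition formula on the universal cover, which expands $\int_{\gamma_1\gamma_2}$ as a sum over break points $i$ of a product of integrals over the two portions. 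Translating the second portion back to the basepoint $\tilde x_0$ by the deck transformation determined by $\gamma_1$ introduces monodromy factors governed by the relation $\psi\cdot\eta = \alpha(\vrho(\eta))(\eta^{-1})^*\psi$; these factors are exactly the $\O(D_\vrho)$-coefficients inserted by the map $\nu$ of $(\ref{rhonu})$. Comparing the resulting expansion term by term with the explicit formula for $\Delta$ then gives $\int_{\gamma_1\gamma_2}([\psi_1|\cdots|\psi_s]\varphi) = \sum \int_{\gamma_1}(x')\int_{\gamma_2}(x'')$ when $\Delta([\psi_1|\cdots|\psi_s]\varphi) = \sum x'\otimes x''$, which is the homomorphism property. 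I expect this matching to be the main obstacle: one must track the conventions carefully (paths multiplied in natural order, with $\pi$ acting on the left of $\X$) and check that the monodromy weights produced by translating along $\gamma_1$ coincide precisely with the $\nu$-insertions dictated in the definition of $\Delta$.

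Finally, for Zariski density I would show that the ideal of functions in the coordinate ring vanishing on $\im\theta_\vrho$ is trivial. An element $f \in H^0B(\C,E^\bullet(X,\O_\vrho),\O(D_\vrho))$ corresponds to a locally constant iterated integral $\hat f \in H^0(I(X)_\vrho)$, regarded as a function on the loop space $\Omega_{x_0}X$, and by construction $\theta_\vrho(\gamma)(f) = \hat f(\gamma)$. By the definition of $H^0(I(X)_\vrho)$, the function $\hat f$ is constant on each homotopy class and is therefore determined by its restriction to $\pi = \pi_0(\Omega_{x_0}X)$. Hence if $\hat f(\gamma) = 0$ for every $\gamma \in \pi$, then $\hat f \equiv 0$ on $\Omega_{x_0}X$ and so $f = 0$. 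Consequently no nonzero function vanishes on $\im\theta_\vrho$, the image is Zariski dense in $\S_\vrho$, and the proof is complete.
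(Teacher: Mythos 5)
The paper itself offers no proof of Theorem \ref{thetavrho}: it is one of the results explicitly ``recalled'' from Hain \cite{HainDeRham} (generalizing Chen \cite{Chen3}), which is why the statement ends in a box. So your proposal must be judged against the standard Chen--Hain argument rather than against anything in the text, and judged that way it is a faithful reconstruction. Your first and third steps are complete as written: by the isomorphism $H^0B(\C,E^\bullet(X,\O_\vrho),\O(D_\vrho)) \cong H^0(I(X)_\vrho)$ of the preceding theorem, elements of $\O(\S_\vrho)$ are genuine functions on $\Omega_{x_0}X$, multiplication is pointwise by Proposition \ref{commprop}, so evaluation at a loop is a $\C$-algebra homomorphism; and for density, injectivity of that same isomorphism shows that any $f$ vanishing on $\im\theta_\vrho$ corresponds to a locally constant iterated integral vanishing on all of $\pi$, hence on all of $\Omega_{x_0}X$, hence $f=0$, so every closed subscheme whose $\C$-points contain the image is all of $\S_\vrho$. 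The one place where your text is a plan rather than a proof is the multiplicativity step, and that is exactly where all the work in \cite{HainDeRham} lives: one must establish the twisted composition-of-paths formula on $\X$ and verify that the monodromy factors $\alpha_j(\vrho(\gamma_1))$, produced when the lift of $\gamma_2$ is translated back to $\tilde{x}_0$ by the deck transformation of $\gamma_1$, match the $\nu$-insertions $\psi \mapsto \psi \otimes \alpha^{-1}$ of (\ref{rhonu}) appearing in the coproduct of Section \ref{BarHopfAlgebra}. With the paper's conventions (paths composed in natural order, $\pi$ acting on the left of $\X$, $\VV_\alpha$ having monodromy $\alpha\circ\vrho$) this bookkeeping is delicate --- a naive pass produces the inverse characters on one side of the comparison --- so you are right to flag it as the main obstacle; carrying out that computation is what remains to turn your outline into a complete proof.
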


In the construction of $B(\C,E^\bullet(X,\O_\vrho),\O(D_\vrho))$, we are viewing $\C$ as an algebra over $E^\bullet(X,\O_\vrho)$ via the composition $E^\bullet(X,\O_\vrho) \to \O(D_\vrho) \overset{\epsilon}{\to} \C$. The map $E^\bullet(X,\O_\vrho) \to \O(D_\vrho)$ is described in Section \ref{HopfExampleSection}. Thus, we may form the reduced bar construction $B(\C,E^\bullet(X,\O_\vrho),\C)$, which is also a differential graded Hopf algebra. Hain \cite{HainDeRham} shows that the Hopf algebra $H^0B(\C,E^\bullet(X,\O_\vrho),\C)$ is the coordinate ring of a prounipotent group scheme $\U_\vrho$. That is,
\begin{equation*}
\begin{aligned}
\O(\S_\vrho) &= H^0B(\C,E^\bullet(X,\O_\vrho),\O(D_\vrho))\\
\O(\U_\vrho) &= H^0B(\C,E^\bullet(X,\O_\vrho),\C).
\end{aligned}
\end{equation*}

\begin{theorem}
There is a natural short exact sequence
$$
1 \longrightarrow \U_\vrho \longrightarrow \S_\vrho \longrightarrow D_\vrho \longrightarrow 1
$$
of affine proalgebraic group schemes over $\C$. \qed
\end{theorem}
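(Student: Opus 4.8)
The plan is to translate the statement into Hopf algebras and read off the two structure maps directly from the descriptions $\O(\S_\vrho) = H^0B(\C,E^\bullet(X,\O_\vrho),\O(D_\vrho))$ and $\O(\U_\vrho) = H^0B(\C,E^\bullet(X,\O_\vrho),\C)$ established above. Dually, a short exact sequence $1 \to \U_\vrho \to \S_\vrho \to D_\vrho \to 1$ of affine proalgebraic group schemes amounts to a faithfully flat Hopf-algebra inclusion $\O(D_\vrho) \hookrightarrow \O(\S_\vrho)$ (the dual of $\S_\vrho \to D_\vrho$) together with an identification of $\O(\U_\vrho)$ with the quotient Hopf algebra $\O(\S_\vrho)\otimes_{\O(D_\vrho)}\C = \O(\S_\vrho)/\O(\S_\vrho)\cdot\O(D_\vrho)^+$, where $\O(D_\vrho)^+$ is the augmentation ideal. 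So I would build these two pieces and then argue exactness.

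First I would construct the surjection $\S_\vrho \to D_\vrho$ from the inclusion $\varphi \mapsto [\,]\varphi$ of $\O(D_\vrho)$ into the bar construction. Since $[\,]\varphi$ carries no bar entries, the differential (\ref{ddef}) kills it and it has degree $0$, so it represents a class in $H^0$; and the restriction of the comultiplication $\Delta$ and antipode $\lambda$ of Section \ref{BarHopfAlgebra} to the $s=0$ summand reproduces the Hopf structure of $\O(D_\vrho)$, so this is a Hopf-algebra homomorphism. Its injectivity, equivalently surjectivity of $\S_\vrho \to D_\vrho$, follows from the Zariski density of $\theta_\vrho$ (Theorem \ref{thetavrho}): the composite $\pi \to \S_\vrho(\C) \to D_\vrho(\C)$ is $\vrho$, whose image is Zariski dense in $D_\vrho$ by the definition of $D_\vrho$, so the image of $\S_\vrho \to D_\vrho$ is a closed subgroup scheme containing a Zariski dense subset and hence all of $D_\vrho$.

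Next I would produce the inclusion $\U_\vrho \hookrightarrow \S_\vrho$. The counit $\epsilon \colon \O(D_\vrho) \to \C$ is a map of $E^\bullet(X,\O_\vrho)$-modules, both structures factoring through $E^\bullet(X,\O_\vrho) \to \O(D_\vrho)$, so it induces a morphism of bar constructions $B(\C,E^\bullet(X,\O_\vrho),\O(D_\vrho)) \to B(\C,E^\bullet(X,\O_\vrho),\C)$ and hence, on $H^0$, a Hopf-algebra surjection $\O(\S_\vrho) \to \O(\U_\vrho)$. Dualizing gives the closed immersion $\U_\vrho \hookrightarrow \S_\vrho$, and $\U_\vrho$ is prounipotent by Section \ref{TwistedII}.

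The main obstacle is exactness, namely identifying $\U_\vrho$ with the kernel of $\S_\vrho \to D_\vrho$; equivalently, showing the surjection above realizes $\O(\U_\vrho)$ as $\O(\S_\vrho)\otimes_{\O(D_\vrho)}\C$. The bar construction is functorial and linear in its last argument, so the map induced by $\epsilon$ is morally base change of the coefficient module from $\O(D_\vrho)$ to $\C$; the real content is that passing to $H^0$ commutes with this base change, which is precisely faithful flatness of $\O(\S_\vrho)$ over $\O(D_\vrho)$. Here I would use that $D_\vrho$ is reductive, being diagonalizable: the Levi-type splitting recalled in Section \ref{RMCProperties} gives a section $D_\vrho \to \S_\vrho$, hence a semidirect product decomposition $\S_\vrho \cong D_\vrho \ltimes \U_\vrho$ and an isomorphism $\O(\S_\vrho) \cong \O(D_\vrho)\otimes\O(\U_\vrho)$ of $\O(D_\vrho)$-modules. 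This exhibits $\O(\S_\vrho)$ as a free, in particular faithfully flat, $\O(D_\vrho)$-module and forces $\O(\S_\vrho)\otimes_{\O(D_\vrho)}\C \cong \O(\U_\vrho)$, which is the desired exactness. The cleanest packaging is to note that the abstract extension $1 \to \U_\vrho \to \S_\vrho \to D_\vrho \to 1$ already exists by the theory of relative completion applied to the reductive group $D_\vrho$ (Sections \ref{RMCProperties}--\ref{CompRelDiag}), and that the two maps constructed above agree with its structure maps since both are compatible with $\theta_\vrho$ and the universal property; exactness is then inherited from that abstract sequence.
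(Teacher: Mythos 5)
Your construction of the two structure maps is fine and agrees with how the paper itself presents them: the Hopf algebra map $\O(D_\vrho) \to \O(\S_\vrho)$, $\varphi \mapsto [\phantom{i}]\varphi$, is injective because $\theta_\vrho(\gamma)([\phantom{i}]\varphi) = \varphi(\vrho(\gamma))$ and $\im\vrho$ is Zariski dense in $D_\vrho$; and the map $\O(\S_\vrho) \to \O(\U_\vrho)$ is induced by the counit $\epsilon$. The problem is the step you yourself flag as the real content, identifying $\O(\U_\vrho)$ with $\O(\S_\vrho)\otimes_{\O(D_\vrho)}\C$: both routes you offer for it are circular. The Levi-type splitting recalled in Section \ref{RMCProperties} is a statement about the relative completion $\mathcal{G}$ defined by the universal property; to apply it to the bar-construction group $\S_\vrho$ you must already know either that $\S_\vrho$ is an extension of $D_\vrho$ by a prounipotent group (which is exactly the statement being proved) or that $\S_\vrho \cong \mathcal{G}$. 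The same objection kills the ``cleanest packaging'': the identification of $\Spec H^0B(\C,E^\bullet(X,\O_\vrho),\O(D_\vrho))$ with the abstract relative completion is precisely Hain's de Rham theorem, which the paper states \emph{after} this one and whose proof in \cite{HainDeRham} relies on this exact sequence; you cannot invoke the universal property of $\mathcal{G}$ on behalf of $\S_\vrho$ before knowing that $\S_\vrho$ is an extension of $D_\vrho$ by a prounipotent group equipped with a compatible lift of $\vrho$.

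There is also a smaller unjustified claim: a surjection of non-negatively graded complexes need not be surjective on $H^0$ (a closed element downstairs lifts, but not necessarily to a closed lift), so ``the counit induces a Hopf-algebra surjection $\O(\S_\vrho)\to\O(\U_\vrho)$'' needs proof; and that surjectivity is genuinely required, since exactness demands that $\U_\vrho \to \S_\vrho$ be a closed immersion onto the kernel. What actually fills both gaps, and what Hain does, is a direct, non-circular comparison of bar constructions: using the right $\O(D_\vrho)$-comodule structure one untwists to obtain an isomorphism of complexes $B(\C,E^\bullet(X,\O_\vrho),\O(D_\vrho)) \cong B(\C,E^\bullet(X,\O_\vrho),\C)\otimes_\C\O(D_\vrho)$; since $\O(D_\vrho)$ is flat over $\C$, passing to $H^0$ gives $\O(\S_\vrho) \cong \O(\U_\vrho)\otimes_\C\O(D_\vrho)$ compatibly with your two maps, from which surjectivity of $\O(\S_\vrho)\to\O(\U_\vrho)$, faithful flatness over $\O(D_\vrho)$, and exactness all follow. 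Note that the paper offers no proof of this theorem at all; it is recalled from \cite{HainDeRham} with only the maps written down, so a complete argument must reproduce this untwisting step rather than appeal to results that come later in the logical order.
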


The homomorphism $\U_\vrho \to \S_\vrho$ corresponds to the Hopf algebra homomorphism $H^0B(\C,E^\bullet(X,\O_\vrho),\O(D_\vrho)) \to H^0B(\C,E^\bullet(X,\O_\vrho),\C)$ which sends $[\psi_1|\cdots|\psi_r]\varphi$ to $[\psi_1|\cdots|\psi_r]\epsilon(\varphi)$. The homomorphism $\S_\vrho \to D_\vrho$ corresponds to the Hopf algebra homomorphism $\O(D_\vrho) \to H^0B(C,E^\bullet(X,\O_\vrho),\O(D_\vrho))$ which sends $\varphi$ to $[\phantom{i}]\varphi$.

If $\gamma \in \pi$, then the element $\theta_\vrho(\gamma)$ of $\S_\vrho(\C)$ is a $\C$-algebra homomorphism $\O(\S_\vrho) \to \C$. Moreover, the element $\vrho(\gamma)$ of $D_\vrho(\C)$ is a $\C$-algebra homomorphism $\O(D_\vrho) \to \C$. If $\varphi \in \O(D_\vrho)$, then $[\phantom{i}]\varphi \in \O(\S_\vrho)$. Thus, by the definition of $\theta_\vrho$, we have $\theta_\vrho(\gamma)([\phantom{i}]\varphi) = \varphi(\vrho(\gamma))$. It follows that the diagram
$$
\xymatrix{\pi \ar[d]^{\theta_\vrho} \ar[dr]_\vrho \\ \S_\vrho(\C) \ar[r] &\D_\vrho(\C)}
$$
commutes.
\begin{theorem}[Hain, \cite{HainDeRham}]
The proalgebraic group scheme $\S_\vrho$ is the Malcev completion of $\pi$ relative to $\vrho$. \qed
\end{theorem}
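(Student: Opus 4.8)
The plan is to verify that the pair $(\S_\vrho,\theta_\vrho)$ satisfies the defining universal property of the relative Malcev completion; since that property characterizes its object up to canonical isomorphism, this identifies $\S_\vrho$ with the abstractly defined completion $\mathcal{G}$ of $\pi$ relative to $\vrho$. The preceding results already supply most of the required structure: Theorem \ref{thetavrho} gives a Zariski dense homomorphism $\theta_\vrho \colon \pi \to \S_\vrho(\C)$, the short exact sequence $1 \to \U_\vrho \to \S_\vrho \to D_\vrho \to 1$ exhibits $\S_\vrho$ as an extension of $D_\vrho$ by the prounipotent group $\U_\vrho$, and the commuting triangle displayed just before the theorem shows that $\theta_\vrho$ lifts $\vrho$. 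Thus $\S_\vrho$ is an admissible candidate, and it remains only to produce, for each test extension, a unique compatible morphism out of $\S_\vrho$.

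So fix an extension $1 \to U \to E \to D_\vrho \to 1$ with $U$ prounipotent, together with a homomorphism $\theta \colon \pi \to E(\C)$ lifting $\vrho$. First I would treat the case where $E$ is algebraic and choose a faithful finite-dimensional representation $E \hookrightarrow \GL(W)$; here $W$ is a $D_\vrho$-module on which $U$ acts unipotently, so under $D_\vrho$ it splits into weight spaces indexed by characters $\alpha \in D_\vrho^\vee$. Pulling back along $\theta$ produces a flat bundle on $X$ whose semisimple part is governed by $\vrho$ and whose nilpotent part is an $\operatorname{End}(W)$-valued connection form with block entries lying in the summands $E^1(X,\VV_{\alpha\beta^{-1}})$ of $E^\bullet(X,\O_\vrho)$. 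By Chen's parallel-transport theorem, in the $D_\vrho$-twisted form recalled in Section \ref{TwistedIteratedIntegrals}, the matrix coefficients of $\theta(\gamma)$ are computed by iterated integrals $\int_\gamma(\psi_1\cdots\psi_r \mid \varphi)$ of this connection form, twisted by $\vrho$ exactly as in the definition of $I(X)_\vrho$. Because $\theta$ factors through $\pi$, these functions are constant on homotopy classes, hence lie in $H^0(I(X)_\vrho) \cong \O(\S_\vrho)$. Assembling the matrix coefficients yields a Hopf-algebra homomorphism $\O(E) \to \O(\S_\vrho)$, that is, a morphism $\S_\vrho \to E$ compatible with the projections to $D_\vrho$ and with $\theta_\vrho$ and $\theta$; the case of general prounipotent $U$ follows by passing to the inverse limit over algebraic quotients.

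For uniqueness, any two such morphisms $\S_\vrho \to E$ agree after precomposition with $\theta_\vrho$ (both recover $\theta$), and since $\theta_\vrho(\pi)$ is Zariski dense in $\S_\vrho$ they coincide. This establishes the universal property and hence the desired isomorphism $\mathcal{G} \cong \S_\vrho$. As an alternative to the integration step, one could instead take the canonical surjection $\mathcal{G} \twoheadrightarrow \S_\vrho$ furnished by the universal property of $\mathcal{G}$ applied to $(\S_\vrho,\theta_\vrho)$, and prove it is an isomorphism by checking the induced map of prounipotent radicals via Lemma \ref{usefullemma}: the homology $H_\bullet(\u_\vrho)$ of the completion's Lie algebra is given by Theorem \ref{HainMatsuTheorem} in terms of $H^1(X,\VV_\alpha)$ and $H^2(X,\VV_\alpha)$, while the bar-construction identity $\O(\S_\vrho) = H^0 B(\C,E^\bullet(X,\O_\vrho),\O(D_\vrho))$ computes the same invariants from $H^1$ and $H^2$ of $E^\bullet(X,\O_\vrho) = \bigoplus_\alpha H^\bullet(X,\VV_\alpha) \otimes V_\alpha^*$, forcing an isomorphism on $H_1$ and a surjection on $H_2$.

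The hard part will be the integration step in the second paragraph: establishing that parallel transport for the flat connection attached to $\theta$ is genuinely computed by the twisted iterated integrals of Section \ref{TwistedIteratedIntegrals}, and that the resulting functions are locally constant. The delicate bookkeeping is to match the $\O(D_\vrho)$-coefficients carried by the bar construction with the $D_\vrho$-action on $E$, and to verify that the semisimple/nilpotent splitting of the connection is preserved so that only forms drawn from $E^\bullet(X,\O_\vrho)$ appear; this is exactly the content of Hain's generalization \cite{HainDeRham} of Chen's de Rham theorem for $\pi_1$.
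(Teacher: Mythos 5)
The paper offers no proof of this statement: it is attributed to Hain and cited to \cite{HainDeRham}, so there is no internal argument to compare yours against. Judged on its own, your main route is a faithful reconstruction of how Hain actually proves it: verify that $(\S_\vrho,\theta_\vrho)$ satisfies the universal property, using the already-established facts that $\theta_\vrho$ is Zariski dense (Theorem \ref{thetavrho}) and that $\S_\vrho$ is an extension of $D_\vrho$ by the prounipotent $\U_\vrho$; produce the map to a test extension $E$ by expressing matrix coefficients of the monodromy $\theta$ as locally constant twisted iterated integrals, hence as elements of $H^0(I(X)_\vrho)\cong\O(\S_\vrho)$; and get uniqueness from Zariski density of $\theta_\vrho(\pi)$. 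Be clear, though, that what you call ``the hard part'' --- that parallel transport of the flat bundle attached to $\theta$ is genuinely computed by the iterated integrals of Section \ref{TwistedIteratedIntegrals}, with forms drawn exactly from $E^\bullet(X,\O_\vrho)$ and coefficients from $\O(D_\vrho)$ --- is not a detail to be filled in but is essentially the entire content of the theorem; deferring it to \cite{HainDeRham} makes your write-up an expansion of the paper's citation rather than an independent proof. That is acceptable here precisely because the paper does the same, but it should be stated as such.

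One caution about your alternative route via the surjection $\mathcal{G}\twoheadrightarrow\S_\vrho$ and Lemma \ref{usefullemma}: it is not a way around the citation, and as sketched it has a gap. Theorem \ref{HainMatsuTheorem} computes $H_1$ and $H_2$ of the Lie algebra on the $\mathcal{G}$ side, but the claim that $H_1$ and $H_2$ of the Lie algebra of $\Spec H^0B(\C,E^\bullet(X,\O_\vrho),\C)$ are computed by $H^1$ and $H^2$ of $E^\bullet(X,\O_\vrho)$ is itself part of Hain's de Rham theory (it is the ``Important Fact'' the paper quotes from the same source), so the circle is not broken. Moreover, knowing that the two homologies are abstractly isomorphic does not ``force'' the map induced by $\mathcal{G}\to\S_\vrho$ to be an isomorphism on $H_1$ and a surjection on $H_2$; you would need to check that the natural map realizes these identifications, which requires tracing both constructions. (Also a typo there: $E^\bullet(X,\O_\vrho)=\bigoplus_\alpha E^\bullet(X,\VV_\alpha)\otimes V_\alpha^*$, with $E^\bullet$, not $H^\bullet$, in the summands.)
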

By definition, the group scheme $\U_\vrho$ is the prounipotent radical of $\S_\vrho$. In Section \ref{SectionY}, we will generalize this construction to define the Malcev completion relative to any irreducible component of the characteristic variety $\V_{N,m}^i(X)$.

\subsection{When $\S_\vrho$ is Combinatorially Determined}
\label{WhenSpCombDet}

\noindent In this section, we give conditions under which $\S_\vrho$ is combinatorially determined, where $\vrho \in \T$ is a character. We do not know whether $\S_\vrho$ is always combinatorially determined. In Section \ref{CharVarIntPos}, we show that if $\S_\vrho$ is combinatorially determined for all $\vrho \in \T$, then the characteristic variety $\V_m^1(X)$ is combinatorially determined.

Let $X$ denote the complement of an arrangement of hyperplanes in a complex vector space $V$, and let $\h$ denote its holonomy Lie algebra. Set $\pi = \pi_1(X,x_0)$. Let $\h^\wedge$ denote its completion with respect to degree. Then $\h^\wedge$ is the pronilpotent Lie algebra constructed from the differential graded algebra $E^\bullet(X)$ by the methods of rational homotopy theory. Let $A^\bullet$ denote the complexified Orlik-Solomon algebra of $X$. The inclusion $A^\bullet \hookrightarrow E^\bullet(X)$ is a quasi-isomorphism \cite{OS1}. Thus, $\h^\wedge$ can also be constructed from the differential graded algebra $A^\bullet$. The Orlik-Solomon algebra is determined by the intersection poset of the hyperplane arrangement. Thus, the pronilpotent Lie algebra $\h^\wedge$ is also determined by the intersection poset. The Malcev completion $\pi^{\un}$ is the unique prounipotent group whose Lie algebra is $\h^\wedge$. Thus, $\pi^{\un}$ is determined by the intersection poset of the arrangement. For this reason, we say that $\pi^{\un}$ is {\em combinatorially determined}.

It is natural to ask whether, in general, the relative Malcev completion $\S_\vrho$ is combinatorially determined for $\vrho \in \T^N$. The first result in this direction was given in Theorem \ref{IsUnipTheorem1}, which says that if two distinct hyperplanes intersect, then $\S_\vrho \cong D_\vrho \times \pi^{\un}$ for general $\vrho \in \T^N$. For such $\vrho$, the relative Malcev completion $\S_\vrho$ is combinatorially determined.

This result extends to positive dimensional subvarieties of the character torus. Here, we only consider $\vrho \in \T$. That is, we consider characters $\vrho \colon \pi \to \C^*$. This simplifies notation, because each subtorus $Y$ of $\T$ which has positive dimension must contain a character $\vrho$ that has Zariski dense image in $\G_m$. Thus, we can apply Theorem \ref{OSATheorem}.

Choose linear functions $L_j$ on $V$ such that the $j$-th hyperplane is the vanishing set of $L_j$. Set $\omega_j = (2\pi i)^{-1}d L_j/L_j$. This is a closed holomorphic $1$-form on $X$. Set $\bomega = (\omega_1,\dots,\omega_n)$.

Suppose that $\a \in \C^n$, and set $\vrho = \exp(\a\bomega^T)$. This is an element of $\T$. In Section \ref{Avrho}, we constructed a commutative differential graded algebra $\A_{\a}^\bullet$. It is defined by
$$
\A_\a^\bullet = \bigoplus_{k \in \Z} A^\bullet q^{-k}.
$$
The differential is given on the $k$-th component by left multiplication by $-k\a\bomega^T$.
There is a natural inclusion $\A_{\a}^\bullet \hookrightarrow E^\bullet(X,\O_\vrho)$. By Theorem \ref{OSATheorem}, if $\mathbf{V}$ is a vector subspace of $\C^n$, then there is a countable collection $\{\W_j\}_{j \in \Z}$ of proper affine subspaces of $\mathbf{V}$ with the following property. If $\a \in \mathbf{V}-\bigcup_j \W_j$ and $\vrho = \exp(\a\bomega^T)$ has Zariski dense image in $\G_m$, then the induced homomorphism $H^\bullet(\A_{\a}^\bullet) \longrightarrow H^\bullet(X,\O_\vrho)$ is an isomorphism.

\begin{theorem}
\label{CombinatoriallyDetermined}
If $Y$ is a subtorus of $\T$ that contains the trivial character, then the relative Malcev completion $\S_\vrho$ is combinatorially determined for general $\vrho \in Y$.
\end{theorem}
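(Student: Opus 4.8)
The plan is to reproduce, in the twisted setting, the argument that shows $\pi^{\un}$ is combinatorially determined: namely, to replace the analytic de Rham model $E^\bullet(X,\O_\vrho)$ by the combinatorial model $\A_\a^\bullet$ via the quasi-isomorphism of Theorem~\ref{OSATheorem}, and then to observe that $\A_\a^\bullet$ is assembled entirely from the Orlik--Solomon algebra. If $Y$ is zero-dimensional, then $Y = \{1\}$ is the trivial character, $D_\vrho$ is trivial, and $\S_\vrho = \pi^{\un}$ is already combinatorially determined, so I may assume $\dim Y > 0$. Since $Y$ is a subtorus through the trivial character, under Brieskorn's combinatorial isomorphism $\C^n \overset{\cong}{\longrightarrow} H^1(X,\C)$, $\a \mapsto \a\bomega^T$, together with the exponential map, there is a $\Q$-rational vector subspace $\mathbf{V} \subseteq \C^n$ with $Y = \{\exp(\a\bomega^T) : \a \in \mathbf{V}\}$; this subspace is determined by the intersection poset together with $Y$. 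Applying Theorem~\ref{OSATheorem} to $\mathbf{V}$ with $N = 1$ yields a countable family $\{\W_j\}$ of proper affine subspaces of $\mathbf{V}$ not containing $0$ such that, whenever $\a \in \mathbf{V} - \bigcup_j \W_j$ and $\vrho = \exp(\a\bomega^T)$ has Zariski dense image in $\G_m$, the induced map $H^\bullet(\A_\a^\bullet) \to H^\bullet(X,\O_\vrho)$ is an isomorphism.

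Next I would pin down the notion of ``general''. Because $\dim Y > 0$, the characters in $Y$ with finite image are exactly the countably many torsion points of $Y$; avoiding these forces $D_\vrho = \G_m$. Thus for $\vrho = \exp(\a\bomega^T)$ with $\a$ outside both $\bigcup_j \W_j$ and the torsion points, $D_\vrho = \G_m$ is reductive, and Hain's splitting of the relative completion gives $\S_\vrho \cong \G_m \ltimes \U_\vrho$. Consequently $\S_\vrho$ is determined, as a proalgebraic group scheme, by the pronilpotent Lie algebra $\u_\vrho$ together with its $\G_m = D_\vrho$-action.

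The core step is to identify $\u_\vrho$ combinatorially. For such $\a$ the $D_\vrho$-equivariant homomorphism $\A_\a^\bullet \to E^\bullet(X,\O_\vrho)$ of Section~\ref{Avrho} induces an isomorphism on all cohomology, so it is a $\G_m$-equivariant quasi-isomorphism of commutative differential graded algebras. By the Important Fact of Section~\ref{CommDiffE}, the Lie algebra $\u_\vrho$ and its $D_\vrho$-module structure are produced functorially from $E^\bullet(X,\O_\vrho)$ by rational homotopy theory; applying Lemma~\ref{usefulcorollary} to this quasi-isomorphism (which is in particular an isomorphism on $H^1$ and an injection on $H^2$) shows that the induced map $\u(\A_\a^\bullet) \overset{\simeq}{\longrightarrow} \u_\vrho$ from the Lie algebra attached to $\A_\a^\bullet$ is an isomorphism. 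Now $\A_\a^\bullet = \bigoplus_{k \in \Z} A^\bullet q^{-k}$ is built solely from the complexified Orlik--Solomon algebra $A^\bullet$ and the differential given by multiplication by $-k\,\a\bomega^T$, while the $\G_m$-action is read off from the $\Z$-grading by $k$. Since $A^\bullet$ is determined by the intersection poset and $\mathbf{V}$ (hence the element $\a$) is combinatorial data, the algebra $\A_\a^\bullet$ with its $\G_m$-action, and therefore $\u_\vrho$ and $\S_\vrho \cong \G_m \ltimes \exp(\u_\vrho)$, are combinatorially determined for general $\vrho \in Y$.

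The step I expect to be the main obstacle is maintaining $\G_m$-equivariance throughout the rational homotopy theory comparison: Lemma~\ref{usefulcorollary} as stated produces only an isomorphism of bare pronilpotent Lie algebras, so one must verify that, because both the quasi-isomorphism and the entire minimal-model construction are carried out in the category of $\G_m$-representations (equivalently, respect the grading by characters of $\G_m$), the resulting isomorphism $\u(\A_\a^\bullet) \overset{\simeq}{\longrightarrow} \u_\vrho$ intertwines the two $\G_m$-actions. Only with this equivariance established does combinatorial determination of the Lie algebra upgrade to combinatorial determination of the full extension $\S_\vrho$, rather than merely of its prounipotent radical $\U_\vrho$.
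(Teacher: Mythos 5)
Your proposal is correct in outline and coincides with the paper's proof through its first half: the reduction to $\dim Y > 0$, the observation that the general $\vrho \in Y$ has Zariski dense image in $\G_m$ (so $D_\vrho = \G_m$), and the application of Theorem \ref{OSATheorem} to the subspace $\mathbf{V}$ covering $Y$ to obtain a $\G_m$-equivariant quasi-isomorphism $\A_\a^\bullet \to E^\bullet(X,\O_\vrho)$. (One small point you gloss over: ``general'' refers to proper subvarieties of $Y$, so, as the paper does, you should exponentiate the affine subspaces $\W_j \subset \mathbf{V}$ to possibly-translated subtori $\Omega_j \subset Y$ and check that these are proper, which follows by comparing dimensions.) Where you genuinely diverge is the endgame. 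The paper never splits $\S_\vrho$: it uses the identification $\O(\S_\vrho) = H^0B(\C,E^\bullet(X,\O_\vrho),\O(\G_m))$ together with Corollary \ref{BarQuism} to conclude that the quasi-isomorphism identifies $\O(\S_\vrho)$ with $H^0B(\C,\A_\a^\bullet,\O(\G_m))$, a Hopf algebra built entirely from the Orlik--Solomon algebra; this captures the full group scheme --- prounipotent radical, reductive quotient, and the extension data gluing them --- in a single step. You instead invoke Hain's Levi splitting $\S_\vrho \cong \G_m \ltimes \U_\vrho$ and determine $\u_\vrho$ together with its $\G_m$-action by equivariant rational homotopy theory via Lemma \ref{usefulcorollary}. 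That route can be made to work, and you correctly isolate its weak point: Lemma \ref{usefulcorollary} produces an isomorphism of bare pronilpotent Lie algebras, so the equivariance of $\u(\A_\a^\bullet) \to \u_\vrho$ must be established separately, and only then does one get the isomorphism class of the semidirect product. The cleanest way to supply that equivariance is to construct $\u_\vrho$ functorially from the bar construction $B(\C,E^\bullet(X,\O_\vrho),\C)$, whose dependence on the differential graded algebra manifestly respects the character grading --- at which point you have essentially rederived the paper's argument, with the added (and, in the paper's approach, avoidable) bookkeeping of the splitting. So your proof buys conceptual contact with the Lie-algebra picture of Section \ref{CompRelDiag}, while the paper's bar-construction proof buys brevity and sidesteps the equivariance verification entirely.
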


\begin{proof}
If $Y$ has dimension $0$, then $Y$ is the trivial character, and $\S_\vrho$ is the standard Malcev completion of $\pi$. It is therefore combinatorially determined. If $Y$ has positive dimension, then the general $\vrho \in Y$ has Zariski dense image in $\G_m$. This is because the statement that $\vrho \colon \pi \to \C^*$ does not have Zariski dense image in $\G_m$ is equivalent to the statement that the image of $\vrho$ is contained in the roots of unity. Choose a countable collection $\{\Lambda_r\}$ of proper subvarieties of $Y$ such that every $\vrho \in Y - \bigcup_r \Lambda_r$ has Zariski dense image in $\G_m$.

Let $\mathbf{V}$ be the unique vector subspace of $\C^n$ such that the exponential map $\exp \colon H^1(X,\C) \to \T$ takes $\mathbf{V}_\bomega$ onto $Y$, where $\mathbf{V}_\bomega$ is the image of $\mathbf{V}$ under the natural isomorphism $\C^n \overset{\cong}{\longrightarrow} H^1(X,\C)$, given by $\a \mapsto \a\bomega^T$. Then $\mathbf{V}_\bomega$ is the universal cover of $Y$. Let $\V_j$ denote the image of $\W_j$ in $\mathbf{V}_\bomega$. Each affine subspace $\V_j$ exponentiates to a possibly-translated subtorus $\Omega_j$ of $Y$. Each $\Omega_j$ is a proper subtorus of $Y$, since $\dim Y = \dim_\C \mathbf{V} > \dim_\C \W_j = \dim_\C \V_j = \dim \Omega_j$. Suppose that $\vrho \in Y$ and that $\vrho$ does not lie in any $\Lambda_r$ or any $\Omega_j$. Since $\vrho$ does not lie in any $\Lambda_r$, it follows that $\vrho$ has Zariski dense image in $\G_m$. That is, $D_\vrho = \G_m$. We show that the isomorphism class of $\S_\vrho$ may be computed using the combinatorially determined algebra $\A_\a^\bullet$, where $\a \in \mathbf{V}$ satisfies $\vrho = \exp(\a\bomega^T)$. If $\vrho$ does not lie in any $\Lambda_r$ or $\Omega_j$, then $\a$ is necessarily an element of $\mathbf{V} - \bigcup_j \W_j$. Theorem \ref{OSATheorem} therefore implies that the map
$$
H^\bullet(\A_a^\bullet) \longrightarrow E^\bullet(X,\O_\vrho)
$$
is an isomorphism of graded algebras.

Consider the reduced bar construction $B(\C,E^\bullet(X,\O_\vrho),\O(\G_m))$. We have
$$
\O(\S_\vrho) = H^0B(\C,E^\bullet(X,\O_\vrho),\O(\G_m)).
$$
There is natural inclusion $\A_\a^\bullet \hookrightarrow E^\bullet(X,\O_\vrho)$, which is a quasi-isomorphism. Thus, $\O(\G_m)$ inherits the structure of a $\A_\a^\bullet$-module. Consider the reduced bar construction $B(\C,\A_\a^\bullet,\O(\G_m))$. Set
$$
\mathcal{G} = \Spec H^0B(\C,\A_\a^\bullet,\O(\G_m)).
$$
This is an affine group scheme. Proposition \ref{BarQuism} implies that the natural homomorphism $\O(\mathcal{G}) \longrightarrow \O(\S_\vrho)$ is an isomorphism, since $\A_\a^\bullet \hookrightarrow E^\bullet(X,\O_\vrho)$ is a quasi-isomorphism. Thus, there is a natural isomorphism
$$
\S_\vrho \overset{\cong}{\longrightarrow} \mathcal{G}
$$
of group schemes. The group scheme $\mathcal{G}$ is determined by the algebra $A_\a^\bullet$, which depends only on the intersection poset of the arrangement.
\end{proof}

We do not know whether $\S_\vrho$ is always combinatorially determined.

\subsubsection{Characteristic Varieties and the Intersection Poset}
\label{CharVarIntPos}

Theorem \ref{HainMatsuTheorem} suggests that the question of whether $\S_\vrho$ is combinatorially determined is related to the question of whether characteristic varieties are combinatorially determined.

\begin{theorem}
If the isomorphism class of the relative Malcev completion $\S_\vrho$ is combinatorially determined for all $\vrho \in \T$, then the characteristic variety $\V_m^1(X) = \{\vrho \in \T \, | \, \dim_\C H^1(X,\L_\vrho) \geq m\} = 0$ is combinatorially determined.
\end{theorem}

\begin{proof}
Suppose that the relative Malcev completion $\S_\vrho$ is always combinatorially determined for all characters $\vrho \colon \pi_1(X,x_0) \to \C^*$. Let $\{K_1,\dots,K_n\}$ and $\{H_1,\dots,H_n\}$ denote arrangements of hyperplanes in $V$ that have isomorphic intersection posets. We may assume that the ordering of the hyperplanes induces the isomorphism of intersection posets. Set $X = V - \bigcup_{j=1}^n K_j$ and $Z = V - \bigcup_{j=1}^n H_j$. There are natural isomorphisms
\begin{equation}
\label{rhoXY}
H^1(X,\C^*) \cong (\C^*)^n \cong H^1(Z,\C^*)
\end{equation}
determined by the ordering of the hyperplanes. Let $\vec{p}$ be an element of $(\C^*)^n$, and choose characters $\vrho_X \in H^1(X,\C^*)$ and $\vrho_Z \in H^1(Z,\C^*)$ which correspond to $\vec{p}$ via the isomorphisms (\ref{rhoXY}). We will show that $\dim_\C H^1(X,\L_{\vrho_X}) = \dim_\C H^1(Z,\L_{\vrho_Z})$.

Let $\S_{\vrho_X}$ denote the completion of $\pi_1(X,x_0)$ relative to $\vrho$, and let $\S_{\vrho_Z}$ denote the completion of $\pi_1(Z,z_0)$ relative to $\vrho_Z$. Let $D$ denote the Zariski closure of the image of $\vrho_X$ in $\G_m$. Then $D$ is also the Zariski closure of the image of $\vrho_Z$ in $\G_m$. Let $\psi\colon \S_{\vrho_X} \to D$ denote the surjection given by the definition of $\S_\vrho$. The composition $\S_{\vrho_X} \overset{\cong}{\longrightarrow} \S_{\vrho_Z} \to D$ is surjective. Since $D$ is the maximal reductive quotient of $\S_{\vrho_X}$, it follows that there is an automorphism $\phi \colon D \to D$ such that the diagram
$$
\xymatrix{\S_{\vrho_X} \ar[r]^\psi \ar[d]_\cong & D \ar[d]^\phi\\
\S_{\vrho_Z} \ar[r] & D}
$$
commutes. Thus, the diagram
$$
\xymatrix{\S_{\vrho_X} \ar[r]^{\phi \circ \psi} \ar[d]_\cong & D \ar@{=}[d]\\
\S_{\vrho_Z} \ar[r] & D}
$$
commutes. Let $\U_{\vrho_Z}$ denote the kernel of $\S_{\vrho_Z} \to D$, and let $\U_{\vrho_X}$ denote the kernel of $\phi\circ \psi \colon\S_{\vrho_X} \to D$. A simple diagram chase shows that there is an isomorphism $\U_{\vrho_X} \overset{\cong}{\longrightarrow} \U_{\vrho_Z}$ such that the diagram
$$
\xymatrix{1 \ar[r] & \U_{\vrho_X} \ar[d]^\cong \ar[r] & \S_{\vrho_X} \ar[d]^\cong \ar[r]^{\phi \circ \psi} & D \ar@{=}[d] \ar[r] & 1 \\ 1 \ar[r] & \U_{\vrho_Z} \ar[r] & \S_{\vrho_Z} \ar[r] & D \ar[r] & 1}
$$
commutes. Let $\u_{\vrho_X}$ and $\u_{\vrho_Z}$ denote the Lie algebras of $\U_{\vrho_X}$ and $\U_{\vrho_Z}$, respectively. Then the isomorphism $\U_{\vrho_X} \cong \U_{\vrho_Z}$ induces a $D$-equivariant isomorphism $$H_1(\u_{\vrho_X}) \cong H_1(\u_{\vrho_Z}).$$ Theorem \ref{HainMatsuTheorem} implies that as representations of $D$, both $H_1(\u_{\vrho_X})$ and $H_1(\u_{\vrho_Z})$ are direct products of irreducible representations. Recall that $D$ is a group subscheme of $\G_m$. The standard representation of $\G_m$ is the one dimensional representation corresponding to the identity $\G_m \to \G_m$, which is a character of $\G_m$. This restricts to an irreducible representation $\mathbf{L}_{{\pmb{1}}}$ of $D$. By Theorem \ref{HainMatsuTheorem}, the $\mathbf{L}_{{\pmb{1}}}$ isotypical part of the representation $H_1(\u_{\vrho_X})$ of $D$ has dimension $\dim_\C H^1(X,\L_{\vrho_X})$. By the same theorem, the $\mathbf{L}_{{\pmb{1}}}$-isotypical part of the representation $H_1(\u_{\vrho_Z})$ of $D$ has dimension $\dim_\C H^1(Z,\L_{\vrho_Z})$. The result follows.
\end{proof}

\section{Infinite Dimensional Flat Vector Bundles}
\label{IDFVB}

\noindent Let $X$ be a smooth manifold such that $H_1(X,\Z)$ is torsion-free, and set $\pi = \pi_1(X,x_0)$. Then $\T = H^1(X,\C^*)$ is isomorphic to $(\C^*)^{b_1(X)}$. Let $R$ be a commutative $\C$-algebra, and suppose that $\alpha \colon \pi \to R^\times$ is a group homomorphism. This gives $R$ the structure of a left $\pi$-module. There is a corresponding infinite dimensional flat vector bundle $\VV_{R,\alpha}$ over $X$. In this section, we define the de Rham complex $E^\bullet(X,\VV_{R,\alpha})$ and prove some of its basic properties. The monodromy homomorphism of $\VV_{R,\alpha}$ is $\alpha \colon \pi \to R^\times$. The natural example to keep in mind is where $Y$ is an irreducible subvariety of $\T^N$, $R = \O(Y)$, and the homomorphism $\alpha \colon \pi \to \O(Y)^\times$ is given by $\gamma \mapsto f_1^{k_1}\cdots f_N^{k_N}$, where the $k_j$ are integers and $f_j \in \O(Y)$ is defined by $f_j(\vrho) = \rho_j(\gamma)$.

\subsection{Definitions and Basic Properties}
\label{InfiniteDimensionalVec}

\noindent Suppose that $X$ is a smooth manifold, and set $\pi = \pi_1(X,x_0)$. Let $\tau \colon \X \to X$ denote the universal cover of $X$. We say that an open subset $U \subset X$ is {\em evenly covered} if $\tau^{-1}(U)$ is a disjoint union of open subsets of $\X$, each of which maps homeomorphically onto $U$ via $\tau$. Suppose that $R$ is a commutative $\C$-algebra.

Let $E^\bullet_{\fin}(\X,R)$ denote the set of sums
$$
\sum_{j \in J} \psi_j \otimes r_j,
$$
taken over any index set $J$, with $\psi_j \in E^\bullet(\X)$ and $r_j \in R$, such that locally, all but finitely many $\psi_j$ vanish. That is, every point in $\X$ has a neighborhood on which only finitely many $\psi_j$ take a nonzero value. We view the elements of $E^\bullet_{\fin}(\X,R)$ as differential forms on $\X$ with values in $R$. We obviously have $E^\bullet(\X) \otimes_\C R \subset E^\bullet_{\fin}(\X,R)$. The product on $E^\bullet(\X) \otimes
R$ extends to a product on $E^\bullet_{\fin}(\X,R)$. If $R$ has finite dimension, then $E^\bullet_{\fin}(\X,R) = E^\bullet(\X) \otimes_\C R$. The differential on $E^\bullet_{\fin}(\X,R)$ defined by $d(\sum_j \psi_j \otimes r_j) = \sum_j d(\psi_j) \otimes r_j$ is $R$-linear. Thus, $E^\bullet_{\fin}(\X,R)$ is a differential graded algebra over $R$.

Let $\alpha \colon \pi \to R^\times$ be a group homomorphism, where $R^\times$ denotes the group of units in $R$. This determines a left action of $\pi$ on $R$. There is an induced left action of $\pi$ on the trivial bundle $\X \times R \to \X$ via the formula $\gamma \cdot (z,r) = (\gamma \cdot z, \alpha(\gamma^{-1}) r)$. The quotient by this action is a flat vector bundle
$$
\xymatrix{\VV_{R,\alpha} \ar@{=}[r] & (\X \times R)/\pi \ar[d] \\ & X.}
$$
The fiber over each point is a free $R$-module of rank one.

\begin{notation}
If $Y$ is an affine variety and $\alpha \colon \pi \to \O(Y)^\times$ is a group homomorphism, then the resulting flat vector bundle is denoted $\VV_{Y,\alpha}$.
\end{notation}

The action of $\pi$ on the bundle $\X \times R \to \X$ induces a right action of $\pi$ on $E^\bullet_{\fin}(\X,R)$ via
$$
(\sum_j \psi_j \otimes r_j)\cdot \gamma = \sum_j (\gamma^{-1})^*\psi_j \otimes \alpha(\gamma) r_j.
$$

\begin{definition}
Define $E^\bullet(X,\VV_{R,\alpha})$ to be the $\pi$-invariants of $E^\bullet_{\fin}(X,R)$:
$$
E^\bullet(X,\VV_{R,\alpha}) = [E^\bullet_{\fin}(X,R)]^\pi.
$$
\end{definition}

This is a module over $R$, and the differential on $E^\bullet_{\fin}(\X,R)$ restricts to a differential on $E^\bullet(X,\VV_{R,\alpha})$. If $\alpha$ and $\beta$ are homomorphisms $\pi \to R^\times$, then the product $\alpha\beta$ is as well.

\begin{proposition}
The space $E^\bullet(X,\VV_{R,\alpha})$ is a cochain complex of $R$-modules, and the product on $E^\bullet_{\fin}(\X,R)$ restricts to a product
$$
E^\bullet(X,\VV_{R,\alpha}) \otimes_V E^\bullet(X,\VV_{R,\beta}) \, \longrightarrow \, E^\bullet(X,\VV_{R,\alpha\beta}). \qed
$$
\end{proposition}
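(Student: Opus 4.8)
The plan is to deduce both assertions directly from the definition of the $\pi$-action on $E^\bullet_{\fin}(\X,R)$, reducing everything to two elementary facts: that each pullback $(\gamma^{-1})^*$ is a homomorphism of differential graded algebras on $E^\bullet(\X)$, and that $\alpha\beta$ is the pointwise product of the homomorphisms $\alpha$ and $\beta$ into the commutative ring $R$.

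First I would establish the cochain complex structure. Since $d$ is $R$-linear on $E^\bullet_{\fin}(\X,R)$ and pullback of forms commutes with the exterior derivative, the action of each $\gamma \in \pi$ commutes with $d$: applying $\gamma$ pulls back the form factors by $(\gamma^{-1})^*$ and scales the $R$-coefficients by $\alpha(\gamma)$, and both operations commute with $d$. Hence $d$ carries $\pi$-invariants to $\pi$-invariants and restricts to a differential on $E^\bullet(X,\VV_{R,\alpha})$. Because $R$ is commutative, multiplication by an element of $R$ commutes with multiplication by $\alpha(\gamma)$, so the $\pi$-action is $R$-linear and its fixed submodule $E^\bullet(X,\VV_{R,\alpha})$ is an $R$-submodule. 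This yields the cochain complex of $R$-modules.

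For the product, the key step is the intertwining identity
$$
(\mu\wedge\nu)\cdot\gamma \;=\; (\mu\cdot\gamma)\wedge(\nu\cdot\gamma),
$$
where the three occurrences of $\cdot\gamma$ denote the $\alpha\beta$-action on the left and the $\alpha$- and $\beta$-actions on the right. Writing $\mu=\sum_j\psi_j\otimes r_j$ and $\nu=\sum_k\phi_k\otimes s_k$, both sides expand to $\sum_{j,k}\big((\gamma^{-1})^*\psi_j\wedge(\gamma^{-1})^*\phi_k\big)\otimes\big(\alpha(\gamma)\beta(\gamma)\,r_j s_k\big)$: on the left because $(\gamma^{-1})^*$ respects $\wedge$ and $(\alpha\beta)(\gamma)=\alpha(\gamma)\beta(\gamma)$, and on the right because commutativity of $R$ lets the scalars be gathered. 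Taking $\mu$ to be $\alpha$-invariant and $\nu$ to be $\beta$-invariant then forces $\mu\wedge\nu$ to be $\alpha\beta$-invariant, so the wedge product lands in $E^\bullet(X,\VV_{R,\alpha\beta})$. Finally, the product is $R$-bilinear, again by commutativity of $R$, so it factors through the tensor product over $R$ to give the asserted map.

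I do not expect a serious obstacle. The only point requiring a word of care is that the wedge product of two elements of $E^\bullet_{\fin}(\X,R)$ is again locally finite, but this is precisely the statement, recalled just above the proposition, that the product on $E^\bullet(\X)\otimes_\C R$ extends to $E^\bullet_{\fin}(\X,R)$, so it may be cited rather than reproved.
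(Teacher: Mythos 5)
Your proof is correct and matches the paper's treatment: the paper states this proposition with no proof at all (it is marked as immediate from the definitions), and your argument---that $d$ and $\wedge$ intertwine the $\pi$-actions, with commutativity of $R$ giving $R$-linearity and the identity $(\alpha\beta)(\gamma)=\alpha(\gamma)\beta(\gamma)$, plus the cited local-finiteness of the product---is exactly the routine verification being left to the reader. Nothing is missing.
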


\begin{example}
Let $R = \C$, and let $\alpha \colon \pi \to \C^*$ be a homomorphism. The flat bundle $\VV_{R,\alpha}$ is the rank one local system on $X$ with monodromy $\alpha$. The complex $E^\bullet(X,\VV_{R,\alpha})$ is the standard complex of differential forms on $X$ with coefficients in $\VV_{R,\alpha}$.
\end{example}

Let $\phi \colon R \to A$ be a $\C$-algebra homomorphism. Then $\phi \circ \alpha$ is a homomorphism $\pi \to A^\times$. Thus, we may form the complex $E^\bullet(X,\VV_{A,\phi \circ \alpha})$. The homomorphism $\phi$ induces a homomorphism
\begin{equation}
\label{ERestriction2}
E^\bullet(X,\VV_{R,\alpha}) \longrightarrow E^\bullet(X,\VV_{A,\phi \circ \alpha})
\end{equation}
of complexes of $R$-modules. The proof of the following proposition is a standard argument using a partition of unity, and it can be found in \cite{Narkawicz}.

\begin{proposition}
\label{PropositionRestrictionSurjection}
If the $\C$-algebra homomorphism $\phi \colon R \to A$ is surjective, then the homomorphism (\ref{ERestriction2}) is surjective. \qed
\end{proposition}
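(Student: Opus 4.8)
The plan is to reduce the statement to a lifting problem on the universal cover and solve it by a twisted partition-of-unity average over $\pi$. Unwinding the definitions, a class in $E^\bullet(X,\VV_{A,\phi\circ\alpha}) = [E^\bullet_{\fin}(\X,A)]^\pi$ is a locally finite $A$-valued form $\eta$ on $\X$ that is invariant for the twisted action $\omega\cdot\gamma = \sum_j(\gamma^{-1})^*\psi_j\otimes(\phi\circ\alpha)(\gamma)r_j$, and surjectivity of (\ref{ERestriction2}) amounts to producing a $\pi$-invariant $\widetilde\eta\in E^\bullet_{\fin}(\X,R)$ with $\phi_*\widetilde\eta=\eta$, where $\phi_*$ applies $\phi$ to coefficients. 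First I would choose a $\C$-linear section $\sigma\colon A\to R$ of $\phi$; this exists because $\phi$ is a surjection of $\C$-vector spaces. Applying $\sigma$ coefficientwise gives a $\C$-linear map $\sigma_*\colon E^\bullet_{\fin}(\X,A)\to E^\bullet_{\fin}(\X,R)$ (well defined since on any relatively compact open set an element of $E^\bullet_{\fin}$ is a finite sum, and the local definitions patch) which preserves local finiteness and satisfies $\phi_*\circ\sigma_*=\operatorname{id}$. The only defect of $\sigma_*\eta$ as a candidate lift is that $\sigma_*$ is not $\pi$-equivariant, whereas $\phi_*$ \emph{is} equivariant because $\phi$ is a ring homomorphism, so $\phi(\alpha(\gamma)r)=(\phi\circ\alpha)(\gamma)\phi(r)$.

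Next I would build the averaging device. Choose a locally finite open cover $\{U_i\}$ of $X$ by evenly covered sets with a subordinate partition of unity $\{\rho_i\}$. For each $i$ fix one sheet $\widetilde{U}_i\subset\tau^{-1}(U_i)$ mapping homeomorphically to $U_i$, and let $\hat\rho_i$ be the function on $\X$ equal to $\tau^*\rho_i$ on $\widetilde{U}_i$ and $0$ elsewhere. Setting $f=\sum_i\hat\rho_i$, a direct check using $\tau\circ\gamma^{-1}=\tau$ gives $(\gamma^{-1})^*\hat\rho_i=\tau^*\rho_i\cdot\mathbf{1}_{\gamma\widetilde{U}_i}$, hence $\sum_{\gamma\in\pi}(\gamma^{-1})^*f=\sum_i\tau^*\rho_i=1$. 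Moreover $\operatorname{supp}f\subset\bigsqcup_i\widetilde{U}_i$, and since $\{U_i\}$ is locally finite the translates $\{\gamma\widetilde{U}_i\}_{\gamma,i}$ are locally finite in $\X$.

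With these in hand I would define
$$
\widetilde{\eta} \;=\; \sum_{\gamma\in\pi}\bigl(f\cdot\sigma_*\eta\bigr)\cdot\gamma,
$$
where $f\cdot\sigma_*\eta$ denotes multiplication of the form by the scalar function $f$. This lies in $E^\bullet_{\fin}(\X,R)$: the $\gamma$-summand is supported in $\gamma\cdot\operatorname{supp}f$, so by local finiteness of $\{\gamma\widetilde{U}_i\}$ only finitely many summands are nonzero near any point, and each summand is itself locally finite. Invariance $\widetilde\eta\cdot g=\widetilde\eta$ follows by reindexing $\gamma\mapsto\gamma g$. Finally, using equivariance of $\phi_*$, the identity $\phi_*\sigma_*=\operatorname{id}$, the relation $(f\cdot\eta)\cdot\gamma=((\gamma^{-1})^*f)\cdot(\eta\cdot\gamma)$, invariance $\eta\cdot\gamma=\eta$, and $\sum_\gamma(\gamma^{-1})^*f=1$, one computes $\phi_*\widetilde\eta=\sum_\gamma((\gamma^{-1})^*f)\cdot\eta=\eta$, as required. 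I expect the main obstacle to be purely bookkeeping: verifying that the infinite sum defining $\widetilde\eta$ is genuinely locally finite (so that $\widetilde\eta$ is a bona fide element of $E^\bullet_{\fin}$), and checking carefully that the twisted $\pi$-action commutes with multiplication by the $\C$-valued function $f$ in the way used above; everything else is formal once the weight $f$ with $\sum_\gamma(\gamma^{-1})^*f=1$ is produced.
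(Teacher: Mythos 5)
Your strategy is the ``standard argument using a partition of unity'' that the paper alludes to (the paper itself defers the proof of this proposition to the thesis \cite{Narkawicz}), and the algebraic skeleton of your argument is sound: the coefficientwise section $\sigma_*$ is well defined because $\mathrm{id}\otimes\sigma$ is well defined on each local finite sum in $E^\bullet(U)\otimes_\C A$, the map $\phi_*$ is $\pi$-equivariant because $\phi$ is a ring homomorphism, and granting a weight $f$ with $\sum_{\gamma\in\pi}(\gamma^{-1})^*f=1$ and the local finiteness of your averaged sum, the computation $\phi_*\widetilde\eta=\sum_\gamma((\gamma^{-1})^*f)\,\eta=\eta$ is correct. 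The genuine gap is precisely at the step you flagged as ``bookkeeping'' and then justified with a false implication: local finiteness of $\{U_i\}$ in $X$ does \emph{not} imply local finiteness of the sheet translates $\{\gamma\widetilde{U}_i\}_{\gamma,i}$ in $\X$. Deck translates of a single sheet can accumulate. For instance, let $X=\C^*$ with universal cover $\exp\colon\C\to X$ and deck group $2\pi i\Z$, and let $\widetilde{U}=\{x+iy : y<0,\ |x-e^y|<e^y/2\}$, a thin tube around the curve $x=e^y$. One checks that $\widetilde{U}\cap(\widetilde{U}+2\pi ik)=\emptyset$ for $k\neq 0$, so $U=\exp(\widetilde{U})$ is open, connected, and evenly covered with sheets $\widetilde{U}+2\pi ik$; but $(e^{-2\pi k},0)\in\widetilde{U}+2\pi ik$ for every $k\geq 1$, so these sheets accumulate at $0\in\C$ and are not locally finite, even though $U$ can perfectly well be a member of a locally finite cover of $X$. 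For such a cover, your sum $\widetilde\eta=\sum_\gamma(f\cdot\sigma_*\eta)\cdot\gamma$ is finite at each point but need not be \emph{locally} finite, which is exactly what membership in $E^\bullet_{\fin}(\X,R)$ requires; so the construction as written can fail to produce an element of $E^\bullet(X,\VV_{R,\alpha})$.

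The repair is standard, but it is where properness of the deck action has to enter, and your proof never uses it. Choose the locally finite cover $\{U_i\}$ to consist of \emph{relatively compact} evenly covered sets (e.g.\ coordinate balls compactly contained in evenly covered sets) and the subordinate partition of unity $\{\rho_i\}$ to have compact supports. Then each $\operatorname{supp}\hat\rho_i=(\tau|_{\widetilde{U}_i})^{-1}(\operatorname{supp}\rho_i)$ is compact, and since the action of $\pi$ on $\X$ by deck transformations is properly discontinuous, for any compact $C\subset\X$ the set $\{\gamma\in\pi : \gamma\cdot\operatorname{supp}\hat\rho_i\cap C\neq\emptyset\}$ is finite. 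Combining this with local finiteness of $\{U_i\}$ downstairs (a compact set in $X$ meets only finitely many $U_i$), every point of $\X$ has a compact neighborhood meeting only finitely many of the sets $\gamma\cdot\operatorname{supp}\hat\rho_i$, hence only finitely many of the supports $\gamma\cdot\operatorname{supp}f$ of your summands. With this choice of cover and partition of unity, $\widetilde\eta$ does lie in $E^\bullet_{\fin}(\X,R)$, it is $\pi$-invariant by your reindexing argument, and your final computation finishes the proof.
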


If $\phi \colon R \to A$ is a surjection of $\C$-algebras, then there is a canonical inclusion
$E^\bullet(X,\VV_{R,\alpha}) \otimes_R A \, \hookrightarrow \, E^\bullet(X,\VV_{A,\phi \circ \alpha})$ of complexes of $A$-modules.

\begin{corollary}
\label{DeRhamSpecialization}
If $\phi \colon R \to A$ is a surjection of $\C$-algebras, then the canonical inclusion $E^\bullet(X,\VV_{R,\alpha}) \otimes_R A \hookrightarrow E^\bullet(X,\VV_{A,\phi \circ \alpha})$ is an isomorphism of complexes of $A$-modules. \qed
\end{corollary}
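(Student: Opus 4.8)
The plan is to deduce the isomorphism directly from Proposition \ref{PropositionRestrictionSurjection}, since that proposition already contains the only analytic input (the partition-of-unity argument hidden in \cite{Narkawicz}). The canonical map $E^\bullet(X,\VV_{R,\alpha}) \otimes_R A \to E^\bullet(X,\VV_{A,\phi\circ\alpha})$ is, by hypothesis, an $A$-linear inclusion of complexes; an injective chain map that is also surjective in each degree is an isomorphism of complexes of $A$-modules. Thus the entire task reduces to establishing surjectivity.

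First I would record the factorization underlying the construction of the canonical inclusion. The $R$-linear chain map (\ref{ERestriction2}), namely $E^\bullet(X,\VV_{R,\alpha}) \to E^\bullet(X,\VV_{A,\phi\circ\alpha})$, lands in an $A$-module, so by the universal property of $\otimes_R A$ it factors as
$$
E^\bullet(X,\VV_{R,\alpha}) \xrightarrow{\,\eta \mapsto \eta \otimes 1\,} E^\bullet(X,\VV_{R,\alpha}) \otimes_R A \longrightarrow E^\bullet(X,\VV_{A,\phi\circ\alpha}),
$$
where the second arrow is precisely the canonical inclusion, sending $\eta \otimes a$ to $a$ times the image of $\eta$. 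By construction the composite of these two maps equals (\ref{ERestriction2}).

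Then the key step is immediate: Proposition \ref{PropositionRestrictionSurjection} asserts that (\ref{ERestriction2}) is surjective because $\phi$ is surjective. A surjective map that factors through $E^\bullet(X,\VV_{R,\alpha}) \otimes_R A$ forces the second factor, the canonical inclusion, to be surjective as well. Combining this with the injectivity that was already built into the definition of the inclusion, I conclude that the canonical map is bijective in every degree and commutes with the differentials, hence is an isomorphism of complexes of $A$-modules.

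I do not expect any genuine obstacle here: the real work is carried by Proposition \ref{PropositionRestrictionSurjection}, and the remaining argument is the formal observation that a surjection factoring through a submodule forces that submodule to be everything. The only point requiring minor care is verifying that the composite in the displayed factorization really is the map (\ref{ERestriction2}) rather than a twist of it, which is clear once one unwinds the definition of the canonical inclusion on simple tensors $\eta \otimes 1$.
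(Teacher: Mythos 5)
Your proof is correct and is essentially the paper's own (implicit) argument: the corollary is stated with only a \qed precisely because, as you observe, the surjective map (\ref{ERestriction2}) of Proposition \ref{PropositionRestrictionSurjection} factors through the canonical inclusion by the universal property of $\otimes_R A$, forcing that inclusion to be surjective and hence an isomorphism of complexes of $A$-modules. Nothing further is needed.
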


The cohomology of the complex $E^\bullet(X,\VV_{R,\alpha})$ is denoted $H^\bullet(X,\VV_{R,\alpha})$.
In the next sections, we show that this cohomology may be computed using simplicial cochains.

\subsection{Simplicial Cohomology}
\label{SingSimpCoh}

\noindent Recall that $X$ is a smooth manifold and that $\pi = \pi_1(X,x_0)$. By \cite[pages 124-135]{Whitney}, there is a triangulation of $X$. In this section, we show that the cohomology $H^\bullet(X,\VV_{R,\alpha})$ may be computed by simplicial cochains. The proofs of these results can be found in \cite{Narkawicz}.

Let $R$ be a commutative $\C$-algebra, and let $\alpha \colon \pi \to R^\times$ be a group homomorphism. As in the previous section, there is an associated flat vector bundle $\VV_{R,\alpha}$ over $X$. The fiber over each point is a free $R$-module of rank one. Choose a triangulation of $X$ such that each simplex has a neighborhood that is evenly covered by $\tau \colon \X \to X$. This triangulation lifts to a triangulation of $\X$ that is invariant under the action of $\pi$ on $\X$. Let $C^\bullet_\Delta$ denote simplicial cochains. Given a subsimplicial complex $Z \subset X$, let $C^\bullet_\Delta(Z,\VV_{R,\alpha})$ denote the set of sums $\sum_{j \in J} \phi_j \otimes r_j$ taken over any index set $J$, with $\phi_j \in C^\bullet_\Delta(\tau^{-1}(Z))$ and $r_j \in R$, that have the following two properties.
\begin{itemize}
\item On each simplex of $\tau^{-1}(Z)$, all but finitely many $\phi_j$ vanish.
\item For $\gamma \in \pi$, one has $\sum_j (\gamma^{-1})^*\phi_j \otimes \alpha(\gamma) r_j = \sum_j \psi_j \otimes r_j$.
\end{itemize}
It is easy to see that $C^\bullet_\Delta(Z,\VV_{R,\alpha})$ is a cochain complex of $R$-modules. The differential is defined by $d(\sum_j \phi_j \otimes r_j) = \sum_j d(\phi_j) \otimes r_j$. Note that if $\sigma$ is a simplex in $X$, then $C^\bullet_\Delta(\sigma,\VV_{R,\alpha})$ is a free $R$-module of rank one.

If $\phi \colon R \to A$ is a $\C$-algebra homomorphism, there is an induced homomorphism $C^\bullet_\Delta(Z,\VV_{R,\alpha}) \to C^\bullet_\Delta(Z,\VV_{A,\phi \circ \alpha})$ of complexes of $R$-modules. The proof of the next proposition is similar to the proof of Proposition \ref{PropositionRestrictionSurjection}.

\begin{proposition}
\label{ComplexCSpec}
If $\phi \colon R \to A$ is a surjective $\C$-algebra homomorphism, then the induced homomorphism $C^\bullet_\Delta(Z,\VV_{R,\alpha}) \to C^\bullet_\Delta(Z,\VV_{A,\phi \circ \alpha})$ is surjective. \qed
\end{proposition}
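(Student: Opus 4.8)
The plan is to identify $C^\bullet_\Delta(Z,\VV_{R,\alpha})$ with the complex of $\pi$-equivariant $R$-valued simplicial cochains on $\tau^{-1}(Z)$ and then lift orbit-by-orbit, exactly paralleling the partition-of-unity argument of Proposition \ref{PropositionRestrictionSurjection}. First I would record the reformulation: thanks to the local finiteness condition, an element $\sum_j \phi_j \otimes r_j$ of $C^\bullet_\Delta(Z,\VV_{R,\alpha})$ is the same data as a single $R$-valued cochain $c$ on $\tau^{-1}(Z)$, namely $c(\sigma) = \sum_j \phi_j(\sigma)\, r_j$, a finite sum for each simplex $\sigma$. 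The $\pi$-invariance condition then becomes the twisted equivariance $c(\gamma\sigma) = \alpha(\gamma)\,c(\sigma)$ for all $\gamma \in \pi$ and all simplices $\sigma$ of $\tau^{-1}(Z)$. Conversely, any such equivariant cochain lies in $C^\bullet_\Delta(Z,\VV_{R,\alpha})$, since it can be written as $\sum_\sigma \delta_\sigma \otimes c(\sigma)$ over the simplices $\sigma$, where $\delta_\sigma$ is the dual basis cochain; this representation is locally finite because only $\delta_\sigma$ is supported on $\sigma$. Under this identification the induced map $C^\bullet_\Delta(Z,\VV_{R,\alpha}) \to C^\bullet_\Delta(Z,\VV_{A,\phi\circ\alpha})$ is postcomposition $c \mapsto \phi\circ c$, and the analogous description holds for the target with equivariance twisted by $\phi\circ\alpha$.

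Next I would use that the triangulation was chosen so that every simplex has an evenly covered neighborhood; this forces $\pi$ to act freely on the set of simplices of $\tau^{-1}(Z)$, with orbit set in bijection with the simplices of $Z$. I would therefore fix a set $S$ of orbit representatives, one lift of each simplex of $Z$, playing the role of a fundamental domain. Given a target cochain $\bar c \in C^\bullet_\Delta(Z,\VV_{A,\phi\circ\alpha})$, for each $\sigma \in S$ I choose, using surjectivity of $\phi$, an element $r_\sigma \in R$ with $\phi(r_\sigma) = \bar c(\sigma)$; this is precisely where the rank-one freeness of $C^\bullet_\Delta(\sigma,\VV_{R,\alpha})$ noted in the text enters, since on a single simplex the restriction map is just the surjection $\phi \colon R \to A$ of free rank-one modules.

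I then define $c$ on all of $\tau^{-1}(Z)$ by $c(\gamma\sigma) = \alpha(\gamma)\,r_\sigma$ for $\sigma \in S$ and $\gamma \in \pi$. Freeness of the action makes this unambiguous, and by construction $c$ satisfies the twisted equivariance for $\alpha$, so $c \in C^\bullet_\Delta(Z,\VV_{R,\alpha})$. A short check shows $\phi\circ c = \bar c$: indeed $\phi(c(\gamma\sigma)) = (\phi\circ\alpha)(\gamma)\,\phi(r_\sigma) = (\phi\circ\alpha)(\gamma)\,\bar c(\sigma) = \bar c(\gamma\sigma)$, the last equality being the equivariance of $\bar c$. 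This proves surjectivity.

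The one point requiring care, and the analogue of the main obstacle in the smooth case, is compatibility of the lift with the $\pi$-action: a naive simplex-by-simplex lift need not be equivariant. Choosing representatives $S$ and spreading the lift across each orbit by the cocycle $\alpha$ resolves this, and freeness of the action (guaranteed by the evenly-covered hypothesis on the triangulation) is exactly what makes the spreading well defined. Local finiteness, which is the genuinely delicate input in the differential-forms version of Proposition \ref{PropositionRestrictionSurjection}, is automatic here because a simplicial cochain already assigns a single value to each simplex.
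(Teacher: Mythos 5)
Your proof is correct, and it is essentially the argument the paper has in mind: the paper omits the proof here, deferring to the author's thesis with the remark that it parallels the partition-of-unity proof of Proposition \ref{PropositionRestrictionSurjection}, and your orbit-by-orbit lift is precisely the simplicial incarnation of that argument, where the indicator cochains $\delta_\sigma$ play the role of the partition of unity and local finiteness becomes automatic. The only imprecision is attributing freeness of the $\pi$-action on simplices to the evenly-covered hypothesis alone; strictly, freeness on points comes from the deck action of the universal cover, while the evenly-covered choice of triangulation ensures the triangulation lifts and that any deck transformation fixing a simplex setwise restricts to the identity on it (since $\tau$ is injective on simplices), hence is trivial --- a detail worth one line but not a gap.
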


\begin{corollary}
\label{SimpSpec}
If $\phi \colon R \to A$ is a surjective $\C$-algebra homomorphism, then the induced homomorphism $C^\bullet_\Delta(Z,\VV_{R,\alpha}) \otimes_{R} A \to C^\bullet_\Delta(Z,\VV_{A,\phi \circ \alpha})$ is an isomorphism of complexes of $A$-modules. \qed
\end{corollary}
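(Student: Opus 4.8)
The plan is to realize the map of the corollary as the canonical factorization of the surjection supplied by Proposition \ref{ComplexCSpec}, and then to pin down its kernel. Write $\rho \colon C^\bullet_\Delta(Z,\VV_{R,\alpha}) \to C^\bullet_\Delta(Z,\VV_{A,\phi\circ\alpha})$ for the map induced by $\phi$. Since $\rho$ is a homomorphism of $R$-modules whose target is an $A$-module, it factors uniquely as
\[
C^\bullet_\Delta(Z,\VV_{R,\alpha}) \longrightarrow C^\bullet_\Delta(Z,\VV_{R,\alpha}) \otimes_R A \overset{\Phi}{\longrightarrow} C^\bullet_\Delta(Z,\VV_{A,\phi\circ\alpha}),
\]
where the first arrow is $c \mapsto c \otimes 1$ and $\Phi$ is the map named in the corollary. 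First I would note that $\Phi$ is $A$-linear by construction and commutes with the differentials, because in each degree the differential is defined coefficientwise and is natural in the coefficient ring; thus $\Phi$ is a chain map, and an isomorphism of complexes will follow once it is shown to be a degreewise isomorphism. Surjectivity is immediate: $\rho = \Phi \circ (c \mapsto c\otimes 1)$ is surjective by Proposition \ref{ComplexCSpec}, so $\Phi$ is too.

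The remaining and substantive point is injectivity. Set $I = \ker\phi$, so $\phi$ identifies $A$ with $R/I$ and hence $C^\bullet_\Delta(Z,\VV_{R,\alpha}) \otimes_R A$ with the quotient $C^\bullet_\Delta(Z,\VV_{R,\alpha})/I\cdot C^\bullet_\Delta(Z,\VV_{R,\alpha})$; under this identification $\Phi$ is the map induced by $\rho$. Since $\rho(r\cdot c) = \phi(r)\,\rho(c) = 0$ for $r \in I$, one has the inclusion $I\cdot C^\bullet_\Delta(Z,\VV_{R,\alpha}) \subseteq \ker\rho$, and $\Phi$ is an isomorphism precisely when this is an equality. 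So the whole problem reduces to computing $\ker\rho$ and comparing it with $I\cdot C^\bullet_\Delta(Z,\VV_{R,\alpha})$.

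Here I would invoke the structural fact recorded just before the corollary: after choosing a lift of each simplex of $Z$, in each degree $n$ the module $C^n_\Delta(Z,\VV_{R,\alpha})$ is the product $\prod_\sigma C^n_\Delta(\sigma,\VV_{R,\alpha})$ of the rank-one free $R$-modules attached to the $n$-simplices $\sigma$ of $Z$, on which $\rho$ acts coefficientwise by $\phi$. Consequently $\ker\rho$ is exactly the set of cochains all of whose coefficients lie in $I$, i.e. $\ker\rho = \prod_\sigma I$. Choosing a finite generating set $t_1,\dots,t_m$ of $I$, any such cochain $c = (c_\sigma)_\sigma$ can be written $c_\sigma = \sum_k t_k r_{k,\sigma}$ with $r_{k,\sigma} \in R$; putting $d_k = (r_{k,\sigma})_\sigma$ gives $c = \sum_k t_k d_k \in I\cdot C^n_\Delta(Z,\VV_{R,\alpha})$. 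This yields the equality $\ker\rho = I\cdot C^n_\Delta(Z,\VV_{R,\alpha})$, hence the injectivity of $\Phi$.

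The hard part is precisely this last step: tensoring with $A$ does not in general commute with the infinite product over the simplices of $Z$, so the equality $\ker\rho = I\cdot C^\bullet_\Delta(Z,\VV_{R,\alpha})$ relies on $I = \ker\phi$ being finitely generated — equivalently, on $A = R/I$ being finitely presented over $R$. This holds automatically in the situation of interest, where $R = \O(Y)$ is the coordinate ring of an affine variety and is therefore Noetherian, so the argument goes through as written; the reduction to the single-simplex case (where each $C^n_\Delta(\sigma,\VV_{R,\alpha})$ is free of rank one and the statement is trivial) is exactly what makes the finite generating set usable. I expect the bookkeeping of the $\pi$-invariance and the choice of simplex lifts, rather than any conceptual difficulty, to be the only other place requiring care.
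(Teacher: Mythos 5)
Your proof is correct, and it supplies exactly the content that the paper suppresses: Corollary \ref{SimpSpec} is stated with no argument in the text (the proofs of this section are deferred to the thesis \cite{Narkawicz}), and its placement suggests it is an immediate consequence of Proposition \ref{ComplexCSpec}. As you rightly isolate, it is not formal: surjectivity of $\rho$ gives only surjectivity of $\Phi$, while injectivity of $\Phi$ is equivalent to the equality $\ker\rho = (\ker\phi)\cdot C^\bullet_\Delta(Z,\VV_{R,\alpha})$, which is genuine extra content. Your reduction is the natural one and is carried out correctly: identify $C^n_\Delta(Z,\VV_{R,\alpha})$ with the product $\prod_\sigma R$ over the $n$-simplices $\sigma$ of $Z$ (legitimate because $\pi$ acts freely on the lifted triangulation, so a $\pi$-invariant locally finite cochain is precisely an arbitrary assignment of an element of $R$ to a chosen lift of each simplex of $Z$), note that under this identification $\rho$ is coefficientwise application of $\phi$, so $\ker\rho = \prod_\sigma \ker\phi$, and then deduce $\prod_\sigma \ker\phi = (\ker\phi)\cdot\prod_\sigma R$ from a finite generating set of $\ker\phi$.

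Your closing caveat deserves emphasis, because it is a defect of the corollary's stated generality, not of your argument. For an arbitrary commutative $\C$-algebra $R$ and an infinite subcomplex $Z$ the assertion is false: take $X = \C^*$ (so $H_1(X,\Z)$ is torsion-free), $Z$ an infinite subcomplex with vertices $v_1,v_2,\dots$, the trivial homomorphism $\alpha$, $R = \C[x_1,x_2,\dots]$, and $\phi \colon R \to \C$ the map killing every $x_i$. The $0$-cochain whose value at $v_i$ is $x_i$ lies in $\ker\rho$ but not in $(\ker\phi)\cdot C^0_\Delta(Z,\VV_{R,\alpha})$, since every coefficient of a finite sum $\sum_{k=1}^m t_k d_k$ lies in the ideal $(t_1,\dots,t_m)$, whereas $x_i \notin (t_1,\dots,t_m)$ whenever $x_i$ occurs in no $t_k$; hence $\Phi$ has nonzero kernel there. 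So a hypothesis of the kind you impose---$Z$ finite, or $\ker\phi$ finitely generated (e.g. $R$ Noetherian)---is necessary, and both hold everywhere the paper actually invokes the corollary: in the proof of Theorem \ref{ImportantCohomology} one has $Z = K$ finite, $R = \O(Y)$, and $A = \C_\vrho$. In short, your proof is the right one, and the finite-presentation hypothesis you flag is precisely what the paper's unproved statement glosses over.
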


\subsection{Equivalence of De Rham and Simplicial Cohomologies}

Suppose that $X$ is a smooth manifold such that $H_1(X,\Z)$ is torsion free, and let $\T = H^1(X,\C^*)$ denote the character torus. Each element of $\T^N$ can be viewed as a representation $\pi \to (\C^*)^N$. Let $Y$ be an irreducible subvariety of $\T^N$, and let $\O(Y)$ denote the coordinate ring of $Y$. Choose $\k \in \Z^N$. There is a homomorphism $\alpha \colon \pi \to \O(Y)^\times$ given by $\gamma \mapsto f_1^{k_1}\cdots f_N^{k_N}$, where $f_j \in \O(Y)^\times$ is given by $f_j(\vrho) = \rho_j(\gamma)$. Then $\VV_{Y,\alpha}$ is a flat vector bundle over $X$, and the fiber over each point is a free $\O(Y)$-module of rank one. If $Y = \{\vrho\}$,  then $\VV_{Y,\alpha}$ is the rank one local system on $X$ with monodromy $\rho_1^{k_1}\cdots \rho_N^{k_N}$. We denote this local system by $\L_{\vrho^\k}$. Corollary \ref{DeRhamSpecialization} implies that there is a canonical isomorphism
$$
E^\bullet(X,\VV_{Y,\alpha}) \otimes_{\O(Y)} \C_\vrho \, \cong \, E^\bullet(X,\L_{\vrho^\k})
$$
of complexes. It induces a homomorphism
$$
H^\bullet(X,\VV_{Y,\alpha}) \otimes_{\O(Y)} \C_\vrho \longrightarrow H^\bullet(X,\L_{\vrho^\k}).
$$
In Section \ref{CohomologySpecialization}, we show that if $X$ deform-retracts onto a finite simplicial complex, then this is an isomorphism for general $\vrho \in Y$. This result therefore holds for the complement of a hyperplane arrangement \cite[Theorem 5.40]{OT}. The proof will use the following proposition.

\begin{proposition}
\label{BigProposition}
If $X$ deform-retracts onto a finite simplicial complex $Z$, then there is a natural isomorphism
$$
H^\bullet(X,\VV_{R,\alpha}) \cong H^\bullet(C^\bullet_\Delta(Z,\VV_{R,\alpha}))
$$
of $R$-modules.
\end{proposition}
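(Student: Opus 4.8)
The plan is to identify both sides with the sheaf cohomology $H^\bullet(X,\mathcal{F})$ of the locally constant sheaf $\mathcal{F}$ of flat sections of $\VV_{R,\alpha}$, and then transfer from $X$ to $Z$ by homotopy invariance. Here $\mathcal{F}$ is a locally constant sheaf of $R$-modules with stalk $R$; on any evenly covered contractible open set it is isomorphic to the constant sheaf $\underline{R}$, the isomorphism being given by a flat trivialization. Every comparison below will be checked to be $R$-linear, so that the final isomorphism is one of $R$-modules and is natural in $R$.

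First I would build the de Rham resolution. Sheafifying $U \mapsto E^\bullet(U,\VV_{R,\alpha}|_U)$, using the $E^\bullet_{\fin}$ construction that defines $E^\bullet(X,\VV_{R,\alpha})$, gives a complex of sheaves $\mathcal{E}^\bullet$ whose global sections recover $E^\bullet(X,\VV_{R,\alpha})$; this matching is precisely the statement that the locally-finite condition imposed on $E^\bullet_{\fin}(\X,R)$ is the sheaf-gluing condition. I would then verify that $0 \to \mathcal{F} \to \mathcal{E}^0 \to \mathcal{E}^1 \to \cdots$ is exact. Exactness off degree zero is the Poincaré lemma, and the key point is that the standard contracting homotopy operator is $\C$-linear, is applied termwise to a locally finite sum $\sum_j \psi_j \otimes r_j$, and therefore preserves the locally-finite condition while commuting with multiplication by elements of $R$. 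Each $\mathcal{E}^k$ is a module over the sheaf $\mathcal{C}^\infty_X$ of smooth functions; multiplication by a bump function acts $R$-linearly and preserves local finiteness, so $\mathcal{E}^k$ is fine, hence acyclic for global sections. This yields $H^\bullet(X,\VV_{R,\alpha}) \cong H^\bullet(X,\mathcal{F})$.

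Next I would treat the simplicial side. By construction $C^\bullet_\Delta(Z,\VV_{R,\alpha})$ is the complex of $\pi$-equivariant, locally finite simplicial cochains on $\tau^{-1}(Z)$ valued in $R$, which is exactly the standard simplicial cochain complex computing cohomology with local coefficients in $\mathcal{F}|_Z$. Because $Z$ has finitely many simplices, the locally-finite condition is automatic and no completion subtlety arises, so the usual comparison of simplicial and sheaf cohomology with local coefficients on a triangulated space gives $H^\bullet(C^\bullet_\Delta(Z,\VV_{R,\alpha})) \cong H^\bullet(Z,\mathcal{F}|_Z)$ as $R$-modules. Finally, the deformation retraction $X \to Z$ induces an isomorphism on fundamental groups and exhibits $\mathcal{F}$ as the pullback of $\mathcal{F}|_Z$; since sheaf cohomology of a locally constant sheaf is a homotopy invariant, we get $H^\bullet(X,\mathcal{F}) \cong H^\bullet(Z,\mathcal{F}|_Z)$. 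Composing the three isomorphisms proves the proposition.

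The main obstacle is the locally-finite condition built into $E^\bullet_{\fin}(\X,R)$: when $R$ is infinite dimensional, $E^\bullet(X,\VV_{R,\alpha})$ is strictly larger than $E^\bullet(X)\otimes_\C R$, so I cannot simply invoke the classical de Rham theorem with coefficients tensored on. The real work lies in confirming that the sheafification has the expected global sections and that both the Poincaré-lemma homotopy and the partitions of unity preserve local finiteness while remaining $R$-linear. Should the sheafification bookkeeping prove awkward, an essentially equivalent route is to set up the Čech–de Rham double complex for a good cover of $X$ subordinate to the triangulation and run a Mayer–Vietoris and five-lemma induction; the same local-finiteness and $R$-linearity checks are the crux there as well.
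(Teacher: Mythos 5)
The paper itself does not contain a proof of this proposition --- its ``proof'' is a citation of the author's thesis \cite{Narkawicz} --- so there is nothing in the text to compare your argument against step by step. Judged on its own merits, your three-step reduction (identify $H^\bullet(X,\VV_{R,\alpha})$ with the sheaf cohomology of the local system $\mathcal{F}$ of flat sections via a fine resolution; identify $H^\bullet(C^\bullet_\Delta(Z,\VV_{R,\alpha}))$ with $H^\bullet(Z,\mathcal{F}|_Z)$; conclude by homotopy invariance of cohomology with locally constant coefficients) is a legitimate and natural route, and you correctly isolate the only nonclassical feature of the problem: the locally finite completion $E^\bullet_{\fin}(\X,R)$, which for infinite-dimensional $R$ is strictly larger than $E^\bullet(\X)\otimes_\C R$, so the classical de Rham theorem cannot simply be tensored with $R$. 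Two of your checks are genuinely routine: the presheaf $U \mapsto E^\bullet(U,\VV_{R,\alpha}|_U)$ is already a sheaf (local finiteness and $\pi$-equivariance are local conditions), and fineness follows by multiplying by the $\pi$-invariant pullbacks $\tau^*\chi_i$ of a partition of unity on $X$, which is $R$-linear and preserves local finiteness. Likewise, since $Z$ is a finite complex, every $\pi$-equivariant $R$-valued cochain on $\tau^{-1}(Z)$ is a per-simplex-finite sum of the prescribed form, so the simplicial complex in the paper really is the usual complex $\Hom_{\Z\pi}(C_\bullet(\tau^{-1}(Z)),R)$ of cochains with local coefficients.

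The one step where your justification is too quick is the Poincar\'e lemma. You assert that the contracting homotopy $K$ ``is applied termwise to a locally finite sum and therefore preserves the locally-finite condition.'' Termwise application does not by itself give this, because $K$ is not a local operator: on a star-shaped chart, $(K\psi)$ near a point $x$ depends on $\psi$ along the entire ray joining the center to $x$, so a term $\psi_j$ that vanishes near $x$ can contribute a nonvanishing $K\psi_j$ near $x$. The statement is nevertheless true, but it needs a compactness argument: the ray over $x$ is compact, so all but finitely many of the $\psi_j$ vanish on some open neighborhood $W$ of it; by the tube lemma there is a neighborhood $V$ of $x$ whose cone $\{\,tv : t\in[0,1],\, v\in V\,\}$ lies in $W$, and for all those $j$ one then has $K\psi_j|_V = 0$. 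With this repair, exactness of $0 \to \mathcal{F} \to \mathcal{E}^0 \to \mathcal{E}^1 \to \cdots$ holds in the locally finite setting and the rest of your argument goes through (homotopy invariance applies since $X$ and $Z$ are paracompact and locally contractible). So the proposal is correct in substance, with this one argument needing to be supplied rather than asserted.
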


\begin{proof}
See \cite{Narkawicz}.
\end{proof}

\subsection{Universal Coefficients and Specialization}
\label{UCSection}

The purpose of this section is to prove several general statements in commutative algebra.
Recall that $Y$ is an irreducible affine variety. Let $\O(Y)$ denote the coordinate ring of $Y$, and let $F$ denote its fraction field. For each $\vrho \in Y$, let $\C_\vrho$ denote the associated residue field. Let $1_F$ and $1_\vrho$ denote the multiplicative identities in $F$ and $\C_\vrho$, respectively. Let $(C_\bullet,\partial_\bullet)$ be a chain complex of $\O(Y)$-modules. In this section, we consider the complex $(C_\bullet \otimes_{\O(Y)} \C_\vrho, \partial_\bullet \otimes 1_\vrho)$. There is a canonical homomorphism
$$
H_\bullet(C_\bullet) \otimes_{\O(Y)} \C_\vrho \longrightarrow H_\bullet(C_\bullet \otimes_{\O(Y)} \C_\vrho),
$$
which is an isomorphism under certain conditions. As in Section \ref{InfiniteDimensionalVec}, associated to any group homomorphism $\alpha \colon \pi_1(X,x_0) \to \O(Y)^\times$ is a flat vector bundle $\VV_{Y,\alpha}$ over $X$ whose fibers are free $\O(Y)$-modules of rank one and whose monodromy homomorphism is $\alpha$. In Section \ref{CohomologySpecialization}, we use the results of the current section to show that if $X$ deform-retracts onto a finite simplicial complex, then the canonical homomorphism
$$
H^\bullet(X,\VV_{Y,\alpha}) \otimes_{\O(Y)} \C_\vrho \longrightarrow H^\bullet(X,\VV_{Y,\alpha} \otimes_{\O(Y)} \C_\vrho)
$$
is an isomorphism for general $\vrho \in Y$.

\begin{remark}
For simplicity of notation, the results in this section are stated in terms of chain complexes. All results hold for cochain complexes as well.
\end{remark}

The next lemma follows from the proofs of Theorems 3.3 and 3.8 of \cite{Rotman}.

\begin{lemma}
\label{LemmaFinGenFree}
If $M$ is a finitely generated $\O(Y)$-module, then there exists a resolution $\dots \to P_1 \to P_0 \to M \to 0$ of $M$ such that each $P_j$ is a finitely generated free $\O(Y)$-module. \qed
\end{lemma}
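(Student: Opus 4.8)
The plan is to reduce everything to the Noetherian property of $\O(Y)$ and then to build the resolution one step at a time. First I would observe that, because $Y$ is an affine variety, its coordinate ring $\O(Y)$ is a finitely generated $\C$-algebra, hence a quotient of a polynomial ring $\C[x_1,\dots,x_m]$. The Hilbert Basis Theorem then shows that $\O(Y)$ is Noetherian. The only ring-theoretic fact I will need beyond this is the standard consequence that over a Noetherian ring every submodule of a finitely generated module is again finitely generated.

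With Noetherianity in hand, I would construct the resolution by induction. Since $M$ is finitely generated, choose generators $m_1,\dots,m_{n_0}$ and let $P_0 = \O(Y)^{n_0}$, with the surjection $\epsilon \colon P_0 \to M$ sending the $i$-th standard basis vector to $m_i$. Set $K_0 = \ker \epsilon$. As a submodule of the finitely generated free module $P_0$ over the Noetherian ring $\O(Y)$, the module $K_0$ is finitely generated. Now suppose inductively that I have produced finitely generated free modules $P_0,\dots,P_j$ together with maps forming an exact sequence down to $M$, and that the kernel $K_j$ of $P_j \to P_{j-1}$ (with the convention $P_{-1} = M$) is finitely generated. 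Choosing a finite generating set of $K_j$ yields a surjection $P_{j+1} \twoheadrightarrow K_j$ from a finitely generated free module $P_{j+1}$; composing with the inclusion $K_j \hookrightarrow P_j$ produces the next boundary map $P_{j+1} \to P_j$. Its image is exactly $K_j$, which guarantees exactness at $P_j$, and its kernel $K_{j+1}$ is a submodule of $P_{j+1}$ and hence finitely generated. Iterating produces the desired resolution $\cdots \to P_1 \to P_0 \to M \to 0$ by finitely generated free $\O(Y)$-modules.

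The construction involves no serious obstacle: the single point that must be checked at every stage is that the successive kernels $K_j$ remain finitely generated, and this is precisely what the Noetherian property of $\O(Y)$ supplies. I would remark only that the resolution so obtained need not terminate, but the statement of the lemma does not require finite length --- only that each $P_j$ be finitely generated and free, both of which hold by construction.
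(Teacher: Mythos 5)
Your proof is correct and is essentially the argument the paper invokes: the paper gives no proof of its own but cites the proofs of Theorems 3.3 and 3.8 of Rotman, which are precisely this standard construction (free surjection onto $M$, kernel finitely generated by Noetherianity of $\O(Y)$, iterate). Your observation that $\O(Y)$ is Noetherian via the Hilbert Basis Theorem fills in the one background fact the paper takes for granted.
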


\begin{lemma}
\label{MTorpLemma}
If $M$ is a finitely generated $\O(Y)$-module, then for each $j \geq 1$, $$\Tor^{\O(Y)}_j(M,\C_\vrho) = 0$$ for generic $\vrho \in Y$.
\end{lemma}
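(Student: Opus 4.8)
The plan is to compute $\Tor$ from a resolution of finite type and then convert the vanishing statement into a rank computation that is controlled by passing to the generic point of $Y$. Since $Y$ is an irreducible affine variety, $\O(Y)$ is a Noetherian domain; let $F$ denote its fraction field. By Lemma \ref{LemmaFinGenFree}, $M$ admits a resolution
$$
\cdots \longrightarrow P_1 \xrightarrow{\,d_1\,} P_0 \longrightarrow M \longrightarrow 0
$$
in which each $P_i \cong \O(Y)^{n_i}$ is a finitely generated free module and each differential $d_i$ is given by a matrix over $\O(Y)$. By definition $\Tor_j^{\O(Y)}(M,\C_\vrho)$ is the $j$-th homology of $P_\bullet \otimes_{\O(Y)} \C_\vrho$, whose differentials are the matrices $d_i$ evaluated at $\vrho$.

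The first step is to record the generic behaviour. The field $F$ is a localization of $\O(Y)$, hence flat, so $\Tor_j^{\O(Y)}(M,F) = 0$ for all $j \geq 1$; equivalently the complex $P_\bullet \otimes_{\O(Y)} F$ is exact in positive degrees. Writing $r_i = \operatorname{rank}_F d_i$ for the rank of $d_i$ over $F$, exactness at $P_j$ for $j \geq 1$ yields the numerical identity $r_j + r_{j+1} = n_j$.

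Next I would fix $j \geq 1$ and analyze the evaluated complex. For each $i$ the rank of the complex matrix $d_i(\vrho)$ is at most $r_i$, and the locus $U_i = \{\vrho \in Y : \operatorname{rank} d_i(\vrho) = r_i\}$ is nonempty and Zariski open, being the nonvanishing locus of some $r_i \times r_i$ minor of $d_i$ (such a minor is a nonzero element of $\O(Y)$ precisely because $d_i$ has rank $r_i$ over $F$). For $\vrho$ in the nonempty open set $U_j \cap U_{j+1}$ one computes
$$
\dim_\C \ker d_j(\vrho) = n_j - \operatorname{rank} d_j(\vrho) = n_j - r_j = r_{j+1} = \operatorname{rank} d_{j+1}(\vrho) = \dim_\C \im d_{j+1}(\vrho).
$$
Since the inclusion $\im d_{j+1}(\vrho) \subseteq \ker d_j(\vrho)$ always holds (as $d_j d_{j+1} = 0$), the equality of dimensions forces $\im d_{j+1}(\vrho) = \ker d_j(\vrho)$, that is, $\Tor_j^{\O(Y)}(M,\C_\vrho) = 0$. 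Hence the vanishing holds on the dense open set $U_j \cap U_{j+1}$, which is the assertion for generic $\vrho$.

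The only genuinely delicate inputs are the two standard facts that localization at the generic point is flat (so the generic ranks $r_i$ obey the exactness identity) and that matrix rank is lower semicontinuous (so the maximal-rank loci $U_i$ are open and nonempty); neither should present a real obstacle. I would stress that, because only the three consecutive terms $P_{j+1}, P_j, P_{j-1}$ intervene, the argument is insensitive to whether the resolution $P_\bullet$ is finite, so no regularity or finite-projective-dimension hypothesis on $\O(Y)$ is required. (Alternatively, one could invoke Grothendieck's generic freeness to produce a single distinguished open set on which $M$ localizes to a free module, giving the conclusion for all $j$ at once; the resolution argument above is closer in spirit to the preceding lemma and suffices.)
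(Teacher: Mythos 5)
Your proof is correct and follows essentially the same route as the paper's: take the finite-type free resolution from Lemma \ref{LemmaFinGenFree}, use flatness of the fraction field $F$ to get the generic ranks $r_i$, observe that the maximal-rank loci are nonempty Zariski open, and conclude exactness of the specialized complex at the $j$-th spot by a rank count. The only difference is that you make explicit the dimension identity $r_j + r_{j+1} = n_j$ and the resulting equality $\im d_{j+1}(\vrho) = \ker d_j(\vrho)$, which the paper leaves implicit.
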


\begin{proof}
Choose a resolution
$$
\dots \longrightarrow P_r \overset{d_r}{\longrightarrow} \dots \overset{d_2}{\longrightarrow} P_1 \overset{d_1}{\longrightarrow} P_0 \overset{d_0}{\longrightarrow} M \longrightarrow 0
$$
of $M$, where each $P_j$ is a finitely generated free $\O(Y)$-module. For $j \geq 0$, the map $d_j$ is represented by a matrix with entries in $\O(Y)$. The complex $P_\bullet \otimes_{\O(Y)} F$ is exact since $F$ is a flat $\O(Y)$-module \cite[Corollary 3.48]{Rotman}. Let $r_j$ denote the rank of the map $d_j \otimes 1_F \colon P_j \otimes_{\O(Y)} F \to P_{j-1} \otimes_{\O(Y)} F$. Then for generic $\vrho \in Y$, the induced map $d_j \otimes 1_\vrho \colon P_j \otimes_{\O(Y)} \C_\vrho \to P_{j-1} \otimes_{\O(Y)} \C_\vrho$ has rank $r_j$. For each $\vrho \in Y$ such that $d_j \otimes 1_\vrho$ has rank $r_j$ and $d_{j+1} \otimes 1_\vrho$ has rank $r_{j+1}$, the sequence
$$
P_{j+1} \otimes_{\O(Y)} \C_\vrho \overset{d_{j+1} \otimes 1_\vrho}{\longrightarrow} P_j \otimes_{\O(Y)} \C_\vrho \overset{d_j \otimes 1_\vrho}{\longrightarrow} P_{j-1} \otimes_{\O(Y)} \C_\vrho
$$
is exact. Thus, $\Tor^{\O(Y)}_j(M,\C_\vrho) = 0$ for such $\vrho$.
\end{proof}

Recall that a property holds for general $\vrho \in Y$ if there are countably many nonempty Zariski open subsets $U_j$ of $Y$ such that the property holds for any $\vrho \in \bigcap_j U_j$. The next lemma follows from Lemma \ref{MTorpLemma} and the fact that the $\Tor$ functor commutes with direct limits.

\begin{lemma}
\label{MTorp}
If $M$ is a countably generated $\O(Y)$-module, then for each $j \geq 1$,
$$
\Tor^{\O(Y)}_j(M,\C_\vrho) = 0
$$
for general $\vrho \in Y$. \qed
\end{lemma}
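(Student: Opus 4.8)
The plan is to reduce to the finitely generated case already handled in Lemma \ref{MTorpLemma} by writing $M$ as a direct limit of finitely generated submodules and invoking the behavior of $\Tor$ under filtered colimits. Fix $j \geq 1$ throughout. Since $M$ is countably generated, I would choose generators $m_1, m_2, \dots$ and set $M_i$ to be the $\O(Y)$-submodule generated by $m_1, \dots, m_i$. Each $M_i$ is finitely generated, the inclusions $M_i \hookrightarrow M_{i+1}$ make $\{M_i\}$ a directed system, and $M = \varinjlim_i M_i$.

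The key homological input is that $\Tor$ commutes with direct limits in its first argument: for the fixed module $\C_\vrho$ and each $\vrho \in Y$,
$$
\Tor^{\O(Y)}_j(M, \C_\vrho) \;\cong\; \varinjlim_i \Tor^{\O(Y)}_j(M_i, \C_\vrho).
$$
This is standard and follows, for instance, from the fact that a filtered colimit of free resolutions of the $M_i$ resolves $M$ together with the exactness of filtered colimits of $\O(Y)$-modules and their commutation with tensor products.

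Now I would apply Lemma \ref{MTorpLemma} to each finitely generated module $M_i$. For the fixed $j$, that lemma produces, for each $i$, a nonempty Zariski open subset $U_i \subseteq Y$ on which $\Tor^{\O(Y)}_j(M_i, \C_\vrho)$ vanishes; here I use that $Y$ is irreducible, so the ``generic'' locus furnished by Lemma \ref{MTorpLemma} (the complement of the vanishing locus of certain minors) is a dense, hence nonempty, open subset. Set $U = \bigcap_i U_i$, a countable intersection of nonempty Zariski opens. For any $\vrho \in U$ we have $\Tor^{\O(Y)}_j(M_i, \C_\vrho) = 0$ for every $i$, so the displayed isomorphism gives $\Tor^{\O(Y)}_j(M, \C_\vrho) = \varinjlim_i 0 = 0$. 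By the definition of ``general'' recalled before the lemma, this is exactly the assertion that $\Tor^{\O(Y)}_j(M, \C_\vrho) = 0$ for general $\vrho \in Y$.

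The only real subtlety, and the step I would check most carefully, is the bookkeeping of the two quantifiers: Lemma \ref{MTorpLemma} gives vanishing on a single dense open set (``generic''), while the conclusion here only requires vanishing on a countable intersection of opens (``general''), and passing from one $M_i$ to the countable family $\{M_i\}$ costs exactly the countable intersection that the weaker notion ``general'' permits. No new geometric input is needed beyond Lemma \ref{MTorpLemma} and the exactness of filtered colimits; the genuine content was already established in the finitely generated case.
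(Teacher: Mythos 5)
Your proof is correct and follows essentially the same route as the paper, which deduces Lemma \ref{MTorp} from Lemma \ref{MTorpLemma} together with the fact that $\Tor$ commutes with direct limits; you have simply made explicit the exhaustion $M = \varinjlim_i M_i$ and the countable intersection of open sets that the notion of ``general'' permits. No gaps here.
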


\begin{theorem}[Important]
\label{GenericCohomDimension}
If $C_\bullet$ is a chain complex of countably generated $\O(Y)$-modules, then the canonical homomorphism
$$
H_\bullet(C_\bullet) \otimes_{\O(Y)} \C_\vrho \longrightarrow H_\bullet(C_\bullet \otimes_{\O(Y)} \C_\vrho)
$$
is an isomorphism for general $\vrho \in Y$.
\end{theorem}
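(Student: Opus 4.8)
The plan is to run the universal-coefficient argument degreewise, using Lemma \ref{MTorp} to kill the relevant $\Tor$ terms for general $\vrho$. Write $k = \C_\vrho$ and, for each $n$, set $Z_n = \ker\partial_n$, $B_n = \operatorname{im}\partial_{n+1}$, and $H_n = H_n(C_\bullet) = Z_n/B_n$. The first point to establish is that all of these auxiliary modules are countably generated. Since $Y$ is an affine variety, $\O(Y)$ is a finitely generated $\C$-algebra and hence Noetherian; over a Noetherian ring every submodule of a countably generated module is again countably generated (write the ambient module as an increasing union of finitely generated submodules and intersect). Thus $Z_n \subseteq C_n$, the quotient $B_n$ of $C_{n+1}$, and the subquotient $H_n$ are all countably generated.

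Next I would apply Lemma \ref{MTorp} to the countable family of modules $\{B_n, H_n\}_n$. For each fixed module the condition $\Tor_1^{\O(Y)}(-, \C_\vrho) = 0$ holds for general $\vrho$, that is, on a countable intersection of nonempty Zariski opens; a countable union of such families of conditions is still of this form, so there is a single set of general $\vrho$ on which $\Tor_1^{\O(Y)}(B_n, \C_\vrho) = \Tor_1^{\O(Y)}(H_n, \C_\vrho) = 0$ for all $n$. I would fix such a $\vrho$ for the remainder. With these $\Tor_1$'s vanishing, I would tensor the two standard short exact sequences
$$
0 \to Z_n \to C_n \to B_{n-1} \to 0, \qquad 0 \to B_n \to Z_n \to H_n \to 0
$$
with $k$ and use right-exactness together with the vanishing to obtain short exact sequences
$$
0 \to Z_n \otimes k \to C_n \otimes k \to B_{n-1} \otimes k \to 0, \qquad 0 \to B_n \otimes k \to Z_n \otimes k \to H_n \otimes k \to 0.
$$
In particular $B_n \otimes k \hookrightarrow Z_n \otimes k \hookrightarrow C_n \otimes k$ become inclusions of submodules. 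Factoring $\partial_n$ as $C_n \twoheadrightarrow B_{n-1} \hookrightarrow C_{n-1}$ and tensoring then shows that $\partial_n \otimes 1$ has kernel exactly $Z_n \otimes k$ and that $\partial_{n+1} \otimes 1$ has image exactly $B_n \otimes k$, both viewed inside $C_n \otimes k$. Hence
$$
H_n(C_\bullet \otimes k) = \frac{Z_n \otimes k}{B_n \otimes k} \cong H_n \otimes k = H_n(C_\bullet) \otimes \C_\vrho,
$$
the last isomorphism being the second short exact sequence above. A direct check on representatives shows this isomorphism is the canonical map $[z] \otimes \lambda \mapsto [z \otimes \lambda]$, which completes the proof.

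The main obstacle I anticipate is bookkeeping rather than conceptual: one must be sure the countably many genericity conditions (one $\Tor_1$-vanishing for each $B_n$ and each $H_n$) can be imposed simultaneously, which relies on the fact that ``general $\vrho$'' is closed under countable intersection, and that the auxiliary modules $Z_n, B_n, H_n$ remain countably generated so that Lemma \ref{MTorp} applies to them. The only mild subtlety in the homology computation is verifying that the injections obtained after tensoring genuinely identify $\ker(\partial_n \otimes 1)$ and $\operatorname{im}(\partial_{n+1} \otimes 1)$ as the asserted submodules of $C_n \otimes k$; this follows by chasing the factorization of $\partial_n$ through $B_{n-1}$ and using that $B_{n-1} \otimes k \to C_{n-1} \otimes k$ is injective, which is itself a consequence of the two short exact sequences in degree $n-1$ (hence of $\Tor_1(B_{n-2}, k) = \Tor_1(H_{n-1}, k) = 0$).
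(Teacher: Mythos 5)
Your proof is correct, but it takes a genuinely different route from the paper. The paper forms the double complex $C_\bullet \otimes_{\O(Y)} P_\bullet$, where $P_\bullet$ is a projective resolution of $\C_\vrho$, and compares its two spectral sequences: one collapses (for general $\vrho$) because $\Tor_i^{\O(Y)}(C_j,\C_\vrho)=0$, yielding $H_\bullet(C_\bullet \otimes \C_\vrho)$, while the other collapses because $\Tor_i^{\O(Y)}(H_j(C_\bullet),\C_\vrho)=0$, yielding $H_\bullet(C_\bullet)\otimes \C_\vrho$. You instead run the classical universal-coefficient argument degreewise, via the short exact sequences $0 \to Z_n \to C_n \to B_{n-1}\to 0$ and $0 \to B_n \to Z_n \to H_n \to 0$, killing the obstruction terms with Lemma \ref{MTorp}. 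Both arguments rest on the same two supporting facts --- generic $\Tor$-vanishing for countably generated modules, and the observation that over the Noetherian ring $\O(Y)$ submodules of countably generated modules are countably generated, so that the genericity conditions form a countable family --- and you handle both points correctly, including the subtler verification that $B_{n-1}\otimes \C_\vrho \to C_{n-1}\otimes \C_\vrho$ is injective (needed to identify $\ker(\partial_n \otimes 1)$ with $Z_n \otimes \C_\vrho$). What your approach buys: it is more elementary (no spectral sequences, no convergence considerations), and it is slightly more economical in hypotheses, since you only need $\Tor_1$-vanishing for the boundaries $B_n$ and homology $H_n$, not vanishing of all higher $\Tor$ of the terms $C_j$ themselves. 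What the paper's approach buys: given the spectral-sequence machinery (already needed elsewhere, e.g.\ in the proof of Theorem \ref{MyBigTheorem}), it is shorter to write down and generalizes immediately to comparisons of hyper-derived functors.
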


\begin{proof}
Since $\O(Y)$ is a Noetherian ring, submodules of countably generated $\O(Y)$-modules are countably generated. Thus, each $H_j(C_\bullet)$ is a countably generated $\O(Y)$-module. By Lemma \ref{MTorp}, for general $\vrho \in Y$,
\begin{equation}
\label{TorCj}
\Tor^{\O(Y)}_i(C_j,\C_\vrho) = 0
\end{equation}
and
\begin{equation}
\label{TorHj}
\Tor^{\O(Y)}_i(H_j(C_\bullet),\C_\vrho) = 0
\end{equation}
for all $j \geq 0$ and $i \geq 1$. Choose any such $\vrho$. Let $\dots \to P_1 \to P_0 \to \C_\vrho \to 0$ be a projective resolution of $\C_\vrho$ as $\O(Y)$-modules.

Let $E^n$ denote the homology spectral sequence associated to the double complex $C_\bullet \otimes_{\O(Y)} P_\bullet$. It has $E^0$-term $E^0_{s,t} = C_s \otimes_{\O(Y)} P_t$. Its $E^1$ term is given by $E^1_{s,t} = \Tor^{\O(Y)}_t(C_s,\C_\vrho)$. By Equation (\ref{TorCj}), for general $\vrho \in Y$, the $E^1$ term is concentrated in $t = 0$ and is given by the complex $C_s \otimes_{\O(Y)} \C_\vrho$. For such $\vrho$, the $E^2$ term is concentrated in $t = 0$ and is given by $H_s(C_\bullet \otimes_{\O(Y)} \C_\vrho)$.

Every projective module is flat. Thus, each $P_t$ is a flat $\O(Y)$-module. This implies that the $'E^1$ term is given by $'E^1_{s,t} = H_s(C_\bullet) \otimes_{\O(Y)} P_t$. Thus, the $'E^2$ term is given by $'E^2_{s,t} = \Tor^{\O(Y)}_t(H_s(C_\bullet),\C_\vrho)$. By Equation (\ref{TorHj}), for general $\vrho \in Y$, the $'E^2$ term is concentrated in $t = 0$ and is given by $H_s(C_\bullet) \otimes_{\O(Y)} \C_\vrho$. The result follows.
\end{proof}

\subsection{Specialization of Cohomology}
\label{CohomologySpecialization}

\noindent Let $X$ be a smooth manifold such that $H_1(X,\Z)$ is torsion-free. Then $\T = H^1(X,\C^*)$ is a torus. Set $\pi = \pi_1(X,x_0)$. Each element of $\T^N$ can be viewed as a homomorphism $\pi \to (\C^*)^N$. Let $Y$ be an irreducible subvariety of $\T^N$, and let $\O(Y)$ denote its coordinate ring. Let $\alpha \colon \pi \to \O(Y)^\times$ be a group homomorphism. As in Section \ref{InfiniteDimensionalVec}, there is an associated flat vector bundle $\VV_{Y,\alpha}$ over $X$. The fiber over each point is a free $\O(Y)$-module of rank one. Let $\phi_\vrho \colon \O(Y) \to \C_\vrho$ denote the canonical surjection. Let $\L_{\vrho,\alpha}$ denote the rank one local system on $X$ with monodromy $\phi_\vrho \circ \alpha$. Recall that Corollary \ref{DeRhamSpecialization} implies that if $\vrho \in Y$, then the natural homomorphism $$E^\bullet(X,\VV_{Y,\alpha}) \otimes_{\O(Y)} \C_\vrho \longrightarrow E^\bullet(X,\L_{\vrho,\alpha})$$
is an isomorphism of complexes. It induces a homomorphism $H^\bullet(X,\VV_{Y,\alpha}) \otimes_{\O(Y)} \C_\vrho \longrightarrow H^\bullet(X,\L_{\vrho,\alpha})$. The next theorem will follow from Proposition \ref{BigProposition} and Theorem \ref{GenericCohomDimension}.

\begin{theorem}[Important]
\label{ImportantCohomology}
If $X$ deform-retracts onto a finite simplicial complex, then for general $\vrho \in Y$, the natural homomorphism
$$
H^\bullet(X,\VV_{Y,\alpha}) \otimes_{\O(Y)} \C_\vrho \longrightarrow H^\bullet(X,\L_{\vrho,\alpha}).
$$
is an isomorphism.
\end{theorem}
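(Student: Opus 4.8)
The plan is to replace the de Rham complex by a finite simplicial cochain complex of free $\O(Y)$-modules and then invoke the universal-coefficients result of Theorem \ref{GenericCohomDimension}. Fix a triangulation refining the deformation retraction of $X$ onto the finite simplicial complex $Z$, chosen so that every simplex has an evenly covered neighborhood in the sense of Section \ref{SingSimpCoh}, and write $C^\bullet = C^\bullet_\Delta(Z,\VV_{Y,\alpha})$. Since $Z$ is finite and $C^\bullet_\Delta(\sigma,\VV_{Y,\alpha})$ is a free $\O(Y)$-module of rank one for each simplex $\sigma$, the complex $C^\bullet$ is a finite complex of finitely generated free $\O(Y)$-modules; in particular it consists of countably generated $\O(Y)$-modules. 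The cochain version of Theorem \ref{GenericCohomDimension}, which holds equally for cochain complexes, therefore applies: for general $\vrho \in Y$ the canonical homomorphism
$$
H^\bullet(C^\bullet) \otimes_{\O(Y)} \C_\vrho \longrightarrow H^\bullet(C^\bullet \otimes_{\O(Y)} \C_\vrho)
$$
is an isomorphism.

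First I would identify the source and target of this map with the de Rham objects appearing in the theorem. By Proposition \ref{BigProposition}, applied with coefficient ring $\O(Y)$, there is a natural isomorphism $H^\bullet(X,\VV_{Y,\alpha}) \cong H^\bullet(C^\bullet)$, so tensoring with $\C_\vrho$ identifies the source above with $H^\bullet(X,\VV_{Y,\alpha}) \otimes_{\O(Y)} \C_\vrho$. For the target, Corollary \ref{SimpSpec} gives a canonical isomorphism $C^\bullet \otimes_{\O(Y)} \C_\vrho \cong C^\bullet_\Delta(Z,\L_{\vrho,\alpha})$ of complexes of $\C_\vrho$-modules, and Proposition \ref{BigProposition}, this time with coefficient ring $\C_\vrho$, identifies $H^\bullet(C^\bullet_\Delta(Z,\L_{\vrho,\alpha}))$ with $H^\bullet(X,\L_{\vrho,\alpha})$. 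Stringing these identifications together turns the displayed map into an isomorphism $H^\bullet(X,\VV_{Y,\alpha}) \otimes_{\O(Y)} \C_\vrho \cong H^\bullet(X,\L_{\vrho,\alpha})$ for general $\vrho$.

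The step I expect to be the main obstacle is checking that this composite isomorphism coincides with the natural homomorphism of the theorem, namely the one induced by the de Rham specialization isomorphism of Corollary \ref{DeRhamSpecialization}. Concretely, I would assemble the commutative square whose top edge is the de Rham universal-coefficients map $H^\bullet(X,\VV_{Y,\alpha}) \otimes_{\O(Y)} \C_\vrho \to H^\bullet(E^\bullet(X,\VV_{Y,\alpha}) \otimes_{\O(Y)} \C_\vrho) \cong H^\bullet(X,\L_{\vrho,\alpha})$, whose bottom edge is the simplicial map above, and whose vertical arrows are the de Rham--simplicial comparison isomorphisms of Proposition \ref{BigProposition} before and after tensoring by $\C_\vrho$. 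Commutativity of this square reduces to the naturality of Proposition \ref{BigProposition} in the coefficient $\C$-algebra, that is, to the compatibility of the comparison with the coefficient-restriction map (\ref{ERestriction2}) on the de Rham side and the simplicial restriction map of Corollary \ref{SimpSpec} on the simplicial side. This compatibility holds because both comparison maps are induced by integration of forms over simplices and are therefore functorial in the coefficient ring. Once it is recorded, the bottom map being an isomorphism for general $\vrho$ forces the top map to be an isomorphism, which is exactly the assertion of the theorem.
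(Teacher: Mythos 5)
Your proposal is correct and follows essentially the same route as the paper: reduce to the finite simplicial cochain complex $C^\bullet_\Delta(Z,\VV_{Y,\alpha})$ via Proposition \ref{BigProposition} and Corollary \ref{SimpSpec}, then apply Theorem \ref{GenericCohomDimension} to this complex of finitely generated $\O(Y)$-modules. The only difference is that you spell out the naturality of the de Rham--simplicial comparison in the coefficient ring, a compatibility the paper's proof uses implicitly without comment.
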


\begin{proof}
Let $K$ be a finite simplicial complex which is deformation retraction of $X$. Corollary \ref{SimpSpec} implies that there is a natural isomorphism
$$
C^\bullet_\Delta(K,\VV_{Y,\alpha}) \otimes_{\O(Y)} \C_\vrho \cong C^\bullet_\Delta(K,\L_{\vrho,\alpha})
$$
of complexes. By Proposition \ref{BigProposition}, it therefore suffices to show that the canonical homomorphism
$$
H^\bullet(C^\bullet_\Delta(K,\VV_{Y,\alpha})) \otimes_{\O(Y)} \C_\vrho \longrightarrow H^j(C^\bullet_\Delta(K,\VV_{Y,\alpha}) \otimes_{\O(Y)} \C_\vrho)
$$
is an isomorphism for general $\vrho \in Y$. Since $C^\bullet_\Delta(K,\VV_{Y,\alpha})$ is a complex of finitely generated $\O(Y)$-modules, this follows from Theorem \ref{GenericCohomDimension}.
\end{proof}

\begin{remark}
The word ``general" in the statement of the theorem cannot be removed. If $X = \C^*$ and $Y = H^1(X,\C^*)$, then there is a homomorphism $\alpha = \pi_1(X,x_0) \to \O(Y)^\times$ given by $\gamma \longmapsto (\vrho \mapsto \vrho(\gamma))$. Thus, $H^0(X,\VV_{Y,\alpha}) = 0$, but for the trivial character $1 \in Y$, we have $H^0(X,\L_{1,\alpha}) = H^0(X,\C) = \C$.
\end{remark}

\begin{corollary}
If $X$ is the complement of an arrangement of hyperplanes in a complex vector space, then for general $\vrho \in Y$, the natural homomorphism
$$
H^\bullet(X,\VV_{Y,\alpha}) \otimes_{\O(Y)} \C_\vrho \longrightarrow H^\bullet(X,\L_{\vrho,\alpha}).
$$
is an isomorphism.
\end{corollary}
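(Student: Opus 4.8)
The plan is to apply Theorem \ref{ImportantCohomology} directly, so the only real task is to verify its single hypothesis for the case at hand: that the complement $X$ of a hyperplane arrangement deform-retracts onto a finite simplicial complex. First I would recall that $X$ is an affine variety, so by \cite[Theorem 7.2]{MilnorMorse} it has the homotopy type of a finite CW complex of dimension at most $\ell = \dim_\C V$. More precisely, by \cite[Theorem 5.40]{OT} the complement deform-retracts onto a finite complex, and since $X$ is a smooth manifold it is triangulable by \cite{Whitney}; combining these, $X$ deform-retracts onto a finite simplicial complex $K$. This is exactly the hypothesis required by Theorem \ref{ImportantCohomology}.

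With that in hand, the corollary is immediate. Theorem \ref{ImportantCohomology} applies verbatim to the finite simplicial complex $K$ onto which $X$ deform-retracts, which says precisely that for general $\vrho \in Y$ the natural homomorphism $H^\bullet(X,\VV_{Y,\alpha}) \otimes_{\O(Y)} \C_\vrho \to H^\bullet(X,\L_{\vrho,\alpha})$ is an isomorphism. Unwinding that theorem, its proof reduces via Corollary \ref{SimpSpec} and Proposition \ref{BigProposition} to the simplicial-cochain model $C^\bullet_\Delta(K,\VV_{Y,\alpha})$, a complex of finitely generated $\O(Y)$-modules, and then to the purely commutative-algebraic specialization statement of Theorem \ref{GenericCohomDimension}. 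All of that content is already available, so no additional argument is needed beyond checking the geometric hypothesis.

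Accordingly, I do not expect a genuine obstacle here; the whole point of the section is that the work has been front-loaded into Theorems \ref{GenericCohomDimension} and \ref{ImportantCohomology}. The only place warranting even a moment's care is ensuring a genuinely \emph{simplicial} (rather than merely CW) finite model for $X$, so that Corollary \ref{SimpSpec} and Proposition \ref{BigProposition} apply on the nose; this is guaranteed by the triangulability of the smooth manifold $X$ together with the cited finiteness result \cite[Theorem 5.40]{OT}. Once this model is fixed, the corollary follows as a direct specialization of Theorem \ref{ImportantCohomology}.
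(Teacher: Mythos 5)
Your proposal is correct and is essentially the paper's own proof: the paper likewise cites \cite[Theorem 5.40]{OT} to obtain a deformation retraction of $X$ onto a finite simplicial complex and then applies Theorem \ref{ImportantCohomology} directly. The additional citations to \cite{MilnorMorse} and \cite{Whitney} are harmless but unnecessary, since \cite[Theorem 5.40]{OT} already supplies the simplicial model on the nose.
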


\begin{proof}
By \cite[Theorem 5.40]{OT}, there is a strong deformation retraction of $X$ onto a finite simplicial complex. Thus, Theorem \ref{ImportantCohomology} applies.
\end{proof}

\begin{example}
For each $\k \in \Z^N$, there is a homomorphism $\alpha \colon \pi \to \O(Y)^\times$ given by $\gamma \mapsto f_1^{k_1}\cdots f_N^{k_N}$, where $f_j \in \O(Y)^\times$ is defined by $f_j(\vrho) = \rho_j(\gamma)$. The local system $\L_{\vrho,\alpha}$ has monodromy $\vrho_1^{k_1}\cdots \vrho_N^{k_N}$. We therefore denote this local system by $\L_{\vrho^\k}$. If $X$ deform-retracts onto a finite simplicial complex, then for general $\vrho \in Y$, there is a canonical isomorphism
$$
H^\bullet(X,\VV_{Y,\alpha}) \otimes_{\O(Y)} \C_\vrho \cong H^\bullet(X,\L_{\vrho^\k}).
$$
It is natural with respect to cup products.
\end{example}

\section{Constancy of Relative Malcev Completion}
\label{SectionY}

Let $X$ be a smooth manifold such that $H_1(X,\Z)$ is torsion-free. Then $\T = H^1(X,\C^*)$ is a complex torus. Set $\pi = \pi_1(X,x_0)$. Each $\vrho \in \T^N$ can be viewed as a homomorphism $\pi \to (\C^*)^N$. Let $Y$ be an irreducible subvariety of $\T^N$. One natural and interesting choice for $Y$ is an irreducible component of the characteristic variety $\V_{N,m}^i(X)$.

In this section, we will construct an affine group scheme $\S_Y$ over $Y$ and a homomorphism $\theta_Y \colon \pi \to \S_Y(\O(Y))$ using iterated integrals that generalize those of Chen \cite{Chen3} and Hain \cite{HainDeRham}. When $Y = \{\vrho\}$, the group scheme $\S_Y$ is the Malcev completion of $\pi$ relative to $\vrho$. In addition, for each irreducible subvariety $Z$ of $Y$, there is a canonical homomorphism $\S_Z \to \S_Y \otimes_{\O(Y)} \O(Z)$ of group schemes such that the diagram
$$
\xymatrix{\pi \ar[r]^{\theta_Z} \ar[d]_{\theta_Y} & \S_Z(\O(Z)) \ar[d] \\ \S_Y(\O(Y)) \ar[r] & \S_Y(\O(Z))}
$$
commutes.

If $X$ deform-retracts onto a finite simplicial complex (e.g. if $X$ is the complement of a hyperplane arrangement), then the relative Malcev completion $\S_\vrho$ is constant over $Y$ in the following sense. If there exists $\pmb{\varrho} \in Y$ such that the Zariski closure of the image of $\pmb{\varrho}$ in $\G_m^N$ contains $\im\vrho$ for every $\vrho \in Y$, then for general $\vrho \in Y$, the homomorphism $\S_\vrho \to \S_Y \otimes_{\O(Y)} \C_\vrho$ is an isomorphism. This holds for any irreducible subvariety of the character torus $\T$.

\subsection{The Group Schemes $G_Y$ and $D_Y$}
\label{DY}

Recall that $X$ is a smooth manifold such that $H_1(X,\Z)$ is torsion-free, that $\T = H^1(X,\C^*)$, and that $\pi = \pi_1(X,x_0)$. Let $Y$ be an irreducible subvariety of $\T^N$. The coordinate ring of $Y$ is denoted $\O(Y)$. There is a tautological homomorphism
$$
\vrho_Y \colon \pi \to (\O(Y)^\times)^N
$$
into the $\O(Y)$-rational points of $\G_m^N$ defined by $\gamma \mapsto (f_1,\dots,f_N)$, where $f_j(\vrho) = \rho_j(\gamma)$ for $\vrho \in Y$. Via the group isomorphism $\G_m^N(\C_\vrho) \cong (\C^*)^N$, the composition
$$
\pi \overset{\vrho_Y}{\longrightarrow}\G_m^N(\O(Y)) \longrightarrow \G_m^N(\C_\vrho)
$$
is given by $\vrho$.

Define $G_Y$ to be the intersection of all group subschemes $G$ of $\G_m^N$ over $\C$ such that $\im \vrho \subset G(\C)$ for every $\vrho \in Y$. This is a group subscheme of $\G_m^N$ over $\C$. Set
$$
D_Y = G_Y \otimes_{\C} \O(Y).
$$
By definition, $\O(D_Y) = \O(G_Y) \otimes_\C \O(Y)$, and $D_Y$ is a group subscheme of $\G_{m/Y}^N$. The image of $\vrho_Y$ is contained in $G_Y(\O(Y))$, which is equal to $D_Y(\O(Y))$.

\begin{remark}
Suppose that $Y = \{\vrho\}$. Then $\O(Y)$ is the residue field $\C_\vrho$, and $G_Y$ is the Zariski closure of the image of $\vrho$ in $\G_m^N$. That is, if $\vrho \in Y$, then $D_\vrho = G_\vrho$.
\end{remark}

Each character $\alpha \in G_Y^\vee$ induces a homomorphism $D_Y \to \G_{m/Y}$ of affine group schemes over $Y$. This homomorphism corresponds to a Hopf algebra homomorphism $\alpha^* \colon \O(Y)[q^{\pm 1}] \to \O(D_Y)$. There is an $\O(Y)$-module map $\O(Y) \to \O(D_Y)$ given by $1 \mapsto \alpha^*(q)$. This results in a $D_Y$-module $V_{Y,\alpha}$, which is a free $\O(Y)$-module of rank one. There is a homomorphism $\pi \longrightarrow \O(Y)^\times$ given by the composition
$$
\pi \overset{\vrho_Y}{\longrightarrow} G_Y(\O(Y)) \overset{\alpha}{\longrightarrow} \G_m(\O(Y)).
$$
This induces a left action of $\pi$ on $\O(Y)$, where an element $\gamma$ of $\pi$ acts by multiplication by $(\alpha \circ \vrho_Y)(\gamma)$. As in Section \ref{InfiniteDimensionalVec}, there is a left action of $\pi$ on the trivial bundle $\X \times \O(Y) \to \X$ defined by $\gamma \cdot (z,v) = (\gamma \cdot z, \gamma^{-1} \cdot v)$. The quotient
$$
\xymatrix{\VV_{Y,\alpha} \ar@{=}[r] & \X \times \O(Y) \ar[d] \\ & X}
$$
is a flat vector bundle over $X$; each fiber is a free $\O(Y)$-module of rank one. The differential forms $E^\bullet(X,\VV_{Y,\alpha})$ on $X$ with coefficients in $\VV_{Y,\alpha}$ are defined to be sums $\sum_{j \in J} \psi_j \otimes v_j$ taken over any index set $J$, with $\psi_j \in E^\bullet(\X)$ and $v_j \in \O(Y)$, which have the following two properties.
\begin{itemize}
\item Each point of $\X$ has a neighborhood on which all but finitely many $\psi_j$ vanish identically.
\item For each $\gamma \in \pi$, one has $\sum_j \psi_j \otimes v_j = \sum_j (\gamma^{-1})^*\psi_j \otimes \gamma \cdot v_j$.
\end{itemize}
The monodromy representation of $\VV_{Y,\alpha}$ is $\alpha\circ \vrho_Y \colon \pi \to \O(Y)^\times$.

\begin{remark}
Let $Y = \{\vrho\}$ and $\k \in Z^N$. Let the character $\alpha$ of $G_Y$ be the restriction to $G_Y$ of the $\k$-th standard character on $\G_m^N$. Let $\L_{\vrho^\k}$ denote the rank one local system on $X$ with monodromy $\rho_1^{k_1}\cdots \rho_N^{k_N}$. The complex $E^\bullet(X,\VV_{Y,\alpha})$ is the de Rham complex $E^\bullet(X,\L_{\vrho^\k})$.
\end{remark}

\subsection{The Algebra $E^\bullet(X,\O_Y)$}
\label{CommDiffEY}

Recall that $X$ is a smooth manifold such that $H_1(X,\Z)$ is torsion-free, that $\T = H^1(X,\C^*)$, and that $\pi = \pi_1(X,x_0)$. Let $Y$ be an irreducible subvariety of $\T^N$. As in the previous section, each $\alpha \in G_Y^\vee$ gives a $D_Y$-module $V_{Y,\alpha}$ and a flat vector bundle $\VV_{Y,\alpha}$ over $X$. The fiber over each point is a free $\O(Y)$-module of rank one. The monodromy representation of $\VV_{Y,\alpha}$ is $\alpha \circ \vrho_Y \colon \pi \to \O(Y)^\times$, where $\vrho_Y \colon \pi \to G_Y(\O(Y))$ is the tautological homomorphism.

Given $\alpha, \beta \in G_Y^\vee$, the cup product of an element of $E^\bullet(X,\VV_{Y,\alpha})$ with an element of $E^\bullet(X,\VV_{Y,\beta})$ lies in $E^\bullet(X,\VV_{Y,\alpha\beta})$. Via the construction in Section \ref{GenConstructionAlg}, we can define the commutative differential graded $\O(Y)$-algebra $E^\bullet(X,\O_Y)$:
\begin{equation}
\label{CommDiffY}
E^\bullet(X,\O_Y) = \bigoplus_{\alpha \in G_Y^\vee} E^\bullet(X,\VV_{Y,\alpha}) \otimes_{\O(Y)} V_{Y,\alpha^{-1}}.
\end{equation}
The differential is defined componentwise by the differentials on the complexes $E^\bullet(X,\VV_{Y,\alpha})$, and the grading is determined by the degree of differential forms. The $\O(Y)$-algebra structure is determined by the $\O(Y)$-module structure on each $E^\bullet(X,\VV_{Y,\alpha})$. The $V_{Y,\alpha^{-1}}$ in the summand implies that $E^\bullet(X,\O_Y)$ has the structure of a $D_Y$-module.

\begin{example}
If $Y = \{\vrho\}$, then $G_Y$ is the Zariski closure of the image of $\vrho$ in $\G_m^N$. Thus, the algebra $E^\bullet(X,\O_\vrho)$ is the same as the commutative differential graded algebra defined in Section \ref{CommDiffE}. Let $\U_\vrho$ denote the prounipotent radical of the relative Malcev completion $\S_\vrho$, and let $\u_\vrho$ denote the Lie algebra of $\U_\vrho$. This is representation of $D_\vrho$. The commutative differential graded algebra $E^\bullet(X,\O_\vrho)$ determines the Lie algebra $\u_\vrho$ and the action of $D_\vrho$ on it by standard methods of rational homotopy theory.
\end{example}

\begin{example}
Suppose that there exists $\vrho \in Y$ such that $\vrho$ has Zariski dense image in $\G_m^N$. Thus, $G_Y = \G_m^N$ and $D_Y = \G_{m/Y}^N$. As in Section \ref{InfiniteDimensionalVec}, given $\k \in \Z^N$, let $\L_{Y^\k}$ denote the flat vector bundle over $X$ whose fibers are free $\O(Y)$-modules of rank one and whose monodromy representation $\pi \to \O(Y)^\times$ is given by $\gamma \mapsto f_1^{k_1}\cdots f_N^{k_N}$, where $f_j(\vrho) = \rho_j(\gamma)$. Let $q_j$ denote the $j$-th standard character on $\G_{m/Y}^N$. The algebra $E^\bullet(X,\O_Y)$ has the following description as a $\G_{m/Y}^N$-module.
$$
E^\bullet(X,\O_Y) = \bigoplus_{\k \in \Z^N} E^\bullet(X,\L_{Y^\k}) q_1^{-k_1}\cdots q_N^{-k_N}.
$$
The $q_j$ determine the action of $\G_{m/Y}^N$ on $E^\bullet(X,\O(Y))$ via the action on its coordinate ring. For $\vrho \in Y$ and $\k \in \Z^N$, let $\L_{\vrho^\k}$ denote the rank one local system on $X$ with monodromy $\rho_1^{k_1}\cdots \rho_N^{k_N}$. By Corollary \ref{DeRhamSpecialization} , for each $\vrho \in Y$ which has Zariski dense image in $\G_m^N$, there is a canonical isomorphism
$$
E^\bullet(X,\O_Y) \otimes_{\O(Y)} \C_\vrho \cong \bigoplus_{\k \in \Z^N} E^\bullet(X,\L_{\vrho^\k}) q_1^{-k_1}\cdots q_N^{-k_N}
$$
of commutative differential graded $\C$-algebras.
\end{example}

The cohomology of the commutative differential graded algebra $E^\bullet(X,\O_Y)$ is denoted $H^\bullet(X,\O_Y)$. There is a canonical isomorphism
$$
H^\bullet(X,\O_Y) \cong \bigoplus_{\alpha \in G_Y^\vee} H^\bullet(X,\VV_{Y,\alpha}) \otimes_{\O(Y)} V_{Y,\alpha^{-1}}.
$$
of $\O(Y)$-algebras. Thus, the algebra $H^\bullet(X,\O_Y)$ is a $D_Y$-module.

\begin{theorem}
The algebra $E^\bullet(X,\O_Y)$ has connected cohomology:
$$
H^0(X,\O_Y) = \O(Y).
$$
\end{theorem}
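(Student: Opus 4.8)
The plan is to compute $H^0$ directly from the decomposition (\ref{CommDiffY}) and to show that every summand except the one indexed by the trivial character contributes nothing. First I would observe that the differential on $E^\bullet(X,\O_Y)$ is defined componentwise and hence respects the grading by $G_Y^\vee$, so that $E^\bullet(X,\VV_{Y,\alpha}) \otimes_{\O(Y)} V_{Y,\alpha^{-1}}$ is a subcomplex for each $\alpha$. Since cohomology commutes with direct sums and $V_{Y,\alpha^{-1}}$ is a free $\O(Y)$-module concentrated in degree $0$, this yields
$$
H^0(X,\O_Y) = \bigoplus_{\alpha \in G_Y^\vee} H^0(X,\VV_{Y,\alpha}) \otimes_{\O(Y)} V_{Y,\alpha^{-1}}.
$$

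Next I would identify $H^0(X,\VV_{Y,\alpha})$ with the parallel sections of the flat bundle $\VV_{Y,\alpha}$. Pulling back to the universal cover $\X$, a closed $0$-form is a constant section, i.e.\ an element $v \in \O(Y)$, and the $\pi$-equivariance condition forces it to be fixed by the monodromy $\alpha\circ\vrho_Y$. Hence
$$
H^0(X,\VV_{Y,\alpha}) = \{\, v \in \O(Y) : (\alpha\circ\vrho_Y)(\gamma)\, v = v \text{ for all } \gamma \in \pi \,\}.
$$
For the trivial character $\alpha = 1$ the monodromy $\alpha\circ\vrho_Y$ is trivial, so this invariant module is all of $\O(Y)$; together with $V_{Y,1} = \O(Y)$ the corresponding summand is $\O(Y) \otimes_{\O(Y)} \O(Y) = \O(Y)$.

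The crux is to show that every nontrivial $\alpha$ contributes $0$, which rests on the key claim that $\alpha\circ\vrho_Y \colon \pi \to \O(Y)^\times$ is nontrivial whenever $\alpha \neq 1$. Arguing by contradiction, I would suppose $(\alpha\circ\vrho_Y)(\gamma) = 1$ for all $\gamma$, and extend $\alpha$ to a character $\tilde\alpha$ of $\G_m^N$ (possible since every character of the diagonalizable group $G_Y$ extends, as noted in Section \ref{DiagGroups}). Then $\tilde\alpha(\vrho(\gamma)) = 1$ for every $\gamma \in \pi$ and every $\vrho \in Y$, so $\im\vrho \subseteq \ker\tilde\alpha(\C)$ for all $\vrho \in Y$. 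As $\ker\tilde\alpha$ is a group subscheme of $\G_m^N$ containing all these images, the definition of $G_Y$ as the smallest such subscheme forces $G_Y \subseteq \ker\tilde\alpha$, whence $\alpha = \tilde\alpha|_{G_Y} = 1$, a contradiction. Granting the claim, I would pick $\gamma_0$ with $u := (\alpha\circ\vrho_Y)(\gamma_0) \neq 1$ in $\O(Y)^\times$; then any invariant $v$ satisfies $(u-1)v = 0$, and because $Y$ is irreducible the ring $\O(Y)$ is an integral domain, forcing $v = 0$. Thus $H^0(X,\VV_{Y,\alpha}) = 0$ for $\alpha \neq 1$, only the trivial summand survives, and $H^0(X,\O_Y) = \O(Y)$.

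The main obstacle is the claim in the last paragraph: this is where the precise definition of $G_Y$ and the character-extension property for diagonalizable subgroups of $\G_m^N$ are both essential, and where irreducibility of $Y$ (hence integrality of $\O(Y)$) is needed to pass from ``the monodromy acts nontrivially on the fiber'' to ``there are no nonzero invariants.'' The remaining steps are formal.
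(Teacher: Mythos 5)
Your proof is correct, and its skeleton matches the paper's: both reduce the theorem to the key claim that $\alpha\circ\vrho_Y \colon \pi \to \O(Y)^\times$ is nontrivial whenever $\alpha \in G_Y^\vee$ is nontrivial, and both prove that claim by contradiction using the minimality built into the definition of $G_Y$. The differences are in execution. For the key claim, the paper stays inside $G_Y$: it notes that $D_\vrho$ is a subgroup scheme of $G_Y$ for each $\vrho \in Y$, uses Zariski density of $\im\vrho$ in $D_\vrho$ together with the fact that a group scheme over $\C$ is determined by its $\C$-rational points to conclude $D_\vrho \subset \ker\alpha$, and then invokes minimality to get $G_Y = \ker\alpha$. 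You instead extend $\alpha$ to a character $\tilde\alpha$ of $\G_m^N$ and observe that $\ker\tilde\alpha$ is a subgroup scheme of $\G_m^N$ whose $\C$-points contain every $\im\vrho$ set-theoretically, which is exactly the hypothesis appearing in the definition of $G_Y$; this bypasses both the density argument and any mention of $D_\vrho$ (the extension step itself is not strictly necessary, since $\ker\alpha \subset G_Y \subset \G_m^N$ would serve the same purpose, but it is harmless). In the other direction, you supply the details the paper compresses into ``it suffices to prove'': the componentwise decomposition of $H^0$, the identification of $H^0(X,\VV_{Y,\alpha})$ with the monodromy invariants of the fiber, and the observation that a nontrivial monodromy value $u \neq 1$ kills all invariants because $\O(Y)$ is an integral domain, $Y$ being irreducible. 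That last point is where irreducibility of $Y$ genuinely enters the argument, and your write-up makes this visible where the paper's does not.
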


\begin{proof}
For the trivial character $1 \colon G_Y \to \G_m$, one always has $H^0(X,\VV_{Y,1}) = \O(Y)$.
It suffices to prove that if $\alpha \in G_Y^\vee$ is nontrivial, then the monodromy representation of $\VV_{Y,\alpha}$ is nontrivial. That is, if $\alpha \colon G_Y \to \G_m$ is nontrivial, then
$$
\alpha \circ \vrho_Y \colon \pi \to \O(Y)^\times
$$
is nontrivial, where $\vrho_Y \colon \pi \to G_Y(\O(Y))$ is the tautological homomorphism. Recall that $G_Y$ is defined to be the intersection of all group subschemes of $\G_m^N$ that contain $\im \vrho$ for every $\vrho \in Y$. Thus, $D_\vrho$ is a group subscheme of $G_Y$ for each $\vrho \in Y$. Thus, the character $\alpha$ of $G_Y$ restricts to a character on $D_\vrho$. The diagram
$$
\xymatrix{\pi \ar[d]_\vrho \ar[r]^-{\vrho_Y} & G_Y(\O(Y)) \ar[r]^-\alpha & \G_{m/Y}(\O(Y)) \ar[d] \\ D_\vrho(\C) \ar[rr]^{\alpha} && \G_m(\C_\vrho)}
$$
commutes.

If $\alpha \circ \vrho_Y$ is trivial, then $\alpha \circ \vrho \colon \pi \to \G_m(\C)$ is trivial for each $\vrho \in Y$. Since the image of $\vrho$ is dense in $D_\vrho$, this implies that $\alpha \colon D_\vrho(\C) \to \G_m(\C_\vrho)$ is trivial for each $\vrho \in Y$. Over $\C$, an algebraic group scheme is uniquely determined by its group of $\C$-rational points. Thus, $D_\vrho \subset \ker \alpha$ for every $\vrho \in Y$. But $\ker \alpha$ is a group subscheme of $G_Y$. By the definition of $G_Y$, this implies that $G_Y = \ker \alpha$. Thus, $\alpha$ is trivial.
\end{proof}

\subsection{Specialization of Coefficients}
\label{CoeffSpecializationV}

\noindent Suppose now that $Z$ is an irreducible subvariety of $Y$. Then $G_Z \subset G_Y$, and the diagram
$$
\xymatrix{\pi \ar[d]_{\rho_Y} \ar[r]^-{\rho_Z} & G_Z(\O(Z))  \ar[d] \\ G_Y(\O(Y)) \ar[r] & G_Y(\O(Z))}
$$
commutes. Suppose that $\alpha$ is a character on $G_Y$. This restricts to a character $\alpha|_Z$ on $G_Z$. Thus, there is a canonical homomorphism $G_Y^\vee \to G_Z^\vee$ of groups, given by restriction. The complex $E^\bullet(X,\VV_{Z,\alpha|_Z})$ is a complex of $\O(Z)$-modules. By Proposition \ref{PropositionRestrictionSurjection}, there is a canonical surjection
$$
E^\bullet(X,\VV_{Y,\alpha}) \longrightarrow E^\bullet(X,\VV_{Z,\alpha|_Z})
$$
of complexes of $\O(Y)$-modules. Corollary \ref{DeRhamSpecialization} implies that it induces a canonical isomorphism
$$
E^\bullet(X,\VV_{Y,\alpha}) \otimes_{\O(Y)} \O(Z) \overset{\cong}{\longrightarrow} E^\bullet(X,\VV_{Z,\alpha|_Z})
$$
of complexes of $\O(Z)$-modules.

Recall that $D_Y = G_Y \otimes_\C \O(Y)$ and $D_Z = G_Z \otimes_\C \O(Z)$. Since $G_Z \subset G_Y$, $D_Z$ is a group subscheme of $D_Y \otimes_{\O(Y)} \O(Z)$. The $\O(Y)$-algebra $E^\bullet(X,\O_Y)$ is a $D_Y$-module. Thus, the $\O(Z)$-algebra $E^\bullet(X,\O_Y) \otimes_{\O(Y)} \O(Z)$ is a $D_Z$-module. The next proposition and corollary follow immediately.

\begin{proposition}
There is a canonical $D_Z$-equivariant homomorphism
$$
E^\bullet(X,\O_Y) \longrightarrow E^\bullet(X,\O_Z)
$$
of commutative differential graded $\O(Y)$-algebras. It is a surjection if $G_Z = G_Y$. \qed
\end{proposition}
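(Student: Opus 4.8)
The plan is to build the map $E^\bullet(X,\O_Y) \to E^\bullet(X,\O_Z)$ one summand at a time, using the restriction surjections of Proposition \ref{PropositionRestrictionSurjection} on the de Rham factors and the reduction $\O(Y) \to \O(Z)$ on the rank-one module factors, and then to regroup the summands according to the restriction homomorphism $G_Y^\vee \to G_Z^\vee$ already recorded above. Since $G_Z \subseteq G_Y$, each $\alpha \in G_Y^\vee$ restricts to $\alpha|_Z \in G_Z^\vee$. For a fixed $\alpha$, Proposition \ref{PropositionRestrictionSurjection} supplies a chain surjection $E^\bullet(X,\VV_{Y,\alpha}) \to E^\bullet(X,\VV_{Z,\alpha|_Z})$, and base change along $\O(Y) \to \O(Z)$ supplies a surjection $V_{Y,\alpha^{-1}} \to V_{Z,(\alpha|_Z)^{-1}}$ of rank-one modules. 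Tensoring these and composing with the inclusion of the $\alpha|_Z$-summand of $E^\bullet(X,\O_Z)$ gives an $\O(Y)$-linear map out of the $\alpha$-summand of $E^\bullet(X,\O_Y)$; summing over $\alpha \in G_Y^\vee$ defines $\Phi$. Because $\Phi$ is defined out of a direct sum, non-injectivity of $G_Y^\vee \to G_Z^\vee$ causes no trouble: distinct source summands with the same restriction simply map into a common target summand. Equivalently, $\Phi$ factors as the canonical reduction $E^\bullet(X,\O_Y) \to E^\bullet(X,\O_Y)\otimes_{\O(Y)}\O(Z)$, followed by the summandwise isomorphism of Corollary \ref{DeRhamSpecialization} and the regrouping by restriction of characters.

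Next I would verify that $\Phi$ is a homomorphism of commutative differential graded algebras. It preserves the grading by form degree and commutes with the differential, because each restriction map of Proposition \ref{PropositionRestrictionSurjection} is a chain map. Multiplicativity is the only point needing genuine care: one must check that the cup-product chain maps $E^\bullet(X,\VV_{Y,\alpha}) \otimes E^\bullet(X,\VV_{Y,\beta}) \to E^\bullet(X,\VV_{Y,\alpha\beta})$ are intertwined by the restriction surjections with the corresponding products over $Z$. This holds because, on the universal cover $\X$, both the products and the restriction maps are induced by the wedge of forms on $\X$ tensored with the multiplication $\O(Y)\to\O(Z)$; since restriction of characters is a group homomorphism, $(\alpha\beta)|_Z = \alpha|_Z\,\beta|_Z$, so products of summands are carried to products of summands. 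Thus $\Phi$ respects the algebra structure of Section \ref{GenConstructionAlg}.

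Finally I would record $D_Z$-equivariance and surjectivity. The $D_Y$-module structure on $E^\bullet(X,\O_Y)$ is carried by the factors $V_{Y,\alpha^{-1}}$, and after base change it becomes the $D_Z$-module structure on $E^\bullet(X,\O_Y)\otimes_{\O(Y)}\O(Z)$ via the inclusion $D_Z \hookrightarrow D_Y\otimes_{\O(Y)}\O(Z)$ noted above; regrouping summands by $\alpha \mapsto \alpha|_Z$ is exactly the reorganization of $D_Y$-isotypical pieces into $D_Z$-isotypical pieces, so $\Phi$ is $D_Z$-equivariant. If $G_Z = G_Y$, then $G_Y^\vee = G_Z^\vee$ and the character restriction is the identity, so on each summand $\Phi$ is a tensor product of the surjection of Proposition \ref{PropositionRestrictionSurjection} with the reduction $V_{Y,\alpha^{-1}} \to V_{Z,\alpha^{-1}}$ induced by $\O(Y)\twoheadrightarrow\O(Z)$, followed by the canonical base-change quotient; this is a composite of surjections, and a direct sum of surjections is surjective, whence $\Phi$ is surjective. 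The main obstacle, such as it is, is the multiplicativity check in the second step; the rest is assembly of Proposition \ref{PropositionRestrictionSurjection} and Corollary \ref{DeRhamSpecialization}.
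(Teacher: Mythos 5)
Your proposal is correct and takes essentially the same route as the paper: the paper's own ``proof'' consists of the paragraphs immediately preceding the proposition, which assemble exactly your ingredients --- the summand-wise restriction surjections $E^\bullet(X,\VV_{Y,\alpha}) \to E^\bullet(X,\VV_{Z,\alpha|_Z})$ from Proposition \ref{PropositionRestrictionSurjection} (identified via Corollary \ref{DeRhamSpecialization} after base change), the regrouping along the character-restriction homomorphism $G_Y^\vee \to G_Z^\vee$, and the $D_Z$-module structure obtained from $D_Z \hookrightarrow D_Y \otimes_{\O(Y)} \O(Z)$ --- and then states that the proposition ``follows immediately.'' Your write-up simply makes explicit the multiplicativity, equivariance, and surjectivity verifications that the paper leaves to the reader.
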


\begin{corollary}
\label{YZSpecCorollary}
There is a canonical $D_Z$-equivariant inclusion
$$
E^\bullet(X,\O_Y) \otimes_{\O(Y)} \O(Z) \hookrightarrow E^\bullet(X,\O_Z)
$$
of commutative differential graded $\O(Y)$-algebras. It is an isomorphism if $G_Z = G_Y$. \qed
\end{corollary}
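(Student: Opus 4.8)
The plan is to produce the displayed map as the base change to $\O(Z)$ of the homomorphism $E^\bullet(X,\O_Y)\to E^\bullet(X,\O_Z)$ furnished by the preceding Proposition, and then to analyze it one character at a time. That homomorphism is a map of $\O(Y)$-algebras whose target is an $\O(Z)$-algebra, regarded as an $\O(Y)$-algebra through the surjection $\O(Y)\to\O(Z)$; by the universal property of $-\otimes_{\O(Y)}\O(Z)$ it factors uniquely through an $\O(Z)$-algebra homomorphism
$$
E^\bullet(X,\O_Y)\otimes_{\O(Y)}\O(Z)\longrightarrow E^\bullet(X,\O_Z),
$$
which is the map in question. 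Since the Proposition's map is a $D_Z$-equivariant chain map of commutative differential graded algebras, so is this induced map; thus the only substantive claims left are the identification of source and target and the injectivity.

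First I would make the map explicit summand by summand. Because $-\otimes_{\O(Y)}\O(Z)$ commutes with the direct sum (\ref{CommDiffY}), it is enough to specialize a single summand $E^\bullet(X,\VV_{Y,\alpha})\otimes_{\O(Y)}V_{Y,\alpha^{-1}}$. On the differential-form factor, Corollary \ref{DeRhamSpecialization} supplies the canonical isomorphism $E^\bullet(X,\VV_{Y,\alpha})\otimes_{\O(Y)}\O(Z)\cong E^\bullet(X,\VV_{Z,\alpha|_Z})$, where $\alpha|_Z$ is the restriction of $\alpha$ along $G_Z\subset G_Y$; on the rank-one coefficient module, base change identifies $V_{Y,\alpha^{-1}}\otimes_{\O(Y)}\O(Z)$ with $V_{Z,(\alpha|_Z)^{-1}}$. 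Hence the $\alpha$-summand of the source is carried isomorphically onto the $\beta$-summand of $E^\bullet(X,\O_Z)$ with $\beta=\alpha|_Z$, and the restriction homomorphism $G_Y^\vee\to G_Z^\vee$ records exactly how summands are matched. I would check along the way that the $G_Z^\vee$-grading the source inherits from its $D_Z$-module structure is the grading by the value of $\alpha|_Z$, so that the map is $G_Z^\vee$-graded and therefore $D_Z$-equivariant, and that it respects cup products because restriction of characters is multiplicative.

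The whole question then reduces to the behaviour of the restriction map $G_Y^\vee\to G_Z^\vee$ on character groups. Since $G_Z$ and $G_Y$ are group subschemes of $\G_m^N$ over $\C$, this map is surjective (the extension property for characters of diagonalizable groups recalled in Section \ref{DiagGroups}), which accounts for every summand of $E^\bullet(X,\O_Z)$. When $G_Z=G_Y$ the restriction is the identity, hence a bijection, and the summand-wise isomorphisms of the previous paragraph assemble into the asserted global isomorphism; this is the cleanest part of the argument. I expect the main obstacle to be the injectivity underlying the general inclusion: it comes down precisely to injectivity of $G_Y^\vee\to G_Z^\vee$, that is, to controlling which characters of $G_Y$ become equal upon restriction to $G_Z$. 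So the real content is the combinatorial bookkeeping of how the summands of $E^\bullet(X,\O_Y)\otimes_{\O(Y)}\O(Z)$ sit inside $E^\bullet(X,\O_Z)$ under restriction of characters, rather than any further analytic input.
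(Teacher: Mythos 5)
Your construction of the map is the paper's own: base-change the restriction homomorphism of the preceding Proposition and, summand by summand, use Corollary \ref{DeRhamSpecialization} to carry the $\alpha$-summand of $E^\bullet(X,\O_Y)\otimes_{\O(Y)}\O(Z)$ isomorphically onto the $\alpha|_Z$-summand of $E^\bullet(X,\O_Z)$; your proof of the second assertion (isomorphism when $G_Z=G_Y$) is complete and coincides with the paper's. The genuine gap is the first assertion. You correctly reduce injectivity of the global map to injectivity of the restriction map $G_Y^\vee\to G_Z^\vee$, but you leave that step as ``combinatorial bookkeeping'' --- and it cannot be carried out, because for diagonalizable group schemes over $\C$ a closed subgroup scheme $G_Z$ is the common kernel of the characters of $G_Y$ that restrict trivially to it; hence the kernel of $G_Y^\vee\to G_Z^\vee$ is nontrivial whenever $G_Z\subsetneq G_Y$. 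Whenever two distinct characters $\alpha_1,\alpha_2\in G_Y^\vee$ agree on $G_Z$, the two summands they index are mapped isomorphically onto the \emph{same} summand of $E^\bullet(X,\O_Z)$, so any element of the form $(\psi,-\psi')$ with $\psi\mapsto\psi'$ under the composite isomorphism lies in the kernel of the sum map. Concretely, take $N=1$, $Y$ a positive-dimensional subtorus of $\T$ (so $G_Y=\G_m$ by Section \ref{GZGY}) and $Z=\{\mathbf{1}\}$ the trivial character: then $G_Z$ is trivial, the source specializes to $\bigoplus_{k\in\Z}E^\bullet(X)$, the target is $E^\bullet(X)$, and the canonical map adds the components. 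So the map you (and the paper) construct is injective precisely when $G_Z=G_Y$.

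To be fair, this defect is not one you introduced: the paper's entire justification is ``the next proposition and corollary follow immediately,'' which glosses over exactly the same point, and the unqualified ``inclusion'' cannot be delivered by this canonical map. What your argument does establish --- the existence of the canonical $D_Z$-equivariant homomorphism $E^\bullet(X,\O_Y)\otimes_{\O(Y)}\O(Z)\to E^\bullet(X,\O_Z)$ and the fact that it is an isomorphism when $G_Z=G_Y$ --- is precisely what the paper uses downstream, in Section \ref{SYDefinition} and in the proof of Theorem \ref{MyBigTheorem}, where $Z=\{\vrho\}$ with $\vrho$ Zariski dense in $G_Y$. So your proposal proves, by the paper's own route, the part of the Corollary that is both true and needed; the blanket injectivity claim remains unproved in your write-up, and is in fact false for this map when $G_Z\subsetneq G_Y$.
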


It follows from this corollary that there is a canonical homomorphism
$$
H^\bullet(X,\O_Y) \otimes_{\O(Y)} \O(Z)\, \longrightarrow \, H^\bullet(X,\O_Z)
$$
of graded $\O(Z)$-algebras. The next proposition will follow directly from Theorem \ref{ImportantCohomology} and the fact that $G_Y^\vee$ is countable.

\begin{theorem}
\label{HYpTheorem}
If $X$ deform-retracts onto a finite simplicial complex, then for general $\vrho \in Y$, the natural homomorphism
\begin{equation}
\label{isom2233}
H^\bullet(X,\O_Y) \otimes_{\O(Y)} \C_\vrho \, \longrightarrow \, H^\bullet(X,\O_\vrho)
\end{equation}
is an isomorphism when $D_\vrho = G_Y$.
\end{theorem}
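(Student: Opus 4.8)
The plan is to reduce the statement to Theorem~\ref{ImportantCohomology} by decomposing $E^\bullet(X,\O_Y)$ into its character components and then running a countability argument, exactly as the sentence preceding the statement suggests. First I would recall from (\ref{CommDiffY}) that, since cohomology is additive,
$$
H^\bullet(X,\O_Y) \cong \bigoplus_{\alpha \in G_Y^\vee} H^\bullet(X,\VV_{Y,\alpha}) \otimes_{\O(Y)} V_{Y,\alpha^{-1}}.
$$
Each $V_{Y,\alpha^{-1}}$ is a free $\O(Y)$-module of rank one, so applying the additive functor $-\otimes_{\O(Y)}\C_\vrho$ and rearranging gives
$$
H^\bullet(X,\O_Y)\otimes_{\O(Y)}\C_\vrho \cong \bigoplus_{\alpha \in G_Y^\vee}\big(H^\bullet(X,\VV_{Y,\alpha})\otimes_{\O(Y)}\C_\vrho\big)\otimes_{\C_\vrho}\big(V_{Y,\alpha^{-1}}\otimes_{\O(Y)}\C_\vrho\big).
$$

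Next I would analyze the target under the hypothesis $D_\vrho = G_Y$. In that case the two groups have the same character group, $D_\vrho^\vee = G_Y^\vee$, so the definition of $E^\bullet(X,\O_\vrho)$ in Section~\ref{CommDiffE} yields the matching decomposition $H^\bullet(X,\O_\vrho)=\bigoplus_{\alpha\in G_Y^\vee} H^\bullet(X,\VV_\alpha)\otimes V_\alpha^*$. Here $\L_{\vrho,\alpha}=\VV_\alpha$ is the rank-one local system with monodromy $\alpha\circ\vrho$, and the line $V_{Y,\alpha^{-1}}\otimes_{\O(Y)}\C_\vrho$ is exactly the $G_Y$-representation $V_\alpha^*$. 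Using naturality of the construction of Corollary~\ref{YZSpecCorollary} applied to $Z=\{\vrho\}$, I would check that the $\alpha$-summand of the natural map (\ref{isom2233}) is the map
$$
H^\bullet(X,\VV_{Y,\alpha})\otimes_{\O(Y)}\C_\vrho \longrightarrow H^\bullet(X,\L_{\vrho,\alpha})
$$
of Theorem~\ref{ImportantCohomology}, tensored with the identity on the line $V_\alpha^*$.

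It then remains to run the genericity bookkeeping. For each fixed $\alpha$, Theorem~\ref{ImportantCohomology} provides a countable intersection of nonempty Zariski opens of $Y$ over which that $\alpha$-summand is an isomorphism. Because $G_Y$ is a subgroup scheme of $\G_m^N$, the restriction map $[\G_m^N]^\vee\to G_Y^\vee$ is surjective, so $G_Y^\vee$ is a countable quotient of $\Z^N$; intersecting the ``general'' loci over all $\alpha\in G_Y^\vee$ therefore again leaves a countable intersection of nonempty Zariski opens. Thus for general $\vrho$ with $D_\vrho=G_Y$ every $\alpha$-summand is an isomorphism, and since (\ref{isom2233}) respects the direct sum, it is an isomorphism as well.

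The step I expect to be the main obstacle is the naturality verification in the second paragraph: one must confirm that the abstract map (\ref{isom2233}) really decomposes character-by-character into the maps of Theorem~\ref{ImportantCohomology}, with compatible indexing of summands once $D_\vrho=G_Y$ forces $D_\vrho^\vee=G_Y^\vee$. Once that identification is secured, the remaining additivity-and-countability argument is routine. As a cross-check, one could instead apply Theorem~\ref{GenericCohomDimension} to a model of $E^\bullet(X,\O_Y)$ by countably generated $\O(Y)$-modules (for instance simplicial cochains), combined with the chain-level isomorphism $E^\bullet(X,\O_Y)\otimes_{\O(Y)}\C_\vrho\cong E^\bullet(X,\O_\vrho)$ that Corollary~\ref{YZSpecCorollary} supplies when $D_\vrho=G_Y$.
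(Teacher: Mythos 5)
Your proposal is correct and follows essentially the same route as the paper's proof: decompose both sides character-by-character, identify each $\alpha$-summand of the natural map (\ref{isom2233}) with the map of Theorem \ref{ImportantCohomology}, and use the countability of $G_Y^\vee$ (together with $D_\vrho = G_Y$ forcing $D_\vrho^\vee = G_Y^\vee$) to intersect the countably many general loci. The naturality step you flag as the main obstacle is exactly what the paper asserts without further elaboration, with the restriction of $\alpha$ to $D_\vrho$ supplying the matching of summands.
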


\begin{proof}
Recall that $D_\vrho = G_\vrho$ for every $\vrho \in Y$. We have the following two equalities.
\begin{equation*}
\begin{aligned}
H^\bullet(X,\O_Y) &= \bigoplus_{\alpha \in G_Y^\vee} H^\bullet(X,\VV_{Y,\alpha}) \otimes_{\O(Y)} V_{Y,\alpha^{-1}}\\
H^\bullet(X,\O_\vrho) &= \bigoplus_{\beta \in D_\vrho^\vee} H^\bullet(X,\VV_{\vrho,\alpha}) \otimes_\C V_{\rho,\alpha^{-1}}
\end{aligned}
\end{equation*}
The group scheme $D_\vrho$ is a group subscheme of $D_Y \otimes_{\O(Y)} \C_\vrho$, and the restriction to $D_\vrho$ of the representation $V_{Y,\alpha^{-1}} \otimes_{\O(Y)} \C_\vrho$ of $D_Y \otimes_{\O(Y)} \C_\vrho$ is the representation $V_{\vrho,\beta^{-1}}$, where $\beta$ is the restriction of $\alpha$ to the subscheme $D_\vrho$ of $G_Y$. The homomorphism (\ref{isom2233}) is given on the $\alpha$-th component by the canonical homomorphism
\begin{equation}
\label{Hom2}
H^\bullet(X,\VV_{Y,\alpha}) \otimes_{\O(Y)} \C_\vrho \longrightarrow H^\bullet(X,\VV_{\vrho,\beta}).
\end{equation}
By Theorem \ref{ImportantCohomology}, for a fixed $\alpha \in G_Y^\vee$, the map (\ref{Hom2}) is an isomorphism for general $\vrho \in Y$. Thus, since $G_Y^\vee$ is countable, for general $\vrho \in Y$, the homomorphism (\ref{Hom2}) is an isomorphism for all $\alpha \in G_Y^\vee$. For any such $\vrho$, if $D_\vrho = G_Y$, then $D_\vrho^\vee = G_Y^\vee$ and the natural homomorphism
$$
H^\bullet(X,\O_Y) \otimes_{\O(Y)} \C_\vrho \, \longrightarrow \, H^\bullet(X,\O_\vrho)
$$
is an isomorphism on the $\alpha$-th component for all $\alpha \in G_Y^\vee$. Since $D_\vrho^\vee = G_Y^\vee$, it is therefore an isomorphism.
\end{proof}

\begin{remark}
The statement that $D_\vrho = G_Y$ is equivalent to saying that for every $\pmb{\varrho} \in Y$, $\im\pmb{\varrho} \subset D_\vrho(\C)$.
\end{remark}

The remark following Theorem \ref{ImportantCohomology} implies that we cannot remove the word ``general" in the statement of this theorem.

\subsection{$G_Z = G_Y$ is Not Too Restrictive}
\label{GZGY}

\noindent The results in the previous section indicate that for an irreducible subvariety $Z$ of $Y$ such that $G_Z = G_Y$, the restriction homomorphism $E^\bullet(X,\O_Y) \longrightarrow E^\bullet(X,\O_Z)$ is well behaved. In this case, the canonical homomorphism
$$
E^\bullet(X,\O_Y) \otimes_{\O(Y)} \O(Z) \longrightarrow E^\bullet(X,\O_Z)
$$
is an isomorphism of commutative differential graded $\O(Z)$-algebras. There is an induced homomorphism
$$
H^\bullet(X,\O_Y) \otimes_{\O(Y)} \O(Z) \longrightarrow H^\bullet(X,\O_Z)
$$
of graded $\O(Z)$-algebras.

Let $Y$ denote any irreducible subvariety of the character torus $\T$. If $K$ is a subgroup of $\C^*$ that is not Zariski dense in $\G_m$, then $K$ must be a finite subgroup of the roots of unity. Thus, if $Y$ has positive dimension, then there exists $\vrho \in Y$,
$$
\vrho \colon \pi_1(X,x_0) \longrightarrow \C^*,
$$
that has Zariski dense image in $\G_m$. Hence, $G_Y = \G_m$. Thus, if $Z$ is an irreducible subvariety of $Y$ that has positive dimension, then $G_Z = G_Y$. In particular, we have the following proposition.

\begin{proposition}
If $Y$ is a subvariety of $\T$, then $D_\vrho = G_Y$ for general $\vrho \in Y$. \qed
\end{proposition}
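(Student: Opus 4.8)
The plan is to reduce at once to the case where $Y$ is irreducible, handling each irreducible component separately; since a general point lives in a dense open subset of a top-dimensional component, nothing is lost. For irreducible $Y$ I split into two cases according to $\dim Y$. If $\dim Y = 0$, then $Y$ is a single point $\{\vrho\}$, the only general point is $\vrho$ itself, and the Remark following the definition of $G_Y$ in Section \ref{DY} already gives $D_\vrho = G_Y$. So the real content is the case $\dim Y > 0$.

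First I would classify the closed subgroup schemes of $\G_m$ over $\C$. By the structure theorem for diagonalizable group schemes recalled in Section \ref{DiagGroups}, every group subscheme of $\G_m$ is either $\G_m$ itself or the group $\bmu_r$ of $r$-th roots of unity for some $r \geq 1$. Consequently, for $\vrho \in \T$ the Zariski closure $D_\vrho$ of $\im\vrho$ fails to be all of $\G_m$ exactly when $\im\vrho \subseteq \bmu_r(\C)$ for some $r$, which in turn happens precisely when $\vrho^r$ is the trivial character, i.e. when $\vrho$ is a torsion element of the group $\T$. Thus $D_\vrho = \G_m$ if and only if $\vrho$ is not a torsion point of $\T$.

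Next I would invoke the Brieskorn isomorphism of Section \ref{HypArr}, under which $\T \cong (\C^*)^n$, so that the torsion subgroup of $\T$ — the tuples of roots of unity — is countable. Because $Y$ is irreducible of positive dimension over $\C$, it is uncountable, hence contains some non-torsion $\vrho_0$, for which $D_{\vrho_0} = \G_m$. Since $G_Y$ is a closed subgroup scheme containing $\im\vrho_0$, it contains the smallest such, namely $\overline{\im\vrho_0} = D_{\vrho_0} = \G_m$, forcing $G_Y = \G_m$. It then remains to see that $D_\vrho = \G_m$ for general $\vrho \in Y$: the exceptional locus $\{\vrho \in Y : D_\vrho \neq \G_m\}$ is exactly the set of torsion points lying in $Y$, which is countable; since $\dim Y > 0$ each such point is a proper zero-dimensional subvariety of $Y$, so the exceptional locus lies inside a countable union of proper subvarieties. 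By the definition of ``general'' recalled in Section \ref{FirstConstancy}, this yields $D_\vrho = \G_m = G_Y$ for general $\vrho \in Y$.

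The argument is short, and there is no serious obstacle; the one point deserving care is the final translation, where the exceptional condition is \emph{not} Zariski closed but is nonetheless negligible because the torsion points of $\T$ are merely countable. It is precisely this countability, combined with the positive dimension of $Y$, that situates the bad set inside a countable union of proper subvarieties and so licenses the appeal to the notion of a general point.
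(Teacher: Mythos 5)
Your proof is correct and takes essentially the same route as the paper: the paper likewise observes that a subgroup of $\C^*$ that is not Zariski dense in $\G_m$ must be a group of roots of unity, so the non-dense characters in $Y$ are exactly the (countably many) torsion points of $\T$, whence a positive-dimensional irreducible $Y$ contains a dense character, forcing $G_Y = \G_m$, and $D_\vrho = \G_m = G_Y$ away from countably many proper subvarieties. Your write-up simply makes explicit the countability argument, the classification of subgroup schemes of $\G_m$, and the zero-dimensional case, all of which the paper leaves implicit in the paragraph preceding the proposition.
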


\subsection{The Affine Group Scheme $\S_Y$}
\label{SYDefinition}

Recall that $X$ is a smooth manifold such that $H_1(X,\Z)$ is torsion-free, that $\T = H^1(X,\C^*)$, and that $\pi = \pi_1(X,x_0)$. Suppose that $Y$ is an irreducible subvariety of $\T^N$, where $\T$ is the character torus of $X$. There is a tautological homomorphism
$$
\vrho_Y \colon \pi \to (\O(Y)^\times)^N
$$
into the $\O(Y)$-rational points of $\G_m^N$, defined by $\gamma \mapsto (f_1,\dots,f_N)$, where $f_j(\vrho) = \rho_j(\gamma)$ for $\vrho \in Y$. The image of $\vrho_Y$ is contained in $G_Y(\O(Y))$. For $\vrho \in Y$, the composition
$$
\pi \overset{\vrho_Y}{\longrightarrow}\G_m^N(\O(Y)) \longrightarrow \G_m^N(\C_\vrho)
$$
is given by $\vrho$. In Section \ref{CommDiffEY}, we constructed a commutative differential graded algebra $E^\bullet(X,\O_Y)$ over $\O(Y)$. Recall that $E^\bullet(X,\O_Y)$ is defined to be the direct sum
$$
E^\bullet(X,\O_Y) = \bigoplus_{\alpha \in G_Y^\vee} E^\bullet(X,\VV_{Y,\alpha}) \otimes_{\O(Y)} V_{Y,\alpha^{-1}}.
$$
Each $\alpha \in G_Y^\vee$ induces a homomorphism $D_Y \to \G_{m/Y}$ of affine group schemes over $Y$. This corresponds to a Hopf algebra homomorphism $\alpha^* \colon \O(Y)[q^{\pm 1}] \to \O(D_Y)$. There is an injective group homomorphism $G_Y^\vee \to \O(D_Y)^\times$ that takes $\alpha$ to $\alpha^*(q)$. The action by $D_Y$ on $E^\bullet(X,\O_Y)$ corresponds to the $\O(Y)$-algebra homomorphism
$$
\nu \colon E^\bullet(X,\O_Y) \to E^\bullet(X,\O_Y) \otimes_{\O(Y)} \O(D_Y)
$$
that sends $\psi \in E^\bullet(X,\VV_{Y,\alpha}) \otimes_{\O(Y)} V_{Y,\alpha^{-1}}$ to $\psi \otimes (\alpha^{-1})^*(q)$. It follows from definitions that the map $\nu$ satisfies the hypotheses at the beginning of Section \ref{BarHopfAlgebra}. Thus, we may form the reduced bar construction $B(\O(Y),E^\bullet(X,\O_Y),\O(D_Y))$, which is a differential graded Hopf algebra over $\O(Y)$. Thus, the cohomology $H^0(\O(Y),E^\bullet(X,\O_Y),\O(D_Y))$ is a Hopf algebra over $\O(Y)$. Define the affine group scheme $\S_Y$ over $Y$ by
$$
\S_Y = \Spec H^0B(\O(Y),E^\bullet(X,\O_Y),\O(D_Y)).
$$
As is standard for affine group schemes, we write
$$
\O(\S_Y) = H^0B(\O(Y),E^\bullet(X,\O_Y),\O(D_Y)).
$$

\begin{proposition}
There is a canonical surjection $\S_Y \to D_Y$ of affine group schemes over $Y$.
\end{proposition}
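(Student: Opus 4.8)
The plan is to construct the surjection dually, by exhibiting an injective homomorphism of Hopf algebras $\iota\colon \O(D_Y) \to \O(\S_Y)$ over $\O(Y)$, in exact analogy with the map $\O(D_\vrho) \to H^0B(\C,E^\bullet(X,\O_\vrho),\O(D_\vrho))$, $\varphi \mapsto [\phantom{i}]\varphi$, that was used in Section \ref{TwistedII} to realize $\S_\vrho \to D_\vrho$. Since $\O(\S_Y) = H^0B(\O(Y),E^\bullet(X,\O_Y),\O(D_Y))$, I set $\iota(\varphi) = [\phantom{i}]\varphi$, the class of the length-zero bar symbol carrying the coefficient $\varphi$.

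First I would check that $\iota$ lands in $\O(\S_Y)$. The symbol $[\phantom{i}]\varphi$ has bar degree $0$, and the differential (\ref{ddef}) applied to a length-zero element has no summands, so $d([\phantom{i}]\varphi)=0$. Because the reduced bar construction is non-negatively graded, there are no elements of degree $-1$ and hence no coboundaries in degree zero; thus $H^0B$ is exactly the module of degree-zero cocycles, and every length-zero cocycle determines a class in $\O(\S_Y)$. I would then verify that $\iota$ is a homomorphism of Hopf algebras over $\O(Y)$ using the explicit formulas of Section \ref{BarHopfAlgebra}, each of which restricts trivially at length zero: the shuffle product gives $([\phantom{i}]\varphi)\cdot([\phantom{i}]\psi) = [\phantom{i}]\varphi\psi$, the unit $[\phantom{i}]$ equals $\iota(1)$, the comultiplication degenerates to $\Delta([\phantom{i}]\varphi) = \sum_j [\phantom{i}]\varphi_j' \otimes [\phantom{i}]\varphi_j''$ (matching $\Delta_{\O(D_Y)}$), and the counit and antipode are inherited directly from $\O(D_Y)$.

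The main point — and the step I expect to require the most care — is injectivity of $\iota$, which is what makes the dual morphism a genuine surjection of group schemes. The plan is to build an $\O(Y)$-linear retraction. The augmentation $\epsilon\colon E^\bullet(X,\O_Y)\to \O(Y)$, which vanishes on positive degrees, is a map of differential graded algebras, so functoriality of the bar construction yields a homomorphism $p\colon B(\O(Y),E^\bullet(X,\O_Y),\O(D_Y)) \to B(\O(Y),\O(Y),\O(D_Y))$. Since $\O(Y)$ has trivial positive part, the target collapses to $\O(Y)\otimes_{\O(Y)}\O(D_Y) = \O(D_Y)$, and $p$ annihilates every bar symbol of positive length while sending $[\phantom{i}]\varphi \mapsto \varphi$. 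Restricting $p$ to $H^0$ gives $p\circ\iota = \mathrm{id}_{\O(D_Y)}$, so $\iota$ is a split injection of $\O(Y)$-algebras; since $p$ is an algebra map it is moreover $\O(D_Y)$-linear, so $\O(D_Y)$ is a direct summand of $\O(\S_Y)$.

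Consequently the induced morphism $\S_Y \to D_Y$ of affine group schemes over $Y$ is the asserted canonical surjection, and the splitting $p$ reflects a Levi-type semidirect product decomposition of $\S_Y$ as an extension of $D_Y$ by its prounipotent radical. The one technical subtlety to confirm throughout is that every construction is carried out over the base ring $\O(Y)$ rather than a field; this is ensured by the $\O(Y)$-linearity of all the maps above together with the base-change compatibility of the bar construction recorded in Proposition \ref{BarSpecializationProposition}.
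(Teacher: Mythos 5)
Your construction of the map itself is exactly the paper's: the paper defines the surjection $\S_Y \to D_Y$ dually as $\varphi \mapsto [\phantom{i}]\varphi$, and your verification that this is a well-defined Hopf algebra homomorphism into $H^0B(\O(Y),E^\bullet(X,\O_Y),\O(D_Y))$ is fine. The gap is in your injectivity argument: the retraction $p$ does not exist. Functoriality of the reduced bar construction under the augmentation $\epsilon\colon E^\bullet(X,\O_Y)\to\O(Y)$ would require the right-hand coefficient $\O(D_Y)$ to be an $E^\bullet(X,\O_Y)$-module \emph{through} $\epsilon$, but it is not: the module structure is given by the evaluation map $u\colon E^0(X,\O_Y)\to\O(D_Y)$, which sends $\psi \in E^0(X,\VV_{Y,\alpha})\otimes_{\O(Y)} V_{Y,\alpha^{-1}}$ to $\delta(\psi)\,(\alpha^{-1})^*(q)$, and this does not factor through $\O(Y)$ when $\alpha$ is nontrivial. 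Concretely, the defining relation $m[dg]n = m\,(g\cdot n) - (m\cdot g)\,n$ (the length-zero case of the bar relations) is not annihilated by your $p$: its image is $mn\bigl(\epsilon(g)-u(g)\bigr)$, which is nonzero whenever $g$ lies in a component with nontrivial character and $\delta(g)\neq 0$; such $g$ exist in abundance (any smooth section with nonzero value at $x_0$). So $p$ is not well defined on the quotient, and "$\O(D_Y)$ is a direct summand of $\O(\S_Y)$" is not established. The concluding claim about a Levi-type splitting is likewise unsupported.

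The deeper point, which shows that no purely formal retraction argument can work, is that injectivity of $\varphi\mapsto[\phantom{i}]\varphi$ genuinely depends on the connectedness theorem $H^0(X,\O_Y)=\O(Y)$, proved in the paper immediately before this proposition. Indeed, if there were a nonzero flat section $g$ of $\VV_{Y,\alpha}\otimes_{\O(Y)} V_{Y,\alpha^{-1}}$ with $\alpha$ nontrivial, then the same relation with $dg=0$ yields $[\phantom{i}]\bigl(u(g)\bigr) = \epsilon(g)\,[\phantom{i}]1$ in the bar construction, so the nonzero element $u(g)-\epsilon(g)\in\O(D_Y)$ would die under $\iota$. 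Your argument never invokes connectedness, so it cannot be complete; a correct proof must either feed $H^0(X,\O_Y)=\O(Y)$ into an analysis of the length filtration (e.g.\ via the Eilenberg--Moore spectral sequence, whose $E_1$-term description in Proposition \ref{EMSS1} already requires connected cohomology), or, as the paper implicitly does, appeal to the known structure of $H^0B$ as the coordinate ring of an extension of $D_Y$.
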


\begin{proof}
This map corresponds to the homomorphism $$\O(D_Y) \to H^0B(\O(Y),E^\bullet(X,\O_Y),\O(D_Y))$$ that sends $\varphi$ to $[\phantom{i}]\varphi$. This Hopf algebra homomorphism is injective.
\end{proof}

If $Z$ is an irreducible subvariety of $Y$, then Corollary \ref{YZSpecCorollary} implies that there is a canonical homomorphism
\begin{equation}
\label{BarSpecYZ1}
B(\O(Y),E^\bullet(X,\O_Y),\O(D_Y)) \otimes_{\O(Y)} \O(Z) \,  \longrightarrow \, B(\O(Z),E^\bullet(X,\O_Z),\O(D_Z))
\end{equation}
Note that Proposition \ref{BarSpecializationProposition} and Corollary \ref{YZSpecCorollary} imply that this is an isomorphism if $G_Z = G_Y$. By the definition of the Hopf algebra $\O(\S_Y)$, there is a canonical homomorphism
\begin{equation}
\label{OSpecialization}
\S_Z \longrightarrow \S_Y \otimes_{\O(Y)} \O(Z)
\end{equation}
of affine group schemes over $Z$.

\begin{proposition}
If $Z$ is an irreducible subvariety of $Y$, then the diagram
$$
\xymatrix{\S_Z \ar[r] \ar[d] & D_Z \ar@{^{ (}->}[d]
\\
\S_Y \otimes_{\O(Y)} \O(Z) \ar[r] & D_Y \otimes_{\O(Y)} \O(Z)}
$$
commutes. \qed
\end{proposition}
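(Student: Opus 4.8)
The plan is to prove commutativity by dualizing and performing an element chase. Since every corner of the square is an affine group scheme, the anti-equivalence between affine group schemes and Hopf algebras (coordinate rings) converts the asserted commutativity into the commutativity of the reversed square
$$
\xymatrix{
\O(\S_Z) & \O(D_Z) \ar[l] \\
\O(\S_Y) \otimes_{\O(Y)} \O(Z) \ar[u] & \O(D_Y) \otimes_{\O(Y)} \O(Z) \ar[l] \ar[u]
}
$$
of Hopf algebra homomorphisms. Thus it suffices to show that the two composites $\O(D_Y) \otimes_{\O(Y)} \O(Z) \to \O(\S_Z)$ agree. First I would record explicit formulas for the four arrows. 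The top arrow is dual to the canonical surjection $\S_Z \to D_Z$, hence is the map $\O(D_Z) \to \O(\S_Z)$ sending $\varphi \mapsto [\phantom{i}]\varphi$. The right arrow is dual to the closed immersion $D_Z \hookrightarrow D_Y \otimes_{\O(Y)} \O(Z)$, which, because $G_Z \subset G_Y$, is the surjection $\O(D_Y) \otimes_{\O(Y)} \O(Z) \to \O(D_Z)$ induced by the restriction $\O(G_Y) \to \O(G_Z)$. The bottom arrow is the base change along $\O(Y) \to \O(Z)$ of the canonical surjection $\S_Y \to D_Y$, namely $\varphi \otimes z \mapsto ([\phantom{i}]\varphi) \otimes z$. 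The left arrow is the map on $H^0$ induced by the bar specialization homomorphism (\ref{BarSpecYZ1}) that defines (\ref{OSpecialization}).

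Next I would chase a generator. Fix $\varphi \in \O(D_Y)$ and $z \in \O(Z)$, and let $\bar\varphi \in \O(D_Z)$ denote the image of $\varphi \otimes z$ under the coefficient restriction $\O(D_Y) \otimes_{\O(Y)} \O(Z) \to \O(D_Z)$. Along the top-right composite, $\varphi \otimes z$ goes first to $\bar\varphi$ and then to $[\phantom{i}]\bar\varphi \in \O(\S_Z)$. Along the bottom-left composite, $\varphi \otimes z$ goes first to $([\phantom{i}]\varphi) \otimes z$ and is then carried into $\O(\S_Z)$ by (\ref{BarSpecYZ1}). The entire computation therefore reduces to the single identity that (\ref{BarSpecYZ1}) sends the empty-word element $([\phantom{i}]\varphi) \otimes z$ to $[\phantom{i}]\bar\varphi$.

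This identity is built into the construction of (\ref{BarSpecYZ1}). By Proposition \ref{BarSpecializationProposition} together with Corollary \ref{YZSpecCorollary}, the map (\ref{BarSpecYZ1}) is the reduced bar functor applied to the three specialization data: the module map $\O(Y) \to \O(Z)$ in the left slot, the homomorphism $E^\bullet(X,\O_Y) \to E^\bullet(X,\O_Z)$ in the middle slot, and the coefficient map in the right slot. Functoriality of $B(M,R^\bullet,N)$ in the argument $N$ shows that it acts on the empty word $[\phantom{i}]\varphi$ simply by applying the coefficient map, and on words of positive length it acts through the middle slot, which does not affect the $H^0$ comparison of these empty-word elements. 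The hard part, and the only place demanding care, is verifying that the coefficient map appearing in (\ref{BarSpecYZ1}) is literally the restriction $\O(D_Y) \otimes_{\O(Y)} \O(Z) \to \O(D_Z)$ dual to the closed immersion $D_Z \hookrightarrow D_Y \otimes_{\O(Y)} \O(Z)$; this is exactly the compatibility furnished by the containment $G_Z \subset G_Y$ used in Corollary \ref{YZSpecCorollary}. Granting this identification, both composites equal $[\phantom{i}]\bar\varphi$, and the square commutes.
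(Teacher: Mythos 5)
Your proof is correct and takes essentially the same route as the paper, which states this proposition with no written proof precisely because it is the definitional unwinding you carry out: dualize to the square of Hopf algebra maps, note that both surjections $\S \to D$ are given by $\varphi \mapsto [\phantom{i}]\varphi$, and observe that the specialization map (\ref{BarSpecYZ1}) acts on empty-word elements by the coefficient map $\O(D_Y)\otimes_{\O(Y)}\O(Z) \to \O(D_Z)$, which is dual to the closed immersion $D_Z \hookrightarrow D_Y \otimes_{\O(Y)} \O(Z)$ furnished by $G_Z \subset G_Y$.
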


Note that $D_Y(\O(Y)) = G_Y(\O(Y))$. Thus, the image of the tautological homomorphism $\vrho_Y \colon \pi \to \G_m^N(\O(Y))$ is contained in $D_Y(\O(Y))$. In Section \ref{subsectionIter}, we show that there is a homomorphism $\pi \to \S_Y(\O(Y))$ into the $\O(Y)$-rational points of $\S_Y$ that lifts $\vrho_Y$:
$$
\xymatrix{\pi \ar[d] \ar[dr]^{\vrho_Y} \\ \S_Y(\O(Y)) \ar[r] & D_Y(\O(Y)).}
$$
If $Z$ is an irreducible subvariety of $Y$, then the diagram
$$
\xymatrix{\pi \ar[r] \ar[d] & \S_Z(\O(Z)) \ar[d] \\ S_Y(\O(Y)) \ar[r] & \S_Y(\O(Z))}
$$
commutes. If $X$ deform-retracts onto a finite simplicial complex, then the homomorphism $\S_\vrho \to \S_Y \otimes_{\O(Y)} \C_\vrho$ is an isomorphism for general $\vrho \in Y$, assuming that $\vrho$ has Zariski dense image in $G_Y$ for general $\vrho \in Y$. As the comments in Section \ref{GZGY} indicate, this restriction is frequently satisfied, and in fact it is always satisfied if $Y$ is an irreducible subvariety of $\T$.

\subsection{Iterated Integrals}
\label{subsectionIter}

\noindent In this section, we introduce iterated integrals that generalize those of Chen \cite{Chen3} and Hain \cite{HainDeRham}. First, recall that $X$ is a smooth manifold such that $H_1(X,\Z)$ is torsion-free, that $\T = H^1(X,\C^*)$, and that $\pi = \pi_1(X,x_0)$. Let $Y$ be an irreducible subvariety of $\T^N$. There is a tautological homomorphism $\vrho_Y \colon \pi \to \G_m^N(\O(Y))$. Its image is contained in $G_Y(\O(Y))$, where $G_Y$ is the group subscheme of $\G_m^N$ defined in Section \ref{DY}. Each element $f$ in $\O(D_Y)$ gives a function $D_Y(\O(Y)) \to \O(Y)$ that sends the $\O(Y)$-algebra homomorphism $\phi \colon \O(D_Y) \to \O(Y)$ to $\phi(f)$. For $\vrho \in Y$, the diagram
$$
\xymatrix{\pi \ar[r]^-{\vrho_Y} \ar[d]^\vrho & \G_m^N(\O(Y)) \ar[d] \\ (\C^*)^N \ar@{=}[r] & (\C_\vrho^\times)^N}
$$
commutes.

Suppose that $\gamma \colon [0,1] \to X$ is a piecewise smooth loop in $X$ with basepoint $x_0$. Let $\tilde{\gamma}$ denote any lift of $\gamma$ to $\widetilde{X}$. If $\psi \in E^1(X,\VV_{Y,\alpha}) \otimes_{\O(Y)} V_{Y,\alpha^{-1}}$, where $\alpha \in G_Y^\vee$, then we define $\int_\gamma \psi = \int_{\tilde{\gamma}} \psi$. This is an element of $V_{Y,\alpha} \otimes_{\O(Y)} V_{Y,\alpha^{-1}}$, which is the trivial $D_Y$-module $\O(Y)$. That is, the integral $\int_\gamma \psi$ is an element of $\O(Y)$. We extend this definition to {\em iterated integrals} as follows.

Suppose that $\psi_1,\dots,\psi_r$ are elements of $E^1(X,\O_Y)$ and that $\varphi \in \O(D_Y)$.
If each $\psi_j \in E^1(X,\VV_{\alpha_j}) \otimes_{\O(Y)} V_{\alpha_j^{-1}}$, where $\alpha_j \in D_Y^\vee$, then we define
$$
\int_\gamma (\psi_1\cdots\psi_r|\varphi) = \varphi(\vrho_Y(\gamma))\int_{\tilde{\gamma}}\psi_1\cdots\psi_r \, \, \in \O(Y).
$$
This definition extends uniquely to the case where $\psi_1,\dots,\psi_r \in E^1(X,\O_Y)$ in such a way that the integral $\int_\gamma (\psi_1\cdots\psi_r|\varphi)$ is $\O(Y)$-multi-linear in the forms $\psi_j$ and in $\varphi$. If $r = 0$, we set $\int_\gamma(\phantom{i}|\varphi) = \varphi(\vrho_Y(\gamma))$.

\begin{definition}
The set $I(X)_Y$ of {\em iterated integrals} with coefficients in $\O(D_Y)$ is defined to be the set of all $\O(Y)$-linear combinations of integrals of the form $\int (\psi_1\cdots\psi_r|\varphi)$, where $r \geq 0$, $\psi_j \in E^1(X,\O_Y)$, and $\varphi \in \O(D_Y)$.
\end{definition}

The elements of $I(X)_Y$ will be regarded as $\O(Y)$-valued functions on the space $\Omega_{x_0} X$ of piecewise smooth loops in $X$.

\begin{definition}
We define $H^0(I(X)_Y)$ to be the subset of $I(X)_Y$ consisting of all elements that are constant on each homotopy class $[\gamma] \in \pi_1(X,x_0)$. We call the elements of $H^0(I(X)_Y)$ {\em locally constant iterated integrals} with coefficients in $\O(D_Y)$.
\end{definition}

Recall that each element of $E^1(X,\VV_{Y,\alpha})$ is a differential form on $\X$ with values in $\O(Y)$. That is, there are open sets $U$ covering $\X$ such that the restriction of any $\zeta \in E^1(X,\VV_{Y,\alpha})$ to $U$ is an element of $E^\bullet(U) \otimes_\C \O(Y)$. Since the image of a  lift $\tilde{\gamma}$ of a piecewise smooth path $\gamma \colon [0,1] \to X$ to $\X$ has compact image, it follows that there is an open set $U$ containing $\tilde{\gamma}$ such that the restriction of $\zeta$ to $U$ is an element of $E^\bullet(U) \otimes_\C \O(Y)$. Thus, the next proposition follows directly from the similar result for ordinary iterated integrals \cite[Equation (1.5.1)]{ChenAlg}.

\begin{proposition}
\label{shuffleY}
For $\psi_1,\dots,\psi_{p+q} \in E^1(X,\O_Y)$ and $\varphi,\theta \in \O(Y)[q_j^{\pm 1}]$, we have
$$
\int (\psi_1\cdots\psi_p|\varphi) \int (\psi_{p+1}\cdots\psi_{p+q} | \theta) = \sum_{\sigma \in Sh(p,q)} \int (\psi_{\sigma(1)}\cdots\psi_{\sigma(p+q)} | \varphi\theta),
$$
where $Sh(p,q)$ denotes the set of shuffles of type $(p,q)$. \qed
\end{proposition}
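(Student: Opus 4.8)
The plan is to reduce Proposition~\ref{shuffleY} to Chen's classical shuffle identity for ordinary iterated integrals, exactly as in the proof of Proposition~\ref{commprop}, by exploiting the factorization of each twisted integral into a scalar ``monodromy'' factor and an honest integral of $\O(Y)$-valued forms along a lifted path. First, since both sides of the asserted identity are $\O(Y)$-multilinear in the forms $\psi_1,\dots,\psi_{p+q}$ and in $\varphi,\theta$, I would reduce to the case in which each $\psi_j$ is homogeneous, say $\psi_j \in E^1(X,\VV_{Y,\alpha_j}) \otimes_{\O(Y)} V_{Y,\alpha_j^{-1}}$ for some $\alpha_j \in G_Y^\vee$. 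Fixing a loop $\gamma$ and a single lift $\tilde\gamma$ to $\X$, the definition then gives
$$
\int_\gamma (\psi_1\cdots\psi_p|\varphi) = \varphi(\vrho_Y(\gamma)) \int_{\tilde\gamma}\psi_1\cdots\psi_p, \qquad \int_\gamma (\psi_{p+1}\cdots\psi_{p+q}|\theta) = \theta(\vrho_Y(\gamma)) \int_{\tilde\gamma}\psi_{p+1}\cdots\psi_{p+q},
$$
so that the left-hand side factors as a product of scalars in $\O(Y)$ times a product of two ordinary iterated integrals of $\O(Y)$-valued forms.

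For the scalar factor, the key observation is that $\vrho_Y(\gamma)$ is an $\O(Y)$-rational point of $D_Y$, i.e. an $\O(Y)$-algebra homomorphism $\O(D_Y) \to \O(Y)$. Being a ring homomorphism, it is multiplicative, so $\varphi(\vrho_Y(\gamma))\,\theta(\vrho_Y(\gamma)) = (\varphi\theta)(\vrho_Y(\gamma))$. For the integral factor, I would invoke the local-finiteness remark preceding the proposition: the lift $\tilde\gamma$ has compact image in $\X$, so there is an open neighborhood $U$ of that image on which each of the finitely many relevant forms $\psi_j$ restricts to an element of $E^\bullet(U) \otimes_\C \O(Y)$. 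On such a $U$ the forms are genuine $\C$-forms with coefficients extended to the commutative $\C$-algebra $\O(Y)$, and Chen's classical shuffle formula (Equation~(1.5.1) of \cite{ChenAlg}) is linear over the coefficient ring; hence it holds verbatim for $\O(Y)$-valued forms, giving
$$
\Biggl(\int_{\tilde\gamma}\psi_1\cdots\psi_p\Biggr)\Biggl(\int_{\tilde\gamma}\psi_{p+1}\cdots\psi_{p+q}\Biggr) = \sum_{\sigma \in Sh(p,q)} \int_{\tilde\gamma}\psi_{\sigma(1)}\cdots\psi_{\sigma(p+q)}.
$$
Multiplying this by $(\varphi\theta)(\vrho_Y(\gamma))$ and recognizing each summand as $\int_\gamma(\psi_{\sigma(1)}\cdots\psi_{\sigma(p+q)}|\varphi\theta)$ yields the claimed formula.

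The only genuine point requiring care---and the step I would regard as the main obstacle---is the passage from the classical shuffle identity over $\C$ to the $\O(Y)$-valued setting. Here one must be sure that the elements of $E^1(X,\VV_{Y,\alpha})$, which are a priori only locally finite $\O(Y)$-valued forms on $\X$ (the $\pi$-invariants of $E^\bullet_{\fin}(\X,\O(Y))$), actually restrict to honest elements of $E^\bullet(U)\otimes_\C\O(Y)$ on a neighborhood of the compact path; this is precisely what the local-finiteness condition guarantees, and it is what makes the reduction to \cite{ChenAlg} legitimate. Once that is in hand, the remaining computation is purely formal, the multiplicativity of evaluation at an $\O(Y)$-point handling the coefficients $\varphi,\theta$ and Chen's formula handling the forms.
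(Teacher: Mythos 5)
Your proof is correct and follows essentially the same route as the paper: the paper's argument is precisely the observation that local finiteness plus compactness of the lifted path lets each form restrict to an element of $E^\bullet(U)\otimes_\C\O(Y)$ near $\tilde\gamma$, whence the identity ``follows directly'' from Chen's classical shuffle formula \cite[Equation (1.5.1)]{ChenAlg}. You have simply made explicit the steps the paper leaves implicit, namely the reduction by $\O(Y)$-multilinearity to homogeneous forms and the multiplicativity of evaluation at the $\O(Y)$-rational point $\vrho_Y(\gamma)$, which handles the coefficients $\varphi$ and $\theta$.
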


\begin{corollary}
The sets $I(X)_Y$ and $H^0(I(X)_Y)$ are $\O(Y)$-algebras, where the map $\O(Y) \to H^0(I(X)_Y)$ is given by $1 \mapsto \int (\phantom{i}|1)$. \qed
\end{corollary}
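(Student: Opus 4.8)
The plan is to regard both $I(X)_Y$ and $H^0(I(X)_Y)$ as subsets of the commutative $\O(Y)$-algebra of all $\O(Y)$-valued functions on the loop space $\Omega_{x_0}X$, equipped with pointwise addition, pointwise $\O(Y)$-scaling, and pointwise multiplication. The $\O(Y)$-module structure on $I(X)_Y$ is immediate from the definition, since $I(X)_Y$ is by construction the $\O(Y)$-span of the generating functions $\int(\psi_1\cdots\psi_r|\varphi)$. Thus the only substantive point is that these two subsets are closed under the pointwise product, together with the routine checks that the product is associative, commutative, and unital.

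First I would establish closure of $I(X)_Y$ under multiplication. It suffices to treat a product of two generators, since multiplication is $\O(Y)$-bilinear. Given $\int(\psi_1\cdots\psi_p|\varphi)$ and $\int(\psi_{p+1}\cdots\psi_{p+q}|\theta)$, Proposition \ref{shuffleY} expresses their pointwise product as the finite sum $\sum_{\sigma\in Sh(p,q)}\int(\psi_{\sigma(1)}\cdots\psi_{\sigma(p+q)}|\varphi\theta)$. Here each summand is again a generating iterated integral: the forms $\psi_{\sigma(j)}$ lie in $E^1(X,\O_Y)$, and $\varphi\theta$ lies in $\O(D_Y)$ because $\O(D_Y)$ is a ring. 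Hence the product lies in $I(X)_Y$, and by bilinearity the product of any two elements of $I(X)_Y$ does as well. Commutativity and associativity are inherited from the pointwise product of functions on $\Omega_{x_0}X$, and the element $\int(\phantom{i}|1)$ is the multiplicative identity, since $\int_\gamma(\phantom{i}|1)=1(\vrho_Y(\gamma))=1\in\O(Y)$ for every loop $\gamma$, so it is the constant function $1$.

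Finally I would check that $H^0(I(X)_Y)$ is a subalgebra. By definition it consists of those elements of $I(X)_Y$ that are constant on each homotopy class $[\gamma]\in\pi_1(X,x_0)$. Because the product on $I(X)_Y$ is pointwise, the product of two functions that are each constant on homotopy classes is again constant on homotopy classes; thus $H^0(I(X)_Y)$ is closed under multiplication. It contains the unit $\int(\phantom{i}|1)$, which is the constant function $1$ and hence certainly locally constant, and it is visibly an $\O(Y)$-submodule. This makes $H^0(I(X)_Y)$ an $\O(Y)$-subalgebra of $I(X)_Y$. There is no genuine obstacle here: the shuffle identity of Proposition \ref{shuffleY} does all the work, and the only point requiring a moment's care is that closure of $H^0(I(X)_Y)$ relies essentially on the product being defined pointwise on the loop space rather than at the level of homotopy classes.
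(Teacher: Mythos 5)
Your proof is correct and takes essentially the same approach the paper intends: the corollary is presented as an immediate consequence of the shuffle formula (Proposition \ref{shuffleY}), with the pointwise $\O(Y)$-algebra structure on functions $\Omega_{x_0}X \to \O(Y)$ supplying the module structure, associativity, commutativity, and the unit $\int(\phantom{i}|1)$, exactly as you spell out. Your closing observation that closure of $H^0(I(X)_Y)$ uses only that the product is pointwise on loops is the right (and only) point of care.
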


\subsection{The Map $\theta_Y \colon \pi \to \S_Y(\O(Y))$}
\label{thetaY}

\noindent Consider the reduced bar construction $$B(\O(Y),E^\bullet(X,\O_Y),\O(D_Y)),$$ which was examined in Section \ref{SYDefinition}. The degree of the element $[\psi_1|\cdots|\psi_r] \varphi$ is defined to be $\deg(\psi_1)+\cdots+\deg(\psi_r) - r$. Note that
$$
H^0 B(\O(Y),E^\bullet(X,\O_Y),\O(D_Y)) \subset B(\O(Y),E^\bullet(X,\O_Y),\O(D_Y)),
$$
since $B(\O(Y),E^\bullet(X,\O_Y),\O(D_Y))$ is is non-negatively weighted.

\begin{proposition}
\label{IterYMap}
There is an $\O(Y)$-algebra homomorphism $$H^0B(\O(Y),E^\bullet(X,\O_Y),\O(D_Y)) \longrightarrow H^0(I(X)_Y)$$
given by
\begin{equation}
\label{BarIterMap}
[\psi_1|\cdots|\psi_r]\varphi \, \longmapsto \, \int (\psi_1\cdots\psi_r|\varphi).
\end{equation}
\end{proposition}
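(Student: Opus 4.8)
The plan is to build the map first on the degree-zero part of the bar construction and then restrict it to $H^0$. Since $B(\O(Y),E^\bullet(X,\O_Y),\O(D_Y))$ is non-negatively weighted, its degree-zero cohomology $H^0$ is just the kernel of the bar differential on $B^0$. Because a bracket $[\psi_1|\cdots|\psi_r]\varphi$ has degree $\sum_j\deg\psi_j-r$, every degree-zero bracket is built from $1$-forms $\psi_j\in E^1(X,\O_Y)$, so the formula (\ref{BarIterMap}) makes sense on $B^0$, and these brackets are closed under the shuffle product. The engine throughout is the observation already used to prove Proposition \ref{shuffleY}: any lift $\tilde\gamma$ of a piecewise smooth loop has compact image, so there is an open $U\supset\tilde\gamma$ on which each form in $E^\bullet(X,\O_Y)$ restricts to an element of $E^\bullet(U)\otimes_\C\O(Y)$. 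This lets us transfer Chen's and Hain's identities for ordinary iterated integrals, which are linear over the coefficient ring, to the present $\O(Y)$-valued twisted setting.

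First I would check that (\ref{BarIterMap}) respects the four relations defining the reduced bar construction in Section \ref{DefOfBarConstr}. Each relation is homogeneous of degree zero once the non-$g$ entries are taken to be $1$-forms, and each involves an exact $1$-form $dg$ with $g\in E^0(X,\O_Y)$; these relations encode exactly Chen's reduction formulas for iterated integrals containing such an exact form. The coefficient $\varphi\in\O(D_Y)$ is carried through the endpoint evaluation $\varphi\mapsto\varphi(\vrho_Y(\gamma))$ together with the evaluation map $E^\bullet(X,\O_Y)\to\O(D_Y)$ and the coaction $\nu$ of Section \ref{SYDefinition}. Restricting to the open set $U$ above, each of the four relations becomes the $\O(Y)$-linear extension of a classical Chen identity, so the formula kills them and descends to a well-defined $\O(Y)$-linear map $\Phi\colon B^0\to I(X)_Y$.

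Next I would verify that $\Phi$ is an algebra homomorphism carrying cocycles to locally constant integrals. The product on the bar construction is the shuffle product, which matches the product of iterated integrals by Proposition \ref{shuffleY}; the unit $[\ ]$ with $\varphi=1$ maps to the constant function $\int(\ |1)=1$, and $\O(Y)$-linearity is built into the definition of the integral, so $\Phi$ is a homomorphism of $\O(Y)$-algebras on the subalgebra $B^0$. Restricting $\Phi$ to the degree-zero cocycles $H^0B$, Chen's homotopy-functional criterion in Hain's twisted form says precisely that a cocycle for the differential (\ref{ddef}) maps to an iterated integral whose value depends only on the homotopy class of the loop: the vanishing of the $dr_j$-terms and the $J r_j\wedge r_{j+1}$-terms of $d$ is the integrability condition guaranteeing homotopy invariance. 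Hence $\Phi$ carries $H^0B$ into $H^0(I(X)_Y)$, and the restriction is the desired $\O(Y)$-algebra homomorphism.

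I expect the only real difficulty to lie in the coefficient bookkeeping for the $dg$-reductions and in justifying the transfer of the classical identities. The safest way to discharge both is by specialization: since $Y$ is irreducible, $\O(Y)$ is a domain and a regular function vanishing at every closed point of $Y$ is zero, so $\O(Y)$ embeds into $\prod_{\vrho\in Y}\C_\vrho$. Moreover the twisted iterated integral and the bar construction both commute with the specialization $\O(Y)\to\C_\vrho$, by Corollary \ref{DeRhamSpecialization}, Proposition \ref{BarSpecializationProposition}, and the commuting square of Section \ref{subsectionIter}; concretely $\phi_\vrho\bigl(\int_\gamma(\psi_1\cdots\psi_r|\varphi)\bigr)=\int_\gamma(\bar\psi_1\cdots\bar\psi_r|\bar\varphi)$, where the bars denote the images in $E^\bullet(X,\O_\vrho)$ and $\O(D_\vrho)$. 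Consequently every identity to be verified may be checked after specializing to each $\vrho\in Y$, where it reduces to the already-established $\C$-coefficient theorem of Section \ref{TwistedII}. Establishing this specialization compatibility cleanly, in particular that integration over the compact arc $\tilde\gamma$ commutes with the $\C$-linear map $\phi_\vrho$ acting on the $\O(Y)$-factor, is the step I would treat most carefully.
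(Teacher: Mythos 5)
Your proposal is correct, and its decisive step --- reducing well-definedness, homotopy invariance, and all identities to the established $\C$-coefficient theory of Section \ref{TwistedII} by specializing at each $\vrho\in Y$, using compatibility of the bar construction and the twisted integrals with $\O(Y)\to\C_\vrho$ together with the injectivity of $\O(Y)\to\prod_{\vrho\in Y}\C_\vrho$ --- is exactly the paper's proof, which phrases the same argument contrapositively via Remark \ref{RemarkNatural} and \cite[Proposition 8.1]{HainDeRham}. The direct verification of the four bar relations sketched in your first two paragraphs is thereby rendered unnecessary; the specialization argument, plus Propositions \ref{commprop} and \ref{shuffleY} for the homomorphism property, carries the whole proof just as in the paper.
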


\begin{remark}
\label{RemarkNatural}
The homomorphism in the theorem is natural with respect to specializations in the following sense. If $Z$ is an irreducible subvariety of $Y$, then the diagram
$$
\xymatrix{H^0B(\O(Y),E^\bullet(X,\O_Y),\O(D_Y)) \ar[r] \ar[d] & H^0(I(X)_Y) \ar[d] \\
H^0B(\O(Z),E^\bullet(X,\O_Z),\O(D_Z)) \ar[r] & H^0(I(X)_Z)}
$$
commutes, where down arrows are the canonical maps.
\end{remark}

\begin{proof}[Proof of Proposition \ref{IterYMap}]
The image of the map (\ref{BarIterMap}) is certainly contained in $I(X)_Y$. Suppose that the image is not contained in $H^0(I(X)_Y)$. Then there exists a closed $\zeta \in B(\O(Y),E^\bullet(X,\O_Y),\O(D_Y))$ of degree $0$ and loops $\gamma,\lambda \in \Omega_{x_0}X$ that have the same equivalence class in $\pi_1(X,x_0)$ such that $\int_\gamma \zeta \neq \int_\lambda \zeta$ as elements of $\O(Y)$. Choose $\vrho \in Y$ such that $\biggl\int_\gamma \zeta \biggr (\vrho) \neq \biggl\int_\lambda\zeta \biggr (\vrho)$.

Consider the canonical homomorphism
\begin{equation}
\label{BarSpecial2}
\phi_\vrho \colon B(\O(Y),E^\bullet(X,\O_Y),\O(D_Y)) \otimes_{\O(Y)} \C_\vrho \, \longrightarrow \, B(\C,E^\bullet(X,\O_\vrho),\O(D_\vrho))
\end{equation}
of differential graded Hopf algebras. By the commutativity of the diagram in Remark \ref{RemarkNatural},
$$
\int_\gamma \phi_\vrho(\zeta \otimes 1) = \biggl\int_\gamma \zeta\biggr (\vrho) \neq \biggl\int_\lambda \zeta\biggr (\vrho) = \int_\lambda \phi_\vrho(\zeta \otimes 1).
$$
Thus, by standard properties of Chen's iterated integrals \cite{Chen3} and Hain's generalization of them \cite{HainDeRham}, $\phi_\vrho(\zeta \otimes 1)$ is not closed in $B(\C,E^\bullet(X,\O_\vrho),\O(D_\vrho))$. This is a contradiction, since $\zeta$ is closed. Thus, the image of (\ref{BarIterMap}) is contained in $H^0(I(X)_Y)$.

To see that it is well defined, suppose that $\xi \in (\bigoplus_{s \geq 0} E^1(X,\O_Y)^{\otimes s}) \otimes \O(D_Y)$, where $\otimes s$ denotes a tensor product taken over $\O(Y)$. Suppose that the equivalence class $[\xi]$ in $B(\O(Y),E^\bullet(X,\O_Y),\O(D_Y)$ is trivial but that $\int \xi \neq 0$ as a function $\Omega_{x_0} X \to \O(Y)$. Then $[\xi] \in H^0B(\O(Y),E^\bullet(X,\O_Y),\O(D_Y))$. Choose $\vrho \in Y$ and $\gamma \in \Omega_{x_0} X$ such that $\biggl\int_\gamma \xi\biggr (\vrho) \neq 0$.

Consider the element $\phi_\vrho([\xi] \otimes 1)$ of $H^0B(\C,E^\bullet(X,\O_\vrho),\O(D_\vrho))$, which is trivial, since $[\xi]$ is trivial. By Remark \ref{RemarkNatural}, it follows that $\int_\gamma \phi_\vrho([\xi] \otimes 1) = \biggl\int_\gamma [\xi] \biggr(\vrho) \neq 0$. By \cite[Proposition 8.1]{HainDeRham}, the element $\phi_\vrho([\xi] \otimes 1)$ is nonzero in $B(\C,E^\bullet(X,\O_\vrho),\O(D_\vrho))$, a contradiction. The map (\ref{BarIterMap}) is therefore well-defined.

Propositions \ref{commprop} and \ref{shuffleY} imply that this map is a homomorphism.
\end{proof}

Recall that the group $\S_Y(\O(Y))$ of $\O(Y)$-rational points of $\S_Y$ is the set of $\O(Y)$-algebra homomorphisms $H^0B(\O(Y),E^\bullet(X,\O_Y),\O(D_Y)) \to \O(Y)$. If $\gamma \in \pi_1(X,x_0)$, then this proposition implies that $\theta_Y(\gamma) = \int_\gamma$ is an element of $\S_Y(\O(Y))$.

\begin{theorem}
\label{homomtheorem}
The map
$$
\pi_1(X,x_0) \, \overset{\theta_Y}{\longrightarrow} \, \S_Y(\O(Y)),
$$
given by $\gamma \mapsto \int_\gamma$, is a homomorphism of groups.
\end{theorem}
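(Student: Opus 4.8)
The plan is to reduce the statement to the already-established pointwise case: by Theorem~\ref{thetavrho} each $\theta_\vrho\colon\pi\to\S_\vrho(\C)$ is a group homomorphism, and all the relevant constructions specialize compatibly at the points of $Y$. Recall that an element of $\S_Y(\O(Y))$ is an $\O(Y)$-algebra homomorphism $g\colon\O(\S_Y)\to\O(Y)$, and that the group law on $\S_Y(\O(Y))$ is $g_1g_2 = m\circ(g_1\otimes_{\O(Y)}g_2)\circ\Delta$, where $\Delta$ is the coproduct of the Hopf algebra $\O(\S_Y)$ and $m$ is multiplication in $\O(Y)$. Thus I must show that the two $\O(Y)$-algebra homomorphisms $\theta_Y(\gamma_1\gamma_2)$ and $\theta_Y(\gamma_1)\theta_Y(\gamma_2)$ from $\O(\S_Y)$ to $\O(Y)$ coincide. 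Since $Y$ is an irreducible affine variety over $\C$, its coordinate ring $\O(Y)$ is a reduced finitely generated $\C$-algebra, so an element is determined by its values at the closed points $\vrho\in Y$. Hence it suffices to prove that $\mathrm{ev}_\vrho\circ\theta_Y(\gamma_1\gamma_2) = \mathrm{ev}_\vrho\circ\bigl(\theta_Y(\gamma_1)\theta_Y(\gamma_2)\bigr)$ as maps $\O(\S_Y)\to\C$ for every $\vrho\in Y$.

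Fix such a $\vrho$ and apply the specialization results of Section~\ref{SectionY} with $Z=\{\vrho\}$. The canonical homomorphism (\ref{BarSpecial2}) is a map of differential graded Hopf algebras, so taking $H^0$ yields a Hopf algebra homomorphism $\bar\phi_\vrho\colon\O(\S_Y)\otimes_{\O(Y)}\C_\vrho\to\O(\S_\vrho)$, which is precisely the comorphism of the group-scheme specialization (\ref{OSpecialization}) $\S_\vrho\to\S_Y\otimes_{\O(Y)}\C_\vrho$. First I would verify the key identity
\[
\mathrm{ev}_\vrho\circ\theta_Y(\gamma) \;=\; \theta_\vrho(\gamma)\circ\bar\phi_\vrho
\]
as elements of $\S_Y(\C_\vrho)$, for every $\gamma\in\pi$. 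On a generator $[\psi_1|\cdots|\psi_r]\varphi$ the left side is the value at $\vrho$ of $\int_\gamma(\psi_1\cdots\psi_r|\varphi)\in\O(Y)$, while the right side is the scalar iterated integral $\int_\gamma(\phi_\vrho\psi_1\cdots\phi_\vrho\psi_r\,|\,\phi_\vrho\varphi)$ computed in the theory of $\S_\vrho$; these agree because integration over a lift of $\gamma$ commutes with specialization of coefficients, which is exactly the compatibility recorded in the proof of Proposition~\ref{IterYMap} and in the naturality square of Remark~\ref{RemarkNatural}.

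With this identity in hand the conclusion is formal. The evaluation $\mathrm{ev}_\vrho\colon\O(Y)\to\C_\vrho$ induces a group homomorphism $\S_Y(\O(Y))\to\S_Y(\C_\vrho)$ by functoriality of the points of the group scheme $\S_Y$, and $\bar\phi_\vrho$, being a Hopf algebra homomorphism, induces a group homomorphism $\S_\vrho(\C)\to\S_Y(\C_\vrho)$, $g\mapsto g\circ\bar\phi_\vrho$. Using that $\theta_\vrho$ is a homomorphism (Theorem~\ref{thetavrho}) I would then compute, in $\S_Y(\C_\vrho)$,
\[
\mathrm{ev}_\vrho\circ\theta_Y(\gamma_1\gamma_2) = \theta_\vrho(\gamma_1\gamma_2)\circ\bar\phi_\vrho = \bigl(\theta_\vrho(\gamma_1)\theta_\vrho(\gamma_2)\bigr)\circ\bar\phi_\vrho = \bigl(\theta_\vrho(\gamma_1)\circ\bar\phi_\vrho\bigr)\bigl(\theta_\vrho(\gamma_2)\circ\bar\phi_\vrho\bigr),
\]
where the last equality is the homomorphism property of $(-)\circ\bar\phi_\vrho$. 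By the key identity this equals $\bigl(\mathrm{ev}_\vrho\circ\theta_Y(\gamma_1)\bigr)\bigl(\mathrm{ev}_\vrho\circ\theta_Y(\gamma_2)\bigr)$, which in turn equals $\mathrm{ev}_\vrho\circ\bigl(\theta_Y(\gamma_1)\theta_Y(\gamma_2)\bigr)$ because $\mathrm{ev}_\vrho$ induces a group homomorphism on points. As $\vrho\in Y$ was arbitrary, reducedness of $\O(Y)$ forces $\theta_Y(\gamma_1\gamma_2)=\theta_Y(\gamma_1)\theta_Y(\gamma_2)$.

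I expect the main obstacle to be the key identity $\mathrm{ev}_\vrho\circ\theta_Y(\gamma)=\theta_\vrho(\gamma)\circ\bar\phi_\vrho$: namely, pinning down that $\bar\phi_\vrho$ really is the $H^0$ of the Hopf map (\ref{BarSpecial2}), that it sends each bar generator $[\psi_1|\cdots|\psi_r]\varphi$ to the expected specialized generator, and that specialization at $\vrho$ genuinely intertwines the $\O(Y)$-valued iterated integrals with Hain's scalar iterated integrals for $\S_\vrho$. This is essentially bookkeeping on top of the commutativity $\int_\gamma\phi_\vrho(\zeta)=\bigl(\int_\gamma\zeta\bigr)(\vrho)$ already used in Proposition~\ref{IterYMap}, so I anticipate no genuinely new input is required. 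As an alternative, one could argue directly and avoid specialization altogether, by checking that the explicit coproduct $\Delta$ of Section~\ref{BarHopfAlgebra} applied to $[\psi_1|\cdots|\psi_r]\varphi$ matches the twisted path-composition formula $\int_{\gamma_1\gamma_2}(\psi_1\cdots\psi_r|\varphi)=\sum_{i}\int_{\gamma_1}(\cdots)\,\int_{\gamma_2}(\cdots)$ for Hain's iterated integrals; in that route the main obstacle becomes verifying that the monodromy twists $\phi^{(\ell)}_{k_\ell}$ appearing in the coproduct reproduce exactly the $\O(D_Y)$-coefficients generated when $\gamma_1$ transports the forms integrated along $\gamma_2$.
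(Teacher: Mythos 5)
Your proposal is correct and takes essentially the same approach as the paper: reduce to the pointwise case (Theorem \ref{thetavrho}) by specializing at each $\vrho \in Y$ via the naturality of $\theta_Y$ (Remark \ref{ThetaYRemark}), then conclude because an element of $\O(Y)$ is determined by its values at the points of $Y$. The paper packages this as showing that the difference map $\beta_Y = \theta_Y(\gamma\lambda) - \theta_Y(\gamma)\theta_Y(\lambda) \colon \O(\S_Y) \to \O(Y)$ vanishes after tensoring with each residue field $\C_\vrho$, which is your argument in slightly different form.
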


\begin{remark}
\label{ThetaYRemark}
The map $\theta_Y$ is natural in the sense that if $Z$ is an irreducible subvariety of $Y$, then the diagram
$$
\xymatrix{\pi_1(X,x_0) \ar[r]^{\theta_Z} \ar[d]_{\theta_Y} & \S_Z(\O(Z)) \ar[d] \\ \S_Y(\O(Y)) \ar[r] & \S_Y(\O(Z))}
$$
commutes.
\end{remark}

\begin{proof}[Proof of Theorem \ref{homomtheorem}]
If $Y = \{\vrho\}$, then this is Theorem \ref{thetavrho}. Suppose that $\gamma, \lambda \in \pi_1(X,x_0)$. Then $\beta_Y = \theta_Y(\gamma\lambda) - \theta_Y(\gamma)\theta_Y(\lambda)$ is a set map $\O(\S_Y) \, \longrightarrow \, \O(Y)$. For each $\vrho \in Y$, there is a canonical homomorphism
$$
\phi \colon \O(\S_Y) \otimes_{\O(Y)} \C_\vrho \, \longrightarrow \, \O(\S_\vrho)
$$
of differential graded algebras. Remark \ref{ThetaYRemark} implies that the diagram
$$
\xymatrix{\O(S_Y) \otimes_{\O(Y)} \C_\vrho \ar[d]_{\beta_Y \otimes 1_\vrho} \ar[r]^-\phi & \O(\S_\vrho) \ar[d]^{\beta_\vrho}\\ \C_\vrho \ar@{=}[r] & \C}
$$
commutes. The map $\beta_\vrho$ is zero by Theorem \ref{thetavrho}. Thus, the map $\beta_Y \otimes 1_\vrho$ must be zero for each $\vrho \in Y$. If the map $\beta_Y$ is not identically zero, choose $\zeta \in \O(\S_Y)$ such that $\beta_Y(\zeta)$ is nonzero as an element of $\O(Y)$. Choose $\vrho \in Y$ such that $\beta_Y(\zeta)(\vrho) \neq 0$. Then the map
$$
\beta_Y \otimes 1_\vrho \colon \O(\S_Y) \otimes_{\O(Y)} \C_\vrho \longrightarrow \C_\vrho
$$
satisfies $(\beta_Y \otimes 1_\vrho)(\zeta \otimes 1) = \beta_Y(\zeta)(\vrho) \neq 0$, a contradiction. Thus, the map $\beta_Y$ is identically zero.
\end{proof}

The next proposition follows directly from definitions.

\begin{proposition}
The homomorphism $\theta_Y$ lifts $\vrho_Y$:
$$
\xymatrix{\pi \ar[d]_{\theta_Y} \ar[dr]^{\vrho_Y} \\ \S_Y(\O(Y)) \ar[r] & D_Y(\O(Y)).}
$$\qed
\end{proposition}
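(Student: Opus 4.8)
The plan is to unwind the definitions and verify that the composite $\pi \overset{\theta_Y}{\longrightarrow} \S_Y(\O(Y)) \longrightarrow D_Y(\O(Y))$ agrees with $\vrho_Y$. The only point requiring care is the contravariance between affine group schemes and their Hopf algebras, so that the two group homomorphisms can be compared after passing to $\O(Y)$-algebra maps on coordinate rings.

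First I would record the two relevant descriptions on coordinate rings. By the proposition in Section \ref{SYDefinition}, the surjection $\S_Y \to D_Y$ is dual to the Hopf algebra inclusion $\O(D_Y) \hookrightarrow \O(\S_Y) = H^0B(\O(Y),E^\bullet(X,\O_Y),\O(D_Y))$ given by $\varphi \mapsto [\phantom{i}]\varphi$. On the other side, $\gamma \in \pi$ is sent by $\theta_Y$ to the $\O(Y)$-rational point $\int_\gamma \in \S_Y(\O(Y))$, that is, to the $\O(Y)$-algebra homomorphism $\O(\S_Y) \to \O(Y)$ carrying $[\psi_1|\cdots|\psi_r]\varphi$ to $\int_\gamma(\psi_1\cdots\psi_r|\varphi)$, as established in Proposition \ref{IterYMap}.

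Next I would compose these. The image of $\theta_Y(\gamma)$ under the map $\S_Y(\O(Y)) \to D_Y(\O(Y))$ is the $\O(Y)$-point of $D_Y$ obtained by precomposing $\int_\gamma \colon \O(\S_Y) \to \O(Y)$ with the inclusion $\O(D_Y) \hookrightarrow \O(\S_Y)$. Concretely, this is the $\O(Y)$-algebra homomorphism $\O(D_Y) \to \O(Y)$ sending $\varphi$ to $\int_\gamma([\phantom{i}]\varphi) = \int_\gamma(\phantom{i}|\varphi)$. Invoking the $r = 0$ clause in the definition of the iterated integral (Section \ref{subsectionIter}), this value equals $\varphi(\vrho_Y(\gamma))$.

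Finally I would observe that the resulting assignment $\varphi \mapsto \varphi(\vrho_Y(\gamma))$ is precisely evaluation at the $\O(Y)$-rational point $\vrho_Y(\gamma) \in D_Y(\O(Y))$, which is exactly the defining description of $\vrho_Y(\gamma)$ as a point of $D_Y(\O(Y)) = G_Y(\O(Y))$. Hence the composite equals $\vrho_Y$, establishing the lifting property. I do not expect a genuine obstacle here: the entire content lies in dualizing the maps correctly, and the nontrivial analytic input—that iterated integrals descend to locally constant functions and assemble into a group homomorphism—has already been absorbed into Proposition \ref{IterYMap} and Theorem \ref{homomtheorem}.
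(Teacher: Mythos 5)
Your proposal is correct and matches the paper's intent exactly: the paper states that this proposition ``follows directly from definitions'' and offers no further argument, and your write-up is precisely that definitional unwinding — dualizing $\S_Y \to D_Y$ to $\varphi \mapsto [\phantom{i}]\varphi$, evaluating $\theta_Y(\gamma)$ on such elements via the $r=0$ clause $\int_\gamma(\phantom{i}|\varphi) = \varphi(\vrho_Y(\gamma))$, and recognizing the result as the point $\vrho_Y(\gamma) \in D_Y(\O(Y))$. No gaps; nothing further is needed.
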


\subsection{Constancy of Relative Completion}
\label{ConstancyY}

\noindent For the convenience of the reader, we recall the material of the previous sections. Recall that $X$ is a smooth manifold such that $H_1(X,\Z)$ is torsion-free, that $\T = H^1(X,\C^*)$, and that $\pi = \pi_1(X,x_0)$. Each element of $\T^N$ can be viewed as a representation $\pi \to (\C^*)^N$. Let $Y$ be an irreducible subvariety of $\T^N$. Define $G_Y$ to be the intersection of all group subschemes of $\G_m^N$ whose group of $\C$-rational points contains $\im \vrho$ for every $\vrho \in Y$. The group scheme $D_Y$ over $Y$ is defined by $D_Y = G_Y \otimes_\C \O(Y)$. This is a group subscheme of $\G_{m/Y}^N$. Recall that $\vrho_Y$ is the tautological homomorphism
$$
\vrho_Y \colon \pi \longrightarrow D_Y(\O(Y)).
$$
The affine group scheme $\S_Y$ over $Y$ satisfies
$$
\O(\S_Y) = H^0B(\O(Y),E^\bullet(X,\O_Y),\O(D_Y)).
$$
There is a canonical surjection $\S_Y \to D_Y$ of affine group schemes over $Y$ and a homomorphism $\theta_Y \colon \pi \to \S_Y(\O(Y))$ that lifts $\vrho_Y$. When $Y = \{\vrho\}$, the group scheme $\S_Y$ is the relative Malcev completion of $\pi$ with respect to $\vrho$. For each irreducible subvariety $Z$ of $Y$, there is a canonical homomorphism $\S_Z \to \S_Y \otimes_{\O(Y)} \O(Z)$ of affine group schemes over $Z$. The diagram
$$
\xymatrix{\pi \ar[d]_{\theta_Y} \ar[r]^{\theta_Z} & \S_Z(\O(Z)) \ar[d] \\ \S_Y(\O(Y)) \ar[r] & \S_Y(\O(Z))}
$$
commutes. In particular, for each $\vrho \in Y$, there is a canonical homomorphism $\S_\vrho \to \S_Y \otimes_{\O(Y)} \C_\vrho$. The next theorem is the main result of Section \ref{SectionY}. The proof is based on the Eilenberg-Moore spectral sequence and relies on Theorem \ref{GenericCohomDimension}.

\begin{theorem}
\label{MyBigTheorem}
Suppose that $X$ deform-retracts onto a finite simplicial complex. If $\vrho$ is Zariski dense in $G_Y$ for general $\vrho \in Y$, then the homomorphism $\S_\vrho \to \S_Y \otimes_{\O(Y)} \C_\vrho$ is an isomorphism of affine group schemes for general $\vrho \in Y$.
\end{theorem}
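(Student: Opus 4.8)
The plan is to prove the dual statement about coordinate rings. Set $B_Y=B(\O(Y),E^\bullet(X,\O_Y),\O(D_Y))$ and $B_\vrho=B(\C,E^\bullet(X,\O_\vrho),\O(D_\vrho))$, so that $\O(\S_Y)=H^0(B_Y)$ and $\O(\S_\vrho)=H^0(B_\vrho)$, and the homomorphism $\S_\vrho\to\S_Y\otimes_{\O(Y)}\C_\vrho$ is $\Spec$ applied to the canonical $\O(Y)$-algebra map $\O(\S_Y)\otimes_{\O(Y)}\C_\vrho\to\O(\S_\vrho)$. I would factor this map as
$$
H^0(B_Y)\otimes_{\O(Y)}\C_\vrho\longrightarrow H^0(B_Y\otimes_{\O(Y)}\C_\vrho)\longrightarrow H^0(B_\vrho)
$$
and show each arrow is an isomorphism for general $\vrho$. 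Since every map involved is a morphism of differential graded Hopf algebras (Section \ref{BarHopfAlgebra}), an isomorphism on $H^0$ is automatically one of Hopf algebras, giving the desired isomorphism of group schemes after applying $\Spec$.

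The second arrow is the easier one and I would dispatch it first. The hypothesis guarantees that for general $\vrho\in Y$ we have $D_\vrho=G_\vrho=G_Y$. For such $\vrho$, Corollary \ref{YZSpecCorollary} (with $Z=\{\vrho\}$, where $G_Z=G_Y$) identifies $E^\bullet(X,\O_Y)\otimes_{\O(Y)}\C_\vrho$ with $E^\bullet(X,\O_\vrho)$ and $\O(D_Y)\otimes_{\O(Y)}\C_\vrho$ with $\O(D_\vrho)$; Proposition \ref{BarSpecializationProposition} then makes the canonical map $B_Y\otimes_{\O(Y)}\C_\vrho\to B_\vrho$ an isomorphism of complexes, so $H^0(B_Y\otimes_{\O(Y)}\C_\vrho)\cong H^0(B_\vrho)$.

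The first arrow is the crux, and it is precisely the conclusion of Theorem \ref{GenericCohomDimension} applied to the complex $B_Y$ of $\O(Y)$-modules. The main obstacle is that the terms of $B_Y$ are built from tensor powers of the infinite-dimensional space $E^1(X,\O_Y)$ and so are not countably generated over $\O(Y)$; Theorem \ref{GenericCohomDimension} therefore does not apply to $B_Y$ directly. I would circumvent this by passing to the Eilenberg--Moore spectral sequence $E_\bullet^Y$ of $B_Y$, viewed as a spectral sequence of $\O(Y)$-modules, and comparing it with the Eilenberg--Moore spectral sequence $E_\bullet^\vrho$ of $B_\vrho$. Because $X$ deform-retracts onto a finite complex, each $H^\bullet(X,\VV_{Y,\alpha})$ is a finitely generated $\O(Y)$-module and $G_Y^\vee$ is countable, so by Proposition \ref{EMSS1} (using $H^0(X,\O_Y)=\O(Y)$) the page $E_1^Y=B(\O(Y),H^\bullet(X,\O_Y),\O(D_Y))$ consists of countably generated $\O(Y)$-modules; every later page, being a subquotient over the Noetherian ring $\O(Y)$, is countably generated as well.

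On these pages I would induct on $r$ to prove $E_r^Y\otimes_{\O(Y)}\C_\vrho\cong E_r^\vrho$ for general $\vrho$. The base case $r=1$ uses Theorem \ref{HYpTheorem}, which gives $H^\bullet(X,\O_Y)\otimes_{\O(Y)}\C_\vrho\cong H^\bullet(X,\O_\vrho)$ for general $\vrho$ with $D_\vrho=G_Y$; distributing $\otimes_{\O(Y)}\C_\vrho$ across the tensor-product structure of the bar construction identifies $E_1^Y\otimes_{\O(Y)}\C_\vrho$ with $E_1^\vrho$ compatibly with the cup-product differential $d_1$. For the inductive step, Theorem \ref{GenericCohomDimension} applied to the complex $(E_r^Y,d_r)$ of countably generated modules gives $E_{r+1}^Y\otimes_{\O(Y)}\C_\vrho\cong H(E_r^Y\otimes_{\O(Y)}\C_\vrho)$ for general $\vrho$, which the inductive hypothesis identifies with $H(E_r^\vrho)=E_{r+1}^\vrho$. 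Since the bar construction is non-negatively graded and $H^\bullet(X,-)$ vanishes above the dimension of the complex, along the antidiagonal computing $H^0$ the incoming differentials vanish and the outgoing ones vanish once $r$ exceeds the filtration degree, so this second-quadrant spectral sequence stabilizes there after finitely many pages and $E_\infty^{Y,-s,s}\otimes_{\O(Y)}\C_\vrho\cong E_\infty^{\vrho,-s,s}$ for each $s$ and general $\vrho$. Finally, invoking the generic vanishing of $\Tor_1^{\O(Y)}(E_\infty^{Y,-s,s},\C_\vrho)$ from Lemma \ref{MTorp} to keep $\otimes_{\O(Y)}\C_\vrho$ exact on the exhaustive filtration of $H^0(B_Y)$, an induction over that filtration followed by passage to the direct limit shows that $H^0(B_Y)\otimes_{\O(Y)}\C_\vrho\to H^0(B_Y\otimes_{\O(Y)}\C_\vrho)$ is an isomorphism. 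Only countably many generic conditions are imposed in total, so the conclusion holds for general $\vrho$; the Hopf-algebra compatibility of the comparison (Proposition \ref{EnHopf}) ensures the resulting isomorphism is one of group schemes.
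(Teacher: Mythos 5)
Your proposal is correct and follows essentially the same route as the paper: reduce the statement to the specialization map of the reduced bar constructions, then compare Eilenberg--Moore spectral sequences page by page, with Theorem \ref{HYpTheorem} furnishing the $E_1$ comparison and Theorem \ref{GenericCohomDimension} driving the induction over pages. The only difference is that you make explicit two points the paper leaves implicit---the stabilization of the antidiagonal terms $E_r^{-s,s}$ at a finite page, and the passage from $E_\infty$ back to $H^0$ via the exhaustive bar filtration together with generic $\Tor$-vanishing from Lemma \ref{MTorp}---which refines rather than changes the argument.
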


\begin{remark}
The statement that $\vrho$ is Zariski dense in $G_Y$ means that the image of $\vrho$ in $G_Y(\C)$ is Zariski dense in $G_Y$. If $Y$ is any irreducible subvariety of $\T$, then $\vrho$ is dense in $G_Y$ for general $\vrho \in Y$.
\end{remark}

\begin{remark} If $X$ is the complement of an arrangement of hyperplanes in a complex vector space, then there is a deformation retraction of $X$ onto a finite simplicial complex \cite[Theorem 5.40]{OT}. Thus, Theorem \ref{MyBigTheorem} can be applied to $X$.
\end{remark}

\begin{proof}[Proof of Theorem \ref{MyBigTheorem}]
The homomorphism $\S_\vrho \to \S_Y \otimes_{\O(Y)} \C_\vrho$ corresponds to the Hopf algebra homomorphism $\O(\S_Y) \otimes_{\O(Y)} \C_\vrho \longrightarrow \O(\S_\vrho)$. This is the canonical homomorphism
\begin{equation}
\label{H0Hopfs}
H^0B(\O(Y),E^\bullet(X,\O_Y),\O(D_Y)) \otimes_{\O(Y)} \C_\vrho \longrightarrow H^0B(\C,E^\bullet(X,\O_\vrho),\O(D_\vrho))
\end{equation}
of Hopf algebras. If $\vrho \in Y$ is Zariski dense in $G_Y$, then this homomorphism is induced by the canonical homomorphism
\begin{equation}
\label{22222}
B(\O(Y),E^\bullet(X,\O_Y),\O(D_Y)) \otimes_{\O(Y)} \C_\vrho \longrightarrow B(\C,E^\bullet(X,\O_\vrho),\O(D_\vrho)).
\end{equation}
If $\vrho$ is Zariski dense in $Y$, then Proposition \ref{BarSpecializationProposition} and Corollary \ref{YZSpecCorollary} imply that (\ref{22222}) is an isomorphism. By assumption, (\ref{22222}) is therefore an isomorphism for general $\vrho \in Y$. It suffices to prove that (\ref{H0Hopfs}) is an isomorphism for general $\vrho \in Y$.

For each irreducible subvariety $Z$ of $Y$, which can be $\vrho$ or $Y$ itself, let $E_n(Z)$ denote the Eilenberg-Moore spectral sequence corresponding to the reduced bar construction $B(\O(Z),E^\bullet(X,\O_Z),\O(D_Z))$. Each $E_n(Z)$ is a Hopf algebra over $\O(Z)$. There is a canonical homomorphism
$$
E_n(Y) \otimes_{\O(Y)} \O(Z) \longrightarrow E_n(Z)
$$
of differential graded Hopf algebras over $\O(Z)$. The Hopf algebra homomorphism $E_0(Y) \otimes_{\O(Y)} \C_\vrho \longrightarrow E_0(\vrho)$ is the map (\ref{22222}), and the Hopf algebra homomorphism $E_\infty(Y) \otimes_{\O(Y)} \C_\vrho \longrightarrow E_\infty(\vrho)$ is the map (\ref{H0Hopfs}).

It suffices to prove that $E_\infty(Y) \otimes_{\O(Y)} \C_\vrho \longrightarrow E_\infty(\vrho)$ is an isomorphism for general $\vrho \in Y$. Proposition \ref{EMSS1} implies that
$$
E_1(Y) = B(\O(Y),E^\bullet(X,\O_Y),\O(D_Y)).
$$
Thus,
$$E_1^{-s,t}(Y) = [H^+(X,\O_Y)^{\otimes s}]^t \otimes_{\O(Y)} \O(D_Y),$$ where $[H^+(X,\O_Y)^{\otimes s}]^t$ denotes the degree $t$ part of $H^+(X,\O_Y)^{\otimes s}$.
Note that this implies that each $E_1^{-s,t}(Y)$ is a countably generated $\O(Y)$-module, since $H^\bullet(X,\O_Y)$ is countably generated. Since $\O(Y)$ is Noetherian, submodules of countably generated $\O(Y)$-modules are Noetherian. It follows that $E_n^{-s,t}(Y)$ is a countably generated $\O(Y)$-module for $n \geq 1$ and all $s$ and $t$.

The image of $\vrho$ is Zariski dense in $G_Y$ for general $\vrho \in Y$. For such $\vrho$, the homomorphism $\O(D_Y) \otimes_{\O(Y)} \C_\vrho \to \O(D_\vrho)$ is an isomorphism. Theorem \ref{HYpTheorem} implies that $H^\bullet(X,\O_Y) \otimes_{\O(Y)} \C_\vrho \longrightarrow H^\bullet(X,\O_\vrho)$ is an isomorphism for general $\vrho \in Y$, since the general $\vrho$ is Zariski dense in $G_Y$. Thus, for general $\vrho \in Y$, the canonical homomorphism
$$
E_1(Y) \otimes_{\O(Y)} \C_\vrho \longrightarrow E_1(\vrho)
$$
is an isomorphism.

Suppose now that the canonical homomorphism $E_n(Y) \otimes_{\O(Y)} \C_\vrho \longrightarrow E_n(\vrho)$ is an isomorphism for general $\vrho \in Y$. Since each $E_n^{-s,t}(Y)$ is countably generated, $E_n(Y)$ is a complex of countably generated $\O(Y)$-modules. Theorem \ref{GenericCohomDimension} therefore implies that for general $\vrho \in Y$, the canonical homomorphism
$$
E_{n+1}(Y) \otimes_{\O(Y)} \C_\vrho \longrightarrow E_{n+1}(\vrho)
$$
is an isomorphism. It follows that the canonical homomorphism
$$
E_\infty(Y) \otimes_{\O(Y)} \C_\vrho \longrightarrow E_\infty(\vrho)
$$
is an isomorphism for general $\vrho \in Y$. This completes the proof.
\end{proof}


\begin{thebibliography}{99}

\bibitem{Arapura} D. Arapura, {\em Geometry of cohomology support loci for local systems I}, J. Algebraic Geom. {\bf 6} (1997), no. 3., 563-597.

\bibitem{Borel} A. Borel, Linear Algebraic Groups, Second Edition, Graduate Texts in Mathematics {\bf 126}, Springer-Verlag, New York (1991).

\bibitem{Brieskorn} E. Brieskorn, {\em Sur les groupes de tresses}, Seminar Bourbaki, 1971/1972, Exp. No. 401, Lecture Notes in Math. {\bf 317} (1973), 21-44.

\bibitem{ChenAlg} K.T. Chen {\em Algebras of iterated path integrals and fundamental groups}, Trans. Amer. Math. Soc. {\bf 156} (1971), 359-379.

\bibitem{Chen1} K.T. Chen, {\em Extension of $C^\infty$ function algebra by integrals and Malcev completion of $\pi_1$}, Advances in Math. {\bf 23} (1977), no. 2, 181-210.

\bibitem{Chen3} K.T. Chen, {\em Iterated path integrals}, Bull. Amer. Math. Soc. {\bf 83} (1977), no. 5, 831-879.

\bibitem{Chen2} K.T. Chen, {\em Reduced bar constructions on de Rham complexes}, Algebra, Topology, and Category Theory (A collection of papers in honor of Samuel Eilenberg), Academic Press, New York (1976), 19-32.

\bibitem{CharVarArr} D. Cohen and A. Suciu, {\em Characteristic varieties of arrangements}, Math. Proc. Cambridge Philos. Soc. {\bf 127} (1999), no. 1, 33-53.

\bibitem{ESV} H. Esnault, V. Schechtman, and E. Viehweg, {\em Cohomology of local systems on the complement of hyperplanes}, Invent. Math. {\bf 109} (1992), no. 3, 557-561.

\bibitem{Falk} M. Falk, {\em Arrangements and cohomology}, Ann Comb. {\bf 1} (1997), no. 2, 135-157.

\bibitem{GorMac} M. Goresky and R. MacPherson, Stratified Morse Theory, Springer Verlag, 1988.

\bibitem{GreenLazarsfeld} M. Green and R. Lazarsfeld, {\em Higher obstructions to deforming cohomology groups of line bundles}, J. Amer. Math. Soc. {\bf 4} (1991), no. 1, 87-103.

\bibitem{HainCompletions} R. Hain, {\em Completions of mapping class groups and the cycle $C-C\sp -$}, Contemporary Math. {\bf 150} (1993), 75-105.

\bibitem{HainDeRham} R. Hain, {\em The Hodge de Rham theory of relative Malcev completion} (English, French summary), Ann. Sci. \'{E}cole Norm. Sup. (4) {\bf 31} (1998), no. 1, 47-92.

\bibitem{HainTorelli} R. Hain, {\em Infinitesimal presentations of the Torelli groups}, J. Amer. Math. Soc. {\bf 10} (1997), no. 3, 597-651.

\bibitem{HainIter} R. Hain, {\em Iterated integrals and algebraic cycles: examples and prospects. Contemporary trends in algebraic geometry and algebraic topology (Tianjin, 2000)}, Nankai Tracts Math., World Sci. Publ., River Edge, NJ (2002), 55-118.

\bibitem{HainRelative} R. Hain, {\em Relative weight filtrations on completions of mapping class groups}, in preparation, 2007.

\bibitem{HainMatsu} R. Hain and M. Matsumoto, {\em Weighted completion of Galois groups and Galois actions on the fundamental group of $\mathbb{P}^1-\{0,1,\infty\}$}, Compositio Math. {\bf 139} (2003), no. 2, 119-167.

\bibitem{Kohno} T. Kohno, {\em On the holonomy Lie algebra and the nilpotent completion of the fundamental group of the complement of hypersurfaces}, Nagoya Math. J. {\bf 92} (1983), 21-37.

\bibitem{Miller} C. Miller, {\em Exponentiated iterated integrals and the relative solvable completion of the fundamental group of a manifold}, Topoology {\bf 44} (2005), no. 2, 351-373.

\bibitem{MilnorMorse} J. Milnor, {\em Morse Theory}, Annals of Math. Studies {\bf 51}, Princeton University Press, Princeton, NJ, 1963.

\bibitem{Narkawicz} A. Narkawicz, {\em Cohomology jumping loci and the relative Malcev completion}, Ph.D. Thesis, Duke University, 2007, \verb"http://hdl.handle.net/10161/441".

\bibitem{OS1} P. Orlik and L. Solomon, {\em Combinatorics and topology of complements of hyperplanes}, Invent. Math. {\bf 56} (1980), no. 2, 167-189.

\bibitem{OT} P. Orlik and H. Terao, Arrangements of Hyperplanes, Grundlehren Math. Wiss., 300, Springer-Verlag, Berlin, 1992.

\bibitem{Quillen} D. Quillen, {\em Rational homotopy theory}, Ann. of Math. {\bf 90} (1969), 205-295.

\bibitem{Rotman} J. Rotman, An Introduction to Homological Algebra, Pure and Applied Mathematics, no. 85, Academic Press, Inc., New York-London, 1979.

\bibitem{Simpson} C. Simpson, {\em Subspaces of modul spaces of rank one local systems}, Ann. Sci. \'{E}cole Norm. Sup. (4) {\bf 26} (1993), no. 3, 361-401.

\bibitem{Suciu1} A. Suciu, {\em Fundamental groups of line arrangements: enumerative aspects}, in: Advances in algebraic geometry motivated by physics, Contemp. Math. {\bf 276}, Amer. Math. Soc.,Providence, RI (2001), 43-79.

\bibitem{Sullivan} D. Sullivan, {\em  Differential forms and the topology of manifolds}, Manifolds, Tokyo (1973), 37-49.

\bibitem{Waterhouse} W. Waterhouse, Introduction to Affine Group Schemes, Graduate Texts in Mathematics, no. 66, Springer, Berlin, 1979.

\bibitem{Whitney} H. Whitney, Geometric integration theory, Princeton Univ. Press, Princeton, NJ, 1957.

\bibitem{Yuz1} S. Yuzvinsky, {\em Cohomology of the Brieskorn-Orlik-Solomon algebras}, Comm. Algebra {\bf 23} (1995), no. 14, 5339-5354.

\end{thebibliography}
\end{document}